\documentclass[12pt]{amsart}
\setcounter{secnumdepth}{3}

\usepackage{hyperref}
\hypersetup{
    colorlinks=true,
    linkcolor=blue,
    citecolor=blue,
    urlcolor=blue,
    pdfauthor={Diego Corro and Jesús Nuñez Zimbrón and Jaime Santos-Rodríguez},
    pdftitle={Codimension 1 RCD-spaces}
}

\usepackage{verbatim}
\usepackage[numbers]{natbib}
\usepackage{url}
\usepackage{graphicx}
\usepackage{amsfonts, amssymb, amsmath, amsthm}
\usepackage{mathtools}
\usepackage{multicol}
\usepackage{tikz}
\usepackage{tikz-cd}
\usepackage{tikz-3dplot}
\usepackage{cancel}
\usepackage{changepage}
\usepackage{csquotes}
\usepackage{adjustbox}
\usetikzlibrary{matrix}
\usepackage{enumerate}
\usepackage[normalem]{ulem}
\usepackage{comment}

\allowdisplaybreaks

\addtolength{\textwidth}{95pt} \addtolength{\textheight}{42pt}
\addtolength{\hoffset}{-50pt} \addtolength{\voffset}{-15pt}

\newtheorem{mtheorem}{Theorem}


\newtheorem{theorem}{Theorem}[section]
\newtheorem{lemma}[theorem]{Lemma}
\newtheorem{prop}[theorem]{Proposition}
\newtheorem{cor}[theorem]{Corollary}

\newtheorem{question}[theorem]{Question}

\theoremstyle{definition}
\newtheorem{definition}[theorem]{Definition}

\newtheorem{rmk}[theorem]{Remark}


 \newtheoremstyle{TheoremNum}
        {\topsep}{\topsep}              
        {\itshape}                      
        {}                              
        {\bfseries}                     
        {.}                             
        { }                             
        {\thmname{#1}\thmnote{ \bfseries #3}}
    \theoremstyle{TheoremNum}
    \newtheorem{duplicate}{Theorem}
    
    \newtheorem{duplicatePROP}{Proposition}

\newcommand{\Ch}{\mathrm{Ch}}
\newcommand{\mycomment}[1]{%
}
\newcommand{\dis}{d}
\newcommand{\dime}{\mathrm{dim}}
\newcommand{\dimess}{\mathrm{dim}_{\mathrm{ess}}}

\newcommand{\E}{\mathbb{E}}

\newcommand{\Hauss}{\mathcal{H}}
\newcommand{\Iso}{\mathrm{Isom}}
\newcommand{\kernel}{Ker}
\newcommand{\Lip}{\mathrm{Lip}}

\newcommand{\BE}{\mathsf{BE}}
\newcommand{\CD}{\mathsf{CD}}
\newcommand{\RCD}{\mathsf{RCD}}

\newcommand{\Sob}[2][1]{W^{#1,#2}}
\newcommand{\Tan}{\mathrm{Tan}}

\DeclareMathOperator{\supp}{supp}
\DeclareMathOperator{\curv}{curv}

\DeclareMathOperator{\diam}{diam}

\DeclareMathOperator{\Tub}{Tub}

\DeclareMathOperator{\Ric}{Ric}

\DeclareMathOperator{\cl}{Cl}

\DeclareMathOperator{\Geo}{Geo}
\DeclareMathOperator{\Opt}{Opt}
\newcommand{\m}{\mathfrak{m}}
\newcommand{\N}{\mathbb{N}} 
\newcommand{\R}{\mathbb{R}} 
\newcommand{\Z}{\mathbb{Z}} 
\newcommand{\C}{\mathbb{C}} 

\newcommand{\Sp}{\mathbf{S}} 
\newcommand{\SO}{\mathrm{SO}}


\usepackage{lineno}

\usepackage[reftex]{theoremref}

\usepackage[colorinlistoftodos, textwidth=3cm]{todonotes}
\newcounter{jncomment}

\newcounter{dccomment}

\newcounter{jscomment}


\begin{document}
\title{Cohomogeneity one $\RCD$-spaces}

\author[D. CORRO]{Diego Corro$^{*}$}
\address[D. CORRO]{Fakultät für Mathematik\\ 
Karlsruher Institut für Technologie\\
Englerstr. 2\\
76131 Karlsruhe,
Deutschland.
}
\curraddr[D. CORRO]{School of Mathematics, Cardiff University, Cardiff, UK.}
\email{diego.corro.math@gmail.com}
\thanks{$^{*}$Supported by a UKRI Future Leaders Fellowship [grant number MR/W01176X/1; PI: J Harvey], by DFG (281869850, RTG 2229 ``Asymptotic Invariants and Limits of Groups and Spaces'' and by DFG-Eigenstelle Fellowship CO 2359/1-1}

\author[J. NUÑEZ-ZIMBRON]{Jesús Núñez-Zimbrón$^{\ddagger}$}
\address[J. NUÑEZ-ZIMBRON]{Facultad de Ciencias\\
Universidad Nacional Autónoma de México (UNAM)\\
Circuito de la Investigación Científica, C.U.\\ 
Coyoacán, 04510 Ciudad de Mé\-xico\\
México.}
\email{nunez-zimbron@ciencias.unam.mx}
\thanks{$^{\ddagger}$Supported by PAPIIT-UNAM project IN101322 and CONAHCyT-SNII 323477}

\author[J. SANTOS-RODRIGUEZ]{Jaime Santos-Rodríguez$^\dagger$}
\address[J. SANTOS-RODRIGUEZ]{Departamento de Matem\'atica Aplicada, Universidad Polit\'e\-cnica de Madrid, Spain.
}
\email{jaime.santos@upm.es}
\thanks{$^{\dagger}$ Supported in part by a Margarita Salas Fellowship of the Universidad Aut\'onoma de Madrid CA1/RSUE/2021--00625, and by research grants  MTM2017-‐85934-‐C3-‐2-‐P, PID2021--124195NB--C32 from the Ministerio de Econom\'ia y Competitividad de Espa\~{na} (MINECO)}

\subjclass[2020]{53C20; 53C21; 53C23}
\keywords{$\RCD$-space, group action, cohomogeneity one, non-collapsed}

\setlength{\overfullrule}{5pt}

\begin{abstract}
We study $\RCD$-spaces $(X,\dis,\m)$ with group actions by isometries preserving the reference measure $\m$ and whose orbit space has dimension one, i.e. cohomogeneity one actions. To this end we prove a Slice Theorem asserting that when $X$ is non-collapsed the slices are homeomorphic to metric cones over homogeneous spaces with $\Ric \geq 0$. As a consequence we obtain complete topological structural results (also in the collapsed case) and a regular  orbit representation theorem. Conversely, we show how to construct new $\RCD$-spaces from a cohomogeneity one group diagram, giving  a complete description of $\RCD$-spaces of cohomogeneity one. As an application of these results we obtain the classification of cohomogeneity one, non-collapsed $\RCD$-spaces of essential dimension at most $4$.  
\end{abstract}

\maketitle

\tableofcontents

  \makeatletter
  \providecommand\@dotsep{5}
  \makeatother

\section{Introduction}

The theory of $\RCD$-spaces  has emerged from, and is still  largely motivated, by the extensive and deep work of Cheeger and Colding on Gromov-Hausdorff limit spaces of sequences consisting of Riemannian manifolds with Ricci curvature bounded from below \cite{CheegerColding1997, CheegerColding2000a, CheegerColding2000b}. Cheeger and Colding themselves asked whether a synthetic treatment of Ricci curvature bounds was possible. The most natural answer is the now well-developed theory of spaces with the curvature-dimension condition, initiated by Lott, Sturm, and Villani \cite{LottVillani2009, SturmI2006, SturmII2006} and refined to exclude Finslerian spaces by Ambrosio, Gigli, and Savaré \cite{AmbrosioGigliSavare2014, Gigli2015}. Thus, the study of $\RCD$-spaces carries intrinsic theoretical value; however, it has also produced several non-trivial applications to the study of Riemannian manifolds which are beyond the scope of the classical standpoint, i.e. only using smooth tools (see \cite[Section 7]{Gigli2023} for some examples of these results).  

Therefore it is compelling to understand the geometries and topologies of non-smooth spaces with curvature bounded below. As these spaces admit topological and metric singularities, which may have a complicated arrangement, it is necessary to look for systematic ways to study them. In particular, the topological structure of $\RCD$-spaces is complicated and, as of now, there are only a few results describing it (see for example \cite{MondinoWei2019, SantosZamora2023, Wang2024} and references therein). 

An approach to produce such a systematic method for the study of $\RCD$-spaces is the so called \textit{extended symmetry program}. Grove originally proposed  in \cite{Grove2002} that, in the search of new examples or obstructions to the existence of Riemannian metrics with positive sectional curvature, we should first study the manifolds with a high degree of symmetry. This approach has been extended to the study of Riemannian manifolds with  other lower curvature bounds, and has also been successfully extrapolated to the study of Alexandrov spaces, i.e. length spaces with a synthetic lower sectional curvature bound (see \cite{Galaz-GarciaSearle2011, HarveySearle2017} and references therein). 

Since the group of measure preserving isometries of an $\RCD$-space is a Lie group \cite{GuijarroSantos2019, Sosa2018}, we can try to follow the symmetry program philosophy for $\RCD$-spaces. In this article we do this by considering \textit{cohomogeneity one $\RCD$-spaces}, that is $\RCD$-spaces admitting an isometric and measure-preserving action of a compact Lie group in such a way that the orbit space is of Hausdorff dimension one. Our main results are the following.

\begin{mtheorem}\th\label{MC: Homeomorphism rigidity}
Let $(X,\dis,\m)$ be an $\RCD(K,N)$-space. Let $G$ be a compact Lie group acting on $X$ by measure preserving isometries and cohomogeneity one. Then the following hold:
\begin{enumerate}[(a)]
\item\label{MC: Homeomorphism rigidity close interval} When $X^\ast$ is homeomorphic to $[0,1]$, then $X$ is equivariantly homeomorphic to the union of two cone bundles over the singular orbits, glued along their boundary.

\item\label{MC: homeomorphism rigidity ray} When $X^\ast$ is homeomorphic to $[0,\infty)$, then $X$ is homeomorphic to a cone bundle.

\item\label{MC: homeomorphism rigidity Circle} When $X^\ast$ is homeomorphic to $\Sp^1$, then $X$ is the total space of a fiber bundle with homogeneous fiber $G/H$ and structure group $N_G(H)/H$. 

\item\label{MC: homeomorphism rigidity line} When $X^\ast$ is homeomorphic to $\R$, then $X$ is homeomorphic to $\R\times G/H$.
\end{enumerate}

\noindent Moreover the cone fibers in items $(a)$ and $(b)$ are cones over homogeneous spaces. 

In the case when $X$ is non-collapsed then  the cone fibers admit  metric cone structures over $\RCD(N-k_{\pm}-2,N-k_\pm-1)$-spaces, where $k_\pm = \dim(G(x_\pm))$.
\end{mtheorem}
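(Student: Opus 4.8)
The plan is to reduce the entire statement to the local structure of the action near a single orbit, as packaged by the Slice Theorem, and then to assemble the pieces according to the topology of the one-dimensional quotient $X^\ast = X/G$. First I would record that, $X$ being connected and $G$ acting by measure-preserving isometries with one-dimensional orbit space, $X^\ast$ is a complete, geodesic, one-dimensional metric space; the classification of such spaces identifies $X^\ast$ with a connected topological $1$-manifold with (possibly empty) boundary, and hence with exactly one of $[0,1]$, $[0,\infty)$, $\Sp^1$, or $\R$, which is the source of the four cases. Interior points of $X^\ast$ correspond to principal orbits $G/H$ and boundary points to singular orbits $G(x_\pm)=G/K_\pm$, so the cases are distinguished simply by the number of boundary points: two for $[0,1]$, one for $[0,\infty)$, and none for $\Sp^1$ and $\R$.

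Next I would invoke the Slice Theorem to describe a $G$-invariant neighborhood of each orbit. Over a regular value the slice is a one-dimensional interval and the isotropy is constant equal to $H$, so the preimage of an arc of regular values is equivariantly homeomorphic to $(G/H)\times I$; patching these along the regular part $X^\ast_{\reg}$ exhibits $\pi^{-1}(X^\ast_{\reg})$ as a fiber bundle with fiber $G/H$ whose transition maps are $G$-equivariant, hence lie in the group $N_G(H)/H$ of equivariant homeomorphisms of the principal orbit. Over a singular value the Slice Theorem gives an equivariant homeomorphism of a tubular neighborhood with the cone bundle $G\times_{K_\pm} C(K_\pm/H)$, where the cone fiber is the metric cone over the normal space of directions; since $K_\pm$ acts transitively on this link, the link is the homogeneous space $K_\pm/H$, which is exactly the assertion that the cone fibers are cones over homogeneous spaces.

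With these local models the four cases are assembled by the standard cohomogeneity-one decomposition. In case $(a)$ the two singular orbits at the endpoints of $[0,1]$ have cone-bundle neighborhoods whose boundaries are each equivariantly homeomorphic to the regular orbit bundle over an interval; cutting $X$ along a single regular orbit and matching the two tubular neighborhoods along this common boundary realizes $X$ as the union of two cone bundles glued along their boundary. Case $(b)$ is the same argument with a single endpoint, the complementary ray carrying no singular orbit and being absorbed into the open cone bundle. In cases $(c)$ and $(d)$ there are no singular orbits, so $X=\pi^{-1}(X^\ast_{\reg})$ is the $G/H$-bundle above: over $\Sp^1$ it is a fiber bundle with structure group $N_G(H)/H$, and over the contractible base $\R$ it is trivial, giving $X\cong \R\times G/H$. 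For the non-collapsed refinement I would feed the non-collapsed Slice Theorem into the cone/cross-section correspondence: the tangent cone at a point $x_\pm$ of the singular orbit splits isometrically as $\R^{k_\pm}\times C(L_\pm)$ with $L_\pm$ homogeneous, and since the tangent cone is $\RCD(0,N)$ the factor $C(L_\pm)$ is $\RCD(0,N-k_\pm)$, which forces its link $L_\pm$ to be an $\RCD(N-k_\pm-2,N-k_\pm-1)$-space of dimension $N-k_\pm-1$.

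The main obstacle I anticipate is not the topological bookkeeping, which is formal once the local models are in place, but the two metric inputs underlying them: promoting the tangent-cone structure at a singular orbit to a genuine equivariant cone-bundle neighborhood with homogeneous link (the Slice Theorem itself), and pinning down the exact curvature-dimension parameters of the link in the non-collapsed case. The latter requires the non-collapsed structure theory, so that tangent cones are genuine metric cones and their cross-sections inherit synthetic Ricci bounds, together with care that the $G$-action descends compatibly to $L_\pm$ and acts transitively on it; once $L_\pm$ is identified as a homogeneous $\RCD(N-k_\pm-2,N-k_\pm-1)$-space the remaining assertions follow by the assembly above.
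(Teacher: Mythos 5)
Your assembly reproduces the architecture of the paper's own proof: the four cases come from the classification of one-dimensional $\RCD$ quotients; cases (c) and (d) follow from the constancy of the principal isotropy over the interior of $X^\ast$, so that lifts of short geodesics trivialize the $G/H$-bundle with structure group $N_G(H)/H$; cases (a) and (b) follow from decomposing $X$ into tubular neighborhoods $G\times_{K_\pm}S_{x_\pm}$ of the singular orbits glued along a regular orbit bundle; and the non-collapsed refinement comes from the splitting of the tangent cone as $\R^{k_\pm}\times C(L_\pm)$ together with the cone theorem of Ketterer. You also correctly locate the genuine work in the Slice Theorem and in pinning down the curvature-dimension parameters of the link.

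One step in your local model would fail if taken literally: you describe the cone fiber as ``the metric cone over the normal space of directions.'' In an $\RCD$-space there is no such object available to cone over --- the infinitesimal-to-local correspondence in the spirit of Perelman's conical neighborhood theorem is precisely what is not known in this setting, and is the reason the classical smooth or Alexandrov Slice Theorem cannot simply be quoted. The paper instead defines the slice globally as $S_{x_0}=\{y\in X\mid \dis(x_0,y)=\dis(G(x_0),y)\}\cap B_\delta(x_0)$, proves it is a slice and a topological cone by showing, via optimal transport and essential non-branching, that horizontal geodesics cannot branch, and only identifies the link a posteriori in the non-collapsed case as the cross-section of the blown-up tangent cone, acted on transitively by the limit group $\overline{K}_\pm$ induced by $K_\pm$. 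In particular the link is homeomorphic to $K_\pm/H$, but its $\RCD(N-k_\pm-2,N-k_\pm-1)$-structure is the one carried by $\overline{K}_\pm/\overline{H}_\pm$, and it is not known whether $K_\pm$ itself acts on that structure by measure-preserving isometries; your phrasing conflates the two. Likewise, the implication that $C(L_\pm)$ being $\RCD(0,N-k_\pm)$ ``forces'' $L_\pm$ to be $\RCD(N-k_\pm-2,N-k_\pm-1)$ is correct but is Ketterer's rigidity theorem for cones rather than a formal consequence, and the homogeneity of $L_\pm$ rests on the transitivity of the singular isotropy on distance spheres in the slice, which in turn uses that $S_{x_\pm}/K_\pm$ is an interval. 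With these substitutions your outline coincides with the paper's proof.
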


This result gives the full description of $\RCD$-spaces of cohomogeneity $1$, completing the rigidity results of  Galaz-García, Kell, Mondino and Sosa in \cite[Corollary 1.4]{GalazGarciaKellMondinoSosa2018}. Also, it will be clear from the proof that if $X_1$ and $X_2$ have the same orbit space and orbit types then they are equivariantly homeomorphic. We also show that, to obtain the conclusions about the homeomorphism type in \th\ref{MC: Homeomorphism rigidity}, we only need an essentially non-branching metric-measure structure (see Section~\ref{S: Slice THeorem RCD} and \th\ref{T: Topological classification geodesic coho 1 spaces.}). 

The topological description depends on having a geodesic structure, and the geometry of the quotient space, as stated in \th\ref{T: Topological classification geodesic coho 1 spaces.}.

Moreover, in the case of \th\ref{MC: Homeomorphism rigidity}~\eqref{MC: Homeomorphism rigidity close interval} we get three subgroups $H$, $K_-$, and $K_+$, that, up to conjugation, are all the possible isotropy groups of the action. By \th\ref{T: Principal isotropy is subgroup of singular isotropy}, it follows that $H<K_\pm$. Thus, we have a tuple $(G,H,K_-,K_+)$ associated to the action.   We show that the converse is true: each \textit{cohomogeneity one group diagram} $(G,H,K_-,K_+)$, that is a collection of subgroups where $H$ is a Lie subgroup of $K_{\pm}$, and in turn $K_{\pm}$ are Lie subgroups of $G$ such that $K_{\pm}/H$ are homogeneous spaces with positive Ricci curvature if they have dimension $\geq 2$, and in the case of dimension $1$ they have bounded diameter, determines a cohomogeneity one $\RCD$-space. 

\begin{mtheorem}
\th\label{thm.rcd space from group diagram}
Let $G$ be a compact Lie group and $K_+$, $K_-$, $H$ Lie subgroups of $G$ such that $(G,H,K_+,K_-)$ is a cohomogeneity one group diagram. Then, there exists an $\RCD$-space $(X,\dis,\m)$ admitting a cohomogeneity one action of $G$ by measure preserving isometries such that the associated group diagram is $(G,H,K_+,K_-)$.
\end{mtheorem}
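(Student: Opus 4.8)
The plan is to build $X$ directly as the metric-measure analogue of the classical double disc-bundle decomposition of a cohomogeneity one manifold: since a cohomogeneity one group diagram $(G,H,K_+,K_-)$ encodes the closed-interval case $X^\ast\cong[0,1]$ with two singular orbits, I will realise $X$ as the union of two \emph{cone bundles} over the singular orbits $G/K_-$ and $G/K_+$, glued along a regular orbit $G/H$, with the linear slice discs replaced by the metric cones predicted by \th\ref{MC: Homeomorphism rigidity}. The device that keeps the verification of the curvature-dimension condition elementary is to realise each cone bundle as a \emph{quotient of a product}, so that all O'Neill-type curvature computations are replaced by closure of the $\RCD$ class under products and under quotients by measure-preserving isometric actions. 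As a first step I fix a bi-invariant metric $Q$ on $G$, rescaled by a single small factor so that the normal homogeneous metrics it induces on both $K_-/H$ and $K_+/H$ satisfy $\Ric\geq\dim(K_\pm/H)-1$ when $\dim(K_\pm/H)\geq 2$, and have circumference at most $2\pi$ when $\dim(K_\pm/H)=1$. This is precisely where the hypotheses built into the notion of a cohomogeneity one group diagram are used: by Ketterer's cone theorem the resulting Euclidean cones $C(K_\pm/H)$ are then $\RCD(0,\dim(K_\pm/H)+1)$, the one-dimensional case giving flat cones of total angle $\leq 2\pi$, hence Alexandrov of curvature $\geq 0$.

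Next I replace the radial coordinate by a warped profile $\phi_\pm\colon[0,1]\to[0,\infty)$ with $\phi_\pm(r)=r$ near $r=0$ and $\phi_\pm\equiv 1$ near $r=1$, and consider the warped truncated cone $C_\pm$ over $K_\pm/H$ with metric $dr^2+\phi_\pm(r)^2g_{K_\pm/H}$ and cone measure. For a suitable (not necessarily concave) choice of $\phi_\pm$ this warped cone satisfies the warped-product $\RCD$ criterion with some finite lower bound $K$: near $r=0$ it is the exact $\RCD(0,\dim+1)$ cone from the previous step, while on $[\delta,1]$ it is a smooth compact warped product whose Ricci curvature is automatically bounded below. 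I then form the product $G\times C_\pm$, which is $\RCD$ by the product theorem, and quotient by the diagonal, isometric and measure-preserving $K_\pm$-action $(g,c)\mapsto(gk^{-1},k\cdot c)$. By the quotient theorem of Galaz-García--Kell--Mondino--Sosa the associated bundle
\[
E_\pm \;=\; G\times_{K_\pm} C_\pm \;=\; (G\times C_\pm)/K_\pm
\]
is again an $\RCD$-space, carrying the residual left $G$-action $g_0\cdot[g,c]=[g_0g,c]$, which is isometric, measure-preserving and of cohomogeneity one. A direct stabiliser computation shows the cone-tip section is the singular orbit $G/K_\pm$ while every orbit of positive radius is the regular orbit $G/H$, so $E_\pm$ realises one half of the prescribed diagram.

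The crux, and the step I expect to be the main obstacle, is to glue $E_-$ and $E_+$ along their common boundary regular orbit while preserving the $\RCD$ condition, since the two halves are quotients by the \emph{different} structure groups $K_-$ and $K_+$ and cannot be assembled by a single quotient. The plateau $\phi_\pm\equiv1$ near $r=1$ is what resolves this: after the quotient, a neighbourhood of the boundary of $E_\pm$ is isometric to the Riemannian product $\bigl((1-\epsilon,1]\times G/H,\;dr^2+g_{G/H}\bigr)$, where $g_{G/H}$ is the normal homogeneous metric induced by $Q$. Crucially this boundary metric is the \emph{same} on both sides, because the submersion metric on $G/H\to G/K_\pm$ reassembled from the bi-invariant $Q$ is independent of the intermediate group $K_\pm$ and equals $g_{G/H}$. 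Gluing $E_-$ and $E_+$ by the identity of $G/H$ therefore produces a length space which near the gluing locus is the smooth Riemannian product $\bigl((-\epsilon,\epsilon)\times G/H,\;dr^2+g_{G/H}\bigr)$, hence locally $\RCD$, and which is $\RCD$ on each half. Passing to a common dimension bound $N=\dim G+\max\{\dim(K_-/H),\dim(K_+/H)\}+1$ and a common lower bound $K$, the glued space is locally $\RCD(K,N)$ everywhere, and the local-to-global property of the curvature-dimension condition upgrades this to a global $\RCD(K,N)$ structure. By construction its isotropy groups are exactly $H$, $K_-$ and $K_+$, so the associated diagram is $(G,H,K_+,K_-)$.

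The remaining, non-compact quotient types require only simpler versions of the same argument and can be recorded for completeness: the half-line case is a single full cone bundle $G\times_{K}C(K/H)$ with $\phi(r)=r$, the line case is the product $\R\times G/H$, and the circle case is the corresponding mapping torus of a $G$-equivariant isometry of $G/H$, each manifestly $\RCD$. I expect the genuine technical labour to be concentrated in the gluing step: verifying the warped-product $\RCD$ criterion for the chosen profiles $\phi_\pm$ and checking that the product-collar matching puts one exactly into the hypotheses of the local-to-global theorem. The conceptual content---that the slices must be cones over the homogeneous spaces $K_\pm/H$, which is what forces the positive-Ricci and bounded-diameter conditions on the diagram---is already dictated by \th\ref{MC: Homeomorphism rigidity}.
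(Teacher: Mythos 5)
Your overall architecture coincides with the paper's: realize each half as a quotient $(G\times C_\pm)/K_\pm$ of a product by a diagonal isometric, measure-preserving action, where $C_\pm$ is a truncated warped cone over $K_\pm/H$ whose profile is linear near the tip and constant near the outer boundary; verify the curvature-dimension condition piecewise via Ketterer's warped-product theorem, the product property, and the quotient theorem of Galaz-Garc\'ia--Kell--Mondino--Sosa; and conclude with local-to-global. There is, however, a genuine gap exactly at the step you identify as the crux. Your gluing rests on the claim that the collar metric induced on the boundary orbit $G/H$ of $E_\pm=G\times_{K_\pm}C_\pm$ is the normal homogeneous metric of $Q$ and hence ``the same on both sides.'' This is false. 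The boundary of $E_\pm$ is the quotient $(G\times K_\pm/H)/K_\pm\cong G/H$, and the submersion metric it inherits from $Q\oplus Q|_{K_\pm/H}$ is a Cheeger deformation of the normal homogeneous metric: writing $\mathfrak{h}^{\perp}=(\mathfrak{h}^{\perp}\cap\mathfrak{k}_\pm)\oplus\mathfrak{k}_\pm^{\perp}$, the horizontal lift of $Y\in\mathfrak{h}^{\perp}\cap\mathfrak{k}_\pm$ at the base point is $(\tfrac12 Y,\tfrac12 Y)$, so the induced metric is $\tfrac12 Q$ on $\mathfrak{h}^{\perp}\cap\mathfrak{k}_\pm$ and $Q$ on $\mathfrak{k}_\pm^{\perp}$. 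The shrunken subspace depends on $\mathfrak{k}_\pm$, so the two collar metrics $g_-$ and $g_+$ on $G/H$ differ whenever $K_-\neq K_+$ (for instance $G=\Sp^3$, $H=\{e\}$, and $K_\pm$ two distinct circles produce two Berger metrics shrunk along different Hopf fibres). Gluing by the identity therefore does not yield a space that is locally a smooth Riemannian product near the interface --- the two sides induce different intrinsic metrics on the gluing hypersurface --- so the local-to-global theorem cannot be invoked there and the argument breaks down.

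The paper repairs this at precisely this point: instead of identifying the two boundaries directly, it inserts a finite cylinder $C_0=[-1,1]\times G/H$ carrying a metric $dt^2\oplus g(t)$, where $g(t)$ is a smooth path of Riemannian metrics equal to $g_-$ near $t=-1$ and to $g_+$ near $t=1$. This compact manifold has some finite Ricci lower bound $K_0$ (possibly very negative), hence is an $\RCD(K_0,\dim(G/H)+1)$-space; the collars now genuinely match as Riemannian products on both gluing loci, and local-to-global applies. This is also why the theorem only asserts an $\RCD(K,N)$-structure with $K\leq 0$ rather than $K=0$. A secondary, more technical point: your assertion that a profile equal to $r$ near $0$ and to $1$ near the boundary ``satisfies the warped-product $\RCD$ criterion with some finite lower bound'' needs justification at the non-smooth matching point of the profile; the paper handles this by smoothing $\min\{1,Ar\}$ with the concave functions $f_{A,\epsilon}$, verifying Ketterer's $\mathcal{F}K$-concavity hypothesis for these, and passing to the measured Gromov--Hausdorff limit as $\epsilon\to 0$, using stability of the $\RCD$ condition.
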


These two results give a complete description of $\RCD$-spaces of cohomogeneity $1$, which is analogous to the descriptions of topological and Riemannian manifolds of cohomogeneity $1$, and Alexandrov spaces of cohomogeneity $1$ (see \cite{AlekseevskyAlekseevsky1993}, \cite{Mostert1957a}, \cite{Mostert1957b}, \cite{Neumann1967}, \cite{GalazGarciaZarei2018}, and \cite{Galaz-GarciaSearle2011}).

We also remark that the condition on $G_\pm/H$ having a metric of positive Ricci curvature is equivalent to $G_\pm/H$ being compact and with finite fundamental group \cite[Theorem 1]{Berestovskii1995}, in the case of dimensions at least $2$.

A crucial ingredient in the proof of the classification \th\ref{MC: Homeomorphism rigidity} is the Slice Theorem. This is used in general in the Grove symmetry program as a recognition tool for the local topology around giving an explicit topological model of a tubular neighborhood of a fixed orbit. 
 
All known geometric proofs of the Slice Theorem, be it in the smooth or Alexandrov setting, require a notion of ``space of directions'' and exponential map (see \cite{HarveySearle2017}). In the $\RCD$-setting, while a notion of space of directions can be defined a.e. (see \cite{HanMondino2017}), the infinitesimal-to-local relation (in the spirit of Perelman's conical neighborhood theorem) is a priori not known to hold. That is, it is not yet fully understood how the mGH-tangent space and $L^2$-tangent module (which are equivalent by work of Gigli and Pasqualetto \cite{GigliPasqualetto2022}) determine the space locally  (see however the recent work of Honda and Peng \cite{HondaPeng2023} in this direction). Instead, we work in the setting of optimal transport theory and exploiting the space's low-dimensionality to circumvent this obstacle.

\begin{mtheorem}\th\label{MT: Slice Theorem Cohomogenity one}
Let $(X,\dis,\m)$ be an essentially non-branching space (see \th\ref{D: essentiallynonbranch} ) and $G$ a compact Lie group acting on $X$ by  measure-preserving isometries such that $X^{\ast}$ is isometric to $[-1,1]$, $[0,\infty)$, $\R$, or $\Sp^1$. Then for any $x_0\in X$, there exists $\delta_0>0$ such that, for any $0<\delta<\delta_0$ we have that
\[
  S_{x_0}:= \lbrace y \in X \,|\, \dis(x_0,y)=\dis(G(x_0), y)  \rbrace\cap B_{\delta}(x_0)
\]
is a slice through $x_0$ (see \th\ref{D: Slice}).
\end{mtheorem}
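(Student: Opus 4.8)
The plan is to reduce the slice condition to a statement about nearest-point (foot-point) projection onto the orbit, and then to verify the defining properties of a slice from \th\ref{D: Slice}. First I would record that, writing $\pi\colon X\to X^\ast$ for the quotient map and $O:=G(x_0)$, the quotient-metric identity $\dis(G(x_0),y)=\dis_{X^\ast}(\pi(x_0),\pi(y))$ holds, since $G$ acts by isometries and the fibers of $\pi$ are exactly the orbits. Hence $y\in S_{x_0}$ if and only if $\dis(x_0,y)=\dis(O,y)$, i.e.\ $x_0$ is a foot point of $y$ on $O$, with $y\in B_\delta(x_0)$. The properties to check are: $(\mathrm{i})$ $S_{x_0}$ is $G_{x_0}$-invariant and contains $x_0$; $(\mathrm{ii})$ $G\cdot S_{x_0}$ is an open neighborhood of $O$; and $(\mathrm{iii})$ the natural equivariant map $G\times_{G_{x_0}}S_{x_0}\to G\cdot S_{x_0}$ is a homeomorphism.

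Property $(\mathrm{i})$ is immediate: for $g\in G_{x_0}$ one has $g\cdot x_0=x_0$, so $\dis(x_0,g\cdot y)=\dis(x_0,y)$, while $\dis(O,g\cdot y)=\dis(O,y)$ since $O$ is $G$-invariant and $g$ is an isometry; thus $g\cdot y\in S_{x_0}$ whenever $y\in S_{x_0}$. For the surjectivity half of $(\mathrm{ii})$, I would use that $O$ is compact (as $G$ is compact), so every $z$ near $O$ admits a foot point $p\in O$; transitivity of $G$ on $O$ gives $g$ with $g\cdot x_0=p$, whence $x_0$ is a foot point of $g^{-1}\cdot z$, so $g^{-1}\cdot z\in S_{x_0}$ once $z$ is close enough that $g^{-1}\cdot z\in B_\delta(x_0)$. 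Openness follows from continuity of $y\mapsto\dis(O,y)$ together with the fact that on a one-dimensional orbit space the distance to $O$ is realized by horizontal geodesics descending to the locally unique geodesic of $X^\ast$ issuing from $\pi(x_0)$.

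The hard part will be the injectivity in $(\mathrm{iii})$, which amounts to uniqueness of foot points: for $y$ close to $x_0$ the orbit $O$ should have a single nearest point. In the smooth or Alexandrov settings this is the normal exponential map being a local homeomorphism, but no such exponential map is available here, and this is precisely the obstacle that the low-dimensionality and the essentially non-branching hypothesis (\th\ref{D: essentiallynonbranch}) are meant to circumvent. My plan is as follows. Choosing $\delta_0$ small so that the geodesic in $X^\ast$ from $\pi(y)$ to $\pi(x_0)$ is unique, any two minimizing geodesics from $y$ to foot points project to this same segment and are horizontal, with $f:=\dis(O,\cdot)$ decreasing at unit speed along them. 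To upgrade this to genuine uniqueness in $X$, I would apply the $L^1$-localization associated to the $1$-Lipschitz function $f$: its transport set disintegrates into transport rays, which by essential non-branching constitute, up to an $\m$-null set, a partition of $B_\delta(x_0)$ by non-branching geodesics along which $f$ is an isometry. Each such ray meets $O$ in exactly one point, yielding uniqueness of foot points for $\m$-a.e.\ $y$, and hence an a.e.\ bijection between $S_{x_0}$ and a transversal to the orbit.

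Finally, I would promote this almost-everywhere statement to an everywhere statement and assemble the homeomorphism. The sets in question are $G$-invariant and $G$ acts transitively on $O$, so the exceptional $\m$-null set can be removed using equivariance together with continuity of the foot-point map; this gives that $g_1\cdot s_1=g_2\cdot s_2$ with $s_i\in S_{x_0}$ forces $g_2^{-1}g_1\in G_{x_0}$, i.e.\ injectivity of the twisted-product map modulo $G_{x_0}$. Continuity of this map is clear, and continuity of its inverse follows from continuity of nearest-point projection onto $O$, which is now single-valued. Together with $(\mathrm{i})$ and $(\mathrm{ii})$ this verifies that $S_{x_0}$ is a slice in the sense of \th\ref{D: Slice}. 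I expect the genuinely delicate point to be the passage from a.e.\ foot-point uniqueness to everywhere uniqueness, where the interplay between the non-branching rays, the homogeneity of the orbit, and the one-dimensionality of $X^\ast$ must be exploited carefully.
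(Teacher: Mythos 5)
Your reduction to uniqueness of foot points on the orbit is the right framing, and properties $(\mathrm{i})$ and $(\mathrm{ii})$ go through essentially as in the paper. The gap is in the key uniqueness step. First, the $L^1$-localization you invoke --- the disintegration of the transport set of $f=\dis(G(x_0),\cdot)$ into a partition by non-branching transport rays up to an $\m$-null set --- is established in the literature for essentially non-branching spaces \emph{satisfying in addition a curvature-dimension or measure-contraction condition}; the theorem here deliberately assumes no curvature bound whatsoever, so that machinery cannot be quoted off the shelf. Second, and more seriously, even granting the localization you only obtain uniqueness of foot points for $\m$-a.e.\ $y$, whereas the slice axioms \eqref{Slice-3} and \eqref{Slice-4} of \th\ref{D: Slice} are pointwise: a single point $y\in B_\delta(x_0)$ with two distinct foot points $x_0$ and $gx_0$ already gives $y\in S_{x_0}\cap gS_{x_0}$ with $g\notin G_{x_0}$ and destroys the slice. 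Your proposed repair --- ``equivariance together with continuity of the foot-point map'' --- is circular, since single-valuedness (hence continuity) of the foot-point projection is precisely what is at stake; you flag this passage as the delicate point but leave it open.

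The paper closes exactly this gap by a different mechanism. It proves (\th\ref{proposition.nonbranchinghorizontalgeodesic}) that horizontal geodesics cannot branch \emph{at all}, not merely almost everywhere: assuming two horizontal geodesics branch, one builds measures supported on small sub-segments of them, forms an optimal dynamical plan between them, and then averages that plan over $G$ with the Haar measure. Because $X^\ast$ is one-dimensional, the $G$-average of a measure carried by a single horizontal geodesic segment is the lift of an absolutely continuous measure on $X^\ast$ and is therefore absolutely continuous with respect to $\m$ by \th\ref{theorem-lift-to-G-invariant-measures}; the averaged plan remains optimal and charges a set of branching geodesics, contradicting essential non-branching for genuinely absolutely continuous marginals. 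Everywhere-uniqueness of foot points (\th\ref{proposition.uniquerealiser}) then follows by concatenating two distance minimizers into a branching horizontal geodesic. If you want to salvage the localization route you would need an independent argument upgrading a.e.\ to everywhere uniqueness; the $G$-averaging trick is where the cohomogeneity-one hypothesis does that work, and it is also why the paper's proof fails (and is expected to fail) when $\dime X^\ast\geq 2$.
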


It is worth pointing out that we do not assume the $\RCD$-condition (or any other bounded curvature condition) in this result. Thus, it does not depend on any specific curvature bound. It depends only on the dimension of the orbit space and properties of the metric measure structure. 

Once this description of the slices at each point has been established we move on to show that, in general, the slices admit a non-negatively curved $\RCD$-structure (that is, each slice is homeomorphic to an $\RCD(0,M)$-space, for the appropriate dimension bound $M$ as stated below). When considering the case when $X$  is non-collapsed, we further show that slices are homeomorphic to metric cones over homogeneous spaces with adequate $\RCD$-structures.

\begin{mtheorem}\th\label{MT: Geometry of the slice}
Let $(X,\dis,\Hauss^N)$ be a non-collapsed $\RCD(K,N)$-space with $N>2$. Assume that $G$  is a compact Lie group acting effectively by measure preserving isometries on $X$, such that $(X^\ast,\dis^\ast,(\Hauss^N)^\ast)$ has essential dimension equal to $1$. Then the following hold:
\begin{enumerate}[(i)]
    \item For any $x_0\in X$, the slice $S_{x_0}$ admits an $\RCD(0,N-k)$ structure, where $k= \dim(G(x_0))$.
    \item  Moreover, $S_{x_0}$ is homeomorphic to a metric cone over a homogeneous smooth Riemannian manifold with Ricci curvature greater or equal to $N-k-2$. 
\end{enumerate}
\end{mtheorem}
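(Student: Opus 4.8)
The plan is to obtain both assertions by comparing the slice $S_{x_0}$ with the measured tangent cone of $X$ at $x_0$: the tangent cone is a metric measure cone which splits off the orbit directions, its transverse factor is an $\RCD(0,N-k)$ cone (yielding (i)), and identifying the cross-section of that factor as a smooth homogeneous space yields (ii). The one-dimensionality of $X^\ast$ is what makes the infinitesimal-to-local comparison feasible.

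First I would set up the infinitesimal picture. Write $H=G_{x_0}$ and $k=\dim G(x_0)$. Since $(X,\dis,\Hauss^N)$ is non-collapsed, every tangent cone at $x_0$ is, by the non-collapsed theory of De~Philippis and Gigli, an $\RCD(0,N)$ metric measure cone $C(\Sigma)$ with cross-section $\Sigma$ an $\RCD(N-2,N-1)$ space, and $H$ acts on it by measure-preserving isometries fixing the tip. The orbit $G(x_0)$ is a compact homogeneous smooth submanifold of dimension $k$ through $x_0$, so its tangent directions produce a $k$-flat through the tip; a cone containing a Euclidean $\R^k$ through its vertex splits it off, and by the $\RCD$ splitting theorem I would write $C(\Sigma)\cong \R^k\times C(Y)$ with $C(Y)$ an $\RCD(0,N-k)$ cone and $Y$ an $\RCD(N-k-2,N-k-1)$ space (equivalently $\Sigma\cong \Sp^{k-1}\ast Y$). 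This is where homogeneity of the orbit enters.

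The main step --- and the expected obstacle --- is to identify the \emph{actual} slice $S_{x_0}$ with the cone $C(Y)$, i.e. to pass from the infinitesimal to the local structure. Here I would use the Slice Theorem \th\ref{MT: Slice Theorem Cohomogenity one}: $S_{x_0}$ is foliated by the transverse minimizing geodesics issuing from $x_0$, which do not branch by the essentially non-branching hypothesis. Because $X^\ast$ is one-dimensional, the quotient $S_{x_0}/H$ is an interval $[0,\varepsilon)$, so for each $r>0$ the level set $\{\,\dis(x_0,\cdot)=r\,\}\cap S_{x_0}$ is a single $H$-orbit $H/L$, where $L$ is the principal isotropy group (using \th\ref{T: Principal isotropy is subgroup of singular isotropy}). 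Sweeping these orbits out along the radial geodesics yields a homeomorphism $S_{x_0}\cong C(H/L)$, and disintegrating $\Hauss^N$ along the radial geodesics via optimal transport matches the cross-section $H/L$ with the link $Y$ of the tangent cone. It is precisely this reduction of the transverse geometry to a single radial parameter that circumvents the conical-neighborhood problem flagged in the introduction, and where I expect the delicate work to lie. The identification transports the $\RCD(0,N-k)$ structure of $C(Y)$ onto $S_{x_0}$, proving (i), and shows $Y\cong H/L$.

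Finally I would upgrade $Y=H/L$ to a smooth object. It is a homogeneous $\RCD(N-k-2,N-k-1)$ space, with the compact group $H$ acting transitively by isometries; since the isometry group of an $\RCD$-space is a Lie group \cite{GuijarroSantos2019}, $H/L$ is a smooth manifold of dimension $N-k-1$, and homogeneity forces its $\RCD$ metric to coincide with a smooth $H$-invariant Riemannian metric with $\Ric\geq N-k-2$. Combined with the homeomorphism $S_{x_0}\cong C(H/L)$ this gives (ii); the consistency of the two parts is guaranteed by Ketterer's characterization of $\RCD$ cones, under which $C(H/L)$ is $\RCD(0,N-k)$ exactly because $H/L$ is $\RCD(N-k-2,N-k-1)$ (its diameter being at most $\pi$ by Bonnet--Myers when $N-k-1\ge 2$). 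The degenerate principal case $k=N-1$, where $S_{x_0}$ is a ray and $H/L$ is a point, is immediate.
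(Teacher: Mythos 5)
Your overall architecture matches the paper's: blow up at $x_0$, split the tangent cone as $\R^k\times C(Y)$ via the splitting theorem, identify $S_{x_0}$ with the transverse factor $C(Y)$, and then smooth the homogeneous link using Ketterer's cone theorem and the rigidity of homogeneous non-collapsed $\RCD$-spaces. However, the two steps where the paper has to work hardest are asserted rather than argued, and as written they contain genuine gaps.

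First, the claim that the orbit's ``tangent directions produce a $k$-flat through the tip'' presupposes that the blow-up of $G(x_0)$, with the metric restricted from $X$, is $\R^k$. The orbit is a smooth $k$-manifold, but a priori its induced metric could have a non-Euclidean (e.g.\ Carnot-type) tangent cone or the wrong metric dimension; this is precisely why the general statement \th\ref{MT: infinitesimal action non-collapsed} requires the Lipschitz and co-Lipschitz hypothesis on the action. In the cohomogeneity-one case the paper instead splits off the \emph{maximal} Euclidean factor $\R^l$, shows the limit orbit $\overline{G}(0_Y)$ lies inside it (any two vertices of a metric cone are joined by a line, and the non-splitting factor $W$ contains no lines), derives the dichotomy ``$W$ is a point or the transverse Euclidean part is trivial,'' and only at the end recovers $\dim G(x_0)=k$ by a topological-dimension count. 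You cannot take the dimension of the flat to be $k$ from the outset.

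Second, the infinitesimal-to-local identification of $S_{x_0}$ with $C(Y)$ --- the step you rightly flag as delicate --- rests entirely on the sentence ``disintegrating $\Hauss^N$ along the radial geodesics via optimal transport matches the cross-section $H/L$ with the link $Y$,'' which is not an argument. The paper's \th\ref{L: infinitesimal action cohomogeneity 1} instead shows that the limit group $\overline{K}$ induced by $K=G_{x_0}$ fixes $0_Z$, acts by cohomogeneity one on $Z$, and is transitive on distance spheres, and then matches radial geodesics of $S_{x_0}$ bijectively with radial geodesics of $Z$ via equivariant Gromov--Hausdorff convergence of orbits together with the non-branching of horizontal geodesics; only then does it conclude that both spaces are cones over homeomorphic homogeneous links. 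Relatedly, the link is homogeneous under the \emph{limit} group $\overline{K}$, not under $G_{x_0}$ itself: the paper explicitly remarks (\th\ref{R: isotropy-acts-on-F}) that it is not known whether the transitive $G_{x_0}$-action on the link is by isometries of the $\RCD$-structure, so ``homogeneity forces its $\RCD$ metric to coincide with a smooth $H$-invariant Riemannian metric'' must be phrased for $\overline{K}$, after which smoothness and the bound $\Ric\geq N-k-2$ follow from \cite{Santos2020} and \cite{Sturm2006} as you intend.
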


We point out that by the work of Palais \cite{Palais1960} the existence of slices is well understood for compact group actions by isometries. Nonetheless, in the Riemannian and Alexandrov setting, the metric structure determines a canonical choice of a slice, which inherits geometric properties of the global space. In this way, for the local topological recognition, we may apply a reduction on the dimension being considered while still considering spaces in the appropriate setting. This key point is beyond the conclusion of Palais' Slice Theorem \cite{Palais1960}, and it is the point being proven in \th\ref{MT: Slice Theorem Cohomogenity one} and \th\ref{MT: Geometry of the slice}. Thus, by \th\ref{MT: Geometry of the slice}, \th\ref{MT: Slice Theorem Cohomogenity one} gives a positive answer to \cite[Problem 1]{BIRSreport2022} for non-collapsed $\RCD$-spaces of cohomogeneity one. 

The slice in \th\ref{MT: Geometry of the slice} is the fiber of the cone bundles in \th\ref{MC: Homeomorphism rigidity}. In the non-collapsed case, the homogeneous spaces are determined by the slice.

Finally, we use our previous results to obtain a topological classification of non-collapsed $\RCD$-spaces of essential dimensions at most $4$, in the spirit of the corresponding classifications in the smooth and Alexandrov cases. In fact, we obtain that all the $\RCD$-spaces considered with these restrictions are homeomorphic to Alexandrov spaces. This solves a particular case (that of spaces admitting cohomogeneity one actions) of a conjecture by Mondino asserting that $\RCD$-spaces with essential dimension $3$ are homeomorphic to orbifolds (see Question 9 in Section 2 of \cite{BIRSreport2022}). 

\begin{mtheorem}
\th\label{MT: Low-dim-classification}
Let $G$ be a compact Lie group acting almost effectively by measure preserving isometries and cohomogeneity one on a closed non-collapsed $\RCD(K,N)$-space $(X,\dis,\Hauss^N)$ with $N\leq4$. Then $X$ is homeomorphic to an Alexandrov space. 
\end{mtheorem}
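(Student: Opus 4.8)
The plan is to run the topological classification of \th\ref{MC: Homeomorphism rigidity} in parallel with the geometric description of the slices in \th\ref{MT: Geometry of the slice}, reducing the statement to a classification of the homogeneous links in dimension at most three and then realizing the resulting group diagram by a genuine non-negatively curved Alexandrov space. First I would use that $X$ is closed, so $X^\ast$ is a compact connected one-dimensional manifold and hence homeomorphic to either $[0,1]$ or $\Sp^1$; the non-compact model spaces $[0,\infty)$ and $\R$ appearing in \th\ref{MC: Homeomorphism rigidity} are ruled out immediately.

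In the circle case \eqref{MC: homeomorphism rigidity Circle} there are no singular orbits, and $X$ is the total space of a fiber bundle over $\Sp^1$ with smooth homogeneous fiber $G/H$ and Lie structure group $N_G(H)/H$. A fiber bundle with smooth compact fiber, compact base, and Lie structure group acting smoothly carries a smooth manifold structure, so $X$ is homeomorphic to a closed smooth manifold; as every closed Riemannian manifold is an Alexandrov space, this case is complete.

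The substantial case is $X^\ast\cong[0,1]$. By \th\ref{MC: Homeomorphism rigidity}\,\eqref{MC: Homeomorphism rigidity close interval}, $X$ is equivariantly homeomorphic to the union of two cone bundles over the singular orbits $G/K_\pm$ glued along the principal orbit $G/H$, and by \th\ref{MT: Geometry of the slice} each cone fiber is a metric cone over a closed homogeneous Riemannian manifold $Z_\pm\cong K_\pm/H$ with $\dim Z_\pm = N-k_\pm-1\le N-1\le 3$ and $\Ric_{Z_\pm}\ge \dim Z_\pm-1$. The key step is to classify these links. For $\dim Z_\pm\ge 2$ the Ricci bound is strictly positive, so Bonnet--Myers forces $Z_\pm$ to be compact with finite fundamental group, whence in dimension $2$ it is a round $\Sp^2$ or $\R P^2$, and in dimension $3$ its universal cover is diffeomorphic to $\Sp^3$, so $Z_\pm$ is a spherical space form $\Sp^3/\Gamma$; the remaining links have dimension $\le 1$ and are a point, $\Sp^0$, or $\Sp^1$. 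I would then upgrade each $Z_\pm$ to an invariant metric of curvature $\ge 1$: in dimension $\le 2$ the homogeneous metrics are already round, and for one-dimensional links one takes a circle of length $\le 2\pi$; in dimension $3$ one uses that a transitive compact group action on $\Sp^3$ is, up to conjugacy, linear, so the round (bi-invariant) metric is $K_\pm$-invariant and undoes any Berger-type squashing without changing the diffeomorphism type.

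With the links normalized, the tuple $(G,H,K_-,K_+)$ is a cohomogeneity one group diagram in the Alexandrov category, the normal spaces of directions $K_\pm/H$ now carrying invariant metrics of curvature $\ge 1$. The realization theorem for cohomogeneity one Alexandrov spaces of Galaz-Garc\'ia and Searle \cite{Galaz-GarciaSearle2011} then produces a non-negatively curved Alexandrov space $Y$ with exactly this diagram; since $X$ and $Y$ have the same orbit space and the same orbit types, the uniqueness up to equivariant homeomorphism noted after \th\ref{MC: Homeomorphism rigidity} yields $X\cong Y$, proving the claim. The main obstacle I anticipate is precisely the three-dimensional link analysis: combining the classification of positively Ricci-curved homogeneous $3$-manifolds with the structure of transitive compact group actions on $\Sp^3$ to guarantee that the positive-Ricci homogeneous structure can be replaced by a $K_\pm$-invariant metric of sectional curvature $\ge 1$ in the same diffeomorphism class, which is what ultimately places $X$ in the Alexandrov category.
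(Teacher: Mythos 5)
Your proposal is correct and follows essentially the same route as the paper's proof: both reduce to the decomposition of \th\ref{MC: Homeomorphism rigidity}, use \th\ref{MT: Geometry of the slice} to identify the cone fibers as cones over closed homogeneous spaces with $\Ric\geq \dime-1$ in dimension at most $3$, classify these links (Myers plus the surface classification in dimension $2$, Hamilton's theorem in dimension $3$), and conclude by matching against the Galaz-Garc\'ia--Searle classification of cohomogeneity one Alexandrov spaces. The only organizational difference is that the paper first pins down the possible acting groups via the estimate $\dime(G)\leq k(k+1)/2$ and compares the tubular neighborhoods case by case with the Alexandrov tables, whereas you classify the links directly and invoke the realization direction of the structure theorem together with the uniqueness-from-diagram remark; both routes rest on the same two inputs and the same external classification.
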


Combining  the characterization  in \cite{Berestovskii1995} of homogeneous spaces admitting positive Ricci curvature with the cone construction in \cite{Ketterer2013}, it is easy to construct $\RCD(K,N)$-spaces of cohomogeneity one for some $K\geq 0$. This construction allows us to show that the dimension bound in \th\ref{MT: Low-dim-classification} is optimal (see \th\ref{T: Suspensions are RCD spaces of coho 1}). Namely, the following example shows that in dimension $5$ there are non-collapsed $\RCD$-spaces of cohomogeneity one which are not Alexandrov spaces of cohomogeneity one.

\begin{mtheorem}\th\label{MT: Grassmannians as examples}
The suspension $\mathrm{Susp}(\Sp^2\times\Sp^2)$ of $\Sp^2\times\Sp^2$ admits a non-collapsed $\RCD(K,5)$-structure for any $K\geq 0$ such that the suspension of the $\SO(3)\times\SO(3)$-action on $\Sp^2\times\Sp^2$ is by measure preserving isometries and by cohomogeneity one, with group diagram  $(\SO(3)\times\SO(3),\SO(2)\times\SO(2),\SO(3)\times\SO(3),\SO(3)\times\SO(3))$. Moreover, $\mathrm{Susp}(\Sp^2\times\Sp^2)$ with this action of $\SO(3)\times\SO(3)$ is not an Alexandrov space of cohomogeneity one, i.e. it does not admit a metric making it an Alexandrov space and such that the action of $\SO(3)\times\SO(3)$ is by isometries.
\end{mtheorem}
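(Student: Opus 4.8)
The plan is to treat the two assertions separately: first build the non-collapsed $\RCD(K,5)$-structure together with its cohomogeneity one action realising the prescribed group diagram, and then obstruct the existence of any invariant Alexandrov structure.

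For the construction I would start from $\Sp^2\times\Sp^2$ equipped with the product of two round metrics, each factor rescaled to radius $1/\sqrt{3}$. A round $2$-sphere of radius $1/\sqrt 3$ has constant curvature $3$, so the product has $\Ric\equiv 3$; thus $(\Sp^2\times\Sp^2,\dis,\Hauss^4)$ is a non-collapsed $\RCD(3,4)$-space, and its diameter $\pi\sqrt{2/3}$ is $<\pi$. Feeding this space into the suspension construction of \th\ref{T: Suspensions are RCD spaces of coho 1} (which rests on Ketterer's cone theorem \cite{Ketterer2013} applied to an $\RCD(3,4)=\RCD(N-1,N)$-space with $N=4$) yields a non-collapsed $\RCD(K',5)$-structure on a space homeomorphic to $\Susp(\Sp^2\times\Sp^2)$, where $K'>0$ is governed by the cone parameter. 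Letting the cone parameter grow realises every $K'>0$, and combined with the monotonicity $\RCD(K',5)\Rightarrow\RCD(K,5)$ for $K\le K'$ this produces the desired $\RCD(K,5)$-structure for every $K\ge 0$, including $K=0$.

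The action is then immediate. The product of the standard transitive $\SO(3)$-actions on each factor acts on $\Sp^2\times\Sp^2$ by isometries preserving $\Hauss^4$, transitively, with isotropy $\SO(2)\times\SO(2)$. This extends to the suspension fixing the two poles and acting on each latitude slice as before; it is isometric and preserves $\Hauss^5$. Since the slice action is transitive, the orbit space is exactly the interval parametrising the suspension, so the action is cohomogeneity one with $X^\ast\cong[0,1]$, principal isotropy $\SO(2)\times\SO(2)$, and both singular orbits equal to the fixed poles with isotropy all of $\SO(3)\times\SO(3)$; hence the associated group diagram is $(\SO(3)\times\SO(3),\SO(2)\times\SO(2),\SO(3)\times\SO(3),\SO(3)\times\SO(3))$.

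For the non-Alexandrov statement I would argue by contradiction. Suppose $X=\Susp(\Sp^2\times\Sp^2)$ admitted a $G$-invariant metric, $G=\SO(3)\times\SO(3)$, making it Alexandrov of curvature $\ge\kappa$ with the given cohomogeneity one action. Fix a pole $N$, a $G$-fixed point. Its space of directions $\Sigma_N X$ is a $4$-dimensional Alexandrov space of curvature $\ge 1$, homeomorphic to the link of $N$ in $X$, i.e. to $\Sp^2\times\Sp^2$. Since the orbit space is an interval and $N$ maps to an endpoint, $\Sigma_N X/G$ is the space of directions of that interval at its endpoint, a single point; therefore $G$ acts transitively on $\Sigma_N X$. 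A homogeneous Alexandrov space is a Riemannian homogeneous manifold (Berestovskii), so $\Sigma_N X=G/L$ with $\dim L=2$, and as $\Sp^2\times\Sp^2$ is simply connected $L$ must be connected, hence a maximal torus $\SO(2)\times\SO(2)$. Thus $\Sigma_N X=(\SO(3)\times\SO(3))/(\SO(2)\times\SO(2))$, and by Schur's lemma its isotropy representation splits into two inequivalent irreducible $2$-planes, so every $G$-invariant metric on it is a product $\lambda_1 g_0\oplus\lambda_2 g_0$ of scaled round metrics. Any such product metric contains $2$-planes of sectional curvature $0$, contradicting curvature $\ge 1$.

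The hard part will be the start of the last paragraph: justifying rigorously that the hypothetical Alexandrov structure forces $\Sigma_N X$ to be homogeneous of curvature $\ge 1$ and then upgrading it, via the rigidity of homogeneous Alexandrov spaces, to a Riemannian homogeneous manifold. The two delicate points are the identification $\Sigma_N X/G\cong\Sigma_{\bar N}(X^\ast)$ that yields transitivity, and the appeal to Berestovskii's classification; once $\Sigma_N X$ is known to be $(\SO(3)\times\SO(3))/(\SO(2)\times\SO(2))$ with an invariant Riemannian metric, the product obstruction to sectional curvature $\ge 1$ is elementary.
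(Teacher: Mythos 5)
Your proof is correct, and it splits into a half that matches the paper and a half that does not. The construction coincides with the paper's: both feed an invariant metric on $\Sp^2\times\Sp^2$ with $\Ric\geq n-1$ and $\diam\leq\pi$ into Ketterer's $(K,N)$-cone theorem via \th\ref{T: Suspensions are RCD spaces of coho 1}; the paper gets the metric from Berestovskii's existence result, while you simply write it down as a product of round spheres of radius $1/\sqrt{3}$, and you are in fact more careful than the paper in obtaining the $K=0$ case by monotonicity rather than from the non-compact $(0,n)$-cone. The obstruction argument shares the paper's skeleton — space of directions at a fixed pole, transitivity of $\SO(3)\times\SO(3)$ there, Berestovskii's rigidity upgrading the homogeneous Alexandrov space of $\curv\geq 1$ to a Riemannian homogeneous manifold with $\sec\geq 1$ — but your final contradiction is genuinely different. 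The paper invokes Hsiang--Kleiner to conclude the link would be homeomorphic to $\Sp^4$ or $\CP^2$, contradicting its homeomorphism type. You instead identify the isotropy as a maximal torus (any $2$-dimensional subalgebra of $\mathfrak{so}(3)\oplus\mathfrak{so}(3)$ is abelian, hence Cartan, and simple connectivity of the link forces the isotropy to be connected), apply Schur's lemma to see that every invariant metric on $(\SO(3)\times\SO(3))/(\SO(2)\times\SO(2))$ is a Riemannian product of round spheres, and conclude via the flat mixed $2$-planes. Your route is more elementary and self-contained for this particular homogeneous space, and needs only the homotopy type of the link rather than its homeomorphism type; the paper's is shorter given the citation and applies uniformly to any simply connected $4$-dimensional link. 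The two delicate points you flag — the identification of $\Sigma_N X/G$ with the space of directions of the orbit space, and the appeal to Berestovskii — are asserted at the same level of detail in the paper's own proof, so they do not constitute a gap relative to it.
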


We observe that the spaces in  \th\ref{MC: Homeomorphism rigidity}~\eqref{MC: homeomorphism rigidity Circle} and \eqref{MC: homeomorphism rigidity line}  admit an $\RCD(0,N)$-structure of cohomogeneity one given by  a smooth Riemannian metric on the smooth representatives of the homeomorphism types. Moreover, due to Gigli's Splitting Theorem \cite{Gigli2014}, the spaces in \th\ref{MC: Homeomorphism rigidity}~\eqref{MC: homeomorphism rigidity line} do not admit an $\RCD(K,N)$-structure with $K>0$. For spaces in \th\ref{MC: Homeomorphism rigidity}~\eqref{MC: homeomorphism rigidity Circle}, since they have infinite fundamental group, by Myers Theorem (see \cite[Theorem 3.5]{MondinoWei2019} together with \cite[Main Theorem]{Wang2024}) it follows that they also do not admit an $\RCD(K,N)$-structure with $K>0$. For spaces in \th\ref{MC: Homeomorphism rigidity}~\eqref{MC: homeomorphism rigidity ray}, we give in Theorem~\ref{MT: coho one ray with conditions is RCD 0,N} sufficient conditions to guarantee that they  also admit an $\RCD(0,N)$-structure of cohomogeneity one. Thus it natural to ask if this also holds for the remaining cases.

\begin{question}
Let $X$ be an $\RCD$-space as in \th\ref{MC: Homeomorphism rigidity}~\eqref{MC: Homeomorphism rigidity close interval} or \eqref{MC: homeomorphism rigidity ray}. Does it admit a (possibly different) $\RCD(0,N)$-structure of cohomogeneity one?
\end{question}

This question was answered positively by Grove and Ziller in \cite[Theorem B]{GroveZiller2002} when $X$ is a smooth Riemannian manifold. Moreover, in \cite[Theorem A]{GroveZiller2002}, they showed that a smooth compact Riemannian manifold with an action of cohomogeneity one admits a Riemannian metric of positive Ricci curvature if and only if it has finite fundamental group. Due to the fact that Myers Theorem holds for $\RCD$-spaces, we ask if the cardinality of the fundamental group is the unique obstruction to ``synthetic positive Ricci curvature'' in the case of compact $\RCD$-spaces of cohomogeneity one.

\begin{question}
Let $X$ be an $\RCD$-space as in \th\ref{MC: Homeomorphism rigidity}~\eqref{MC: Homeomorphism rigidity close interval}. If $X$ has finite fundamental group, does it admit an $\RCD(K,N)$-structure with $K>0$?
\end{question}

We present a simple presentation of the fundamental group of an $\RCD$-space satisfying \th\ref{MC: Homeomorphism rigidity}~\eqref{MC: Homeomorphism rigidity close interval} in \th\ref{T: fundamental group}.\\

The organization of the article is the following. In Section~\ref{SEC:PRELIMINARIES} we collect the definitions and results from the theory of non-smooth differential geometry and transformation groups that we will need in the sequel. In Section~\ref{S: Slice THeorem RCD} we prove \th\ref{MT: Slice Theorem Cohomogenity one}. We are also able to prove that the set $S_x$ is a slice without the cohomogeneity one restriction, albeit in the case that the orbit space happens to be isometric to an Alexandrov space, which is of independent interest. In Section~\ref{S: GEOMETRY-OF-THE-SLICE} we prove \th\ref{MT: Geometry of the slice} obtaining along the way a version of the orbit representation theorem. We also establish the topological rigidity result contained in \th\ref{MC: Homeomorphism rigidity}. We continue in Section~\ref{S: Gluing of RCD-spaces} proving that one can construct $\RCD$-spaces from any given cohomogeneity one group diagram, as expressed in \th\ref{thm.rcd space from group diagram}. Finally, we address the low-dimensional classification of \th\ref{MT: Low-dim-classification} in Section~\ref{S:Low-DIM-Classification}.\\

\subsection*{Acknowledgments} 
The authors wish to thank Fernando Galaz-García, Nicola Gigli, John Harvey, Alexander Lytchak, Marco Radeschi, and Masoumeh Zarei for very useful discussions and communications and for commenting on a first draft of this manuscript. We thank Elia Bruè for useful discussions about the infinitesimal action which improved Section~\ref{S: GEOMETRY-OF-THE-SLICE}, and for pointing us to the work of Pan and Wei \cite{PanWei2022}. We are also grateful to Jiayin Pan and Dimitri Navarro for useful discussions on results in Section~\ref{S: GEOMETRY-OF-THE-SLICE}. We thank Christoph B\"{o}hm for questions about Section~\ref{S: Gluing of RCD-spaces}.

\section{Preliminaries}
\label{SEC:PRELIMINARIES}

\subsection{Calculus on metric measure spaces}

Here we introduce the differential structure that we need in order to define the appropriate curvature notions that  we use later. For a more detailed exposition, the reader may consult the book by Gigli and Pasqualetto \cite{GigliPasqualetto2020}. 

By a metric measure space we  mean a triple $(X,\dis,\m)$, where $(X,\dis)$ is a complete and separable metric space, and the reference measure $\m$ is a non-negative Borel measure on $X$ which is finite on balls.

Let $f \colon X \rightarrow \R$ be a Lipschitz function. We define its \emph{local Lipschitz constant} $\Lip f \colon X \rightarrow \R$ as:
\[
\Lip f (x) := \limsup_{y \rightarrow x}\frac{|f(x)-f(y)|}{\dis(x,y)}. 
\]
We denote the space of Lipschitz functions by $\mathrm{LIP}(X)$.

\begin{definition}[Cheeger energy]
Let $(X,\dis,\m)$ be a metric measure space. Given a  function $f \in L^2(\m)$, we define its \emph{Cheeger energy} as: 
\[  
\mathrm{Ch}(f) :=\inf \left\lbrace \liminf_{n \rightarrow \infty} \frac{1}{2}\int |\Lip f_n|^2 d \m \,|\, f_n \in \mathrm{LIP}(X),  f_n \rightarrow f  \text{ in } L^2 (\m)   \right\rbrace. 
\]
\end{definition}

\begin{definition}[Sobolev space]
Let $(X,\dis,\m)$ be a metric measure space. We define the \emph{Sobolev space} $\Sob{2}(X):= \lbrace f \in L^2(\m)\,|\, \Ch(f) < \infty \rbrace$, equipped with the norm  $\|f\|^2_{\Sob{2}} := \|f \|^2_2+ 2\Ch(f)$. In the case that this norm comes from an inner product, we say that $(X,\dis,\m)$ is \emph{infinitesimally Hilbertian}.
\end{definition}

Given a function $f \in\Sob{2}(X)$, there exists a distinguished function $|\nabla f| \in L^2(\m)$ called the \emph{minimal weak upper gradient} of $f$, which satisfies $\Ch(f) = \frac{1}{2}\int |\nabla f|^2 d\m$ (see Proposition $2.2.8$ in \cite{GigliPasqualetto2020}).

\begin{rmk}
It should be noted that we can define Sobolev spaces,  equivalently, via test plans as in Chapter $2$ of \cite{GigliPasqualetto2020}. However, for our purpose we find more convenient to make use of  approximations by Lipschitz functions.  
\end{rmk}

\begin{definition}[Pointwise inner product]
Given $f,g \in \Sob{2}(X)$ we define 
\[
 \langle\nabla f,\nabla g \rangle := \frac{1}{4}\left(|\nabla (f+ g)|^2-|\nabla (f- g)|^2 \right) \in L^1(\m).
\]
\end{definition}

Let us recall the following characterization of infinitesimal Hilbertianity (see Theorem 4.3.3. in \cite{GigliPasqualetto2020}).

\begin{prop}\th\label{prop.paralellogramlaw}
The space $\Sob{2}(X)$ with the pointwise inner product is a Hilbert space if and only if the parallelogram rule 
\begin{linenomath}
 \begin{equation}\label{EQ: Paralellogram Law}
 2\left(|\nabla f|^2+|\nabla g|^2 \right) = |\nabla (f+ g)|^2+|\nabla (f- g)|^2
\end{equation}
\end{linenomath}
holds $\m$-a.e. for all $f,g\in \Sob{2}(X)$.
\end{prop}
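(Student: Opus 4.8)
The plan is to reduce the statement to the classical Jordan--von Neumann theorem and then run a convexity argument to pass from an integrated identity to a pointwise one. First, observe that the $\Sob{2}$-norm splits as $\|f\|_{\Sob{2}}^2 = \|f\|_2^2 + 2\Ch(f)$, where the $L^2$-part already comes from an inner product. By the Jordan--von Neumann theorem, $(\Sob{2}(X),\|\cdot\|_{\Sob{2}})$ is Hilbert iff its norm satisfies the parallelogram law; subtracting the $L^2$-contribution (which always satisfies it) this is equivalent to the \emph{integrated} identity
\[
 \Ch(f+g) + \Ch(f-g) = 2\Ch(f) + 2\Ch(g) \qquad \text{for all } f,g\in\Sob{2}(X),
\]
that is, to $\int\big(|\nabla(f+g)|^2 + |\nabla(f-g)|^2\big)\,d\m = \int 2\big(|\nabla f|^2+|\nabla g|^2\big)\,d\m$. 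The implication from the pointwise rule \eqref{EQ: Paralellogram Law} is then immediate: integrating it over $X$ yields the displayed identity, so the norm is induced by an inner product. The content of the proposition is therefore the converse.

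For the converse, I would fix $f,g \in \Sob{2}(X)$ and study, for $\lambda \in \R$, the pointwise quantity $e(\lambda, x) := |\nabla(f+\lambda g)|^2(x)$. Two facts drive the argument. Pointwise, $\lambda \mapsto |\nabla(f+\lambda g)|(x)$ is convex, since $f \mapsto |\nabla f|$ is a seminorm (subadditive and positively homogeneous, both $\m$-a.e. for the minimal weak upper gradient) evaluated along an affine path; hence $e(\cdot,x)$ is convex for $\m$-a.e. $x$. Globally, the integrated identity makes $\lambda \mapsto \Ch(f+\lambda g)$ a genuine quadratic polynomial (a nonnegative, $2$-homogeneous functional satisfying the parallelogram law is a quadratic form). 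Now set, for $s>0$,
\[
 D_s(x) := \frac{|\nabla(f+sg)|^2(x) + |\nabla(f-sg)|^2(x) - 2|\nabla f|^2(x)}{2s^2}.
\]
Convexity of $e(\cdot,x)$ gives $D_s(x) \ge 0$ and, via the standard monotonicity of the symmetric second difference quotient of a convex function, that $s \mapsto D_s(x)$ is nondecreasing for $\m$-a.e. $x$; the quadratic dependence computed above gives $\int_X D_s\,d\m = 2\Ch(g)$ for every $s$. A nondecreasing family with constant integral is $\m$-a.e. independent of $s$ (for two parameters, the nonnegative difference has zero integral), so running over a countable dense set of parameters, $s \mapsto D_s(x)$ is constant on $(0,\infty)$ for $\m$-a.e. $x$. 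Finally, homogeneity and subadditivity give $\big|\,s^{-1}|\nabla(f+sg)| - |\nabla g|\,\big| = \big|\,|\nabla(g + s^{-1}f)| - |\nabla g|\,\big| \le s^{-1}|\nabla f|$, so $D_s(x) \to |\nabla g|^2(x)$ as $s \to \infty$; by $s$-independence this common value equals $D_1(x)$, which is exactly \eqref{EQ: Paralellogram Law}.

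The main obstacle is precisely this upgrade from the integrated to the pointwise identity. The naive idea---bounding the pointwise defect $|\nabla(f+g)|^2 + |\nabla(f-g)|^2 - 2|\nabla f|^2 - 2|\nabla g|^2$ using only subadditivity and homogeneity---cannot work, since those formal properties do not determine the sign of the defect pointwise (one can exhibit values satisfying all of $|\nabla(f\pm g)| \le |\nabla f|+|\nabla g|$ and $2|\nabla f|,\, 2|\nabla g| \le |\nabla(f+g)|+|\nabla(f-g)|$ with defect of either sign). The essential extra input is the interplay between the pointwise convexity of $e(\cdot,x)$, coming from the seminorm property of the minimal weak upper gradient, and the global rigidity forced by the hypothesis, exploited through the monotone second-difference argument above; the only nonroutine verifications are the monotonicity of $s\mapsto D_s$ and the homogeneity limit as $s\to\infty$, both of which follow from the established calculus of minimal weak upper gradients.
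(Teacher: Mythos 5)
Your reduction to the Jordan--von Neumann theorem, and the computation that the integrated parallelogram law makes $\Ch$ a quadratic form with $\int_X D_s\,d\m=2\Ch(g)$ for every $s>0$, are both correct. (For the record, the paper does not prove this proposition at all: it is quoted from Theorem 4.3.3 of Gigli--Pasqualetto, whose argument is along the lines sketched at the end below.) The gap is exactly at the step you identify as the crux. For a convex function $\phi$ the standard monotone quantity is the \emph{first} difference quotient $s\mapsto\big(\phi(s)-\phi(0)\big)/s$; consequently the symmetric second difference $\phi(s)+\phi(-s)-2\phi(0)$ is nondecreasing after division by $s$, \emph{not} after division by $s^2$. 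With the $s^2$ normalization there is no monotonicity in either direction, and the seminorm axioms for $|\nabla\cdot|$ --- which are all you invoke --- do not exclude this. Concretely, subadditivity and homogeneity permit $|\nabla(f+\lambda g)|(x)=1+|\lambda|$ at a point (this is $\|e_1+\lambda e_2\|_{\ell^1}$ in $\R^2$), for which $D_s(x)=1+2/s$ is strictly decreasing; they equally permit $|\nabla(f+\lambda g)|(x)=\max(1,|\lambda|)$ (the $\ell^\infty$ analogue), for which $D_s(x)=\max\{0,\,1-1/s^2\}$ is strictly increasing. Since $\{D_s\}_{s>0}$ need not be a monotone family, the device ``nonnegative difference with zero integral vanishes a.e.'' is unavailable, and the passage from the constant integral $\int D_s\,d\m=2\Ch(g)$ to a.e.\ constancy of $s\mapsto D_s(x)$ fails. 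The rest of your argument (including the limit $D_s\to|\nabla g|^2$ as $s\to\infty$, which is correct) then yields nothing.

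The repair is to run your convexity argument on the first difference quotient instead. Since $\epsilon\mapsto|\nabla(f+\epsilon g)|^2(x)$ is convex for $\m$-a.e.\ $x$, the quotients $\big(|\nabla(f+\epsilon g)|^2-|\nabla f|^2\big)/(2\epsilon)$ are nonincreasing as $\epsilon\downarrow 0$ and nondecreasing as $\epsilon\uparrow 0$, so the one-sided limits $D^{\pm}(f,g)(x)$ exist with $D^{-}(f,g)\le D^{+}(f,g)$ $\m$-a.e. Monotone convergence together with the quadratic structure of $\Ch$ gives $\int D^{+}(f,g)\,d\m=\int D^{-}(f,g)\,d\m=2B(f,g)$, where $B$ is the polarized bilinear form, whence $D^{+}(f,g)=D^{-}(f,g)$ $\m$-a.e.; one then checks that the common value is a.e.\ symmetric and additive in each argument, and polarizing recovers \eqref{EQ: Paralellogram Law}. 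This is essentially the proof in Gigli--Pasqualetto.
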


\begin{definition}[Laplacian]
Let $(X,\dis,\m)$ be an infinitesimally Hilbertian space. A function $f\in\Sob{2}(X) $ is defined to be \emph{in the domain of the Laplacian}, $D(\Delta)$, if there exists some $g \in L^2(\m)$ such that:
\[
 \int hg\, dm = -\int \langle\nabla f, \nabla h\rangle d\m, \quad \forall h\in \Sob{2}(X).
\]
We denote the  function $g$ (which is unique if exists)  by $\Delta f$ and refer to it as the \emph{Laplacian of $f$}. 
\end{definition}

In the case where the underlying space is infinitesimally Hilbertian we have at our disposal the following good properties of the Laplacian and of the pointwise inner product (see Theorem 4.3.3 part $v)$,  Remark 5.2.2, and Proposition 5.2.3 in \cite{GigliPasqualetto2020}).

\begin{prop}
Let $(X,\dis,\m)$ be an infinitesimally Hilbertian space. Then we have the following properties:
\begin{itemize}
    \item $D(\Delta)$ is a vector space.
    \item  The Laplacian $\Delta$ is linear.
    \item  The operator $\Delta (\cdot)$ is closed.
    \item  The pointwise inner product  $\langle \nabla \cdot, \nabla\cdot \rangle $ is continuous.
\end{itemize}
\end{prop}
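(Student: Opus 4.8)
The plan is to reduce all four assertions to a single foundational fact: that under infinitesimal Hilbertianity the pointwise inner product $\langle\nabla f,\nabla g\rangle$ is symmetric and $\m$-a.e. bilinear, viewed as a map $\Sob{2}(X)\times\Sob{2}(X)\to L^1(\m)$. Symmetry is immediate from the defining polarization formula together with the sign-invariance $|\nabla(f-g)|=|\nabla(g-f)|$ of the minimal weak upper gradient, and the identity $\langle\nabla f,\nabla f\rangle=|\nabla f|^2$ follows by taking $g=f$. For bilinearity I would run the classical polarization argument pointwise: additivity in each slot is extracted from the parallelogram rule \eqref{EQ: Paralellogram Law} (available by \th\ref{prop.paralellogramlaw}) exactly as one derives bilinearity of an inner product from a norm satisfying the parallelogram identity; homogeneity over $\Q$ then follows from additivity together with the scaling $|\nabla(\lambda f)|=|\lambda|\,|\nabla f|$ of the minimal weak upper gradient, and real homogeneity is obtained by approximation. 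The one point requiring care is that each algebraic identity holds only outside an $\m$-null set, so I would fix the relevant identities on a countable dense set of scalars and functions and take the union of the corresponding null sets, so that all of them hold simultaneously $\m$-a.e.

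Granting bilinearity, the first two items are immediate. If $f_1,f_2\in D(\Delta)$ with Laplacians $g_1,g_2$ and $\alpha,\beta\in\R$, then for every $h\in\Sob{2}(X)$ bilinearity gives
\[
-\int\langle\nabla(\alpha f_1+\beta f_2),\nabla h\rangle\,d\m=-\alpha\int\langle\nabla f_1,\nabla h\rangle\,d\m-\beta\int\langle\nabla f_2,\nabla h\rangle\,d\m=\int(\alpha g_1+\beta g_2)\,h\,d\m,
\]
so $\alpha f_1+\beta f_2\in D(\Delta)$ with $\Delta(\alpha f_1+\beta f_2)=\alpha g_1+\beta g_2$; this proves at once that $D(\Delta)$ is a vector space and that $\Delta$ is linear. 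For the continuity item I would first record the pointwise Cauchy--Schwarz inequality $|\langle\nabla f,\nabla g\rangle|\le|\nabla f|\,|\nabla g|$ holding $\m$-a.e., which is valid because $\langle\nabla\cdot,\nabla\cdot\rangle$ is a genuine nonnegative symmetric bilinear form $\m$-a.e. Continuity of $(f,g)\mapsto\langle\nabla f,\nabla g\rangle$ from $\Sob{2}(X)\times\Sob{2}(X)$ to $L^1(\m)$ then follows by writing the difference as $\langle\nabla f_n,\nabla(g_n-g)\rangle+\langle\nabla(f_n-f),\nabla g\rangle$, applying the pointwise and then the $L^2$ Cauchy--Schwarz inequalities, and using that convergence in $\Sob{2}(X)$ forces $\||\nabla(f_n-f)|\|_{L^2}\to0$ via the reverse triangle inequality $\big||\nabla f_n|-|\nabla f|\big|\le|\nabla(f_n-f)|$.

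Finally, for closedness I would argue directly from the definition, after the continuity item is in hand. Suppose $f_n\in D(\Delta)$, $f_n\to f$ and $\Delta f_n\to\eta$ in $L^2(\m)$. Testing the defining equation for $\Delta(f_n-f_m)$ against $h=f_n-f_m$ yields the energy identity
\[
\int|\nabla(f_n-f_m)|^2\,d\m=-\int(\Delta f_n-\Delta f_m)(f_n-f_m)\,d\m\le\|\Delta f_n-\Delta f_m\|_{L^2}\,\|f_n-f_m\|_{L^2},
\]
whose right-hand side tends to $0$; combined with $f_n\to f$ in $L^2(\m)$, this shows $(f_n)$ is Cauchy in $\Sob{2}(X)$, so $f\in\Sob{2}(X)$ and $f_n\to f$ in $\Sob{2}(X)$. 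Passing to the limit in $\int(\Delta f_n)\,h\,d\m=-\int\langle\nabla f_n,\nabla h\rangle\,d\m$ for fixed $h$, using the just-established continuity of the pointwise inner product on the right and $\Delta f_n\to\eta$ on the left, gives $\int\eta\,h\,d\m=-\int\langle\nabla f,\nabla h\rangle\,d\m$ for all $h\in\Sob{2}(X)$, i.e. $f\in D(\Delta)$ and $\Delta f=\eta$. I expect the main obstacle to be the first paragraph: turning the $\m$-a.e. parallelogram identity into genuine $\m$-a.e. bilinearity of the pointwise inner product, since the polarization manipulations must be carried out at the level of $L^0(\m)$ and reconciled across uncountably many choices of functions and scalars. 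The remaining three items are then formal consequences of bilinearity together with the Hilbertian completeness of $\Sob{2}(X)$.
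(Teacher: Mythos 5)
Your argument is correct, but note that the paper does not actually prove this proposition: it is stated as a preliminary and outsourced entirely to Theorem 4.3.3(v), Remark 5.2.2 and Proposition 5.2.3 of \cite{GigliPasqualetto2020}. What you have written is essentially a reconstruction of the standard proofs from that reference: polarization of the parallelogram identity gives $\m$-a.e.\ bilinearity of $\langle\nabla\cdot,\nabla\cdot\rangle$, which yields linearity of $\Delta$ and the vector-space structure of $D(\Delta)$; the pointwise Cauchy--Schwarz inequality gives continuity; and the energy identity obtained by testing against $h=f_n-f_m$, together with completeness of $\Sob{2}(X)$, gives closedness. The only step you gloss over slightly is the pointwise Cauchy--Schwarz inequality itself, which requires knowing that $t\mapsto|\nabla(f+tg)|^2=|\nabla f|^2+2t\langle\nabla f,\nabla g\rangle+t^2|\nabla g|^2$ is nonnegative for \emph{all} $t$ outside a \emph{single} null set (fix a countable dense set of $t$ and use continuity in $L^1$ before applying the discriminant criterion); this is the same bookkeeping you already describe for bilinearity, so the proof is complete once that is said explicitly.
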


\subsection{Bakry-\'Emery condition}

Instead of giving the usual definition of an $\RCD-$space, we  present the  Bakry-\'Emery condition, originally introduced in \cite{AmbrosioGigliSavare2015} by Ambrosio, Gigli and Savaré; the reason being that this formulation enables us to prove that the gluing of cohomogeneity one $\RCD-$spaces is again an $\RCD-$space in Section~\ref{S: Gluing of RCD-spaces}. 

\begin{definition}[Bakry-\'Emery condition]\th\label{def.BE}
 Let $(X,\dis,\m)$ be a metric measure space. We  say that it satisfies the \emph{$\BE(K,N)$-condition} for some $K\in \R$, $N\geq 1$, if for all $f \in D(\Delta)$ with $\Delta f \in \Sob{2}(X)$ and for all $g\in D(\Delta)$ non-negative, bounded with $\Delta g \in L^{\infty}(\m)$ we have
\begin{equation}
\label{eq.BakryEmery}
  \frac{1}{2}\int \Delta g |\nabla f|^2d\m-\int g \langle \nabla (\Delta f),\nabla f\rangle d\m\geq K\int g|\nabla f|^2 d\m+\frac{1}{N}\int g(\Delta f)^2d\m.
\end{equation}
\end{definition}

 Satisfying the $\BE(K,N)$ condition is equivalent to the space satisfying the $\CD^\ast(K,N)$ for infinitesimally Hilbertian metric measure spaces $(X,\dis,\m)$, under the under the following assumptions:

\begin{enumerate}
 \item \label{item.Sobolev-to-Lipschitz} The \emph{Sobolev-to-Lipschitz property} is satisfied, that is,  $\forall f\in \Sob{2}(X)$ with $|\nabla f|\leq 1$ there exists a Lipschitz representative of $f$.
 \item \label{item.volumegrowth} For any $x_0 \in X$ and $c>0$ we have 
 \[
 \int \exp(-c\dis^2(x_0,x))\dis\m(x)< \infty.
 \]
\end{enumerate}
This fact was shown by Ambrosio, Gigli and Savaré for $N=\infty$ (under equivalent technical assumptions) in \cite[Theorem 1.1]{AmbrosioGigliSavare2015} and for $N<\infty$ by Erbar, Kuwada and Sturm in \cite[Theorem 7]{ErbarKuwadaSturm2015}. Note also the equivalence between the $\CD^\ast(K,N)$- and $\CD(K,N)$- conditions due to Cavalletti and Milman, \cite[Corollary 13.7]{CavallettiMilman2021} and Li \cite{Li2024}. 

Since we only consider compact spaces with a Radon measure, it is clear that condition (2) above is automatically fulfilled.

From these considerations, we point out that we do not need to make use of the optimal transport formulation of the curvature-dimension condition, for example as stated in \cite{GuijarroSantos2019}, so instead we settle on the following definition.

\begin{definition}[$\RCD$-spaces]\th\label{D: RCD-condition}
Let $K \in \R, N \geq 1$, we  say that a metric measure space $(X,\dis,\m)$ is an \emph{$\RCD (K,N)$-space} if it satisfies the  $\BE(K,N)$ condition, is infinitesimally Hilbertian, satisfies Sobolev-to-Lipschitz (\ref{item.Sobolev-to-Lipschitz}), and the volume growth condition (\ref{item.volumegrowth}).
\end{definition}

One important thing to mention is that in general the Bakry-\'Emery condition is strictly weaker than the $\RCD$-condition. In \cite{Honda2018} Honda gives examples of spaces that satisfy a $\BE(K,N)$ condition but that are not $\RCD$-spaces. The reason being that they do not satisfy the Sobolev-to-Lipschitz property. Moreover, the Sobolev-to-Lipschitz property is essential as the $\BE(K,N)$-condition alone cannot imply anything about the geometry of the space as was pointed out to us by Gigli with the following examples. Any metric space equipped with an atomic measure satisfies $\BE(K,N)$ for any $K$ and $N$ as the Cheeger energy is identically $0$. A more geometric example is the following: For a given compact smooth embedded submanifold $M$ in $\mathbb{R}^n$ one can consider the induced Riemannian metric $g$ from $\mathbb{R}^n$ (and in turn the Riemannian distance $\dis_g$) and Riemannian volume $d\mathrm{vol}_g$. Then by compactness, the metric measure space $(M, \dis_g, d\mathrm{vol}_g)$ is an $\RCD(K,N)$-space for adequate $K$ and $N$. Now, $M$ can be endowed with the restriction of the Euclidean distance as well, i.e., $\dis(x,y)= \left\| x-y\right\|$ (and the same measure $d\mathrm{vol}_g$). It is clear that this distance in general does not make $M$ a geodesic space and thus, this metric measure structure cannot satisfy the $\RCD$-condition in general. However, the Cheeger energies associated to both distances agree as can be seen directly from the definition above since the Lipschitz constants of functions almost agree for both distances since they almost coincide at small scales.   

Often, if there is no room for confusion, we simply refer to  metric measure spaces satisfying \th\ref{D: RCD-condition}  as $\RCD$-spaces without making any explicit mention of the parameters $K$ and $N$.

Denote by $\mathbf{X}$ the class of metric measure spaces $(X,\dis,\m)$ satisfying:

\begin{enumerate}[(i)]
\item $(X,\dis)$ is complete and separable.
\item $\m$ is a Borel measure (i.e. defined on the Borel $\sigma$-algebra of open sets of $X$)  with $\mathrm{supp}(\m) = X$, satisfying for all $r>0$ and $x\in X$:
\[
\m(B_r(x))\leq ce^{Ar^2},
\]
for  appropriate constants $A,c\geq 0$.
\item $(X,\dis,\m)$ is infinitesimally Hilbertian.
\end{enumerate}

Then, we have the following global-to-local and local-to-global results.

\begin{theorem}[Global-to-Local for $\RCD(K, N )$, Proposition 7.7 in \cite{AmbrosioMondinoSavare2016}]\th\label{T: Global-to-local}
Let $(X, \dis, \m) \in  \mathbf{X}$ be  an $\RCD(K, N )$-space, and let $U\subset X$ be open. If $\m(\partial U ) = 0$ and $(\bar{U} , \dis)$ is geodesic, then $(\bar{U} , \dis, \m\llcorner \bar{U} )$ is an $\RCD(K, N )$-space.
\end{theorem}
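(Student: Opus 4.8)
The plan is to verify, one at a time, the defining properties of an $\RCD(K,N)$-space for the restricted structure $(\bar U,\dis,\m\llcorner\bar U)$, exploiting that the two hypotheses $\m(\partial U)=0$ and ``$(\bar U,\dis)$ geodesic'' are exactly what is needed to transport both the metric-measure (optimal transport) information and the Sobolev-calculus information from $X$ to $\bar U$. Rather than attack the $\BE(K,N)$ inequality directly on $\bar U$ --- which would force a delicate comparison of the Neumann-type Laplacian of the restricted Dirichlet form with the ambient one --- I would instead establish the equivalent $\CD^*(K,N)$ condition together with infinitesimal Hilbertianity and the Sobolev-to-Lipschitz property, and then invoke the Erbar--Kuwada--Sturm equivalence cited above to recover $\BE(K,N)$, hence $\RCD(K,N)$ in the sense of \th\ref{D: RCD-condition}. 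The easy structural requirements of the class $\mathbf X$ come first: $(\bar U,\dis)$ is complete (closed in a complete space) and separable, the bound $\m\llcorner\bar U\le\m$ inherits the exponential volume growth, and $\supp(\m\llcorner\bar U)=\bar U$ because for $x\in\bar U$ every ball $B_r(x)$ meets the open set $U$, so $\m\llcorner\bar U(B_r(x))\ge\m(B_r(x)\cap U)>0$ as $\supp\m=X$. The remaining structural point, infinitesimal Hilbertianity, is handled together with the Sobolev calculus below.

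For the $\CD^*(K,N)$ condition, take $\mu_0,\mu_1\ll\m\llcorner\bar U$ in $\mathcal P_2(\bar U)$ and view them as absolutely continuous measures on $X$ (their densities vanish off $\bar U$, and since $\mu_i(\partial U)=0$ the entropies computed against $\m\llcorner\bar U$ and against $\m$ coincide). As $(X,\dis,\m)$ is $\RCD$, it is essentially non-branching, so by uniqueness of $W_2$-geodesics between absolutely continuous measures in essentially non-branching $\CD^*(K,N)$ spaces there is a \emph{unique} $W_2$-geodesic $(\nu_t)$ joining $\mu_0$ to $\mu_1$, and being the only one it must be the one along which the $\CD^*(K,N)$ R\'enyi-entropy inequality holds. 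On the other hand, because $(\bar U,\dis)$ is geodesic, any two of its points are joined by a curve of length $\dis(\cdot,\cdot)$ lying in $\bar U$ --- that is, by an $X$-geodesic contained in $\bar U$; a measurable selection of such geodesics over an optimal coupling of $\mu_0,\mu_1$ produces a dynamical optimal plan $\Pi$ on $\Geo(X)$ concentrated on curves in $\bar U$, whose marginals $(e_t)_\#\Pi$ form a $W_2$-geodesic supported in $\bar U$. By the uniqueness just invoked this geodesic equals $(\nu_t)$, so the good geodesic stays inside $\bar U$, and the entropy inequality --- read against $\m\llcorner\bar U$, which gives the same densities and the same distortion coefficients --- yields $\CD^*(K,N)$ for the restricted space.

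It remains to compare the Sobolev calculus, which delivers both infinitesimal Hilbertianity and Sobolev-to-Lipschitz. The key lemma I would prove is that, since $U$ is open and $\m(\partial U)=0$, for $f$ ranging over a class that exhausts $W^{1,2}(\bar U,\dis,\m\llcorner\bar U)$ one has $\Ch^{\bar U}(f)=\tfrac12\int_{\bar U}|\nabla f|^2\,d\m$, the minimal weak upper gradient of the restricted space agreeing $\m$-a.e.\ on $U$ with the ambient one; geodesicity of $\bar U$ guarantees that no length is lost at $\partial U$ and that Lipschitz approximations adapted to $\bar U$ are admissible. Granting this, quadraticity of $\Ch^{\bar U}$ follows from that of $\Ch$ on $X$, so \th\ref{prop.paralellogramlaw} gives infinitesimal Hilbertianity; and if $|\nabla f|\le1$ $\m$-a.e.\ on $\bar U$, the length-space structure of the geodesic space $\bar U$ upgrades $f$ to a $1$-Lipschitz representative, giving Sobolev-to-Lipschitz. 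Combined with the (automatic) volume-growth condition, the Erbar--Kuwada--Sturm equivalence then turns $\CD^*(K,N)$ into $\BE(K,N)$, which is the conclusion.

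I expect the main obstacle to be precisely this Sobolev-calculus comparison: controlling possible boundary contributions to the Cheeger energy and showing that restricting the measure to $\bar U$ neither creates spurious Sobolev functions nor enlarges the minimal weak upper gradient near $\partial U$. This is exactly where the two hypotheses must be used \emph{in tandem} --- $\m(\partial U)=0$ to annihilate boundary mass in every integral, and geodesicity of $(\bar U,\dis)$ to forbid shortcuts and loss of length along curves. The optimal-transport step, by contrast, is comparatively clean once essential non-branching and the uniqueness of $W_2$-geodesics are in hand.
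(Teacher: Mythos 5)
This statement is not proved in the paper at all: it is imported verbatim as Proposition~7.7 of \cite{AmbrosioMondinoSavare2016}, so there is no internal proof to compare your argument against. Judged on its own terms, the optimal-transport half of your sketch is sound and is essentially the standard argument: essential non-branching of the ambient $\RCD$ space gives uniqueness of the $W_2$-geodesic between absolutely continuous marginals, the geodesicity of $(\bar U,\dis)$ together with a measurable selection produces a competing $W_2$-geodesic concentrated on $\bar U$, and uniqueness forces the entropy-convex geodesic to stay in $\bar U$; the hypothesis $\m(\partial U)=0$ then makes the densities and entropies computed against $\m$ and against $\m\llcorner\bar U$ agree, yielding $\CD^*(K,N)$ for the restriction.

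The genuine gap is the Sobolev-calculus comparison, which you name as ``the key lemma I would prove'' and then do not prove. This is not a routine verification: the minimal weak upper gradient on $(\bar U,\dis,\m\llcorner\bar U)$ is defined by testing against a strictly smaller family of curves (or plans) than on $X$, so one only gets $|\nabla f|_{\bar U}\le |\nabla \tilde f|_X$ a.e.\ on $\bar U$ for an extension $\tilde f$ essentially for free; the reverse inequality on $U$, which is what you need to transfer the parallelogram identity (hence infinitesimal Hilbertianity) and the Sobolev-to-Lipschitz property, requires a localization argument for test plans or a careful relaxation argument for Lipschitz functions on $\bar U$ versus their McShane extensions. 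Since two of the four defining properties of \th\ref{D: RCD-condition} for the restricted space (infinitesimal Hilbertianity and Sobolev-to-Lipschitz) rest entirely on this unproved lemma, the proposal as written establishes only the $\CD^*$ half of the statement; the remaining half is exactly the content that the cited reference supplies and that your sketch defers.
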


\begin{theorem}[Local-to-Global for $\RCD(K, N )$, Theorem 7.8 in \cite{AmbrosioMondinoSavare2016}]\th\label{T:local-to-global}
Let $(X, \dis, \m) \in \mathbf{X}$ be a length space and
assume that there exists a covering $\{U_i\}_{i\in I}$ of $X$ by non-empty open subsets such that $\m(U_i) <
\infty$ if $K < 0$, and $(\overline{U}_i, \dis, \m \llcorner
\overline{U}_i)\in X$ satisfy $\RCD(K, N )$.
Then $(X, \dis, \m)$ is a $\RCD(K, N )$ space.
\end{theorem}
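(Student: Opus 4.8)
The plan is to decompose the $\RCD(K,N)$ condition into its two essentially independent ingredients — infinitesimal Hilbertianity and the curvature-dimension bound — and to globalize each separately, exploiting that both can be detected from local data on the charts $\overline{U_i}$.

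\textbf{Infinitesimal Hilbertianity.} First I would show that $X$ is infinitesimally Hilbertian. The decisive point is that the minimal weak upper gradient is a local object and is compatible with restriction to open sets: for $f\in\Sob{2}(X)$ the function $|\nabla f|$ computed in $(X,\dis,\m)$ coincides $\m$-a.e.\ on $U_i$ with the minimal weak upper gradient of $f|_{\overline{U_i}}$ computed in $(\overline{U_i},\dis,\m\llcorner\overline{U_i})$; here one uses that each $\overline{U_i}$ is geodesic (being $\RCD$) and that $U_i$ is open, so the Cheeger energy localizes. By \th\ref{prop.paralellogramlaw}, infinitesimal Hilbertianity is equivalent to the parallelogram identity \eqref{EQ: Paralellogram Law} holding $\m$-a.e. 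Since each $\overline{U_i}$ is infinitesimally Hilbertian, this identity holds $\m$-a.e.\ on each $U_i$; as $\{U_i\}_{i\in I}$ covers $X$, it holds $\m$-a.e.\ on $X$, whence $X$ is infinitesimally Hilbertian.

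\textbf{Curvature-dimension bound.} This is the substantial part. Using the equivalences recalled after \th\ref{def.BE}, on each chart the local $\BE(K,N)$ condition, together with infinitesimal Hilbertianity and the Sobolev-to-Lipschitz property, is equivalent to $\CD^\ast(K,N)$ for $(\overline{U_i},\dis,\m\llcorner\overline{U_i})$, so each chart satisfies a local reduced curvature-dimension condition. I would then invoke the local-to-global theorem for $\CD^\ast(K,N)$ of Bacher and Sturm, which applies because $\RCD$-spaces are essentially non-branching. Concretely: given compactly supported $\mu_0,\mu_1$, one covers a $W_2$-geodesic joining them by finitely many short subgeodesics, each of whose interpolating measures is supported inside a single chart; one verifies the R\'enyi-entropy convexity inequality on each piece from the local condition, and then concatenates these estimates along one globally defined $W_2$-geodesic. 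Finally, the volume-growth condition is automatic since $X\in\mathbf{X}$, and the Sobolev-to-Lipschitz property globalizes because a global gradient bound $|\nabla f|\le 1$ yields a $1$-Lipschitz representative by integrating the local bound along near-geodesics in the length space $X$; together these give that $(X,\dis,\m)$ is $\RCD(K,N)$.

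\textbf{Main obstacle.} The crux is the patching step for the curvature-dimension bound. Two points require care: (i) the charts carry the \emph{ambient} restricted distance $\dis$, so one must guarantee that the relevant optimal transport interpolations stay inside a single chart, which is ensured by refining the covering and passing to sufficiently short transports — here the length-space hypothesis is essential; and (ii) the chart-by-chart entropy-convexity estimates must be reassembled along a \emph{single} interpolating geodesic, which forces one to rule out branching of the $W_2$-geodesics, precisely what essential non-branching (available for $\RCD$-spaces) provides. Alternatively — and this is the route actually taken by Ambrosio, Mondino and Savar\'e — one can avoid optimal transport and instead show that the local $\BE(K,N)$ inequalities self-improve to local gradient-contraction estimates for the heat semigroup and then upgrade these to a global Bochner inequality; in that approach the obstacle migrates to controlling the inherently nonlocal heat flow through the locality of the underlying Dirichlet form.
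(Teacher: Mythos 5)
The paper does not prove this statement at all: it is quoted verbatim as Theorem 7.8 of \cite{AmbrosioMondinoSavare2016}, so there is no internal proof to compare against. What can be compared is your route versus the cited one, and versus the framework the paper deliberately sets up. The proof in \cite{AmbrosioMondinoSavare2016} is the one you relegate to your final ``alternatively'' sentence: one works entirely with the $\BE(K,N)$ inequality \eqref{eq.BakryEmery}, exploits the locality of the carr\'e du champ and of the Laplacian to pass from the charts $\overline{U_i}$ to $X$ (via cut-off test functions subordinate to the cover), globalizes the Sobolev-to-Lipschitz property using the length-space hypothesis, and then invokes the equivalence $\BE(K,N)\Leftrightarrow\RCD^\ast(K,N)$. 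This is also why the paper adopts $\BE(K,N)$ as its \emph{definition} of $\RCD$ in \th\ref{D: RCD-condition}; the locality of that formulation is exactly what makes the local-to-global statement (and the gluing construction in Section~\ref{S: Gluing of RCD-spaces}) tractable. Your treatment of infinitesimal Hilbertianity via locality of minimal weak upper gradients and the a.e.\ parallelogram identity of \th\ref{prop.paralellogramlaw} is correct and is the standard argument.

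Your primary route for the curvature-dimension bound, however, has a genuine gap. You invoke the Bacher--Sturm local-to-global theorem for $\CD^\ast(K,N)$ ``because $\RCD$-spaces are essentially non-branching,'' but essential non-branching is a property of $W_2$-geodesics in the \emph{global} space $(X,\dis,\m)$, and at that stage of the argument you only know that each chart $(\overline{U_i},\dis,\m\llcorner\overline{U_i})$ is $\RCD$, hence essentially non-branching as a space in its own right. Concluding that $X$ itself is essentially non-branching (which is also what the Cavalletti--Milman passage from $\CD^\ast$ to $\CD$ requires) needs a separate localization argument: one must show that an optimal dynamical plan in $X$ charged on branching geodesics can be restricted, near a branch point, to an optimal plan with absolutely continuous marginals supported in a single chart, contradicting the chart's essential non-branching. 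This is plausible but not automatic, and without it the appeal to Bacher--Sturm is circular. A second, smaller point you gesture at but do not resolve is that the entropy-convexity estimates on the charts are formulated for the intrinsic transport problem of $(\overline{U_i},\dis)$; identifying these with restricted transports in $X$ uses that each $\overline{U_i}$ is geodesically convex in $X$, which does follow from the hypothesis that $(\overline{U_i},\dis)$ is geodesic, but should be said. These issues evaporate in the $\BE$ route, which is why it is the one taken in the cited reference.
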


Let us describe some features of $\RCD$-spaces that will be useful to us. Their proofs can be found for example in \cite{ErbarKuwadaSturm2015}. We remark that the tensorization property in item (2) below was originally shown to hold for the $\BE$-condition in \cite{AmbrosioGigliSavare2015}.

\begin{prop}\th\label{prop.productsrescallings}
Let $(X,\dis_X, \mathfrak{m}_X),(Y,\dis_Y, \mathfrak{m}_Y)$ be $\RCD$-spaces that satisfy  the curvature-dimension bounds $(K_X,N_X), (K_Y,N_Y)$ respectively then:
\begin{enumerate}
    \item $(X,\dis_X, \mathfrak{m}_X)$ is also an $\RCD(L,M)$ for all $L\leq K_X$, $M\geq N_X$.
    \item The product $(X\times Y, \dis_{X\times Y}, \mathfrak{m}_X\otimes \mathfrak{m}_Y)$ is an $\RCD(\min\{K_X,K_Y\},N_X+N_Y)-$space.
    \item If $\alpha, \beta >0$ then the metric measure space $(X,\alpha \dis_X, \beta \mathfrak{m}_X)$ is an $\RCD(\alpha^{-2}K_X,N_X)$.
\end{enumerate}
\end{prop}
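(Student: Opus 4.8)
The plan is to check, item by item, that each construction preserves all four ingredients of \th\ref{D: RCD-condition}: the $\BE(K,N)$ inequality \eqref{eq.BakryEmery}, infinitesimal Hilbertianity, the Sobolev-to-Lipschitz property, and the volume growth bound. The last three are essentially insensitive to the numerical values of $K$ and $N$ (and, under rescaling, transform in a transparent way), so the real content in each item is the behaviour of \eqref{eq.BakryEmery}. Items (1) and (3) are then bookkeeping, while item (2) is the genuine theorem and the main obstacle.

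For item (1) I would simply inspect the right-hand side of \eqref{eq.BakryEmery}. Since the admissible test function $g$ is non-negative, both $\int g|\nabla f|^2\,d\m$ and $\int g(\Delta f)^2\,d\m$ are non-negative; hence for $L\leq K_X$ and $M\geq N_X$ we have $L\int g|\nabla f|^2\,d\m\leq K_X\int g|\nabla f|^2\,d\m$ and $\tfrac1M\int g(\Delta f)^2\,d\m\leq\tfrac1{N_X}\int g(\Delta f)^2\,d\m$. Thus the $\BE(K_X,N_X)$ inequality implies the $\BE(L,M)$ inequality on the very same class of test functions, and the remaining three defining properties are unchanged because the metric measure space itself is unchanged. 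For item (3) the underlying set and the class of Lipschitz functions are unchanged, so infinitesimal Hilbertianity and Sobolev-to-Lipschitz persist, and the volume growth bound for $(X,\alpha\dis_X,\beta\m_X)$ follows from that for $(X,\dis_X,\m_X)$ after absorbing $\alpha$ and $\beta$ into the constants. The one computation to run is the scaling of the objects in \eqref{eq.BakryEmery}: one has $|\nabla f|^2_{\alpha\dis}=\alpha^{-2}|\nabla f|^2_{\dis}$ and the pointwise inner product scales by $\alpha^{-2}$, while scaling the reference measure by the constant $\beta$ leaves the Laplacian unaffected, so that overall $\Delta_{\alpha\dis,\beta\m}=\alpha^{-2}\Delta_{\dis,\m}$. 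Substituting these and factoring out the common power $\alpha^{-4}\beta$, I expect the inequality to collapse to the original one with $K_X$ replaced by $\alpha^{2}K'$ and $N_X$ by $N'$; matching coefficients forces $K'=\alpha^{-2}K_X$ and $N'=N_X$, as claimed.

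The substantive item is the tensorization in (2). I would first invoke the tensorization of the Cheeger energy and the Sobolev space: for $\m_X$-a.e.\ $x$ and $\m_Y$-a.e.\ $y$ the slices $h(\cdot,y)$ and $h(x,\cdot)$ are Sobolev, and the minimal weak upper gradient splits as $|\nabla h|^2(x,y)=|\nabla_X h(\cdot,y)|^2(x)+|\nabla_Y h(x,\cdot)|^2(y)$ for a.e.\ $(x,y)$. Applying \th\ref{prop.paralellogramlaw} factorwise then yields infinitesimal Hilbertianity of the product. The decisive structural fact is that the generator of the product splits as $\Delta_{X\times Y}=\Delta_X\oplus\Delta_Y$ on a core of product functions, so that the two factors' Bakry-Émery integrands add. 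Combining the additivity of the carré du champ with the two curvature bounds gives $\min\{K_X,K_Y\}$ on the gradient term, while the dimensional term is handled by the Cauchy-Schwarz inequality
\[
\frac{(\Delta_X f+\Delta_Y f)^2}{N_X+N_Y}\;\leq\;\frac{(\Delta_X f)^2}{N_X}+\frac{(\Delta_Y f)^2}{N_Y},
\]
which is exactly what produces the dimension $N_X+N_Y$. I expect the delicate points to be the density/approximation argument needed to pass from product test functions to the full admissible class in \th\ref{def.BE}, and the careful verification that the splitting of $\Delta$ holds on a sufficiently rich core; the cleanest route is to appeal directly to the tensorization of the $\BE$-condition established in \cite{AmbrosioGigliSavare2015} and \cite{ErbarKuwadaSturm2015}, checking only that the hypotheses transfer. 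Finally, Sobolev-to-Lipschitz for the product descends from that of the factors, and the volume growth of $\m_X\otimes\m_Y$ follows from the factor bounds by Fubini.
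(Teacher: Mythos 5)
The paper does not actually prove this proposition: it is quoted from the literature, with the proofs attributed to \cite{ErbarKuwadaSturm2015} and, for the tensorization in item (2), to \cite{AmbrosioGigliSavare2015}. Your sketch is correct and follows exactly the route those references take starting from the paper's $\BE$-based definition: items (1) and (3) reduce to the monotonicity and scaling checks you describe ($|\nabla f|^2$ and $\Delta$ both picking up a factor $\alpha^{-2}$, the constant $\beta$ cancelling), and for item (2) you correctly identify the splitting of the carr\'e du champ together with the inequality $(a+b)^2/(N_X+N_Y)\le a^2/N_X+b^2/N_Y$ as the core, while rightly deferring the genuinely delicate points --- the density of product test functions and the tensorization of the Sobolev-to-Lipschitz property, which is itself a nontrivial part of \cite{AmbrosioGigliSavare2015} --- to the same sources the paper cites.
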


One important feature of the class of $\RCD(K,N)$-spaces is that it is compact with respect to measured Gromov-Hausdorff convergence (see \cite{AmbrosioGigliSavare2014}, \cite{Gigli2010}, \cite{Gigli2015}, \cite{GigliMondinoSavare2015}, \cite{LottVillani2009}, \cite{SturmI2006}, \cite{SturmII2006}).

\begin{definition}\th\label{def.epsilon-aprox}
Let $(X,\dis_X,x), (Y,\dis_Y,y)$ be complete and separable pointed metric spa\-ces. Given $\epsilon >0$, we  say that a map $f_\epsilon \colon B_{1/\epsilon}(x)\rightarrow B_{1/\epsilon}(y)$ is an \emph{$\epsilon$-Gromov-Hausdorff approximation} if:
\begin{enumerate}
    \item $f_\epsilon (x)= y$.
    \item For all  $u,v \in B_{1/\epsilon}(x)$, $|\dis_X(u,v)-\dis_Y(f_\epsilon (u),f_\epsilon (v))|<\epsilon$.
    \item For all $y \in B_{1/\epsilon}(y)$, there exists some $x\in B_{1/\epsilon}(x)$ such that $\dis_Y(f_\epsilon (x),y)<\epsilon$.
\end{enumerate}
\end{definition}

\begin{definition}
Let $\lbrace (X_n,\dis_n,x_n) \rbrace_{n \in \N}$ be a sequence of complete separable pointed metric spaces. We say that \emph{the sequence converges in the pointed Gromov-Hausdorff sense} to a complete separable pointed metric space $(Y,\dis_Y,y)$ if there exists a sequence $\{\epsilon_n\}_{n\in \N}\subset \R$ such that $\epsilon_n \rightarrow 0$ as $n \rightarrow \infty$, and $\epsilon_n$-Gromov-Hausdorff  approximations from $B_{1/\epsilon_n}(x)\subset (X_n,\dis_n,x_n)$ to $B_{1/\varepsilon_n}(y)\subset (Y,\dis_Y,y)$.
\end{definition}

If in addition our spaces are equipped with  non-negative Borel reference measures which are finite on balls we define:

\begin{definition}
A sequence $\lbrace (X_n,\dis_n,\m_n,x_n) \rbrace_{n \in \N}$ of complete pointed metric measure spaces \emph{converges in the pointed measured Gromov-Hausdorff} sense to a complete pointed metric measure space  $(Y,\dis_Y, \m_Y,y)$ if it converges in the pointed Gromov-Hausdorff sense  with the extra assumption that the $\epsilon_n$-Gromov-Hausdorff approximations are measurable and satisfy 
\[
f_{\epsilon\#}\m_{n} \rightharpoonup \m_Y
\]
where the topology is the $\text{weak}^{\star}$-topology with respect to continuous functions with bounded support on $Y$.

We abbreviate this by:
\[
(X_n,\dis_n,\m_n,x_n)\xrightarrow{pmGH} (Y,\dis_Y,\m_Y,y).
\]
\end{definition}

Let $(X,\dis,\m)$ be an $\RCD$-space, $x \in X$, and some $r \in (0,1)$. Consider now the pointed, rescaled, and normalized metric measure space  $(X,r^{-1}\dis,\m^x_r, x)$, where
\[
\m^x_r := \left(\int_{B_r(x)}1-\frac{\dis(x,y)}{r}d\m(y) \right)^{-1}\m.
\]
Using this notation we define:

\begin{definition}[Tangent space]
Let $(X,\dis,\m)$ be an $\RCD(K,N)$-space for $K\in \R$ and $N\in [1,\infty)$. Given a point $x_0\in X$, we say that a pointed metric measure space $(Y,\dis_Y,\m_Y,y)$ is \emph{a tangent space of $X$ at $x_0$} if there exists a sequence $\lbrace r_i \rbrace_{i\in \N}\subset (0,1)$  such that $r_i \rightarrow 0$, and the sequence 
\[
(X,r^{-1}_i \dis,\m^{x_0}_{r_i},x_0)\xrightarrow{pmGH} (Y,\dis_Y,\m_Y,y). 
\]
We denote the set of all tangents of $X$ at $x_0$ by $\Tan(X,\dis, \m,x_0)$.
\end{definition}

\begin{rmk}\th\label{R: tangent of RCD-space is RCD(0,N)-space}
We recall that if a space $(X,\dis,\m)$ is an $\RCD(K,N)$-space, then for $r,\lambda>0$ the space $(X,\dis/r,\lambda \m)$ is an $\RCD(r^{2}K,N)$-space. Since the $\RCD$-condition is stable under Gromov-Hausdorff limits, we have that tangents of $\RCD(K,N)$-spaces are $\RCD(0,N)$-spaces.
\end{rmk}

\begin{rmk}
Given an $\RCD$-space $(X,\dis,\m)$ and $x_0\in X$, the set $\Tan(X,\dis,\m,x_0)$ is non-empty (see Proposition $2.2$ in \cite{MondinoNaber2019}), but it may contain several spaces   which may not even be metric cones (see Examples $8.41,8.80,8.95$ in \cite{CheegerColding1997}). 
\end{rmk}

Given $m \in \mathbb{N}\cap [1,N]$, let $\mathcal{R}_m$ denote the set of points $x \in X$ such that $\Tan(X,\dis, \m,x) = \lbrace (\R^m,\dis_\E,\mathcal{L}^m, 0) \rbrace. $
Using Regular Lagrangian flows, Bru\`e and Semola in \cite{BrueSemola2020} proved that there is precisely a unique $m$ such that the set $\mathcal{R}_m$  has full measure.

\begin{theorem}[Essential dimension]
Let $(X,\dis,\m)$ be an $\RCD(K,N)-$space with $K \in \R$ and $N \in [1,\infty)$. Then there exists a unique $m \in \mathbb{N}\cap [1,N]$ such that $\m(X-\mathcal{R}_m)=0$.  We define such $m$ to be the \emph{essential dimension} of $(X,\dis,\m)$, and we denote it by $\dimess (X)$. The set $\mathcal{R}_m$ is the \emph{set of regular points of $X$}.
\end{theorem}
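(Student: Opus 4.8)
The plan is to deduce the statement from three inputs: the tangential stratification, the rectifiability of each regular set, and a regularity theory for Lagrangian flows that lets one transport dimensional information along trajectories. First I would recall from Mondino--Naber \cite{MondinoNaber2019} that $\m$-almost every point is $k$-regular for some $k \in \N \cap [1,N]$, so that $\m\big(X \setminus \bigcup_{k=1}^{\lfloor N\rfloor}\mathcal{R}_k\big)=0$; this already yields existence of \emph{some} $m$ with $\m(\mathcal{R}_m)>0$. The entire content is therefore uniqueness, i.e.\ that at most one index $k$ can satisfy $\m(\mathcal{R}_k)>0$; once this is known, that index automatically has full measure and the theorem follows. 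I would also record the rectifiability theorem (Mondino--Naber together with \cite{GigliPasqualetto2022}): each $\mathcal{R}_k$ is $k$-rectifiable and $\m \llcorner \mathcal{R}_k$ is mutually absolutely continuous with $\Hauss^k \llcorner \mathcal{R}_k$. In particular a subset carrying positive $\m$-measure inside $\mathcal{R}_k$ has positive and $\sigma$-finite $\Hauss^k$-measure, while its $\Hauss^j$-measure vanishes for $j>k$.

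Next I would set up the transport machinery. For a vector field $b$ of Sobolev regularity with bounded divergence and bounded support, the Ambrosio--Trevisan theory produces a unique Regular Lagrangian Flow $X_t$ with bounded compression, $(X_t)_\#\m \le C\,\m$, together with an analogous bound for the time-reversed flow. The technical heart, following \cite{BrueSemola2020}, is a \emph{quantitative Lipschitz regularity} estimate for $X_t$: after regularizing $b$ through the heat semigroup and running the Bochner inequality supplied by the $\BE(K,N)$-condition \cite{AmbrosioGigliSavare2015, ErbarKuwadaSturm2015}, one controls the exponential rate at which two nearby trajectories separate by a maximal function of $|\nabla b|$. This gives, for every $\varepsilon>0$, a set of measure at least $(1-\varepsilon)\,\m(\cdot)$ on which $X_t$ is bi-Lipschitz onto its image, with constants independent of the fine geometry.

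With these ingredients the conclusion follows by a dimensional invariance argument. Since bi-Lipschitz maps send $k$-rectifiable sets to $k$-rectifiable sets and preserve positivity and finiteness of $\Hauss^k$-measure, the essentially bi-Lipschitz-on-large-sets maps $X_t$ cannot carry a positive-$\m$-measure portion of $\mathcal{R}_k$ into $\bigcup_{j\neq k}\mathcal{R}_j$: an image inside $\mathcal{R}_j$ with $j>k$ would have $\m$-measure zero because $\Hauss^j$ of a $k$-rectifiable set vanishes, while an image inside $\mathcal{R}_j$ with $j<k$ would contradict $\Hauss^k(\mathcal{R}_j)=0$. Combined with bounded compression in both time directions, this shows that, up to $\m$-null sets, each $\mathcal{R}_k$ is invariant under all such flows. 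I would then choose the flows to be gradient flows of heat-regularized distance-type or harmonic functions, which move mass throughout $\supp(\m)=X$; invariance of each $\mathcal{R}_k$ under a sufficiently rich family of flows, together with the connectedness of $X$, forces the dimension function $x \mapsto \dim \Tan(X,\dis,\m,x)$ to be $\m$-a.e.\ constant, which is exactly the desired uniqueness.

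The step I expect to be the main obstacle is the quantitative Lipschitz regularity of the Lagrangian flow. Unlike the Euclidean Crippa--De Lellis estimate, here one cannot differentiate the distance between trajectories naively; the correct replacement is to test the evolution of $\dis(X_t(x),X_t(y))$ against the Bochner inequality, which is precisely where the lower Ricci bound enters and which requires the heat-semigroup regularization of $b$ to have integrable symmetrized gradient. Making this estimate scale-invariant and uniform enough to feed into the rectifiability argument is the crux of the matter.
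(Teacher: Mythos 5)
The paper does not prove this statement itself---it records it as the constancy-of-dimension theorem of Bru\`e and Semola \cite{BrueSemola2020}, established precisely via Regular Lagrangian flows---and your outline (existence of some $m$ from the Mondino--Naber stratification, uniqueness from the rectifiability of each $\mathcal{R}_k$ together with quantitative Lipschitz bounds on the flows derived from the Bochner inequality, then invariance of the strata under a rich family of flows plus connectedness) is a faithful sketch of exactly that argument. So your approach is essentially the same as the one the paper relies on, and the step you single out as the crux, namely the scale-invariant Lipschitz regularity of the Lagrangian flow, is indeed the main technical contribution of that work.
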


\subsection{Non-collapsed spaces}

Here we distinguish a particular class of $\RCD$-spaces: the so called non-collapsed spaces. These spaces were studied by De Philippis and Gigli in \cite{dePhilippisGigli2018}, and were further characterized in terms of properties of the $N$-dimensional Bishop-Gromov density by Brena, Gigli, Honda, and Zhu  in \cite{BrenaGigliHondaZhu2023}. 

\begin{definition}[Non-collapsed $\RCD$-space]
Let $(X,\dis,\m)$ be an $\RCD(K,N)$-space. We say that it is \emph{non-collapsed} if $\m= \Hauss^N$.    
\end{definition}

Thanks to the structure of $\RCD$-spaces, we can check that for non-collapsed spaces we  have  necessarily that $N \in \N$ (see Theorem $1.12$ in  \cite{dePhilippisGigli2018}). Non-collapsed $\RCD$-spaces also enjoy good structural properties which are stronger than those of more general $\RCD$-spaces. For example, we have the following result about the tangent spaces (see Proposition $2.8$ in \cite{dePhilippisGigli2018}).

\begin{prop}
Let $(X,\dis,\Hauss^N)$ be a non-collapsed $\RCD(K,N)$-space. Then the tangent spaces at every point are metric cones.     
\end{prop}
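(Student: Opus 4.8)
The plan is to combine the Bishop--Gromov volume monotonicity available in the non-collapsed setting with the non-smooth ``volume cone implies metric cone'' theorem for $\RCD(0,N)$-spaces. Throughout, let $\omega_N$ denote the volume of the Euclidean unit ball and let $v_{K,N}(r)$ denote the volume of a ball of radius $r$ in the $N$-dimensional model space.

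First I would recall that for a non-collapsed $\RCD(K,N)$-space the density ratio $r \mapsto \Hauss^N(B_r(x))/v_{K,N}(r)$ is monotone non-increasing. Since $v_{K,N}(r)/(\omega_N r^N)\to 1$ as $r\to 0^+$, this guarantees that the $N$-density
\[
\vartheta(x) := \lim_{r \to 0^+} \frac{\Hauss^N(B_r(x))}{\omega_N r^N}
\]
exists and is finite. Next, fix $x\in X$ and take any $(Y,\dis_Y,\m_Y,y)\in\Tan(X,\dis,\Hauss^N,x)$, realised as a pointed measured Gromov--Hausdorff limit along a sequence $r_i\to 0$. By the remark recalled earlier in the excerpt, $Y$ is an $\RCD(0,N)$-space. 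The additional input from the non-collapsed theory is that $Y$ is again non-collapsed (so that $\m_Y$ is a constant multiple of $\Hauss^N$) and that $\Hauss^N$ passes continuously to the blow-up limit; granting this, for each fixed $s>0$ one computes
\[
\frac{\Hauss^N(B_s^Y(y))}{\omega_N s^N} = \lim_{i\to\infty}\frac{\Hauss^N(B_{s r_i}(x))}{\omega_N (s r_i)^N} = \vartheta(x),
\]
since $s r_i\to 0$ forces the rightmost limit to equal the density $\vartheta(x)$ independently of $s$. Hence the Euclidean volume-density ratio of $Y$ centred at $y$ is constant in the radius.

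Finally, I would invoke the non-smooth volume-cone-to-metric-cone theorem for $\RCD(0,N)$-spaces: an $\RCD(0,N)$-space whose volume-density ratio about a point is constant in the radius is isometric, as a metric measure space, to a metric cone with vertex at that point. Applying this to $Y$ at $y$ shows that $Y$ is a metric cone, and since $Y$ was an arbitrary tangent, the conclusion follows.

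The main obstacle is the middle step, namely justifying that the tangent $Y$ remains non-collapsed and that $\Hauss^N$ is continuous under the rescaling convergence, so that the monotone density function genuinely becomes \emph{constant} in the limit rather than merely converging to the correct value at the vertex. This is precisely where the non-collapsing hypothesis $\m=\Hauss^N$ is indispensable and where the structural results of De Philippis--Gigli on continuity of $\Hauss^N$ within the non-collapsed class are used; once constancy of the volume ratio is in hand, the volume-cone-to-metric-cone theorem can be treated as a citable black box that upgrades this constancy to the conical structure.
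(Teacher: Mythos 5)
Your argument is correct and is essentially the standard proof of this statement: Bishop--Gromov monotonicity of the density ratio, continuity of $\Hauss^N$ along the non-collapsed blow-up sequence forcing the limit volume ratio to be constant in the radius, and then the ``volume cone implies metric cone'' theorem for $\RCD(0,N)$-spaces. The paper itself gives no proof but simply cites Proposition 2.8 of De Philippis--Gigli, whose proof follows exactly this route, so there is nothing to add beyond the technical inputs you already flagged (stability of non-collapsedness and convergence of $\Hauss^N$ under the rescaled pmGH convergence).
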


Regarding the essential dimension, we have the following result by Brena, Gigli, Honda, and Zhu \cite[Theorem 1.7]{BrenaGigliHondaZhu2023}
which lets us identify non-collapsed spaces. 

\begin{theorem}\th\label{T: Characterization non-collapsed}
Let $(X,\dis,\m)$ be an $\RCD(K,N)$-space. Suppose that the essential dimension of some tangent space $(Y,\dis_Y,\m_Y,y)$ at some $x\in X$ is equal to $N$, then $\m= c\Hauss^N$ for some $c>0$.    
\end{theorem}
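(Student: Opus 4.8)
\emph{Plan.} The strategy is to transfer the maximality of the essential dimension from the tangent $Y$ to the space $X$ itself, and then to invoke the rigidity that an $\RCD(K,N)$-space whose essential dimension attains the top value $N$ must be (weakly) non-collapsed, i.e. that $\m$ is proportional to $\Hauss^N$. By \th\ref{R: tangent of RCD-space is RCD(0,N)-space} the tangent $(Y,\dis_Y,\m_Y,y)$ is an $\RCD(0,N)$-space, and by definition it is a pointed measured Gromov--Hausdorff limit of the rescaled and normalized spaces $(X,r_i^{-1}\dis,\m^x_{r_i},x)$. Rescaling the distance and multiplying the measure by a constant do not alter the regular sets $\mathcal{R}_m$, so each of these spaces has essential dimension $\dimess(X)$. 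I would then appeal to the lower semicontinuity of the essential dimension under such limits within the class of $\RCD(K',N)$-spaces, which gives $\dimess(Y)\le\liminf_i\dimess(X,r_i^{-1}\dis,\m^x_{r_i})=\dimess(X)$. Since one always has $\dimess(X)\le N$, the hypothesis $\dimess(Y)=N$ forces $\dimess(X)=N$.

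\emph{Density representation.} For $x\in X$ set $\vartheta_N(x):=\lim_{r\to 0^+}\frac{\m(B_r(x))}{\omega_N r^N}$; this limit exists in $(0,+\infty]$ because the Bishop--Gromov inequality makes $r\mapsto \m(B_r(x))/v_{K,N}(r)$ monotone, with $v_{K,N}(r)\sim\omega_N r^N$ as $r\to 0$. Now that $\dimess(X)=N$, $\m$-almost every point is $N$-regular, i.e. has $\Tan=\{(\R^N,\dis_\E,\Le^N,0)\}$; by the structure theory of \cite{dePhilippisGigli2018} this forces $\vartheta_N<+\infty$ $\m$-almost everywhere and yields the representation $\m=\vartheta_N\,\Hauss^N\llcorner\mathcal{R}_N$, where $\mathcal{R}_N$ carries full measure and is $N$-rectifiable. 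Hence the desired conclusion $\m=c\,\Hauss^N$ is equivalent to showing that $\vartheta_N$ equals a single constant $\m$-almost everywhere.

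\emph{Constancy of the density.} The heart of the matter is therefore the almost-everywhere constancy of $\vartheta_N$, which is exactly the resolution by \cite{BrenaGigliHondaZhu2023} of the De Philippis--Gigli conjecture that weakly non-collapsed spaces are non-collapsed. I expect this to be the genuinely difficult point, invisible to the soft reductions above: two distinct $N$-regular points could a priori carry different Euclidean densities, so the reductions alone do not pin $\vartheta_N$ down. The plan here is to use that $\vartheta_N$ is lower semicontinuous and that at every $N$-regular point the unique Euclidean tangent forces the limiting Bishop--Gromov ratio to the flat value, thereby normalizing the density pointwise, and then to propagate this normalization over the full-measure regular set by combining the monotonicity of the Bishop--Gromov ratio with a continuity argument for the density carried out in the $\RCD$ differential calculus. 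Once $\vartheta_N\equiv c$ is established, the representation in the previous step gives $\m=c\,\Hauss^N$, completing the proof.
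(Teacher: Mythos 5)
The paper offers no proof of \th\ref{T: Characterization non-collapsed}: it is quoted verbatim from \cite[Theorem 1.7]{BrenaGigliHondaZhu2023}, so there is no internal argument to compare yours against. Your soft reductions are reasonable — lower semicontinuity of the essential dimension along the blow-up sequence gives $\dimess(X)=N$, and the rectifiable-structure theory then represents $\m$ as $\vartheta_N\,\Hauss^N\llcorner\mathcal{R}_N$ on a full-measure set — but after these reductions the entire content of the theorem is the $\m$-a.e.\ constancy (and, in fact, already the a.e.\ finiteness) of $\vartheta_N$, and for exactly that step you cite the very result you are trying to prove. As a reduction to the literature your proposal therefore lands where the paper does; as a proof it is circular modulo that citation.

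The genuine gap is in your sketched plan for the constancy step. The assertion that ``at every $N$-regular point the unique Euclidean tangent forces the limiting Bishop--Gromov ratio to the flat value'' is false: tangents are computed with the \emph{normalized} measures $\m^x_r$, so the fact that the tangent is $(\R^N,\dis_\E,\Le^N,0)$ gives no information about the value of $\vartheta_N(x)$ — two $N$-regular points can a priori carry different finite densities, precisely as you concede one sentence earlier. Likewise, lower semicontinuity of $\vartheta_N$ combined with Bishop--Gromov monotonicity yields only one-sided control and cannot ``propagate'' a normalization across the regular set; this soft combination was exactly what De Philippis and Gigli had available when they posed the weakly-non-collapsed conjecture, and its resolution in \cite{BrenaGigliHondaZhu2023} requires genuinely new machinery (sharp short-time heat-kernel asymptotics and the isometric immersion of the space into $L^2$ via the heat kernel), not a continuity argument. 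The honest version of your proof is to stop at the citation, as the paper does; the argument you sketch in its place would not close.
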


Finally, we recall that the essential dimension of a non-collapsed $\RCD(K,N)$-space is $N$ (see the last paragraph in Page 3 of \cite{BrenaGigliHondaZhu2023}). 

\subsection{Warped products}\label{SS: Warped Products}

Recall that given $(X,\dis_X)$ a complete and separable metric space, a curve $\gamma\colon [0,1]\to X$ is \emph{absolutely continuous (AC)}, if there exists a map $f\in L^1[0,1]$ such that for every $0\leq s<t\leq 1$ we have
\begin{linenomath}
\begin{align}
  \dis_X(\gamma(t),\gamma(s))\leq \int_s^t f(r)\, dr. \label{EQ: AC}   
\end{align}
\end{linenomath}
Given an AC curve $\gamma\colon [0,1]\to X$, the following limit exists for a.e. $t_0\in [0,1]$:
\[
|\dot{\gamma}(t_0)|:= \lim_{h\to 0} \frac{\dis_X(\gamma(t_0+h),\gamma(t_0))}{|h|}.
\]
Moreover, the function $t\mapsto |\dot{\gamma}(t)|$ (called the \emph{metric speed of $\gamma$}) is in $L^1(0,1)$ and is the minimal function satisfying \eqref{EQ: AC}.

Let $(B,\dis_B)$ and $(F,\dis_F,\m_F)$ be complete, locally compact geodesic metric spaces. Consider $f\colon B\to \R_{\geq 0}$ a locally Lipschitz function. A curve $\gamma=(\alpha,\beta)\colon [0,1]\to B\times F$ is \emph{admissible} if $\alpha$ and $\beta$ are absolutely continuous in $B$ and $F$ respectively. For an admissible curve $\gamma\colon [0,1]\to B\times F$ we define the \emph{length with respect to $f$} as
\[
L_f(\gamma) := \int_0^1 \sqrt{|\dot{\alpha}(t)|^2+(f\circ \alpha)^2(t)|\dot{\beta}(t)|^2}\, dt.
\]
For convenience, we define the length of a non-admissible curve as $+\infty$. $L_f$ is a length-structure on the class of admissible curves. We define a pseudo-metric $\tilde{d}_f$ on $B\times F$, between $(p,x)$ and $(q,y)$ by setting
\[
\tilde{d}_f((p,x),(q,y)):= \inf\{L_f(\gamma)\mid \gamma \mbox{ admisible and connects }(p,x) \mbox{ to }(q,y)\}.
\]
Then we have an induced equivalence relation $\sim$, where $(p,x)\sim (q,y)$ if and only if \linebreak$\tilde{d}_f((p,x),(q,y)) =0$. We define the \emph{warped product of $B$ and $F$ via $f$} as $B\times_f F := B\times F/ \sim$ with the distance $\dis_f([p,x],[q,y]):=\tilde{d}_f((p,x),(q,y))$. By definition $B\times_f F$ is an intrinsic metric space, and since $B$ and $F$ are complete and locally compact, then the warped product is complete and locally compact. As pointed out in \cite[Remark 2.4]{Ketterer2013}, the warped product $B\times_f F$ is geodesic. 

We consider $(X,\dis_X,\m_X)$ and $(Y,\dis_Y,\m_Y)$ two complete separable length spaces, and $\m_X$ and $\m_Y$ Radon measures. We also assume that $\m_X(X)<\infty$. We consider continuous functions $\omega_d,\omega_{\m}\colon Y\to [0,\infty)$. For $\omega=(\omega_d,\omega_{\m})$ define the \emph{$\omega$-warped product}  $Y\times_{\omega} X$ as the metric space $Y\times_{\omega_d} X$, and set the measure 
\[
\m_{\omega}:= \pi_\ast((\omega_{\m}\m_Y)\otimes \m_X),
\]
where $\pi\colon Y\times X\to (Y\times_{\omega_d} X)$ is the quotient map.

When $\omega_d = f$, and $\omega_{\m}=f^N$ for $f\colon Y\to [0,\infty)$ a continuous function, we set $Y\times_{f}^N X:=Y\times_{(f,f^N)} X$. The space $Y\times_f^N X$ is referred to as the \emph{$N$-warped product of $Y$, $X$ and $f$} (see \cite{Ketterer2013}). 

We also define here the $(K,N)$-cones for $K\in \R$, $N\in (0,\infty)$: Given a metric measure space $(F,\dis_F,\m_F)$, the \emph{$(K,N)$-cone of $F$}, denoted by $\mathrm{Con}^N_K(F)$ is the metric measure space $(\mathrm{Con}_K(F),\dis_{\mathrm{Con}_K},\m^N_{\mathrm{Con}_K})$ defined by
 \[
    \mathrm{Con}_K(F) = \begin{cases}
        F\times [0,\pi/\sqrt{K}]/(F\times \{0,\pi/\sqrt{K}\}) & \mbox{if }K>0\\
        F\times [0,\infty)/(F\times \{0\}) & \mbox{if } K \leq 0,
        \end{cases}
    \]
\begin{linenomath}
\begin{align*}
&\dis_{\mathrm{Con}_K}([p,t],[q,s])\\
&:= \begin{cases}
    \cos^{-1}_K\Big(\cos_K(s)\cos_K(t)+K\sin_K(s)\sin_K(t)\cos_K\big(\dis_F(p,q)\wedge \pi\big)\Big) & \mbox{if }K\neq 0\\
    \sqrt{s^2+t^2-2st\cos\big(\dis_F(p,q)\wedge\pi\big)} & \mbox{if }K=0,
    \end{cases}
\end{align*}
\end{linenomath}
and
\[
\m_{\mathrm{Con}_K}^N :=\sin_K^N(t)dt\otimes \m_F.
\]
Here, for $I_K =[0,\pi/\sqrt{K}]$ for $K>0$ and $[0,\infty)$ for $K\leq 0$, we have $\cos_K\colon I_K\to [0,\infty)$ given by
\[
\cos_K(t) := \begin{cases}
    \cos\left(\sqrt{K}t\right)& \mbox{if } K>0\\
    \cosh\left(\sqrt{-K}t\right)&\mbox{if } K<0,
\end{cases}
\]
and $\sin_K\colon I_K\to [0,\infty)$ given by
\[
\sin_K(t):= \begin{cases}
    \frac{1}{\sqrt{K}}\sin\left(\sqrt{K}t\right) & \mbox{if }K>0\\
    t & K=0\\
    \frac{1}{\sqrt{|K|}}\sinh\left(\sqrt{|K|}t\right) & \mbox{if }K<0.\\
\end{cases}
\]
\begin{rmk}
If $\mathrm{diam}(F)\leq \pi$ and $F$ is a length space, then the $(K,N)$-cone $\mathrm{Con}^N_K(F)$ coincides with the $N$-warped product $I_K\times_{\sin_K}^N F$ (e.g. \cite[Theorem 3.6.17]{BuragoBuragoIvanov}).
\end{rmk}

\subsection{Transformation groups}

In order to set the notation, let us recall the basic facts of the theory of transformation groups that we use  throughout the manuscript.

Let $(X,\dis,\m)$ be a metric measure space, and $G$ a  topological group. A \emph{continuous (left) action of $G$ on $X$} is a continuous map $\alpha\colon G\times X\to X$ such that for any $g,h\in G$ and $x\in X$ we have $\alpha(g,\alpha(h,x)) = \alpha(gh,x)$, and for $e\in G$ the identity element and any $x\in X$ we have $\alpha(e,x) = x$. From now on we denote $gx:=\alpha(g,x)$.

The \emph{isotropy group at $x\in X$} is the closed subgroup $G_x=\left\{ g\in G \mid gx=x\right\}$. We say the action $\alpha$ is \emph{effective} if the only element of $G$ that fixes all the points in $X$ is the identity element, i.e. $\cap_{x\in X} G_x = \{e\}$. The action is \emph{proper} if the map $A\colon G\times X\to X \times X$ given by $A(g,x) = (gx,x)$ is a proper map (that is, the preimage $A^{-1}(K)\subset G\times X$ of a compact subset $K\subset X\times X$ is compact. 

The \emph{orbit of a point $x\in X$} is the subset of $X$ given by $G(x)=\left\{ gx\in X \mid g\in G \right\}$. The \emph{orbit type of an orbit $G(x)$} is the conjugation class $(G_x):= \{gG_xg^{-1}\mid g\in G \}$ of the isotropy.

The \emph{orbit space} is denoted by $X/G$ and $\pi\colon X\to X/G$ is the natural projection. We equip $X/G$ with the quotient topology. We also use the notation $ U^\ast:=\pi(U)$, for any $U\subset X$. In particular, $X^\ast=X/G$ and $\pi(x)=x^\ast$ for each $x\in X$.

When we have that $\dis(\alpha(g,x),\alpha(g,y)) = \dis(x,y)$ for any $g\in G$ and any $x,y\in X$ we say that \emph{the action $\alpha$ is by isometries}. Given $g\in G$ fixed we have a homeomorphism $\alpha_g\colon X\to X$ given as $\alpha_g(x) = \alpha(g,x)$. In the case when the action is by isometries and $(\alpha_g)_\# \m = \m$ for any $g\in G$, we say that \emph{the action is by measure-preserving isometries}, and that the metric $\dis$ is \emph{$G$-invariant}. For simplicity, unless otherwise stated, we denote the action as $gx := \alpha(g,x)$.  In the case where the orbits are compact, the orbit space is equipped with the \emph{orbital metric} 
\[
\dis^\ast(x^\ast, y^\ast) := \dis(G(x),G(y))
\]
and the \emph{push-forward measure} $\m^\ast = \pi_{\#}\m$. 

When the action is proper, the isotropy groups are compact. Moreover, properness gives us the description of each orbit as a homogeneous space.

\begin{theorem}[Theorem 2.3 in \cite{CorroKordass2021}]\th\label{T: equivariant homeomorphism from the orbit}
Let $G$ be a Hausdorff topological group acting properly on
a metric space $(X, \dis)$, where $\dis$ is $G$-invariant. Fix $x\in X$, and consider $\alpha_{x}\colon  G \to X$ given by $\alpha_x(g) = gx$. Let $\rho\colon  G \to G/G_x$ be the quotient map. Then there exists a $G$-equivariant homeomorphism  $\tilde{\alpha}_x \colon G/G_x \to G(x)$ onto the orbit through $x$ such that $\tilde{\alpha}_x \circ \rho = \alpha_x$. Furthermore, the orbit $G(x)$ is a closed subspace of $X$.
\end{theorem}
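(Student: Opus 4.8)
The statement to prove is Theorem 2.3 from \cite{CorroKordass2021}, giving a $G$-equivariant homeomorphism $\tilde{\alpha}_x\colon G/G_x \to G(x)$ with $\tilde{\alpha}_x\circ\rho = \alpha_x$, together with closedness of the orbit. Let me sketch a proof plan.

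The plan is to construct $\tilde\alpha_x$ via the universal property of quotient maps and then show it is a homeomorphism using properness. First I would record that the orbit map $\alpha_x\colon G\to X$, $\alpha_x(g)=gx$, is continuous (it is the restriction of the action $\alpha$ to $G\times\{x\}$) and is constant on cosets of $G_x$: indeed $\alpha_x(gh)=ghx=gx=\alpha_x(g)$ for all $h\in G_x$. Since $\rho\colon G\to G/G_x$ is a quotient map and $\alpha_x$ is continuous and $G_x$-invariant, the universal property yields a unique continuous map $\tilde\alpha_x\colon G/G_x\to X$ with $\tilde\alpha_x\circ\rho=\alpha_x$; its image is exactly $G(x)$, so we view it as a continuous surjection onto $G(x)$. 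Equivariance is immediate: $\tilde\alpha_x(g\cdot(g'G_x))=gg'x=g\tilde\alpha_x(g'G_x)$.

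Next I would check injectivity of $\tilde\alpha_x$. If $\tilde\alpha_x(g G_x)=\tilde\alpha_x(g'G_x)$, then $gx=g'x$, so $g^{-1}g'x=x$, i.e. $g^{-1}g'\in G_x$, whence $gG_x=g'G_x$. Thus $\tilde\alpha_x$ is a continuous equivariant bijection onto $G(x)$, and it remains to prove that its inverse is continuous and that $G(x)$ is closed in $X$.

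The main obstacle is upgrading the continuous bijection to a homeomorphism; this is exactly where properness enters (for general topological group actions a continuous orbit bijection need not be a homeomorphism). The strategy I would use is: since the action is proper, $A\colon G\times X\to X\times X$, $A(g,y)=(gy,y)$, is a proper map, and from this one deduces that orbits are closed and that $\tilde\alpha_x$ is a closed (equivalently proper) map. Concretely, I would argue that $G_x$ is compact (the preimage $A^{-1}(\{(x,x)\})=G_x\times\{x\}$ is compact by properness) so that $\rho$ is a closed map and $G/G_x$ is Hausdorff; then for a closed set $C\subseteq G/G_x$ I would show $\tilde\alpha_x(C)$ is closed in $X$ by taking a net (or sequence, in the metric setting) $g_n x\to p$ in $X$ with $g_n G_x\in C$ and using properness of $A$ to extract a convergent subnet of $(g_n)$: the points $(g_n x, x)$ lie in a compact neighborhood of $(p,x)$, so their $A$-preimages $(g_n,x)$ lie in a compact set, giving $g_{n_k}\to g$ with $gx=p$, hence $gG_x\in C$ (using $C$ closed) and $p=\tilde\alpha_x(gG_x)\in\tilde\alpha_x(C)$. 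Taking $C=G/G_x$ yields that $G(x)=\tilde\alpha_x(G/G_x)$ is closed in $X$; taking general closed $C$ shows $\tilde\alpha_x$ is a closed continuous bijection, hence a homeomorphism onto its image $G(x)$. This completes the proof, and the only genuinely delicate point is the properness-based extraction of the convergent subnet, which is the crux of turning the algebraic bijection into a topological equivalence.
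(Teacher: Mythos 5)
The paper does not contain a proof of this statement: it is imported verbatim as Theorem 2.3 of \cite{CorroKordass2021}, so there is no internal argument to compare yours against. On its own merits, your proposal is the standard proof of this classical fact and is essentially correct: you factor $\alpha_x$ through $\rho$ by the universal property of the quotient, check injectivity from the definition of $G_x$, observe equivariance, and then use properness of $A(g,y)=(gy,y)$ to upgrade the continuous equivariant bijection to a closed map, which simultaneously gives the homeomorphism and the closedness of $G(x)$. The only step that needs repair is the phrase ``the points $(g_nx,x)$ lie in a compact neighborhood of $(p,x)$'': a general metric space need not be locally compact, so such a neighborhood may fail to exist. The fix is immediate in the metric setting, where sequences suffice: given $g_nx\to p$ with $g_nG_x\in C$, apply properness to the compact set $K=\{(g_nx,x)\mid n\in\N\}\cup\{(p,x)\}$ (a convergent sequence together with its limit is compact). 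Then $(g_n,x)\in A^{-1}(K)$ for every $n$, so $\{g_n\}$ lies in a compact subset of $G$, and the extraction of a convergent subsequence $g_{n_k}\to g$ with $gx=p$ and $gG_x\in C$ proceeds exactly as you describe. With that one-line adjustment the argument is complete.
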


\begin{rmk}
In the case when $G$ is a compact Lie group, an action of $G$ on a metric space $X$ is proper (see for example  \cite[Theorem 2.2]{CorroKordass2021}). From now on, we only consider compact Lie groups.
\end{rmk}

We recall from \cite[Theorem 1.3]{GalazGarciaKellMondinoSosa2018} that given an $\RCD$-space $(X,\dis,\m)$ with an action by measure preserving isometries by a compact Lie group $G$, there exist (up to conjugation) a unique subgroup $G_{\min}<G$, such that the orbit $G(y)$ of $\m$-a.e $y\in M$ is homeomorphic to the quotient $G/G_{\min}$. We call points $y\in M$ such that $G_y$ is conjugate to $G_{\min}$ in $G$ \emph{points with principal orbit type}. The orbit of a point of principal orbit type is referred to as a \emph{principal orbit}.

The following notion of regularity for the action was introduced in \cite{GalazGarciaKellMondinoSosa2018} and will also be useful to us. 

We will use the following definition of slice taken  from Bredon \cite{Bredon1972}
\begin{definition}\th\label{D: Slice}
Let $x\in X$ a set $x\in S$ is called a slice at $x$ if it satisfies the following:
\begin{enumerate}[(i)]
    \item\label{Slice-1} $S$ is closed in $G(S)$
    \item\label{Slice-2} $G(S)$ is an open neighbourhood of $G(x)$ 
    \item\label{Slice-3} $G_x (S)=S$
    \item\label{Slice-4} $(gS)\cap S \neq \emptyset $ implies that $g\in G_x$.
\end{enumerate}
\end{definition}

Sometimes we will use the following equivalent characterization of a slice.

\begin{theorem}[Chapter $2$, Section $4$ in \cite{Bredon1972}]\th\label{T: equivalence slice}
For an action $G$ on $X$, given a point $x\in X$ a slice through $x$ can be equivalently defined as a subset $S_x\subset X$ that satisfies:
\begin{itemize}
    \item $gS_x\subset S_x$ for all $g\in G_x$
    \item $gS_x\cap S_x \neq \emptyset$ implies that $g\in G_x$
    \item Setting $H= G_x$, there exists an open neighborhood $U \subset G/H $ around $eH$, and
    a cross-section $\chi \colon U \rightarrow G $ such that the function $F \colon U \times S_x \rightarrow X$ given by 
    \[
    F(gH,s) = \chi (gH)s
    \]
    is a homeomorphism onto its image.
\end{itemize}
\end{theorem}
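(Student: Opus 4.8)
The two descriptions share two of their requirements, so the plan is to isolate the genuinely topological content. Condition (iii) of \th\ref{D: Slice}, namely $G_x(S)=S$, is exactly the first bullet $gS_x\subset S_x$ for all $g\in G_x$ (using that $G_x$ is a group), and condition (iv) is verbatim the second bullet. Thus it suffices to show that, in the presence of these two conditions, the pair of topological conditions (i)--(ii) of \th\ref{D: Slice} (that $S$ be closed in $G(S)$ and that $G(S)$ be an open neighborhood of $G(x)$) is equivalent to the existence of the local cross-section and embedding $F$ of the third bullet. Writing $H:=G_x$, which is compact since $G$ is, I would form the twisted product $G\times_H S$, the quotient of $G\times S$ by the $H$-action $h\cdot(g,s)=(gh^{-1},hs)$; this is well defined precisely because $HS=S$. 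Using \th\ref{T: equivariant homeomorphism from the orbit} to identify $G(x)$ with $G/H$, the natural map $\mu\colon G\times_H S\to X$, $\mu([g,s])=gs$, is continuous and $G$-equivariant, has image exactly $G(S)$, and is injective if and only if the second bullet holds. Hence the whole statement reduces to recognizing when $\mu$ is a homeomorphism onto an open set.

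For the forward direction I would first prove that $\mu$ is a \emph{closed} map, and hence a homeomorphism onto $G(S)$; this is where compactness of $G$ and condition (i) enter. Given a closed $C\subset G\times_H S$ and a sequence $g_ns_n\to y$ in $G(S)$ with $[g_n,s_n]\in C$, compactness of $G$ lets me pass to a subsequence with $g_n\to g$, whence $s_n=g_n^{-1}(g_ns_n)\to g^{-1}y=:s$; as $G(S)$ is $G$-invariant, $s\in G(S)$, and as $S$ is closed in $G(S)$ we get $s\in S$, so $[g,s]\in C$ and $y=gs\in\mu(C)$. A continuous closed bijection is a homeomorphism. Combining this with the openness of $G(S)$ from condition (ii) and a local cross-section $\chi\colon U\to G$ of the principal bundle $G\to G/H$ near $eH$ (which exists since $H$ is a closed subgroup of a compact Lie group), the composition $F=\mu\circ\widehat F$, where $\widehat F(gH,s)=[\chi(gH),s]$ is the standard homeomorphism of $U\times S$ onto the open piece of $G\times_H S$ lying over $U$, is a homeomorphism of $U\times S$ onto an open subset of $X$, which is exactly the third bullet.

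For the converse I would run the same identifications backwards. From the cross-section data, translating $F$ by group elements $g_0\in G$ produces embeddings $F_{g_0}(gH,s)=g_0\chi(gH)s$ over the neighborhoods $g_0U$ of $g_0H$; since $G/H$ is compact, finitely many translates $g_1U,\dots,g_nU$ cover it, and because $HS=S$ their images assemble to all of $G(S)$. The local embeddings are restrictions of the single injective $\mu$, so $\mu$ is a local homeomorphism and therefore a homeomorphism onto $G(S)$; identifying $S$ with the closed fiber $q^{-1}(eH)$ of the bundle projection $q\colon G\times_H S\to G/H$ then gives that $S$ is closed in $G(S)$, i.e. condition (i), while openness of the images gives condition (ii). The step I expect to demand the most care is exactly this upgrade of a continuous equivariant bijection to an open homeomorphism: in one direction it rests on the closedness condition (i) together with compactness of $G$, and in the other on the fact that the local model $U\times S$ sits in $X$ as an \emph{open} tube, which is the geometric substance encoded in the embedding $F$ of the third bullet and in the local triviality of $G\to G/H$.
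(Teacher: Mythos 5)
The paper does not actually prove this statement — it is quoted from \cite{Bredon1972} (Chapter II, Section 4) as a known result — so there is no in-paper argument to compare against. Your reconstruction is the standard one and is essentially Bredon's tube argument: conditions (iii) and (iv) of \th\ref{D: Slice} match the first two bullets verbatim, and the genuine content is that the remaining topological conditions are equivalent to the natural map $\mu\colon G\times_{G_x}S\to X$, $[g,s]\mapsto gs$, being an equivariant homeomorphism onto an open set. Your forward direction is sound: well-definedness of $\mu$ uses $G_xS=S$, injectivity uses the second bullet, the closedness of $\mu$ (compactness of $G$ together with closedness of $S$ in $G(S)$, valid via sequences since $X$ is metric) upgrades the continuous bijection to a homeomorphism onto $G(S)$, and composing with a local trivialization of the principal bundle $G\to G/G_x$ produces the chart $F$. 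The converse assembly of translated charts to cover $G(S)$ is also correct.

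One caveat, which is a defect of the statement as transcribed rather than of your argument: the third bullet only asks that $F$ be a homeomorphism onto its image, with no requirement that the image be \emph{open} in $X$. Read literally, the equivalence fails in the converse direction (take $G$ trivial and $S_x=\{x\}$ a single point of a connected metric space $X$: all three bullets hold, yet $G(S_x)=\{x\}$ is not an open neighborhood of the orbit, so condition (ii) of \th\ref{D: Slice} fails), and your step ``the local embeddings are restrictions of the single injective $\mu$, so $\mu$ is a local homeomorphism and therefore a homeomorphism onto $G(S)$'' genuinely requires the images of the charts to be open — a continuous bijection whose restrictions to a covering family are homeomorphisms onto non-open images need not be a homeomorphism. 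In Bredon's formulation the tube is an open embedding, and you correctly flag this openness as the crux at the end of your write-up; you should simply make that hypothesis explicit at the point where you invoke it, since it is the only place where the converse can break.
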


We now state the main recognition tool in the theory of transformation groups, the so called \emph{Slice Theorem} (see \cite[Definition 4.1, Theorem 4.2]{Bredon1972}). 

\begin{theorem}[Slice Theorem]
\th\label{T:Slice-Theorem}
Let a compact Lie group $G$ act by isometries on a metric space $(X,\dis)$. For all $x\in X$ there is some $r_0>0$, such that  for all $0<r\leq r_0$ there exists a slice $S_x = S_x(r)$  and there is an equivariant homeomorphism 
\[
G\times_{G_x} S_x \to B_{r}(G(x)).
\]
\end{theorem}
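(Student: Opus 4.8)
The plan is to reduce the statement to the existence of a slice via the equivalent characterization in \th\ref{T: equivalence slice}, to produce an explicit geometric candidate for the slice out of the orbital distance function, and then to assemble the twisted-product homeomorphism by equivariance. First I would record the structural facts. Since $G$ is a compact Lie group the action is proper, so the isotropy $H := G_x$ is a compact subgroup and, by \th\ref{T: equivariant homeomorphism from the orbit}, the orbit $G(x)$ is compact and $G$-equivariantly homeomorphic to $G/H$. As $H$ is closed in the Lie group $G$, the projection $\pi\colon G\to G/H$ is a principal $H$-bundle, hence admits a local section $\chi\colon U\to G$ over an open neighbourhood $U$ of $eH$ with $\chi(eH)=e$; this $\chi$ is exactly the cross-section required by the second characterization in \th\ref{T: equivalence slice}.

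Next I would propose the geometric candidate
\[
S_x(r) := \{\, y \in \overline{B_r(x)} \mid \dis(x,y) = \dis(G(x),y) \,\},
\]
the set of points in the closed $r$-ball whose nearest orbit point is $x$ itself. The invariance axiom $gS_x(r)=S_x(r)$ for $g\in H$ is immediate: if $g\in H$ then $gx=x$, the action is isometric, and the orbital distance $\dis(G(x),\cdot)$ is $G$-invariant (since $\dis(G(x),gy)=\inf_{a\in G}\dis(g^{-1}ax,y)=\dis(G(x),y)$), so $y\in S_x(r)$ forces $gy\in S_x(r)$. The substantive axiom is the tube condition: if $y$ and $gy$ both lie in $S_x(r)$, then $g^{-1}x$ is \emph{also} a nearest orbit point to $y$, because $\dis(g^{-1}x,y)=\dis(x,gy)=\dis(G(x),gy)=\dis(G(x),y)=\dis(x,y)$. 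One must then rule out $g^{-1}x\neq x$ for $r$ small, which would give $g\in H$.

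Finally, granting the slice axioms I would build the tube map. The assignment $G\times_H S_x(r)\to X$, $[g,s]\mapsto gs$, is well defined and $G$-equivariant; its image $G(S_x(r))$ coincides with $B_r(G(x))$ for small $r$, since any point $z$ within distance $r$ of the orbit can be translated by some $g$ so that $g^{-1}z$ has $x$ as a nearest orbit point, whence $g^{-1}z\in S_x(r)$. Injectivity is precisely the tube condition; continuity of the inverse follows from properness together with the compactness of $G$; and local triviality near the zero section is supplied by $\chi$ through \th\ref{T: equivalence slice}, with equivariance spreading it over the whole orbit.

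The hard part will be the tube condition, equivalently injectivity of the tube map, which amounts to local uniqueness of the nearest orbit point. In a Riemannian or Alexandrov setting this is delivered by the normal exponential map or by convexity of the distance, but in a bare metric space neither tool is available; at this level of generality one therefore falls back on Palais' theorem \cite{Palais1960}, which guarantees a slice for any proper action and thus supplies a valid $S_x$ abstractly. It is exactly the impossibility of pinning down this nearest-point geometry for general metric spaces that motivates the essentially non-branching hypothesis in our own refinement, \th\ref{MT: Slice Theorem Cohomogenity one}, where the explicit set $S_{x_0}$ is shown to be a genuine slice.
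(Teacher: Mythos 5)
Your proposal ends up in the right place, but it is worth being clear about what the paper actually does here: Theorem \ref{T:Slice-Theorem} is stated without proof, as a quoted classical result from Bredon \cite[Ch.~II, Definition 4.1, Theorem 4.2]{Bredon1972} (the paper also notes that Palais \cite{Palais1960} gives slices for actions on completely regular spaces, which covers metric spaces). So the ``paper's proof'' is a citation, and your final fallback to Palais/Bredon is exactly the intended justification. The bulk of your write-up is an attempted direct geometric proof via the nearest-point set $S_x(r)=\{y\in \overline{B_r(x)}\mid \dis(x,y)=\dis(G(x),y)\}$, and you correctly diagnose where it breaks: conditions (i)--(iii) and surjectivity of the tube map onto $B_r(G(x))$ go through in any metric space (a nearest point on the compact orbit always exists), but the tube condition --- equivalently, local uniqueness of the nearest orbit point --- is simply false without further hypotheses, and no smallness of $r$ rescues it in a branching space. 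That diagnosis is valuable and is precisely the content the paper isolates: \th\ref{MT: Slice Theorem Cohomogenity one} proves that this same candidate set \emph{is} a slice once one adds the essentially non-branching hypothesis and the one-dimensionality of the orbit space, via \th\ref{proposition.nonbranchinghorizontalgeodesic} and \th\ref{proposition.uniquerealiser}. One small point you should still address if you keep the citation route: Bredon/Palais give a slice $S$ with $G(S)$ an open neighbourhood of the orbit, not a metric ball; to match the statement as written you should shrink, e.g.\ replace $S$ by $S\cap\{y\mid \dis(G(x),y)<r\}$ for $r$ small enough that $B_r(G(x))\subset G(S)$, and check (using the $G$-invariance of $\dis(G(x),\cdot)$) that this intersection is still a slice whose tube is exactly $B_r(G(x))$.
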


\begin{rmk}
The dependence of $S_{x_0}$ on the radius $r$ is given by \th\ref{MT: Slice Theorem Cohomogenity one}.
\end{rmk}

Every point in a completely regular space has a slice (see \cite{Palais1960}). This fact is sometimes referred to as the Slice Theorem. It follows then that every point on an $\mathsf{RCD}$-space admits a slice. However, given an Alexandrov space $X$ and $G$ a Lie group acting by isometries, for each point $x\in X$ we can find a slice containing $x$ which is a metric cone over a positively curved Alexandrov space (namely the space of normal directions at the point in question; see the Slice Theorem for Alexandrov spaces due to Harvey and Searle, \cite[Theorem B]{HarveySearle2017}). This implies that for any point in an Alexandrov space equipped with a compact Lie group action by isometries, there is a slice through the given point which in itself admits an Alexandrov structure. 

Moreover, for  the case when the space is a Riemannian manifold with a compact Lie group action by isometries, the same holds: for any point there exists a slice containing the point, such that it admits a Riemannian space structure.

Thus in general, when considering a geometric space and a group action that preserves the geometric structure, we are interested in finding in a well-defined way a slice which belongs to the same class of geometric objects as the global space. 

\subsection{Equivariant Gromov-Hausdorff convergence}

We now define the notion of equivariant Gromov-Hausdorff convergence (see \cite{FukayaYamaguchi1992}). Given some  $r >0$ and 
$G$ a group acting by isometries on a pointed metric space $(X,\dis, x)$ we define: 
\[
G(r) := \lbrace g \in G\,|\, gx\in B_r (x) \rbrace.
\]

\begin{definition}
Let $(X,\dis_X,x,G)$, $(Y,\dis_Y,y,H)$ be two pointed metric spaces with $G$, $H$ groups acting isometrically on $X$ and $Y$ respectively. An \emph{equivariant $\epsilon$-Gromov-Hausdorff approximation} is a triple $(f_\epsilon, \varphi,\psi)$ which consists of maps
\[
f_\epsilon \colon B_{1/\epsilon}(x)\rightarrow B_{1/\epsilon}(y),\quad \varphi \colon G(1/\epsilon)\rightarrow H(1/\epsilon),\quad \psi \colon H(1/\epsilon)\rightarrow G(1/\epsilon)
\]
such that:
\begin{enumerate}
    \item $f_\epsilon$ is an $\epsilon$-Gromov-Hausdorff approximation.
    \item If $g\in G(1/\epsilon)$ and both $p, gp \in B_{1/\epsilon}(x)$, then 
    \[
    \dis_Y(f_\epsilon(gp), \varphi(g)f_\epsilon(p))<\epsilon.
    \]
    \item If $h\in H(1/\epsilon)$ and both $p, \psi(h)p \in B_{1/\epsilon}(x)$, then 
    \[
    \dis_Y(f_\epsilon(\psi(h)p),hf_\epsilon(p))<\epsilon.
    \] 
\end{enumerate}
\end{definition}

\begin{definition}
A sequence $\lbrace (X_n,\dis_n,x_n,G_n) \rbrace_{n \in \N}$ is be said to \emph{converge in the equivariant pointed Gromov-Hausdorff sense} to a pointed metric space  $(Y,\dis_Y,y,H)$ if there exist equivariant $\epsilon_n$-Gromov-Hausdorff approximations such that  $\epsilon_n \rightarrow 0$ as $n \rightarrow \infty$.   
\end{definition}

With this definition we present the following result that is used later on.

\begin{theorem}[Proposition 3.6 in \cite{FukayaYamaguchi1992}]\th\label{T: p-G-H implies eq-p-G-H}
Assume that for a sequence $\{X_i, \dis_{X_i} , H_i, x_i)\}_{i\in \N}$ of pointed metric spaces with $H_i\subset \Iso(X_i,\dis_i)$ a closed subgroup with respect to the compact-open topology acting effectively on $X_i$, we we have that $(Y, \dis_Y , y)$ is the pointed-Gromov-Hausdorff limit of $\{(X_i, \dis_{X_i}, x_i)\}_{i\in \N}$. Then there exist $H' \subset \Iso(Y, \dis_Y )$ a closed subgroup with respect to the compact-open topology, acting effectively on $Y$, and a subsequence $\{(X_{i_k} , \dis_{X_{i_k}} , H_{i_k} , x_{i_k} )\}_{k\in \N}$
converging to $(Y, \dis_Y , H', y)$ in the pointed-equivariant Gromov-Hausdorff sense.
\end{theorem}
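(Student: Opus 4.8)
The plan is to realize the candidate limit group $H'$ as a genuine subgroup of $\Iso(Y,\dis_Y)$, obtained by transporting the isometries in $H_i$ across the given Gromov--Hausdorff approximations and extracting limits by an Arzel\`a--Ascoli and diagonal argument. Effectiveness of $H'$ on $Y$ will then be automatic, since any subgroup of $\Iso(Y,\dis_Y)$ acts effectively (an isometry fixing every point is the identity). Let $f_i\colon B_{1/\epsilon_i}(x_i)\to B_{1/\epsilon_i}(y)$ be the $\epsilon_i$-Gromov--Hausdorff approximations realizing the pointed convergence, with $\epsilon_i\to 0$, and let $\bar f_i$ be an approximate inverse, i.e.\ an $\epsilon_i$-approximation $B_{1/\epsilon_i}(y)\to B_{1/\epsilon_i}(x_i)$ with $\bar f_i\circ f_i$ and $f_i\circ\bar f_i$ both $O(\epsilon_i)$-close to the identity. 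For each radius $R>0$ and each $g\in H_i(R)$, the transported map $f_i\circ g\circ \bar f_i$ is defined on a slightly smaller ball $B_{R'}(y)$ and is distance-preserving up to an additive error $O(\epsilon_i)$.

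First I would fix an exhausting sequence of radii $R\in\N$ and observe that, for each fixed $R$, the transported maps $\{f_i\circ g\circ\bar f_i : g\in H_i(R)\}$ are uniformly almost-isometric and take values in a fixed bounded region of $Y$; hence by Arzel\`a--Ascoli, along a subsequence they converge uniformly on $B_R(y)$ to honest distance-preserving self-maps of $Y$. A diagonal extraction over $R\to\infty$ produces a single subsequence of indices $i$ along which the transported isometries converge, uniformly on every bounded set, to elements of $\Iso(Y,\dis_Y)$. I would then define $H'$ to be the closure in $\Iso(Y,\dis_Y)$, with the topology of uniform convergence on bounded sets, of the collection of all such limits.

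Next I would verify that $H'$ is a closed subgroup. Closedness is built into the definition. For the group structure the key point is that composition and inversion are approximately preserved under transport: since $\bar f_i\circ f_i\approx\mathrm{id}$, one has $f_i\circ(gh)\circ\bar f_i\approx (f_i\circ g\circ\bar f_i)\circ(f_i\circ h\circ\bar f_i)$ up to $O(\epsilon_i)$, and likewise for inverses. Passing to the limit shows the limit set is closed under composition and inverse, so it is a subgroup, and taking the closure keeps it a group. Finally, to produce the equivariant approximations I would set $\varphi_i\colon H_i(1/\epsilon_i)\to H'(1/\epsilon_i)$ by sending $g$ to the limiting isometry approximated by $f_i\circ g\circ\bar f_i$, and define $\psi_i\colon H'(1/\epsilon_i)\to H_i(1/\epsilon_i)$ in the reverse direction by transporting isometries of $Y$ back through $\bar f_i$ and $f_i$. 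The three defining inequalities of an equivariant $\epsilon$-approximation then reduce to the almost-equivariance estimate $\dis_Y(f_i(gp),\varphi_i(g)f_i(p))=O(\epsilon_i)$, which is exactly the uniform convergence established above.

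The main obstacle I expect is the bookkeeping required to make the diagonal extraction compatible with the group operations: one must extract a single subsequence in $i$ that simultaneously yields convergence of all transported isometries on every ball \emph{and} preserves the approximate homomorphism property in the limit, while guaranteeing that the limit group does not degenerate, so that enough isometries survive to recover the full equivariant structure. Controlling the accumulation of the errors coming from the compositions $f_i\circ\bar f_i$ and $\bar f_i\circ f_i$ --- so that the approximate homomorphism identity genuinely passes to the limit and $\varphi_i,\psi_i$ become asymptotically inverse homomorphisms --- is the technically delicate part of the argument.
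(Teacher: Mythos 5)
The paper does not prove this statement itself but quotes it from Fukaya--Yamaguchi (Proposition 3.6 in \cite{FukayaYamaguchi1992}), and your proposal reconstructs essentially their argument: transport the isometries through the Gromov--Hausdorff approximations, extract limits by Arzel\`a--Ascoli plus a diagonal argument over a countable dense family, define $H'$ as the (closed) set of limits, and read off the equivariant approximations $\varphi_i,\psi_i$ from the uniform convergence. The only point to tighten is that one diagonalizes over a countable dense subfamily of the $H_i(R)$ rather than over ``all transported isometries,'' and note that the maps $\varphi_i,\psi_i$ in the definition used here are not required to be homomorphisms, so the approximate-homomorphism bookkeeping you flag as the main obstacle is needed only to show $H'$ is a subgroup, not for the convergence itself.
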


\section{Slice Theorem for cohomogeneity one geodesic spaces}\label{S: Slice THeorem RCD}

In this section we do not need any curvature bounds on $X$. However, we  need  to assume some properties for some geodesics of $X$. In order to properly describe these properties we need to introduce some concepts of optimal transport which we recall below. The interested reader can find the proofs of the claims for example in \cite{AmbrosioGigli2013}.

A metric space $(X,\dis)$ is called a geodesic space if any two points $x_0,x_1 \in X$ can be joined by a curve 
$\gamma \colon [0,1]\rightarrow X$ with $\gamma_0 = x_0, \gamma_1=x_1$ and such that:
\[
\dis(\gamma_t,\gamma_s) =|t-s|\dis(\gamma_0,\gamma_1), \text{ for all } s,t\in [0,1].
\]
The space of all geodesics in $X$ is  denoted by $\Geo(X)$ and equipped  with the $\sup$ norm.

Let $\mathbb{P}(X)$ denote the space of Borel probability measures on $X$, and for $p\in [1,\infty)$ by $\mathbb{P}_p(X)$ those that have finite $p$-moments, that is
\[
\mathbb{P}_p(X) := \Big\lbrace \mu \in \mathbb{P}(X)\,|\, \int \dis^p(x,x_0)d\mu(x)< \infty \text{ for some } x_0 \in X \Big\rbrace .
\]
We then endow $\mathbb{P}_p(X)$ with the $L^p$-Wasserstein distance: For all $\mu_0,\mu_1\in \mathbb{P}_p(X)$
\begin{equation}
    \mathbb{W}_p^p(\mu_0,\mu_1) = \inf_{\rho} \int \dis^p(x,y) d\rho (x,y),
\end{equation}
where the infimum is taken over all probability measures in $\rho \in\mathbb{P}(X\times X)$ such that their marginals are $\mu_0$ and $\mu_1$, i.e. for the projections $X\times X\ni (x,y)\mapsto \mathrm{proj}_1(x,y)=x$, $X\times X\ni (x,y)\mapsto \mathrm{proj}_2(x,y)=y$ we have $(\mathrm{proj}_i)_\#\rho=\mu_{i-1}$ for $i\in \{1,2\}$. It can be proved that one can always find measures that achieve such infimum, they are called optimal transports and we denote the set of them by $\Opt(\mu_0,\mu_1)$. 

The $L^2$-Wasserstein space will inherit some geometric properties from its base. For example, if $(X,\dis)$ is a geodesic space, then $(\mathbb{P}_2(X),\mathbb{W}_2)$ is also a geodesic space. Furthermore, the next result tell us that the optimal transport between two measures occurs by moving  along geodesics of $X$.
Recall that for $t\in [0,1]$, the evaluation map $e_t \colon \Geo(X)\rightarrow X$ is defined as $e_t(\gamma)= \gamma_t$.

\begin{theorem}{(Theorem 3.10 in \cite{AmbrosioGigli2013})}\th\label{T: Dynamical optimal transport}
Let $(X,\dis)$ be a Polish geodesic space. Then the $L^2$-Wasserstein space  $(\mathbb{P}_2(X),\mathbb{W}_2)$ is also a geodesic space. Furthermore, the following are equivalent:
\begin{enumerate}
    \item A curve $t\mapsto \eta_t$ is a geodesic.
    \item There exists a measure $\eta\in \mathbb{P}(\Geo(X))$  called a dynamical optimal transport, such that $(e_0,e_1)_{\#}\eta \in \Opt(\eta_0,\eta_1)$ and 
    \[
    \eta_t = e_{t\# }\eta.
    \]
\end{enumerate}
The set of measures described in $(2)$ is denoted by $\Opt\Geo (\mu_0,\mu_1)$.
\end{theorem}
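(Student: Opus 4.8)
The plan is to establish three things: that $(\mathbb{P}_2(X),\mathbb{W}_2)$ is geodesic, that (2) implies (1), and that (1) implies (2). The existence of geodesics in the Wasserstein space will fall out of the construction used for the converse direction together with the implication (2) $\Rightarrow$ (1), so I would organize the argument around the two implications and treat existence as a corollary.

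First I would prove the implication (2) $\Rightarrow$ (1), which is elementary. Given $\eta \in \mathbb{P}(\Geo(X))$ with $(e_0,e_1)_\#\eta \in \Opt(\eta_0,\eta_1)$, set $\eta_t := (e_t)_\#\eta$. For $0\le s\le t\le 1$ the measure $(e_s,e_t)_\#\eta$ is an admissible coupling of $\eta_s$ and $\eta_t$, so using the defining identity $\dis(\gamma_s,\gamma_t)=|t-s|\dis(\gamma_0,\gamma_1)$ for geodesics,
\[
\mathbb{W}_2^2(\eta_s,\eta_t)\le \int \dis^2(\gamma_s,\gamma_t)\,d\eta(\gamma)=|t-s|^2\int\dis^2(\gamma_0,\gamma_1)\,d\eta(\gamma)=|t-s|^2\,\mathbb{W}_2^2(\eta_0,\eta_1),
\]
the last equality because $(e_0,e_1)_\#\eta$ is optimal. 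Feeding these bounds on $[0,s]$, $[s,t]$ and $[t,1]$ into the triangle inequality for $\mathbb{W}_2$ forces all inequalities to be equalities, so $t\mapsto\eta_t$ has constant speed and is a geodesic.

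For the converse (1) $\Rightarrow$ (2), which I expect to be the crux, I would lift a Wasserstein geodesic to a measure on $\Geo(X)$ by a discretization-and-gluing scheme. Fix a geodesic $(\eta_t)$ and the dyadic times $t_i^n=i\,2^{-n}$. Choose optimal couplings between consecutive $\eta_{t_i^n}$ and $\eta_{t_{i+1}^n}$ and glue them, via the Gluing Lemma for Polish spaces, into a single measure on $X^{2^n+1}$ with the prescribed consecutive two-marginals. Using a measurable selection of geodesics --- here one invokes that $\Geo(X)$ is Polish under the sup norm, that the endpoint map $(e_0,e_1)\colon\Geo(X)\to X\times X$ is continuous and surjective (since $X$ is geodesic), and the Kuratowski--Ryll-Nardzewski selection theorem --- one connects the consecutive sampled points by geodesics to obtain a measure $\eta^n$ on the space of piecewise-geodesic curves whose evaluations at the times $t_i^n$ reproduce $\eta_{t_i^n}$.

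The heart of the argument is then a compactness-and-identification step. I would show the family $\{\eta^n\}$ is tight (tightness of each $\eta_t$ together with the uniform speed bound yields equicontinuity of the curves, whence tightness in $C([0,1],X)$ by a Wasserstein Arzel\`a--Ascoli argument), extract a weak limit $\eta$, and verify that $(e_t)_\#\eta=\eta_t$ for all $t$ (true on dyadic times by construction, then for all $t$ by continuity of $t\mapsto\eta_t$ and of evaluation) and that $\eta$ is concentrated on $\Geo(X)$. The latter is where the geodesic hypothesis on $(\eta_t)$ is essential: the constant-speed property forces, in the limit, the length of $\eta$-a.e.\ curve between times $s$ and $t$ to equal $|t-s|\,\mathbb{W}_2(\eta_0,\eta_1)$ with no loss, so the limiting curves are genuine minimizing geodesics and $(e_0,e_1)_\#\eta$ is optimal. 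I expect the main obstacle to be precisely this limiting identification: ensuring no length is lost as $n\to\infty$ --- controlled by the optimality of each consecutive coupling and lower semicontinuity of length under uniform convergence --- together with the measurability of the geodesic selection. Finally, the existence statement that $(\mathbb{P}_2(X),\mathbb{W}_2)$ is geodesic follows by applying the selection-and-lift construction to an arbitrary $\rho\in\Opt(\mu_0,\mu_1)$ to produce $\eta\in\mathbb{P}(\Geo(X))$ with $(e_0,e_1)_\#\eta=\rho$, and then invoking the already-proved implication (2) $\Rightarrow$ (1) to conclude that $t\mapsto(e_t)_\#\eta$ is a geodesic joining $\mu_0$ to $\mu_1$.
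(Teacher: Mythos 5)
The paper does not prove this statement; it is quoted verbatim from Ambrosio--Gigli's \emph{A user's guide to optimal transport}, and your outline reproduces the standard argument given there (dyadic discretization, gluing of consecutive optimal couplings, measurable geodesic selection via Kuratowski--Ryll-Nardzewski, tightness in $C([0,1],X)$, and identification of the limit), together with the correct elementary proof of $(2)\Rightarrow(1)$. The only remark worth adding is that your worry about ``losing length in the limit'' is already resolved at each finite level: the equality case of Minkowski's inequality in $L^2$ of the glued measure forces $\dis(x_0,x_{2^n})=\sum_i\dis(x_i,x_{i+1})$ almost everywhere, so the piecewise-geodesic interpolants are genuine minimizing geodesics for every $n$, and the limit is needed only to match $\eta_t$ at all times $t$ rather than to recover minimality.
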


We recall the definition of an essentially non-branching space which was first introduced by Rajala and Sturm in \cite{RajalaSturm2014}. Although in said paper the authors were working with spaces satisfying a lower Ricci curvature bound, we stress that the definition does not rely on it. 

\begin{definition}\th\label{D: essentiallynonbranch}
A metric measure space $(X,\dis,\m)$ is essentially non-branching if for every $\mu_0,\mu_1\in \mathbb{P}_2(X)$  such that $\mu_0,\mu_1 \ll \m$, we have  that every $\rho\in \Opt\Geo(\mu_0,\mu_1)$
is concentrated on a set of non-branching geodesics.
\end{definition}

\begin{rmk}
In \cite{Ohta2014} Ohta constructed a couple of explicit examples of spaces that are essentially non-branching in the sense of \th\ref{D: essentiallynonbranch} and that do exhibit some branching geodesics. 
\end{rmk}

We now recall the main theorem of this section:
\vspace*{8pt}
\begin{duplicate}[\ref{MT: Slice Theorem Cohomogenity one}]
Let $(X,\dis, \mathfrak{m})$ be an essentially non-branching space  and $G$ a compact Lie group acting on $X$ by isometries such that $X^{\ast}$ is isometric to $[-1,1]$, $[0,\infty)$, $\R$, or $\Sp^1$. Then for any $x_0\in X$, there exists $\delta_0>0$ such that, for any $0<\delta<\delta_0$ we have that
\[
  S_{x_0}:= \lbrace y \in X \,|\, \dis(x_0,y)=\dis(G(x_0), y)  \rbrace\cap B_{\delta}(x_0)
\]
is a slice through $x_0$.
\end{duplicate}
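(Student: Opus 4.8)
The plan is to verify the four defining properties of a slice from \th\ref{D: Slice} for the candidate set $S_{x_0}$, treating $x_0$ according to whether its orbit is principal or singular, but with the central technical work being to exhibit the cross-section structure of \th\ref{T: equivalence slice}. The key geometric idea is that since $X^\ast$ is one-dimensional, the function $r(\cdot) := \dis(G(x_0),\cdot)$ is essentially the pullback of a coordinate on the interval/circle $X^\ast$, so the set $S_{x_0}$ consists of points realizing the distance to the orbit $G(x_0)$ inside a small ball, i.e. ``radial'' points emanating orthogonally from the orbit. First I would fix $\delta_0$ small enough that $B_{\delta_0}(x_0)$ meets only orbits whose isotropy contains a conjugate of $G_{x_0}$ (possible by the structure of cohomogeneity one actions and upper semicontinuity of isotropy), and small enough to stay away from the other singular orbit(s) when $X^\ast$ is an interval.

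\textbf{The key steps.} Properties \eqref{Slice-3} and \eqref{Slice-4} of \th\ref{D: Slice} should come essentially for free: since $G$ acts by isometries and preserves the orbit $G(x_0)$, the function $y\mapsto \dis(G(x_0),y)$ is $G$-invariant, so $G_{x_0}$ maps $\{y : \dis(x_0,y)=\dis(G(x_0),y)\}$ to itself and preserves $B_\delta(x_0)$ (as $G_{x_0}$ fixes $x_0$), giving $G_{x_0}(S_{x_0})=S_{x_0}$; and if $g S_{x_0}\cap S_{x_0}\neq\emptyset$ then some $y\in S_{x_0}$ has $gy\in B_\delta(x_0)$ with $\dis(x_0,gy)=\dis(x_0,y)$, from which a short argument using that $g$ moves the footpoint of $y$ on $G(x_0)$ forces $g\in G_{x_0}$. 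The substance lies in \eqref{Slice-1} and \eqref{Slice-2}, namely that $G(S_{x_0})$ is an open neighborhood of $G(x_0)$ filling out $B_\delta(G(x_0))$. Here I would argue that every point $y$ near $G(x_0)$ has a unique nearest point $p(y)\in G(x_0)$: because $X$ is essentially non-branching, $X^\ast$ is one-dimensional and geodesic, and the optimal transport picture of \th\ref{T: Dynamical optimal transport} lets me lift geodesics from $X^\ast$ to genuine minimizing geodesics in $X$ meeting the orbit orthogonally. The footpoint map $y\mapsto p(y)$ is then $G$-equivariant, and moving $y$ by the group element carrying $p(y)$ back to $x_0$ lands $y$ in $S_{x_0}$, proving $G(S_{x_0})\supseteq B_\delta(G(x_0))$.

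\textbf{Main obstacle.} The hard part will be establishing the uniqueness of the nearest-point projection onto $G(x_0)$ and the non-branching of the ``orthogonal'' geodesics issuing from the orbit --- this is exactly where the essentially non-branching hypothesis and the one-dimensionality of $X^\ast$ must be combined, since without a curvature bound one cannot invoke comparison or an exponential map. I expect the argument to proceed by choosing the transport measures concentrated on geodesics transverse to the orbits (using that a.e. geodesic projects to a geodesic in the one-dimensional $X^\ast$, so it can cross each orbit at most once), and then using essential non-branching to rule out two distinct minimizing segments from a point to the orbit coalescing. A secondary delicate point is the construction of the local cross-section $\chi\colon U\to G$ over a neighborhood $U\subset G/G_{x_0}$ of $eG_{x_0}$ and the verification that $F(gH,s)=\chi(gH)s$ is a homeomorphism onto its image as in \th\ref{T: equivalence slice}; this is where I would finally read off that $B_\delta(G(x_0))$ is equivariantly a product of the orbit with $S_{x_0}$, completing both \eqref{Slice-1} and \eqref{Slice-2}.
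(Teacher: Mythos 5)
Your proposal follows essentially the same route as the paper: verify Bredon's four slice conditions, with the crux being the uniqueness of the nearest point on $G(x_0)$, which the paper derives from the non-branching of horizontal geodesics, itself obtained by combining essential non-branching with the one-dimensionality of $X^\ast$ exactly as you outline (the cross-section construction you flag as a secondary step is not needed, since Bredon's four conditions already characterize a slice). The only substantive detail you leave implicit is the mechanism that makes essential non-branching applicable at all: the paper averages the candidate branching transport plan over the Haar measure of $G$ so that its marginals become the $G$-invariant lifts of absolutely continuous measures on $X^\ast$, hence absolutely continuous with respect to $\m$, and only then does the hypothesis yield a contradiction.
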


\begin{rmk}
Here we need to take $\delta_0$ small enough so that $\pi(B_{\delta_0}(x_0))$ is homeomorphic to one of $(x_0^\ast-\delta_0,x_0^\ast+\delta_0)$ or $[x_0^\ast,x_0^\ast+\delta)$ or $(x_0-\delta_0,x_0]$, for $\pi\colon X\to X/G$.
\end{rmk}

Before going through the proof of this Theorem we will need to introduce a particular class of geodesics which let us define  the set that turns to be the slice. 

\begin{definition}
Let $\pi \colon X \rightarrow X^\ast$ be the quotient map. We will say that a geodesic $\gamma \in \Geo (X)$ is horizontal if $\pi\circ \gamma \in \Geo(X^\ast)$.
\end{definition}

\begin{lemma}\th\label{lemma.projection.horizontalgeo}
Let $\gamma \in \Geo(X)$ be such that $\dis(\gamma_0,\gamma_1)= \dis^\ast (\gamma_0^\ast,\gamma_1^\ast)$. Then $\pi\circ \gamma \in \Geo (X^\ast)$.    
\end{lemma}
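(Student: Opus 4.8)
The statement asserts that if a geodesic $\gamma$ in $X$ realizes the distance between its endpoints' orbits—that is, $\dis(\gamma_0,\gamma_1)=\dis^\ast(\gamma_0^\ast,\gamma_1^\ast)$—then its projection to the orbit space is again a geodesic. The plan is to exploit the fact that $\pi$ is $1$-Lipschitz together with the defining metric equality to sandwich the length of $\pi\circ\gamma$ from both sides. First I would recall that the orbital metric $\dis^\ast(x^\ast,y^\ast)=\dis(G(x),G(y))$ is non-expanding under $\pi$: for all $s,t$ we have $\dis^\ast(\gamma_s^\ast,\gamma_t^\ast)=\dis(G(\gamma_s),G(\gamma_t))\leq \dis(\gamma_s,\gamma_t)$, since $\gamma_s\in G(\gamma_s)$ and $\gamma_t\in G(\gamma_t)$ and the orbit distance is an infimum over the orbits. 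This is the only soft input needed.

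Next I would combine this Lipschitz bound with the hypothesis. Since $\gamma$ is a geodesic in $X$, for any $0\leq s\leq t\leq 1$ we have $\dis(\gamma_s,\gamma_t)=(t-s)\dis(\gamma_0,\gamma_1)$. Applying the $1$-Lipschitz estimate of $\pi$ on each subinterval, and then the triangle inequality across the whole interval, gives the chain
\begin{equation*}
\dis^\ast(\gamma_0^\ast,\gamma_1^\ast)\leq \dis^\ast(\gamma_0^\ast,\gamma_s^\ast)+\dis^\ast(\gamma_s^\ast,\gamma_t^\ast)+\dis^\ast(\gamma_t^\ast,\gamma_1^\ast)\leq \dis(\gamma_0,\gamma_s)+\dis(\gamma_s,\gamma_t)+\dis(\gamma_t,\gamma_1).
\end{equation*}
The right-hand side equals $\bigl(s+(t-s)+(1-t)\bigr)\dis(\gamma_0,\gamma_1)=\dis(\gamma_0,\gamma_1)$. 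By the hypothesis $\dis(\gamma_0,\gamma_1)=\dis^\ast(\gamma_0^\ast,\gamma_1^\ast)$, so the two ends of this chain coincide and every inequality in between must be an equality. In particular $\dis^\ast(\gamma_s^\ast,\gamma_t^\ast)=\dis(\gamma_s,\gamma_t)=(t-s)\dis(\gamma_0,\gamma_1)=(t-s)\dis^\ast(\gamma_0^\ast,\gamma_1^\ast)$ for all $s\leq t$, which is exactly the constant-speed geodesic condition for $\pi\circ\gamma$ in $X^\ast$. Hence $\pi\circ\gamma\in\Geo(X^\ast)$.

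The argument is essentially a forcing of equality in a Lipschitz chain, so there is no genuine analytic obstacle; the delicate point to verify carefully is simply that $\pi$ is $1$-Lipschitz with respect to $\dis^\ast$, which requires that the orbits be compact so that the infimum $\dis(G(x),G(y))$ is attained and the orbital metric is well defined—this holds here since $G$ is a compact Lie group acting properly. I would state that Lipschitz property explicitly as the first line of the proof, then present the sandwich inequality above and conclude by reading off the geodesic equation for the projected curve.
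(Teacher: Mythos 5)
Your argument is correct and is essentially the paper's own proof: both are triangle-inequality sandwich arguments that force equality using the hypothesis $\dis(\gamma_0,\gamma_1)=\dis^\ast(\gamma_0^\ast,\gamma_1^\ast)$ together with the fact that $\pi$ does not increase distances. The only cosmetic difference is that the paper runs the chain upstairs in $X$ after picking $g\in G$ with $\dis(\gamma_s,g\gamma_t)=\dis^\ast(\gamma_s^\ast,\gamma_t^\ast)$, whereas you run it downstairs in $X^\ast$ via its triangle inequality; the two chains are term-by-term equivalent.
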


\begin{proof}
Let $\gamma \in \Geo(X)$ be such that $\dis(\gamma_0,\gamma_1)= \dis^\ast (\gamma_0^\ast,\gamma_1^\ast)$, and denote by $\gamma^\ast = \pi \circ \gamma$. Take $s,t\in [0,1]$, and observe that we can assume without loss of generality that $s<t$. Now there exists some $g\in G$ such that $\dis(\gamma_s,g\gamma_t)= \dis^\ast(\gamma_s^\ast,\gamma_t^\ast)$. It is clear that $\dis(g\gamma_t, g\gamma_1)= \dis(\gamma_t,\gamma_1)$ so then
\begin{align*}
 \dis^\ast(\gamma_0^\ast,\gamma_1^\ast) &\leq\dis(\gamma_0,\gamma_1)\\
 &\leq \dis(\gamma_0,\gamma_s)+\dis(\gamma_s,g\gamma_t)+\dis(g\gamma_t,g\gamma_1)\\
 &\leq \dis(\gamma_0,\gamma_s)+\dis(\gamma_s,\gamma_t)+\dis(\gamma_t,\gamma_1)\\
 &= \dis(\gamma_0,\gamma_1)= \dis^\ast(\gamma_0^\ast,\gamma_1^\ast).
\end{align*}
This implies that $\dis(\gamma_s,\gamma_t)= \dis(\gamma_s,g\gamma_t)= \dis^\ast(\gamma_s^\ast,\gamma_t^\ast)$.
\end{proof}

\begin{lemma}\th\label{lemma.horizontaltransport}
Let $\mu_G, \nu_G \in \mathbb{P}_2(X)$ be two $G-$invariant measures (i.e. $g_{\#}\mu_G=\mu_G,g_{\#}\nu_G=\nu_G, \forall g \in G$) then any dynamical optimal transport between them must be supported on horizontal geodesics of $X$.   
\end{lemma}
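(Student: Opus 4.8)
The plan is to reduce the transport problem on $X$ to the one on the quotient $X^\ast$, show that the two have \emph{exactly} the same $\mathbb{W}_2$-cost, and then use a squeezing argument to force every optimal geodesic to realize the orbital distance between its endpoints; \th\ref{lemma.projection.horizontalgeo} then upgrades this to horizontality. Throughout I write $\mu_G^\ast := \pi_\#\mu_G$ and $\nu_G^\ast := \pi_\#\nu_G$, and I denote by $dg$ the normalized Haar measure on $G$.

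First I would record the easy \emph{lower bound}. Since $\dis^\ast(x^\ast,y^\ast)=\dis(G(x),G(y))\le \dis(x,y)$, the projection $\pi$ is $1$-Lipschitz, so $\dis(\gamma_0,\gamma_1)\ge \dis^\ast(\gamma_0^\ast,\gamma_1^\ast)$ for every $\gamma\in\Geo(X)$. Pushing any coupling of $\mu_G,\nu_G$ forward through $\pi\times\pi$ produces a coupling of $\mu_G^\ast,\nu_G^\ast$ whose cost does not increase, whence $\mathbb{W}_2(\mu_G,\nu_G)\ge \mathbb{W}_2(\mu_G^\ast,\nu_G^\ast)$.

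The heart of the proof is the matching \emph{upper bound} $\mathbb{W}_2(\mu_G,\nu_G)\le \mathbb{W}_2(\mu_G^\ast,\nu_G^\ast)$, obtained by lifting an optimal plan on $X^\ast$ orbit by orbit. Disintegrating the $G$-invariant measure $\mu_G$ over $\pi$ gives $\mu_G=\int_{X^\ast}\mu_{x^\ast}\,d\mu_G^\ast(x^\ast)$, and by $G$-invariance together with uniqueness of the disintegration, each conditional $\mu_{x^\ast}$ is the unique $G$-invariant (i.e.\ Haar-pushforward) probability measure on the orbit $\pi^{-1}(x^\ast)=G(x)$; likewise for $\nu_G$. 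Fix an optimal coupling $\bar\rho\in\Opt(\mu_G^\ast,\nu_G^\ast)$. For $\bar\rho$-a.e.\ pair $(x^\ast,y^\ast)$ choose, by compactness of orbits, points $p_0\in\pi^{-1}(x^\ast)$, $q_0\in\pi^{-1}(y^\ast)$ with $\dis(p_0,q_0)=\dis^\ast(x^\ast,y^\ast)$, and set $\sigma_{x^\ast,y^\ast}:=\big(g\mapsto(gp_0,gq_0)\big)_\#\,dg$. The two marginals of $\sigma_{x^\ast,y^\ast}$ are exactly the uniform orbit measures $\mu_{x^\ast}$ and $\nu_{y^\ast}$, and because $G$ acts by isometries,
\[
\dis(gp_0,gq_0)=\dis(p_0,q_0)=\dis^\ast(x^\ast,y^\ast)\qquad\text{for all }g\in G .
\]
Assembling $\rho:=\int_{X^\ast\times X^\ast}\sigma_{x^\ast,y^\ast}\,d\bar\rho(x^\ast,y^\ast)$ yields a coupling of $\mu_G,\nu_G$ with cost $\int(\dis^\ast)^2\,d\bar\rho=\mathbb{W}_2^2(\mu_G^\ast,\nu_G^\ast)$, giving the upper bound. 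Combining the two bounds, $\mathbb{W}_2(\mu_G,\nu_G)=\mathbb{W}_2(\mu_G^\ast,\nu_G^\ast)$.

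Finally I would conclude by squeezing. Let $\eta\in\Opt\Geo(\mu_G,\nu_G)$; by \th\ref{T: Dynamical optimal transport} the plan $(e_0,e_1)_\#\eta$ is optimal, and $(\pi\circ e_0,\pi\circ e_1)_\#\eta$ couples $\mu_G^\ast,\nu_G^\ast$, so
\[
\mathbb{W}_2^2(\mu_G,\nu_G)=\int \dis^2(\gamma_0,\gamma_1)\,d\eta
\ge \int (\dis^\ast)^2(\gamma_0^\ast,\gamma_1^\ast)\,d\eta
\ge \mathbb{W}_2^2(\mu_G^\ast,\nu_G^\ast)=\mathbb{W}_2^2(\mu_G,\nu_G).
\]
All inequalities are equalities, so the nonnegative integrand $\dis^2(\gamma_0,\gamma_1)-(\dis^\ast)^2(\gamma_0^\ast,\gamma_1^\ast)$ vanishes $\eta$-a.e.; hence $\dis(\gamma_0,\gamma_1)=\dis^\ast(\gamma_0^\ast,\gamma_1^\ast)$ for $\eta$-a.e.\ $\gamma$, and \th\ref{lemma.projection.horizontalgeo} shows each such $\gamma$ is horizontal. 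I expect the main obstacle to be the lifting step: one must verify that $(x^\ast,y^\ast)\mapsto(p_0,q_0)$ can be chosen measurably (a standard measurable-selection argument, the set of minimizing pairs being a measurable multifunction with nonempty compact values) and that the resulting assembly $\rho$ has the claimed marginals; the geometric point—that the Haar orbit-coupling realizes the orbital distance—is immediate from $G$ acting isometrically.
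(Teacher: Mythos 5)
Your proof is correct and takes essentially the same route as the paper: both arguments reduce the problem to the identity $\mathbb{W}_2(\mu_G,\nu_G)=\mathbb{W}_2(\pi_\#\mu_G,\pi_\#\nu_G)$ together with the optimality of the projected plan, squeeze to get $\dis(\gamma_0,\gamma_1)=\dis^\ast(\gamma_0^\ast,\gamma_1^\ast)$ for a.e.\ geodesic of the transport, and then invoke \th\ref{lemma.projection.horizontalgeo}. The only difference is that the paper imports this Wasserstein identity directly from \cite[Theorem 3.2]{GalazGarciaKellMondinoSosa2018} (restated as \th\ref{theorem-lift-to-G-invariant-measures}), whereas you re-derive it by disintegrating over the orbits and measurably selecting distance-realizing pairs; this makes your write-up self-contained but does not change the substance of the argument.
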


\begin{proof}
Let $\mu_G, \nu_G \in \mathbb{P}_2(X)$ be $G$-invariant  measures  and take $\rho \in \Opt (\mu_G,\nu_G)$. Let $\pi\colon X\rightarrow X^\ast$ denote the quotient map. Then by \cite[Theorem 3.2]{GalazGarciaKellMondinoSosa2018} $(\pi,\pi)_{\#}\rho \in \Opt(\mu_G^\ast,\nu_G^\ast)$, where $\mu_G^\ast, \nu_G^\ast$ are now viewed as probability measures in $X^\ast$. From this we have that
\begin{align*}
    \int \dis^2(x,y) d\rho(x,y) &= \int \dis^{\ast 2} (x^\ast, y^\ast) d(\pi,\pi)_{\#}\rho (x^\ast,y^\ast)\\
    &= \int \dis^{\ast 2}(\pi(x),\pi(y))d\rho(x,y).
\end{align*}
This implies 
\[
\int \dis^2(x,y)-\dis^{\ast 2}(\pi(x),\pi(y)) d\rho (x,y)= 0.
\]
As the integrand is non-negative we have that it must be equal to zero $\rho$-a.e. This yields that  all points $(x,y)\in \supp \rho$ are joined by horizontal geodesics. Hence, when we lift it to a measure in $\Opt \Geo (\mu_G,\nu_G)$  via a geodesic selection (see for example Theorem 3.10 and Lemma 3.11 in \cite{AmbrosioGigli2013}) the resulting measure must be supported on horizontal geodesics.
\end{proof}

In \cite{GalazGarciaKellMondinoSosa2018} Galaz-Garc\'ia--Kell--Mondino--Sosa studied the relationship between measures supported on the quotient space $X^\ast$ and convenient lifts into $\mathbb{P}(X)$ that could be used to deduce curvature bounds on the quotient. Let us recall parts of this analysis (the interested reader can find further details in subsection $2.3$ and Section $3$ in \cite{GalazGarciaKellMondinoSosa2018} ).

Let $\mathbb{H}$ be the normalized Haar measure of the compact Lie group $G$. Then for every $x \in X$ the measure 
\begin{equation}\label{G-invariant-lift-Dirac-delta}
    \nu_x = \int_G \delta_{gx}d\mathbb{H}(g)
\end{equation}
is the unique $G-$invariant measure such that $\pi_{\#}\nu_x = \delta_{x^\ast}$. Furthermore, the assignment $x^\ast \mapsto \nu_x$ is well defined and measurable.
This can be used to define a lift map of measures on $X^\ast$
\begin{align*}
    \Lambda \colon  \mathbb{P}(&X^\ast) \rightarrow \mathbb{P}(X),\\
    &\mu \mapsto \Lambda(\mu) := \int_{X^\ast}\nu_xd\mu(x^\ast).
\end{align*}
Let $\mathbb{P}^G(X)$ denote the set of $G-$invariant measures on $X$. It is easy to check that $\Lambda(\mathbb{P}(X^\ast))\subset \mathbb{P}^G(X). $

\begin{theorem}{(Theorem $3.2$ in \cite{GalazGarciaKellMondinoSosa2018})}\th\label{theorem-lift-to-G-invariant-measures}
Let $\Lambda \colon \mathbb{P}(X^\ast)\rightarrow \mathbb{P}^G(X)$ be the lift defined above. Assume that $\mu_0,\mu_1\in \mathbb{P}(X^\ast)$ and let $\Lambda(\mu_0),\Lambda(\mu_1) \in \mathbb{P}^G(X)$ be their respective lifts. Denote by $\mathbb{P}^{ac}(X)$ the set of all measures absolutely continuous with respect to $\mathfrak{m}$. Then the following holds:
\begin{enumerate}
    \item\label{theorem-lift-to-G-invariant-measures (1)} $\Lambda({\mu}_0)$ is the unique $G-$invariant probability measure that satisfies $\pi_{\#}\Lambda({\mu}_0)=\mu_0$,
    \item\label{theorem-lift-to-G-invariant-measures (2)} $\Lambda(\mathbb{P}^{ac}(X^\ast))= \mathbb{P}^{ac}(X)\cap \mathbb{P}^G(X)$,
    \item $\Lambda(\mathbb{P}_p(X^\ast))= \mathbb{P}_p(X)\cap\mathbb{P}^G(X), p\geq 1$,
    \item $\mathbb{W}_p(\Lambda({\mu}_0),\Lambda({\mu}_1))= \mathbb{W}_p(\mu_0,\mu_1)$ whenever $\mu_0,\mu_1\in \mathbb{P}_p(X^\ast)$
\end{enumerate}
Therefore, $\Lambda$ is an isometric embedding that preserves absolutely continuous measures. And in particular, the lifts of Wasserstein geodesics in $\mathbb{P}_p(X^\ast)$ are $G-$invariant Wasserstein geodesics in $\mathbb{P}_p(X)$.
\end{theorem}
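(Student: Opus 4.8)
The plan is to prove the four items purely from the structure of the measure-preserving isometric action; no curvature bound, and in particular no non-branching hypothesis, is needed. The backbone is the disintegration of $G$-invariant measures along the orbit projection $\pi\colon X\to X^\ast$, combined with the elementary fact that each orbit $G(x)=\pi^{-1}(x^\ast)$ is, by \th\ref{T: equivariant homeomorphism from the orbit}, a compact homogeneous space $G/G_x$ on which $\nu_x$ is the \emph{unique} $G$-invariant probability measure (the push-forward of the Haar measure $\mathbb{H}$). I would establish item $(1)$ first, since it underlies everything else. That $\Lambda(\mu_0)$ is $G$-invariant and satisfies $\pi_\#\Lambda(\mu_0)=\mu_0$ follows at once from the two defining properties $g_\#\nu_x=\nu_x$ and $\pi_\#\nu_x=\delta_{x^\ast}$ by interchanging the order of integration. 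For uniqueness I would take any $\sigma\in\mathbb{P}^G(X)$ with $\pi_\#\sigma=\mu_0$, disintegrate it as $\sigma=\int_{X^\ast}\sigma_{x^\ast}\,d\mu_0(x^\ast)$ with $\sigma_{x^\ast}$ concentrated on the fiber $G(x)$, and observe that $G$-invariance of $\sigma$ together with the essential uniqueness of disintegrations forces $g_\#\sigma_{x^\ast}=\sigma_{x^\ast}$ for $\mu_0$-a.e.\ $x^\ast$ and all $g$ in a countable dense subset of $G$; by continuity each $\sigma_{x^\ast}$ is $G$-invariant on the orbit, hence equals $\nu_x$, so $\sigma=\Lambda(\mu_0)$.

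For item $(2)$ I would apply the same disintegration to the reference measure $\m$ itself: since $\m$ is $G$-invariant, its disintegration over $\pi$ with respect to $\m^\ast=\pi_\#\m$ has fiber measures equal to $\nu_x$, that is $\m=\int_{X^\ast}\nu_x\,d\m^\ast(x^\ast)$. Then for $\mu\ll\m^\ast$ with density $\rho$, a direct computation (using that $\rho\circ\pi$ is constant on each fiber) gives $\Lambda(\mu)=(\rho\circ\pi)\,\m\ll\m$, so $\Lambda$ sends absolutely continuous measures to absolutely continuous measures. Conversely, if $\sigma\ll\m$ is $G$-invariant, then $\pi_\#\sigma\ll\pi_\#\m=\m^\ast$ and $\sigma=\Lambda(\pi_\#\sigma)$ by item $(1)$; this yields both inclusions.

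The core of the theorem is the Wasserstein isometry, item $(4)$. The inequality $\mathbb{W}_p(\mu_0,\mu_1)\le\mathbb{W}_p(\Lambda(\mu_0),\Lambda(\mu_1))$ is the easy direction: any coupling $\Pi$ of $\Lambda(\mu_0),\Lambda(\mu_1)$ pushes forward under $(\pi,\pi)$ to a coupling of $\mu_0,\mu_1$, and $\dis^\ast(\pi(x),\pi(y))\le\dis(x,y)$ shows its cost does not increase. For the reverse inequality I would lift an optimal plan $\rho^\ast\in\Opt(\mu_0,\mu_1)$ on $X^\ast\times X^\ast$: for each pair $(x^\ast,y^\ast)$ choose $g_0\in G$ realizing $\dis(x,g_0 y)=\dis^\ast(x^\ast,y^\ast)$ and consider the coupling $\bigl(g\mapsto(gx,\,gg_0y)\bigr)_\#\mathbb{H}$ of $\nu_x$ and $\nu_y$; invariance of $\dis$ makes its cost identically $\dis^{\ast p}(x^\ast,y^\ast)$, while invariance of $\mathbb{H}$ recovers the two marginals. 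Integrating these fiberwise couplings against $\rho^\ast$ produces a coupling of $\Lambda(\mu_0),\Lambda(\mu_1)$ of total cost $\int\dis^{\ast p}\,d\rho^\ast=\mathbb{W}_p^p(\mu_0,\mu_1)$. The main obstacle here is measurability: one must select $g_0=g_0(x^\ast,y^\ast)$ and assemble the fiberwise couplings into a genuine Borel measure on $X\times X$. I expect to resolve this with a measurable selection theorem (Kuratowski--Ryll-Nardzewski) applied to the closed-valued multifunction of minimizers, using the stated measurability of $x^\ast\mapsto\nu_x$ together with a measurable section of $\pi$.

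Finally, for item $(3)$ I would note that $\Lambda(\delta_{x_0^\ast})=\nu_{x_0}$ is compactly supported, so membership in $\mathbb{P}_p$ is equivalent to finiteness of the $\mathbb{W}_p$-distance from it; combined with item $(4)$, whose coupling constructions remain valid even when the distances are infinite, this gives $\Lambda(\mu)\in\mathbb{P}_p(X)$ if and only if $\mu\in\mathbb{P}_p(X^\ast)$, and together with items $(1)$ and $(2)$ identifies the image exactly. The closing assertions are then formal: item $(1)$ gives injectivity and item $(4)$ gives distance preservation, so $\Lambda$ is an isometric embedding; an isometric embedding sends $\mathbb{W}_p$-geodesics to $\mathbb{W}_p$-geodesics, and by construction these land in $\mathbb{P}^G(X)$, so lifts of Wasserstein geodesics in $\mathbb{P}_p(X^\ast)$ are $G$-invariant Wasserstein geodesics in $\mathbb{P}_p(X)$.
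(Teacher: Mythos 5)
The paper does not prove this statement at all: it is imported verbatim as Theorem 3.2 of \cite{GalazGarciaKellMondinoSosa2018}, so there is no internal proof to compare against, and measured against the argument in that reference your proposal is correct and follows essentially the same route --- disintegration along $\pi$ plus uniqueness of the $G$-invariant (Haar-pushforward) probability measure on each compact orbit for items (1) and (2), the $1$-Lipschitz pushforward $(\pi,\pi)_{\#}$ for the inequality $\mathbb{W}_p(\mu_0,\mu_1)\leq \mathbb{W}_p(\Lambda(\mu_0),\Lambda(\mu_1))$, and a lift of an optimal plan by the fiberwise couplings $\big(g\mapsto (gx,gg_0y)\big)_{\#}\mathbb{H}$, whose cost is constant equal to $\dis^{\ast p}(x^\ast,y^\ast)$, for the reverse inequality. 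You also correctly isolate and resolve the only genuinely delicate step, the Borel selection of distance-realizing pairs in the fibers (Kuratowski--Ryll-Nardzewski applies since the orbits are compact, $X^\ast$ is Polish, and $x^\ast\mapsto \nu_x$ is measurable); the remaining details check out, including the disintegration of the $\sigma$-finite reference measure $\m$ in item (2), injectivity of $\Lambda$ from $\pi_{\#}\Lambda(\mu)=\mu$, and the geodesic-lifting conclusion, which is automatic because being a constant-speed distance-realizing curve is preserved under any isometric embedding.
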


In order to prove the main result of this section we need to be more careful on how we lift measures.
Observe that given a geodesic $\gamma \in \Geo(X)$, we have that $(\gamma [0,1],\dis)$ is a   convex subset of $(X,\dis)$.

\begin{prop}\th\label{prop.induced-isometry}
Fix $\gamma^\ast \in \Geo(X^\ast)$ and let $\gamma\in \Geo(X)$ be a horizontal geodesic such that $\pi(\gamma_t)=\gamma^\ast_t$ for all $t \in [0,1]$. Then there is an isometry $F \colon (\gamma^\ast [0,1],\dis^\ast) \rightarrow (\gamma[0,1], \dis)$ that induces an isometry of their respective Wasserstein spaces. Furthermore, $F$ also induces a correspondence between (dynamical) optimal transports in $(\gamma^\ast [0,1],\dis^\ast)$ and (dynamical) optimal transports in $ (\gamma[0,1], \dis)$.
\end{prop}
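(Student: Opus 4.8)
The isometry should be the obvious reparametrization map $F(\gamma^\ast_t):=\gamma_t$, $t\in[0,1]$, sending the $X^\ast$-segment traced by $\gamma^\ast$ onto the $X$-segment traced by $\gamma$. First I would check that this is well defined and bijective: since $\gamma^\ast$ is a non-degenerate geodesic of the one-dimensional space $X^\ast$ it is injective, so the parameter $t$ is recovered unambiguously from $\gamma^\ast_t$, and $\gamma$ is likewise injective, so $F$ is a bijection of $\gamma^\ast[0,1]$ onto $\gamma[0,1]$.

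The heart of the argument is to show that $F$ preserves distances, i.e. $\dis(\gamma_s,\gamma_t)=\dis^\ast(\gamma^\ast_s,\gamma^\ast_t)$ for all $s,t\in[0,1]$. Both $\gamma$ and $\gamma^\ast$ are constant-speed geodesics, so $\dis(\gamma_s,\gamma_t)=|s-t|\,\dis(\gamma_0,\gamma_1)$ and $\dis^\ast(\gamma^\ast_s,\gamma^\ast_t)=|s-t|\,\dis^\ast(\gamma^\ast_0,\gamma^\ast_1)$; hence it suffices to prove $\dis(\gamma_0,\gamma_1)=\dis^\ast(\gamma^\ast_0,\gamma^\ast_1)$. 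The inequality $\dis^\ast(\gamma^\ast_0,\gamma^\ast_1)\le\dis(\gamma_0,\gamma_1)$ is immediate since $\pi$ is $1$-Lipschitz. For the reverse inequality I would invoke that $\gamma$ is horizontal in the distance-realizing sense of \th\ref{lemma.projection.horizontalgeo} (equivalently, that it arises as a geodesic joining two orbits at minimal distance, as in \th\ref{lemma.horizontaltransport}); additivity of $\dis$ along $\gamma$ combined with the $1$-Lipschitz estimate applied on each subinterval then upgrades the endpoint equality to every pair $s,t$, exactly as in the computation proving \th\ref{lemma.projection.horizontalgeo}. I expect this to be the main obstacle, because a geodesic whose projection is merely a geodesic need not realize the quotient distance (for instance on a metric product), so one must genuinely use the distance-realizing property of the horizontal geodesics in play rather than just the literal projection condition.

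Once $F$ is an isometry of the two geodesic segments, the Wasserstein statements follow formally. The pushforward $\mu\mapsto F_\#\mu$ is a bijection $\mathbb{P}_2(\gamma^\ast[0,1])\to\mathbb{P}_2(\gamma[0,1])$, and since $F\times F$ preserves the cost $\dis^2$, it carries couplings to couplings with equal total cost; thus $\mathbb{W}_2(F_\#\mu_0,F_\#\mu_1)=\mathbb{W}_2(\mu_0,\mu_1)$, so $F_\#$ is a Wasserstein isometry and $\rho\mapsto(F\times F)_\#\rho$ sends $\Opt(\mu_0,\mu_1)$ bijectively onto $\Opt(F_\#\mu_0,F_\#\mu_1)$. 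For the dynamical transports I would use the convexity of the geodesic segments noted before the statement to see that $F$ maps geodesics to geodesics, giving a homeomorphism $F_\ast\colon\Geo(\gamma^\ast[0,1])\to\Geo(\gamma[0,1])$, $\sigma\mapsto F\circ\sigma$, for the sup metric. Since $e_t\circ F_\ast=F\circ e_t$, pushforward by $F_\ast$ commutes with the evaluation maps, so for any $\eta$ one has $(e_t)_\#(F_\ast)_\#\eta=F_\#(e_t)_\#\eta$ and $(e_0,e_1)_\#(F_\ast)_\#\eta=(F\times F)_\#(e_0,e_1)_\#\eta$; by the characterization in \th\ref{T: Dynamical optimal transport} this shows $(F_\ast)_\#$ sends $\Opt\Geo(\mu_0,\mu_1)$ bijectively onto $\Opt\Geo(F_\#\mu_0,F_\#\mu_1)$, which is the claimed correspondence.
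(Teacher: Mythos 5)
Your proposal is correct and follows essentially the same route as the paper: the same reparametrization map $F(\gamma^\ast_t)=\gamma_t$, the same cost-comparison for pushed-forward couplings (with the reverse inequality obtained via $F^{-1}$), and the same commutation $e_t\circ F=F\circ e_t$ to transfer dynamical optimal transports. The only divergence is that you are more careful on the isometry step than the paper, which simply asserts $F$ is ``clearly an isometry'': you rightly note that the literal definition of horizontal (the projection is a geodesic) does not by itself give $\dis(\gamma_0,\gamma_1)=\dis^\ast(\gamma^\ast_0,\gamma^\ast_1)$, and that one must use the distance-realizing property — which indeed holds for the horizontal geodesics actually fed into this proposition (those produced in \th\ref{lemma.horizontaltransport}) and, by \th\ref{lemma.projection.horizontalgeo}, upgrades to the segment-wise equality $\dis(\gamma_s,\gamma_t)=\dis^\ast(\gamma^\ast_s,\gamma^\ast_t)$ exactly as you describe.
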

\begin{proof}
Let us define the map $F \colon (\gamma^\ast [0,1],\dis^\ast)\rightarrow (\gamma [0,1],\dis)$ by $F(\gamma^\ast_t)= \gamma_t$. This is clearly an isometry since $\gamma $ is a horizontal geodesic such that $\pi (\gamma_t)= \gamma^\ast_t$, for any  $t\in [0,1]$. By taking pushforwards, we have a map 
\[
F_{\#}\colon \mathbb{P}_2(\gamma^\ast[0,1])\rightarrow\mathbb{P}_2(\gamma[0,1]).
\]
Take two measures $\mu_0^\ast,\mu_1^\ast\in \mathbb{P}_2(\gamma^\ast[0,1])$ and consider $\pi^\ast \in \Opt (\mu_0^\ast,\mu_1^\ast)$. The measure $(F,F)_{\#}\pi^\ast$ is an admissible plan between the measures $F_{\#}\mu_0^\ast$ and $F_{\#}\mu_1^\ast$. Now,
\begin{linenomath}
\begin{align*}
\mathbb{W}_2^2(F_{\#}\mu_0^\ast,F_{\#}\mu_1^\ast)&\leq \int_{\gamma[0,1]} \dis^{2}(x,y)d(F,F)_{\#}\pi^\ast (x,y)\\ &= \int_{\gamma^\ast[0,1]} \dis^{\ast 2}(x^\ast,y^\ast)d\pi^\ast (x^\ast,y^\ast) = \mathbb{W}^2_2(\mu_0^\ast, \mu_1^\ast).
\end{align*}
\end{linenomath}
When we consider the maps $F^{-1}$ and $F_{\#}^{-1}$ we obtain the reversed inequality, and thus conclude that $\mathbb{W}_2^2(\mu_0^\ast,\mu_1^\ast)= \mathbb{W}_2^2(F_{\#}\mu_0^\ast,F_{\#}\mu_1^\ast)$. Hence $F_{\#}$ is an isometry.
Observe that we have also shown that $(F,F)_{\#}$ induces a correspondence between optimal transports.
Thus, we only need to consider dynamical optimal transports. But first let us notice that if $\sigma \in \Geo(\gamma^\ast[0,1])$, then the curve $F(\sigma)$ defined by $F(\sigma)_t = F(\sigma_t)$ for all $t \in [0,1]$  belongs to $\Geo(\gamma [0,1])$.
Taking pushforwards we have 
\[
F_{\#}\colon \mathbb{P}(\Geo(\gamma^\ast[0,1]))\rightarrow \mathbb{P}(\Geo(\gamma [0,1])).
\] 
Proceeding just as before it is easy to confirm that if $\eta \in \Opt\Geo(\mu_0^\ast,\mu_1^\ast)$ for some measures 
$\mu_0^\ast, \mu_1^\ast \in \mathbb{P}_2(\gamma^\ast[0,1])$, then $F_{\ast}\eta \in \Opt\Geo(F_{\#}\mu_0^\ast,F_{\#}\mu_1^\ast)$.
\end{proof}

In the next result we prove that for essentially non-branching spaces with a cohomogeneity one group action we can further affirm that horizontal geodesics cannot branch. If that were to happen, we  can then find measures $\mu_0,\mu_1\in \mathbb{P}(X)$ such that 
the transport is done by moving along branching horizontal geodesics. Now, by taking $G-$invariant measures $\int_G g_{\#}\mu_id\mathbb{H}(g)$
we can obtain measures that are absolutely continuous with respect to $\mathfrak{m}$ and such that the transport is again realized by moving along horizontal branching geodesics. This gives us a contradiction.

\begin{theorem}\th\label{proposition.nonbranchinghorizontalgeodesic}
Let $(X,\dis,\mathfrak{m})$ be an essentially nonbranching space with $G$ compact Lie group acting by measure preserving isometries, assume that $(X^\ast,\dis^\ast, \mathfrak{m}^\ast)$ is isometric to either $[-1,1]$, $[0,\infty)$, $\mathbb{R}$, or $\Sp^1$.   Take $x_0\in X$  a point such that $x_0^\ast \in int X^\ast$. Then $x_0$ is in the interior of a horizontal geodesic. Furthermore, this geodesic cannot branch.     
\end{theorem}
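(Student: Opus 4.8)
The plan is to prove the statement in two parts: first exhibiting $x_0$ as an interior point of a horizontal geodesic, and then ruling out branching of such geodesics by an optimal-transport argument that exploits the one-dimensionality of $X^\ast$ and the essential non-branching hypothesis. For the existence part, since $x_0^\ast \in \inte X^\ast$ and $X^\ast$ is isometric to one of $[-1,1]$, $[0,\infty)$, $\R$, or $\Sp^1$, I can choose $a^\ast, b^\ast \in X^\ast$ with $x_0^\ast$ strictly between them, lying on a geodesic $\gamma^\ast \in \Geo(X^\ast)$ through $x_0^\ast$. Because the orbits are compact, I can lift this to a point-realization: choose a preimage and use \th\ref{lemma.projection.horizontalgeo} in reverse by selecting, at each stage, group elements realizing the orbital distance. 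Concretely, I would lift $\gamma^\ast$ to a curve $\gamma$ of the same length with $\dis(\gamma_0,\gamma_1) = \dis^\ast(\gamma_0^\ast,\gamma_1^\ast)$; by \th\ref{lemma.projection.horizontalgeo} this $\gamma$ is automatically horizontal and is a genuine geodesic in $X$ with $x_0 = \gamma_{t_0}$ for some interior $t_0$.

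\medskip

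For the non-branching part, I would argue by contradiction following the roadmap sketched in the paragraph preceding the statement. Suppose two horizontal geodesics $\gamma, \sigma$ agree on $[0,t_0]$ (with $\gamma_{t_0} = x_0$) but branch afterward. The key idea is to transfer the branching downstairs. Since $\pi \circ \gamma$ and $\pi \circ \sigma$ are geodesics in the one-dimensional space $X^\ast$, and a geodesic through an interior point of $[-1,1]$, $\R$, $\Sp^1$, or $[0,\infty)$ is locally unique and cannot itself branch, the projections $\gamma^\ast$ and $\sigma^\ast$ must \emph{coincide} on a neighborhood of $t_0$ — so the branching of $\gamma$ and $\sigma$ occurs entirely within a single fiber direction, i.e.\ $\gamma_t$ and $\sigma_t$ lie in the same orbit $G(\gamma_t)$ for $t$ near $t_0$. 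I would then build two absolutely continuous $G$-invariant measures whose optimal dynamical transport is forced, via \th\ref{lemma.horizontaltransport} and \th\ref{prop.induced-isometry}, to be supported on these branching horizontal geodesics. The averaging construction $\mu_i \mapsto \int_G g_\# \mu_i \, d\mathbb{H}(g)$ produces $G$-invariant measures, and by \th\ref{theorem-lift-to-G-invariant-measures}\eqref{theorem-lift-to-G-invariant-measures (2)} these can be arranged to be absolutely continuous with respect to $\m$ (by lifting absolutely continuous measures from $X^\ast$). This contradicts \th\ref{D: essentiallynonbranch}.

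\medskip

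The main obstacle I anticipate is making the branching transfer rigorous: I need to ensure that an honest branching of horizontal geodesics in $X$ survives the averaging and lifting procedure as a branching in $\Opt\Geo$ of absolutely continuous measures. The delicate point is that a single pair of branching geodesics has measure zero, so I must thicken the construction — choosing a small positive-measure family of branch points and a positive-measure family of outgoing branches — while keeping the transport optimal. Here \th\ref{prop.induced-isometry} is the crucial tool: it identifies dynamical optimal transports along the horizontal geodesic $\gamma[0,1]$ with those along $\gamma^\ast[0,1]$ downstairs, so I can design the downstairs transport (where everything is one-dimensional and explicit) and pull it back. I would lift via $\Lambda$ an absolutely continuous transport on $X^\ast$ concentrated near $x_0^\ast$; by \th\ref{theorem-lift-to-G-invariant-measures} this yields a $G$-invariant absolutely continuous Wasserstein geodesic in $\mathbb{P}_2(X)$, and by \th\ref{lemma.horizontaltransport} it is supported on horizontal geodesics. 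If some positive fraction of these branch, essential non-branching is violated; if none branch, then in particular the geodesic through $x_0$ does not branch, which is exactly the desired conclusion.
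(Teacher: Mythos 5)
Your existence argument is fine and in fact a little more elementary than the paper's: you lift the geodesic of $X^\ast$ directly by choosing endpoints in the two orbits realizing the orbital distance (possible since orbits are compact) and invoking \th\ref{lemma.projection.horizontalgeo}, whereas the paper routes through the Wasserstein lift $\Lambda$ of the Dirac-delta geodesic and \th\ref{lemma.horizontaltransport}. Both yield a horizontal geodesic whose projection passes through $x_0^\ast$ in its interior, after which a single group translation finishes the job.

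The non-branching half has a genuine gap in its concluding step. The essential non-branching condition quantifies over \emph{all} plans in $\Opt\Geo(\mu_0,\mu_1)$ for absolutely continuous $\mu_0,\mu_1$, so to reach a contradiction you must exhibit one specific optimal dynamical plan, between absolutely continuous marginals, that gives positive mass to a set of branching geodesics containing both branches $\gamma^1,\gamma^2$. Your final dichotomy --- ``lift via $\Lambda$ an absolutely continuous transport concentrated near $x_0^\ast$; if none of its geodesics branch, then the geodesic through $x_0$ does not branch'' --- is a non sequitur: $\Lambda$ lifts measures, not dynamical plans, and whatever plan you select to represent the lifted Wasserstein geodesic has a support you do not control; it may be carried entirely by $\gamma^1$-type geodesics and never see $\gamma^2$, in which case its non-branching tells you nothing about the pair. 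This is exactly why the paper builds the plan by hand: it takes a downstairs plan $\eta^\ast$ between $\mu_0$ (supported in $\gamma^{1\ast}[0,t_0/2]$, with a deliberate gap so the transport must cross the branch point) and $\mu_1$ (supported in $\gamma^{1\ast}[t_0,1]$), forms the mixture $\tfrac12 F^1_{\#}\eta^\ast+\tfrac12 F^2_{\#}\eta^\ast$ so that \emph{both} branches are charged by construction, averages over $G$ to make the endpoint marginals equal to the absolutely continuous lifts $\Lambda(\mu_0)$, $\Lambda(\mu_1)$, and --- crucially --- verifies by an explicit cost computation that this mixed, averaged plan is still optimal (optimality of a convex combination of two pushed-forward plans is not automatic; the paper compares its cost with $\mathbb{W}_2^2(\mu_0,\mu_1)$ using \th\ref{theorem-lift-to-G-invariant-measures}). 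You correctly name $F^1,F^2$ and the $G$-averaging as ingredients, but without the explicit mixture and its optimality verification the contradiction is not established.
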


\begin{proof}
Let $x_0\in X$ such that $x_0^\ast \in \mathrm{int} (X^\ast)$. Thanks to our assumptions on the quotient $X^\ast$ we can take $y_0^\ast, y_1^\ast \in X^\ast$ such that $x_0^\ast$ is the unique midpoint between them. This gives us in particular that there is a unique geodesic $\gamma^\ast_t$ from $y^\ast_0$ to $y^\ast_1$ and the curve $\eta^\ast_t:=\delta_{\gamma^\ast_t}$ is a constant speed geodesic of $(\mathbb{P}_2(X^\ast),\mathbb{W}_2)$ joining $\delta_{y_0^\ast}$ and $\delta_{y_1^{\ast}}$ (see \cite[p. 37]{AmbrosioGigli2013}).

Recall that we have a lift $\Lambda\colon \mathbb{P}_2(X)\to \mathbb{P}_2(X)$ defined in \eqref{G-invariant-lift-Dirac-delta} and consider the $G$-in\-va\-riant measures $\nu_{y_0},\nu_{y_1},\nu_{x_0}\in \mathbb{P}^G(X)$. Now consider the curve $\eta_t:=\Lambda(\eta_t^\ast)$. Since we have that for any $s,t\in [0,1]$ 
\[
\mathbb{W}_2(\eta_s,\eta_t)=\mathbb{W}_2(\eta^\ast_s,\eta^\ast_t)=|s-t|\dis^\ast(y^\ast_0,y^\ast_1),
\]
we conclude that $\eta_t$ is a geodesic in $(\mathbb{P}_2(X),\mathbb{W}_2)$. Then by \th\ref{T: Dynamical optimal transport} there exists a dynamical optimal transport $\eta \in \Opt\Geo (\nu_{y_0},\nu_{y_1})$, and by \th\ref{lemma.horizontaltransport} it has to be supported on horizontal geodesics. 

We point out that by construction we have that $(\pi\circ e_{\frac{1}{2}})_{\#}\eta = \delta_{x^\ast_0}$.
Hence, we have a horizontal geodesic $\gamma \in \supp \eta$ such that $\gamma_{\frac{1}{2}}\in \pi^{-1}(x^\ast_0)$. Observe that there exists $g \in G$ such that $g\gamma_{\frac{1}{2}}=x_0$, so  $g\gamma$ is the sought horizontal geodesic.

Now, to prove the second part of the proposition we will proceed by contradiction. Suppose that we have two horizontal geodesics $\gamma^1,\gamma^2 \in \Geo(X)$ and some $t_0\in (0,1) $ such that 
\begin{align*}
    \gamma^1_t=\gamma^2_t &\text{ for all } t \in [0,t_0],\\
    \gamma^1_s \neq \gamma^2_s &\text{ for some } s \in (t_0,1].\\
\end{align*}
WLOG we can assume that $\gamma^{1\ast}_t= \pi(\gamma^1)_t = \pi(\gamma^2)_t= \gamma^{2\ast}_t$ for all $t \in [0,1]$ and that there is a unique geodesic $\gamma^\ast$ between $\gamma^{1\ast}_0$ and $\gamma^{1\ast}_1$,  and $\gamma^\ast_t=\gamma^{1\ast}_t=\gamma^{2\ast}_t$ for all $t\in [0,1]$. 

We can easily find isometries 
\begin{align*}
    F^1\colon& \gamma^{\ast}[0,1]\rightarrow \gamma^1[0,1]\\ 
    F^2\colon& \gamma^{\ast}[0,1]\rightarrow \gamma^2[0,1]\\ 
\end{align*}
that by \th\ref{prop.induced-isometry} induce isometries of their respective Wasserstein spaces and correspondences between (dynamical) optimal transports. Observe that $(F^{i})^{-1}=\pi|_{\gamma^i[0,1]}$. Furthermore, the maps $F^i \colon \gamma^{\ast}[0,1]\rightarrow \gamma^i[0,1]$ can also be used to induce maps between the corresponding spaces of geodesics. That is, we have for $i= 1,2$ maps 
$F^i\colon \Geo(\gamma^\ast [0,1])\rightarrow \Geo (\gamma^i[0,1])$ given by $(F^i(\sigma^\ast))_t:= F^i(\sigma^\ast_t)$ for all $t \in [0,1]$ and $\sigma^\ast \in \Geo(\gamma^\ast[0,1])$. Put in other words, we have that $e_t(F^i(\sigma^\ast))= (F^i\circ e_t)(\sigma^\ast)$ for all $t \in [0,1]$ and $\sigma^\ast \in \Geo(\gamma^\ast[0,1])$.

We set 
\[
\mu_0 = \frac{\mathfrak{m}^\ast \llcorner\gamma^{1\ast}[0, t_0/2]}{\mathfrak{m}^\ast(\gamma^{1\ast}[0,t_0/2])}, \quad  \mu_1= \frac{\mathfrak{m}^\ast\llcorner\gamma^{1\ast} [t_0,1]}{\mathfrak{m}^\ast(\gamma^{1\ast}[t_0,1])}\in \mathbb{P}_2(X^\ast).
\]
We observe for $i=1,2$ and $j=0,1$ we have  $F^i_{\#}\mu_j\in \mathbb{P}(X)$, defined simply for a Borel subset $A\subset X$ by $F^i_{\#}\mu_j(A):=F^i_{\#}\mu_j(A\cap \gamma^i[0,1])$. From the definition of $F^i_{\#}\mu_j$, the fact that $\gamma^i[0,1]\subset X$ is a totally convex subset of $X$ since it is a minimizing geodesic, and $F^i$ is an isometry, one can check that $F^i_{\#}\mu_j\in \mathbb{P}_2(X)$. We also point out that 
\[
F^1_{\#}\mu_0 = F^2_{\#}\mu_0 \text{ since } \gamma^1_t = \gamma^2_t \text{ for all } t \in [0,t_0].
\]

Let $\eta^\ast\in \Opt\Geo (\mu_0,\mu_1)$, and consider a geodesic $\sigma^\ast\in \supp \eta^\ast$. By our hypothesis on $X^\ast$ notice that its endpoints belong to $\gamma^\ast [0,1]$. Thus $\sigma^\ast[0,1]\subset\gamma^\ast[0,1]$. Hence 
$\eta^\ast \in \mathbb{P}_2(\Geo(\gamma^\ast [0,1]))$.
Let $\eta^i = F^i_{\#}\eta^\ast\in \mathbb{P}(\mathrm{Geo}(\gamma^i[0,1]))$ for $i=1,2$. Since $\gamma^i[0,1]$ is totally convex in $X$, then we have $\mathrm{Geo}(\gamma^i[0,1])\subset \mathrm{Geo}(X)$. Thus we can define $F^i_{\#}\eta^\ast$ on $\mathrm{Geo}(X)$ by $F^i_{\#}\eta^\ast(\Gamma):=F^i_{\#}\eta^\ast(\Gamma\cap \mathrm{Geo}(\gamma^i[0,1]))$ for  $\Gamma\subset \mathrm{Geo}(X)$ a Borel subset. 

We claim that the measure $\frac{1}{2}\eta^1+\frac{1}{2}\eta^2$ is a dynamical optimal transport for the measures $F^1_{\#}\mu_0$ and $\frac{1}{2}F^1_{\#}\mu_1+\frac{1}{2}F^2_{\#}\mu_1$. Recalling that $e_t\circ F^i = F^i\circ e_t$ for all $t \in [0,1]$, we have that 
\begin{align*}
  e_{t\#}\left(\frac{1}{2}\eta^1+\frac{1}{2}\eta^2 \right)  &= \frac{1}{2}e_{t\#}\eta^1+\frac{1}{2}e_{t\#}\eta^2 \\
  &= \frac{1}{2}(e_t\circ F^1)_\#\eta^\ast+\frac{1}{2}(e_t\circ F^2)_\#\eta^\ast\\
  &= \frac{1}{2}(F^1\circ e_t)_\#\eta^\ast+\frac{1}{2}(F^2\circ e_t)_\#\eta^\ast\\
  &= \frac{1}{2}F^1_{\#}(e_{t\#}\eta^\ast)+\frac{1}{2}F^2_{\#}(e_{t\#}\eta^\ast).
\end{align*}
From this we conclude that $t \mapsto e_{t\#}\left(\frac{1}{2}\eta^1+\frac{1}{2}\eta^2 \right)$ is a curve joining $F^1_{\#}\mu_0$ with $\frac{1}{2}F^1_{\#}\mu_1+\frac{1}{2}F^2_{\#}\mu_1$.
Similarly we can check that $(e_0,e_1)_{\#}(\frac{1}{2}\eta^1+\frac{1}{2}\eta^2)\in \mathbb{P}(X \times X)$ has marginals $F^1_{\#}\mu_0$ and $\frac{1}{2}F^1_{\#}\mu_1+\frac{1}{2}F^2_{\#}\mu_1$.

Let us observe a couple of things:
\begin{itemize}
    \item $\pi_{\#}F^i_{\#}\eta^\ast=\eta^\ast$ so $\pi_{\#}(\frac{1}{2}\eta^1+\frac{1}{2}\eta^2)=\eta^\ast$,
    \item $\mathbb{W}_2^2(\mu_0,\mu_1)=\mathbb{W}_2^2(\mu_0,\pi_{\#}(\frac{1}{2}F^1_{\#}\mu_1+\frac{1}{2}F^2_{\#}\mu_1))\leq \mathbb{W}^2_2(F^1_{\#}\mu_0,\frac{1}{2}F^1_{\#}\mu_1+\frac{1}{2}F^2_{\#}\mu_1)$.
\end{itemize}
So when we compute the cost, we obtain
\begin{align*}
\int_{X\times X} \dis^2(x,y)d(e_0,e_1)_{\#}\left(\frac{1}{2}\eta^1+\frac{1}{2}\eta^2\right)(x,y) =& \int_{\mathrm{Geo}(X)} \dis^2(\sigma_0,\sigma_1)d\left(\frac{1}{2}F^1_{\#}\eta^\ast+\frac{1}{2}F^2_{\#}\eta^\ast\right)(\sigma)\\
=& \frac{1}{2}\int_{\mathrm{Geo}(X)} \dis^2(\sigma_0,\sigma_1)dF^{1}_{\#}\eta^\ast(\sigma)\\
&+\frac{1}{2}\int_{\mathrm{Geo}(X)} \dis^2(\sigma_0),\sigma_1))dF^2_{\#}\eta^\ast(\sigma)\\
=& \frac{1}{2}\int_{\mathrm{Geo}(\gamma^1[0,1])} \dis^2(\sigma_0,\sigma_1)dF^{1}_{\#}\eta^\ast(\sigma)\\
&+\frac{1}{2}\int_{\mathrm{Geo}(\gamma^2[0,1])} \dis^2(\tilde{\sigma}_0),\tilde{\sigma}_1))dF^2_{\#}\eta^\ast(\tilde{\sigma})\\
=& \frac{1}{2}\int_{\mathrm{Geo}(\gamma[0,1])} \dis^2(F^1(\sigma^\ast_0),F^1(\sigma^\ast_1))d\eta^\ast(\sigma^\ast)\\
&+\frac{1}{2}\int_{\mathrm{Geo}(\gamma[0,1])} \dis^2(F^2(\tilde{\sigma}^\ast_0),F^2(\tilde{\sigma}^\ast_1))d\eta^\ast(\tilde{\sigma}^\ast)\\
=& \frac{1}{2}\int_{\mathrm{Geo}(\gamma[0,1])} \dis^{\ast2}(\sigma^\ast_0,\sigma^\ast_1)d\eta^\ast(\sigma^\ast)\\
&+\frac{1}{2}\int_{\mathrm{Geo}(\gamma[0,1])} \dis^{\ast 2}(\tilde{\sigma}^\ast_0,\tilde{\sigma}^\ast_1)d\eta^\ast(\tilde{\sigma}^\ast)\\
=&\int_{\mathrm{Geo}(\gamma[0,1])} \dis^{\ast 2}(\sigma^\ast_0,\sigma^\ast_1)d\eta^\ast(\sigma^\ast)\\
=&\int_{\mathrm{Geo}(X^\ast} \dis^{\ast 2}(\sigma^\ast_0,\sigma^\ast_1)d\eta^\ast(\sigma^\ast)\\
=& \mathbb{W}^2_2(\mu_0,\mu_1).
\end{align*}
Therefore we have that $\mathbb{W}_2^2(\mu_0,\pi_{\#}(\frac{1}{2}F^1_{\#}\mu_1+\frac{1}{2}F^2_{\#}\mu_1))= \mathbb{W}^2_2(F^1_{\#}\mu_0,\frac{1}{2}F^1_{\#}\mu_1+\frac{1}{2}F^2_{\#}\mu_1)$ and that $\frac{1}{2}\eta^1+\frac{1}{2}\eta^2$ is optimal.

Let $\Gamma^\ast \subset \Geo(\gamma^\ast[0,1])$ be such that $\eta^\ast(\Gamma^\ast)=1$. Then $ \Gamma^1 = F^1\Gamma^\ast\subset \Geo(\gamma^1[0,1]), \Gamma^2 = F^2\Gamma^\ast \subset \Geo(\gamma^2[0,1])$ satisfy $\eta^i(\Gamma^i)=1$ for $i=1,2$, and so we have that $(\frac{1}{2}\eta^1+\frac{1}{2}\eta^2)(\Gamma^1\cup\Gamma^2)=1$.  

We show next that the measure $\frac{1}{2}\eta^1+\frac{1}{2}\eta^2$ gives positive mass to a set of branching geodesics. Notice that the set $\lbrace t \in [0,1]\,|\, \gamma^1_t \neq \gamma^2_t \rbrace\subset (t_0,1]$ is open: consider $t_0<s_0\leq 1$ such that $\gamma^1_{s_0}\neq \gamma^2_{s_0}$, and assume that for any $\varepsilon>0$ such that $t_0<s_0-\varepsilon<s_0+\varepsilon\leq 1$ there exists $t_\varepsilon\in (s_0-\varepsilon,s_0+\varepsilon)$ with $\gamma^1_{t_\varepsilon}=\gamma^2_{t_\varepsilon}$. Then by taking $\varepsilon=1/n$ we obtain a sequence $\{t_n\}_{n\in \N}\subset(t_0,1]$ converging to $s_0$, such that $\dis(\gamma^1_{t_n},\gamma^2_{t_n})=0$. But by continuity of $\gamma^1$, $\gamma^2$ and $\dis$  this implies that $\dis(\gamma^1_{s_0},\gamma^2_{s_0})=0$ which is a contradiction. Thus, we can find $s_0 \in (t_0,1]$ and $\epsilon > 0$ such that $(s_0-\epsilon,s_0+\epsilon) \subset\lbrace t \in [0,1]\,|\, \gamma^1_t \neq \gamma^2_t  \rbrace. $ As the set $\gamma^\ast(s_0-\epsilon,s_0+\epsilon)$ is open we have that $\mu_1(\gamma^\ast(s_0-\epsilon,s_0+\epsilon))>0$. 
Then by defining $\Gamma^\ast_{s_0,\epsilon} = e^{-1}_1(\gamma^\ast(s_0-\epsilon,s_0+\epsilon))\cap\Gamma^\ast, $ we have
\[
\eta^\ast (\Gamma^\ast_{s_0,\epsilon})= \eta^\ast(e^{-1}_1(\gamma^\ast(s_0-\epsilon,s_0+\epsilon))\cap\Gamma^\ast)= \mu_1(\gamma^\ast(s_0-\epsilon,s_0+\epsilon))>0.
\]
So we just notice that $F^1\Gamma^\ast_{s_0,\epsilon}\subset \Geo(\gamma^1[0,1]), F^2\Gamma^\ast_{s_0,\epsilon}\subset\Geo(\gamma^2[0,1])$ which give us that 
\[
\eta^1(F^1\Gamma^\ast_{s_0,\epsilon})= F^1_{\#}\eta^\ast(F^1\Gamma^\ast_{s_0,\epsilon})>0,\quad \eta^2(F^2\Gamma^\ast_{s_0,\epsilon})= F^2_{\#}\eta^\ast(F^2\Gamma^\ast_{s_0,\epsilon})>0.
\]
We claim now that $(\frac{1}{2}\eta^1+\frac{1}{2}\eta^2)$ gives positive mass to the set $\Gamma_{branch}= F^1\Gamma^\ast_{s_0,\epsilon}\cup F^2\Gamma^\ast_{s_0,\epsilon}\subset \Geo(X)$ and that it consists of branching geodesics.
The first part is clear, so let us just deal with the second part of the claim. 
Consider $\sigma^1\in F^1\Gamma^\ast_{s_0,\epsilon}$,  and notice that $\sigma^\ast = \pi \circ \sigma^1 \in \Gamma^\ast_{s_0,\epsilon}$  and so
$F^2\sigma^\ast \in F^2\Gamma^\ast_{s_0,\epsilon}$. 
Since $\sigma^\ast_0 \in \gamma^{1\ast}[0,t_0/2] $ and $\gamma^1_t=\gamma^2_t$ for all $t\in [0,t_0]$,
we have that

\begin{align*}
    (F^2\sigma^\ast)_t &= \sigma^1_t \text{ for } t\ll 1,\\
    (F^2\sigma^\ast)_1 &\neq \sigma^1_1.
\end{align*}
Therefore the measure $(\frac{1}{2}\eta^1+\frac{1}{2}\eta^2)$ gives positive mass to the set of branching geodesics $\Gamma_{branch}$. 
Let $\mathbb{H}$ be the normalized Haar measure of $G$ and consider the measure
\[
\eta_G = \int_G g_{\#}(\frac{1}{2}\eta^1+\frac{1}{2}\eta^2)d\mathbb{H}(g) \in \mathbb{P}(\Geo(X))
\]
which is clearly $G-$invariant. Observe that 
\[
\pi_{\#}(e_{0\#}\eta_G)= \mu_0, \quad \pi_{\#}(e_{1\#}\eta_G)= \mu_1. 
\]
So by \eqref{theorem-lift-to-G-invariant-measures (1)} in \th\ref{theorem-lift-to-G-invariant-measures} we have that 
\[
e_{0\#}\eta_G = \Lambda(\mu_0), \quad e_{1\#}\eta_G = \Lambda(\mu_1).
\]
Observe that both measures are absolutely continuous with respect to $\mathfrak{m}$ by \eqref{theorem-lift-to-G-invariant-measures (2)} in \th\ref{theorem-lift-to-G-invariant-measures}. Now 
\begin{align*}
    \mathbb{W}^{2}(\Lambda(\mu_0),\Lambda(\mu_1))\leq\int \dis^2(x,y) d(e_0,e_1)_{\#}\eta_G &= \int_G\int \dis^2(\sigma_0,\sigma_1)d g(\frac{1}{2}\eta^1+\frac{1}{2}\eta^2)(\sigma)d\mathbb{H}(g)\\
    &= \int_G\int \dis^2(\sigma_0,\sigma_1)d (\frac{1}{2}\eta^1+\frac{1}{2}\eta^2)(\sigma)d\mathbb{H}(g)\\
    &= \mathbb{W}_2^2(\mu_0,\mu_1)= \mathbb{W}_2^2 (\Lambda(\mu_0),\Lambda(\mu_1)).
\end{align*}
Which yields that $\eta_G$ is optimal. Finally, observe that $\eta_G(\cup_{g \in G}g\Gamma_{branch})>0$. Since it is clear that $\cup_{g \in G}g\Gamma_{branch}$ consists of branching geodesics this gives us a contradiction to the fact that $X$ is essentially non-branching.
\end{proof}

\begin{rmk}
Notice that in the proof of \th\ref{proposition.nonbranchinghorizontalgeodesic} we needed to take the measure $\mu_0$ with its support in $\gamma^{1\ast}[0,t_0/2]$ in order to ensure that the geodesics involved in the transport from $\mu_0$ to $\mu_1$ do branch as they move along $\gamma^{1\ast}[0,t_0].$     
\end{rmk}

As a quick consequence of \th\ref{proposition.nonbranchinghorizontalgeodesic}, we get the following two lemmas:

\begin{lemma}[Kleiner's Lemma]\th\label{L: Kleiners Lemma}
Let $(X,\dis,\mathfrak{m})$ be an essentially non-branching space and $G$ a compact Lie group acting by isometries. Then for any horizontal geodesic $\gamma \in \Geo(X)$ we have that for all $t\in (0,1)$ the isotropy group $G_{\gamma_t}$ is a subgroup of both $G_{\gamma_0}$  and $G_{\gamma_1}$.   
\end{lemma}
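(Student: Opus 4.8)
The plan is to exploit the non-branching of horizontal geodesics established in \th\ref{proposition.nonbranchinghorizontalgeodesic}. Fix a nonconstant horizontal geodesic $\gamma\in\Geo(X)$, a parameter $t\in(0,1)$, and an element $g\in G_{\gamma_t}$, so that $g\gamma_t=\gamma_t$; the goal is to show $g\gamma_0=\gamma_0$ and $g\gamma_1=\gamma_1$. First I would record that $g\gamma$ is again a horizontal geodesic: since $g$ acts by isometries we have $g\gamma\in\Geo(X)$, and since $g$ preserves $G$-orbits it satisfies $\pi\circ(g\gamma)=\pi\circ\gamma\in\Geo(X^\ast)$.

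The main construction is a hybrid curve $c\colon[0,1]\to X$ defined by $c_s=g\gamma_s$ for $s\in[0,t]$ and $c_s=\gamma_s$ for $s\in[t,1]$; this is continuous because $g\gamma_t=\gamma_t$. I would then verify that $c\in\Geo(X)$ and that it is horizontal. Constant speed is immediate, since both pieces are traversed at speed $\dis(\gamma_0,\gamma_1)$, so $L(c)=\dis(\gamma_0,\gamma_1)=\dis^\ast(\gamma_0^\ast,\gamma_1^\ast)$ by horizontality of $\gamma$. For minimality, projection to the quotient gives $\dis(g\gamma_0,\gamma_1)\geq\dis^\ast(\pi(g\gamma_0),\pi(\gamma_1))=\dis^\ast(\gamma_0^\ast,\gamma_1^\ast)$, while $c$ connects $g\gamma_0$ to $\gamma_1$ and so $\dis(g\gamma_0,\gamma_1)\leq L(c)=\dis^\ast(\gamma_0^\ast,\gamma_1^\ast)$; the two estimates force equality throughout, so $c$ is a minimizing constant-speed geodesic with $\dis(c_0,c_1)=\dis^\ast(c_0^\ast,c_1^\ast)$, hence horizontal by \th\ref{lemma.projection.horizontalgeo}.

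Now $c$ and $g\gamma$ are two horizontal geodesics that coincide on $[0,t]$. Since \th\ref{proposition.nonbranchinghorizontalgeodesic} rules out branching of horizontal geodesics, they must coincide on all of $[0,1]$; evaluating at $s=1$ gives $\gamma_1=c_1=g\gamma_1$, i.e.\ $g\in G_{\gamma_1}$. Applying the identical argument to the reversed (still horizontal) geodesic $s\mapsto\gamma_{1-s}$, on which $g$ fixes the interior point $\gamma_t$ now at parameter $1-t\in(0,1)$, yields $g\in G_{\gamma_0}$. Therefore $G_{\gamma_t}\subseteq G_{\gamma_0}\cap G_{\gamma_1}$, as claimed.

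I expect the only delicate point to be verifying that the hybrid curve $c$ is a genuine minimizing geodesic rather than a merely piecewise-geodesic concatenation; this is precisely where horizontality of $\gamma$ is essential, since it pins the distance $\dis(g\gamma_0,\gamma_1)$ to the quotient distance and leaves no slack for a shortcut across the junction at $s=t$. Once minimality is secured, the non-branching theorem closes the argument automatically, and the reversal step to recover $G_{\gamma_0}$ is purely formal.
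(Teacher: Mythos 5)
Your proof is correct and takes essentially the same route as the paper's: the paper also observes that $g\gamma$ is a horizontal geodesic and invokes \th\ref{proposition.nonbranchinghorizontalgeodesic} to conclude $g\in G_{\gamma_1}$, then reverses the roles to get $g\in G_{\gamma_0}$. Your write-up is in fact more careful, since you construct the hybrid curve $c$ and verify via the quotient distance that it is a minimizing horizontal geodesic — the step that makes the non-branching theorem literally applicable, which the paper leaves implicit.
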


\begin{proof}
Let $\gamma \in \Geo(X)$ be a horizontal geodesic. For $t \in (0,1)$ take some $g\in G_{\gamma_t}$ and assume that $g\gamma_1 \neq \gamma_1$. Clearly $g\gamma$ is again a horizontal geodesic, but by \th\ref{proposition.nonbranchinghorizontalgeodesic} such geodesics cannot branch. Hence $g\in G_{\gamma_1}$ and an analogous argument gives us that $g \in G_{\gamma_0}$.     
\end{proof}

\begin{lemma}[Principal isotropy Lemma]\th\label{L: Principal Isotropy Lemma}
Let $(X,\dis,\mathfrak{m})$ be an essentially non-branching space and $G$ a compact Lie group acting by isometries such that $(X^\ast,\dis^\ast)$ is isometric to $[-1,1]$, $[0,\infty)$, $\mathbb{R}$, or $\Sp^1$. Then the isotropy groups of points in orbits corresponding to the interior of $X^\ast$ are conjugated. 
\end{lemma}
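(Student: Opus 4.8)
The plan is to reduce the statement to a single-geodesic assertion and then globalize it by a connectedness argument on $\inte(X^{\ast})$. First I would show that along any single horizontal geodesic the isotropy groups of all interior points literally coincide (not merely up to conjugacy). Let $\gamma\in\Geo(X)$ be horizontal and fix $0<s<t<1$. The key observation is that any reparametrized restriction $\gamma|_{[a,b]}$ of a horizontal geodesic is again a horizontal geodesic, since its projection is a subarc of $\pi\circ\gamma\in\Geo(X^{\ast})$. Choosing $a\in(0,s)$ and $b\in(t,1)$, I would apply \th\ref{L: Kleiners Lemma} twice: to $\gamma|_{[a,t]}$ at the interior point $\gamma_s$, which yields $G_{\gamma_s}\subseteq G_{\gamma_t}$, and to $\gamma|_{[s,b]}$ at the interior point $\gamma_t$, which yields $G_{\gamma_t}\subseteq G_{\gamma_s}$. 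Hence $G_{\gamma_s}=G_{\gamma_t}$, so the isotropy is constant along the open part of a horizontal geodesic.

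Next I would use \th\ref{proposition.nonbranchinghorizontalgeodesic} to upgrade this into a local statement on the quotient. Fix an interior point $x$ and set
\[
A^{\ast}:=\{\,z^{\ast}\in\inte(X^{\ast})\mid G_z\text{ is conjugate in }G\text{ to }G_x\,\},
\]
which is well defined because points lying in a common orbit have conjugate isotropy. Given $z^{\ast}\in\inte(X^{\ast})$, I would choose a representative $z$ and, via \th\ref{proposition.nonbranchinghorizontalgeodesic}, a horizontal geodesic $\gamma$ with $z=\gamma_{t_0}$ and $t_0\in(0,1)$. By the previous paragraph $G_{\gamma_t}=G_z$ for all $t\in(0,1)$, and since $\pi\circ\gamma$ is a nonconstant geodesic in the one-dimensional space $X^{\ast}$, the set $(\pi\circ\gamma)\big((t_0-\varepsilon,t_0+\varepsilon)\big)$ is a neighbourhood of $z^{\ast}$ in $\inte(X^{\ast})$; over each of its points $(\pi\circ\gamma)_t$ the representative $\gamma_t$ has isotropy exactly $G_z$. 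Thus every interior point admits a neighbourhood over which all isotropy groups are conjugate to a single subgroup.

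Finally I would conclude by connectedness. The neighbourhood statement shows that $A^{\ast}$ is open, and also that its complement in $\inte(X^{\ast})$ is open: if $z^{\ast}\notin A^{\ast}$, then on the neighbourhood just produced every isotropy group is conjugate to $G_z$, and by transitivity of conjugacy none of them can be conjugate to $G_x$, so the neighbourhood misses $A^{\ast}$. Since $\inte(X^{\ast})$ is connected in each of the cases $[-1,1]$, $[0,\infty)$, $\R$, $\Sp^1$, and $x^{\ast}\in A^{\ast}\neq\emptyset$, I obtain $A^{\ast}=\inte(X^{\ast})$. In particular every interior isotropy group is conjugate to $G_x$, and therefore all interior isotropy groups are pairwise conjugate.

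I expect the main subtlety to be the passage from the one-sided inclusions supplied by \th\ref{L: Kleiners Lemma} to the genuine equality $G_{\gamma_s}=G_{\gamma_t}$. This is precisely the place where one exploits the freedom to restrict a horizontal geodesic to a subinterval containing a chosen interior point in its interior, and where non-branching is essential, since \th\ref{proposition.nonbranchinghorizontalgeodesic} is what guarantees that these restrictions remain non-branching horizontal geodesics to which \th\ref{L: Kleiners Lemma} applies on both sides. The remaining globalization is a routine clopen argument; the only point demanding care is that the horizontal geodesic furnished by \th\ref{proposition.nonbranchinghorizontalgeodesic} has nondegenerate projection, so that $\pi\circ\gamma$ genuinely covers a neighbourhood of $z^{\ast}$.
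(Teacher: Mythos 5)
Your argument is correct, and it rests on the same two pillars as the paper's proof: Kleiner's \th\ref{L: Kleiners Lemma} and the existence, via \th\ref{proposition.nonbranchinghorizontalgeodesic}, of a non-branching horizontal geodesic having any prescribed interior point in its interior. The difference lies in the globalization. The paper connects two arbitrary interior points $x,y$ directly: it takes the midpoint $z^\ast$ of a geodesic from $x^\ast$ to $y^\ast$, produces two horizontal geodesics $\sigma^x,\sigma^y$ whose open parts contain $x$ and a lift of $z^\ast$, respectively $y$ and another lift of $z^\ast$, reads off from Kleiner's Lemma that the interior isotropy along each is constant, and concludes that $G_x$ and $G_y$ are conjugate because the two lifts of $z^\ast$ lie in a common orbit. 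You instead prove local constancy of the conjugacy class over a neighbourhood of each interior orbit and finish with a clopen argument on the connected set $\inte(X^{\ast})$. Both routes are essentially equally short here; yours has the mild advantage of making explicit a step the paper glosses over, namely that Kleiner's Lemma as stated only gives $G_{\gamma_t}\subseteq G_{\gamma_0}\cap G_{\gamma_1}$, and that the genuine equality $G_{\gamma_s}=G_{\gamma_t}$ for interior parameters requires restricting the horizontal geodesic to subintervals placing each of $s,t$ in the interior in turn. The paper's chaining through a common midpoint orbit, on the other hand, avoids any appeal to connectedness and gives the conjugating element more explicitly.
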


\begin{proof}
Let $x,y\in X$ be points such that $x^\ast,y^\ast \in int X^\ast$. If $x=gy$, it is clear that  $G_{gy}= gG_y g^{-1}$. So we will assume that they belong to different orbits.

Let $\gamma^\ast \in \Geo(X^\ast)$ be a geodesic between $x^\ast$ and $y^\ast$. Take now $z^\ast = \gamma^\ast_{\frac{1}{2}}$ and notice that by our assumptions on $X^\ast$ there must be geodesics $\sigma^{x\ast}, \sigma^{y\ast}\in \Geo(X^\ast)$
 such that
 \[
x^\ast, z^\ast \in \sigma^{x\ast}((0,1));\, y^\ast, z^\ast \in \sigma^{y\ast}((0,1)). 
 \]
 That is, they are in the interior of the corresponding geodesics. We can find horizontal geodesics $\sigma^x,\sigma^y\in \Geo (X)$ such that
 \begin{align*}
\pi\circ \sigma^x &= \sigma^{x\ast},\, \pi\circ \sigma^y= \sigma^{y\ast},\\
x,z \in &\sigma^x((0,1)), \mbox{ and } \, y,gz \in \sigma^y((0,1)),
 \end{align*}
where $z,gz$ are points that satisfy
\[
\dis(z,x)= \dis(G(z),x), \, \dis(gz,y)= \dis(G(z),y).
\]
$\sigma^x$ being a horizontal geodesic, Kleiner's \th\ref{L: Kleiners Lemma} tells us that the isotropy of points in $\sigma^x((0,1))$ is some constant group $H$. Similarly for $\sigma^y$ we can find some group $K$ which is the isotropy of all points in $\sigma^y((0,1))$. By the observation made in the beginning of the proof it follows that $H=G_z=G_x$, $K= G_{gz}=G_y$. So $H$ and $K$ must be conjugated. As $x,y$ were chosen arbitrarily we have the claim.
\end{proof}

\begin{prop}\th\label{proposition.uniquerealiser}
Let $x_0, y\in X$ such that $y^\ast$ is in the interior of $X^\ast$. Then, there exists a unique $gx_0\in \pi^{-1}(x_0^\ast)$ such that 
\[
\dis(G(x_0) ,y) = \dis(gx_0,y).
\]
\end{prop}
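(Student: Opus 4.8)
The plan is to establish existence by a straightforward compactness argument and uniqueness by manufacturing, out of two distinct realizers, a pair of \emph{branching} horizontal geodesics, which is forbidden by \th\ref{proposition.nonbranchinghorizontalgeodesic}. For existence, note first that $\pi^{-1}(x_0^\ast)=G(x_0)$ is compact: by \th\ref{T: equivariant homeomorphism from the orbit} it is the continuous image $\tilde{\alpha}_{x_0}(G/G_{x_0})$ of a compact homogeneous space. Hence the continuous function $p\mapsto \dis(p,y)$ attains its minimum $\dis(G(x_0),y)$ at some $gx_0\in G(x_0)$, which gives existence. If $x_0^\ast=y^\ast$ then $y\in G(x_0)$ and the only realizer is $y$ itself, so from now on I assume $x_0^\ast\neq y^\ast$.

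For uniqueness, suppose $g_1x_0\neq g_2x_0$ both realize $d:=\dis(G(x_0),y)$, and let $\gamma^i\colon[0,1]\to X$ be a minimizing geodesic from $y$ to $g_ix_0$. Since $\dis(\gamma^i_0,\gamma^i_1)=d=\dis(G(x_0),y)=\dis^\ast(y^\ast,x_0^\ast)$, \th\ref{lemma.projection.horizontalgeo} shows that each $\gamma^i$ is horizontal. I would then use that $y^\ast$ lies in the interior of $X^\ast$ to choose a point $z^\ast$ on the side of $y^\ast$ opposite to $x_0^\ast$ (close enough to $y^\ast$ that the minimal geodesic from $y^\ast$ to $x_0^\ast$ is unique), so that $y^\ast$ is an interior point of a minimal geodesic from $z^\ast$ to $x_0^\ast$, i.e. $\dis^\ast(z^\ast,x_0^\ast)=\dis^\ast(z^\ast,y^\ast)+\dis^\ast(y^\ast,x_0^\ast)$. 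Picking $z$ in the compact orbit $\pi^{-1}(z^\ast)$ that minimizes $\dis(\cdot,y)$, the minimizing geodesic $\beta$ from $y$ to $z$ is again horizontal by \th\ref{lemma.projection.horizontalgeo}.

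The crux is that the concatenation $\sigma^i$ of $\beta$ reversed with $\gamma^i$, running from $z$ through $y$ to $g_ix_0$, is itself a horizontal geodesic. Indeed $\mathrm{length}(\sigma^i)=\dis(z,y)+\dis(y,g_ix_0)=\dis^\ast(z^\ast,y^\ast)+\dis^\ast(y^\ast,x_0^\ast)=\dis^\ast(z^\ast,x_0^\ast)\leq \dis(z,g_ix_0)\leq \mathrm{length}(\sigma^i)$, so equality holds throughout, $\sigma^i$ is minimizing, and horizontality follows once more from \th\ref{lemma.projection.horizontalgeo}. Reparametrizing proportionally to arclength, both $\sigma^1$ and $\sigma^2$ pass through $y$ at the \emph{same} parameter $t_0=\dis(z,y)/(\dis(z,y)+d)\in(0,1)$, and they coincide on $[0,t_0]$ because they share the single segment $\beta$. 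Since $g_1x_0\neq g_2x_0$ they differ at the endpoint, so $\sigma^1$ and $\sigma^2$ are branching horizontal geodesics, contradicting \th\ref{proposition.nonbranchinghorizontalgeodesic}. Therefore $g_1x_0=g_2x_0$.

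The step I expect to be most delicate is guaranteeing that $\gamma^1$ and $\gamma^2$ admit a \emph{common} backward extension $\beta$ through $y$, i.e. that they leave $y^\ast$ in the same direction. This is automatic in the cases $X^\ast\cong[-1,1],[0,\infty),\R$, where the minimal geodesic from $y^\ast$ to $x_0^\ast$ is unique, and in the case $X^\ast\cong\Sp^1$ precisely when $y^\ast$ and $x_0^\ast$ are not antipodal (in particular whenever $y$ is close to $x_0$, which is the regime in which the statement is applied in the Slice Theorem). In the antipodal $\Sp^1$ configuration the two geodesics could a priori project to the two distinct minimal geodesics of $X^\ast$, and a single $\beta$ no longer extends both; one then has to argue separately, so the uniformity of the choice of $z^\ast$ is the real point to be handled with care.
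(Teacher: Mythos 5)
Your proof is essentially the paper's own argument written out in full: the paper likewise extends the horizontal geodesic from $gx_0$ through $y$ (using that $y^\ast$ is interior) and concatenates it with a second minimizer to produce a pair of branching horizontal geodesics, contradicting \th\ref{proposition.nonbranchinghorizontalgeodesic}; your compactness step for existence and your verification that the concatenation is minimizing are exactly the details the paper leaves implicit. The antipodal caveat you flag for $X^\ast\cong\Sp^1$ is a genuine gap, and it is present in the paper's proof in the same form: if the two minimizers project to the two distinct minimal geodesics of $\Sp^1$, there is no common extension $\beta$ and the concatenation need not be minimizing. In fact that case cannot be repaired, since the statement fails there (in the flat mapping torus of a rotation of $\Sp^1(r)$ by an angle $\alpha\neq 0$, with $\Sp^1$ acting on the fibers, a point $y$ whose projection is antipodal to $x_0^\ast$ has exactly two closest points on $G(x_0)$); this is harmless only because the proposition is invoked in the Slice Theorem for $y\in B_\delta(x_0)$ with $\delta$ small, where the minimal geodesic from $y^\ast$ to $x_0^\ast$ is unique.
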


\begin{proof}
Let $y\in X$ with $y^\ast \in int X^\ast$. Take a point $gx_0 \in \pi^{-1}(x_0^\ast) $ such that
\[
\dis(gx_0,y) = \dis(G(x_0), y).
\]
Since $y^\ast \in int X^\ast$ we can find an horizontal geodesic $\gamma \in \Geo(X)$ with $\gamma_0= gx_0$ and $\gamma_t = y$ for some $t \in (0,1)$. Assume now that we can find a different $hx_0$ in the orbit such that $\dis(hx_0,y)=  \dis(G(x_0),y)$.

Consider $\sigma \in \Geo(X)$ a horizontal geodesic joining $hx_0$ with $y$. By adequately concatenating $\sigma$ and $\gamma$ we can obtain a branching horizontal geodesic. This contradicts \th\ref{proposition.nonbranchinghorizontalgeodesic}. Thus we must have that $gx_0= hx_0$.
\end{proof}

We are now ready to prove the main result of this section:

\begin{proof}[Proof of \th\ref{MT: Slice Theorem Cohomogenity one}]
Take $x_0 \in X$ and define 
\[
S_{x_0}:= \lbrace y \in X \,|\, \dis(x_0,y)=\dis(G(x_0), y)  \rbrace\cap B_{\delta}(x_0).
\]
Where $\delta >0$ is chosen in the following way:
\begin{align*}
    \delta < \begin{cases}
        \dis^\ast(x_0^\ast, \partial X^\ast) &\text{if } x_0^\ast\in int X^\ast,\\
        \dis^\ast(x_0^\ast, \partial X^\ast - \lbrace x_0^\ast\rbrace) &\text{if } x_0^\ast \in \partial X^\ast.
    \end{cases}
\end{align*}

We will now prove each of the items in \th\ref{D: Slice}:

\begin{itemize}
     \item[(\ref{Slice-1})] Let $y_n, n \in \mathbb{N}$ be a converging sequence of points in $S_{x_0}$ such that the limit $y \in G(S_{x_0})$. As the function 
     $\dis(x_0,\cdot) \colon X \rightarrow [0,\infty)$ is continuous we have that  $\dis(x_0, y_n)\rightarrow \dis(x_0, y)$ as $n \rightarrow \infty$. 
     Also, notice that  
     $\dis(x_0,y_n)= \dis(G(x_0),y_n)= \dis^\ast(x_0^\ast,y_n^\ast) \rightarrow \dis^\ast(x_0^\ast,y^\ast)$ as $n\rightarrow \infty$.

     Therefore $\dis(x_0,y)= \dis^\ast(x_0^\ast,y^\ast)= \dis(G(x_0),y)$.

    \item[(\ref{Slice-2})] Just notice that 
    \[
    G(S_{x_0}) = \bigcup_{gx_0\in x_0^\ast}S_{gx_0} = G(B_\delta (gx_0))
    \]
     with the set in the last equality clearly being an open neighborhood of the orbit $x_0^\ast$.

     \item[(\ref{Slice-3})] Take $g \in G_{x_0}$ and $y \in S_{x_0}$. Then $\dis(x_0,y)= \dis(x_0, gy)$. Take $hx_0 \in \pi^{-1}(x_0^\ast)$ such that $\dis(hx_0,gy)= \dis(G(x_0),gy)$ and observe that
     \[
       \dis(G(x_0), y)= \dis(G(x_0), gy)= \dis(g^{-1}hx_0, y). 
     \]
     Since $y^\ast \in int X^\ast$ by \th\ref{proposition.uniquerealiser} this implies that $g^{-1}hx_0 = x_0$ so then $h\in G_{x_0}$ and $gy \in S_{x_0}$.

     \item[(\ref{Slice-4})] First, observe that for any $g\in G$ $gS_{x_0}= S_{gx_0}$. Now, take a point $y\in S_{gx_0}\cap S_{x_0}$ and so
     \[
    \dis(gx_0, y)= \dis(x_0, y)= \dis(G(x_0), y).
     \]
     Again, by  \th\ref{proposition.uniquerealiser}  it follows that $gx_0 = x_0$ which means that $g\in G_{x_0}$.
\end{itemize}

As all of the conditions are satisfied, we conclude that $S_{x_0}$ is indeed a slice at $x_0$.
\end{proof}

\begin{rmk}\th\label{R: Slice is a topological cone}
Due to \th\ref{proposition.uniquerealiser}, we have that  $S_{x_0}$ is a union of horizontal geodesics starting which have $x_0$ as the single common point. Thus $S_{x_0}$ is homeomorphic to a cone.
\end{rmk}

If we allow the dimension of the quotient space $X^\ast$ to be larger than one, we can still get a slice but we require stronger assumptions on both $X$ and $X^\ast$.

\begin{theorem}
Let $G$ act by isometries on a non-branching space $(X,\dis,\mathfrak{m})$ and assume that $(X^\ast,\dis^\ast)$ is an Alexandrov space. Moreover, assume that there exists $\mathcal{M}\subseteq X^\ast$ an open set such that for $x_0^\ast \in \mathcal{M}$ the distance of the cut locus $\mathcal{C}(x_0^\ast)$ to $x_0^\ast$ is positive. Then, for every point $x\in x_0^\ast$ we have a slice. 
\end{theorem}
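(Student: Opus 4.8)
The plan is to follow the template of the proof of \th\ref{MT: Slice Theorem Cohomogenity one}, replacing the one-dimensionality of $X^\ast$ by the local geometry coming from the positive-injectivity-radius hypothesis on $\mathcal{M}$. Fix $x_0^\ast\in\mathcal{M}$, set $r:=\dis^\ast(x_0^\ast,\mathcal{C}(x_0^\ast))>0$, and for $x\in\pi^{-1}(x_0^\ast)$ define
\[
S_x:=\{\,y\in X\mid \dis(x,y)=\dis(G(x),y)\,\}\cap B_\delta(x),\qquad 0<\delta<r.
\]
Since $\pi$ is $1$-Lipschitz, every $y\in B_\delta(x)$ has $y^\ast\in B_r(x_0^\ast)$, so $y^\ast$ is not a cut point of $x_0^\ast$; hence there is a \emph{unique} minimizing geodesic $\gamma^\ast$ from $x_0^\ast$ to $y^\ast$ in $X^\ast$, and it extends as a minimizing geodesic slightly beyond $y^\ast$. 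I would first record that, by \th\ref{lemma.projection.horizontalgeo}, whenever $gx\in\pi^{-1}(x_0^\ast)$ realizes $\dis(gx,y)=\dis(G(x),y)$ the minimizing geodesic from $gx$ to $y$ is horizontal and projects onto $\gamma^\ast$; and that horizontal lifts of base geodesics exist (either because the isometric quotient $\pi$ is a submetry, or directly via distance-realizers in the relevant fibre).

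The heart of the argument is the analogue of \th\ref{proposition.uniquerealiser}: for each $y\in B_\delta(x)$ there is a unique $gx\in\pi^{-1}(x_0^\ast)$ with $\dis(gx,y)=\dis(G(x),y)$. Existence is immediate from compactness of the orbit. For uniqueness I would argue by contradiction: suppose $gx\neq hx$ both realize the distance, and let $\gamma$ (from $gx$ to $y$) and $\sigma$ (from $hx$ to $y$) be minimizing geodesics; by \th\ref{lemma.projection.horizontalgeo} both are horizontal and, $y^\ast$ being no cut point, both project onto the unique $\gamma^\ast$. Pick $z^\ast$ on the continuation of $\gamma^\ast$ just beyond $y^\ast$, still inside $B_r(x_0^\ast)$, and let $\tau$ be a single horizontal geodesic from $y$ projecting onto $[y^\ast,z^\ast]$; uniqueness of the minimizing geodesic $x_0^\ast\to z^\ast$ (again the cut-locus bound) forces the projection of $\tau$ to be exactly the continuation of $\gamma^\ast$. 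A short length computation (projections are minimizing through $y^\ast$ and $\pi$ is $1$-Lipschitz) then shows that the concatenations $A=\gamma\ast\tau$ and $B=\sigma\ast\tau$ are both minimizing geodesics ending at the common point $w=\tau(1)$. Read backwards from $w$, they share the reversed segment $\tau$ until reaching $y$ and then run to the distinct points $gx\neq hx$; this is a branching geodesic, contradicting that $X$ is non-branching.

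With the unique-realizer property in hand, I would verify the four conditions of \th\ref{D: Slice} exactly as in the proof of \th\ref{MT: Slice Theorem Cohomogenity one}: closedness of $S_x$ in $G(S_x)$ follows from continuity of $\dis(x,\cdot)$ and of $\pi$; the set $G(S_x)$ is the tube $G(B_\delta(x))$, an open neighbourhood of $G(x)$; and both the invariance $G_x(S_x)=S_x$ and the separation property ($gS_x\cap S_x\neq\emptyset\Rightarrow g\in G_x$) reduce directly to uniqueness of the realizer, using $gS_x=S_{gx}$.

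The main obstacle is the uniqueness step, where the one-dimensional extension argument of \th\ref{proposition.uniquerealiser} must be replaced by the combination of the cut-locus hypothesis — which supplies a unique base geodesic $\gamma^\ast$ that is extendable past $y^\ast$ with the extension $x_0^\ast\to z^\ast$ again unique — and a \emph{shared} horizontal lift $\tau$ of that extension, so that attaching the two distinct heads $\gamma,\sigma$ produces a genuine branch. Care is needed to keep $z^\ast$ within $B_r(x_0^\ast)$ (so that $\tau$ is pinned to the continuation of $\gamma^\ast$) and to guarantee that $\tau$ exists at all, which is precisely where the submetry structure of $\pi$ — or, equivalently, a fibrewise distance-realizer combined with \th\ref{lemma.projection.horizontalgeo} — enters.
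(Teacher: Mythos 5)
Your proposal is correct and follows essentially the same route as the paper: both proofs use the cut-locus bound to place $y$ in the interior of a horizontal geodesic emanating from the realizer $gx$ (by extending the unique base geodesic past $y^\ast$ and lifting), then invoke the non-branching hypothesis to show the distance realizer in $\pi^{-1}(x_0^\ast)$ is unique, and finally verify the four slice conditions exactly as in the proof of the cohomogeneity one Slice Theorem. Your write-up simply makes explicit the concatenation and length computation that the paper leaves implicit.
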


\begin{proof}
 Let $x_0^\ast$ and take $0< \delta < \dis^\ast (x_0^\ast, \mathcal{C}(x_0^\ast))$. For an orbit $y^\ast\in B_\delta(x_0^\ast)$  take some $y \in \pi^{-1}(y^\ast)$. Now, there exists some $gx_0\in \pi^{-1}(x_0^\ast)$ such that $\dis(gx_0,y)= \dis(G(x_0), y)$.   

 Noting that our space is non-branching and that we can find a geodesic $\gamma \in \Geo (X)$ such that $\gamma_0 = gx_0$ and $\gamma_t= y$ for some $t \in (0,1)$ we can conclude that 
 \[
\arg\min \lbrace \dis(gx_0,y)\,|\, gx_0 \in \pi^{-1}(x_0^\ast) \rbrace = \lbrace gx_0 \rbrace.
 \]
 Furthermore, by arguing as in \th\ref{lemma.projection.horizontalgeo} we can check that $\gamma$ is a horizontal geodesic.
 Once we have this we can define 
 \[
S_{x_0}= \lbrace y\in X\,|\, \dis(x_0,y)= \dis(G(x_0),y) \rbrace \cap B_{\delta}(x_0)
 \]
 and proceed as in the proof of \th\ref{MT: Slice Theorem Cohomogenity one} to conclude that it is indeed a slice.
\end{proof}

\begin{rmk}
    In the previous Theorem we needed to impose further restrictions on the geodesics of $X$. This is because if the dimension of the quotient space $X^\ast$ is larger than or equal to $2$, it could happen that \th\ref{proposition.nonbranchinghorizontalgeodesic} does not hold. This is because some horizontal branching geodesic $\gamma$ could very well be undetected as there is no guarantee that taking $\lbrace g\gamma \mid g \in G \rbrace$ would yield something of positive measure.  
\end{rmk}

\begin{rmk}
    The condition asking $x_0^\ast$ to be isolated from its cut locus $\mathcal{C}(x_0^\ast)$ is a not necessary for the existence of the slice. For example if $D\subset \R^2$ is a convex domain, then we consider $X$ to be its double which is an Alexandrov space with curvature $\geq 0$. For $x\in \partial D\subset X$, we have that the cut locus is $\mathcal{C}(x) = \partial D\setminus\{x\}$ (see \cite[Section 1, Example 2]{ShiohamaTanaka1996}). Thus $x$ is not isolated from its cut locus. Considering $G$ any compact Lie group with a left-invariant metric, the space $Y:=G\times X$ with the product metric is an Alexandrov space, whose orbit space is $X$, and for any point $(g,x)\in G\times X$, $S_{(g,x)}:= \{g\}\times X$ is a slice.
\end{rmk}

\subsection{Cohomogeneity one Group diagrams}\label{SS: Group Diagram}

We now show a relation between the isotropy subgroups for an action of a compact Lie group $G$ on a geodesic space $(X,\dis)$ by isometries. From this relation we obtain a group diagram for actions of cohomogeneity one by isometries on  essentially non-branching spaces. Given an action of the compact Lie group $G$ on a metric space $(X,\dis)$ such that $(X^\ast,\dis^\ast)$ is homoemorphic to $[-1,1]$, a \emph{group diagram} is a tuple $(G,K_-,K_+,H)$ where $H,K_-,K_+<G$ such that the following diagram of inclusions holds:
\begin{center}
    \begin{tikzcd}
        & K_+ \arrow[dr,hook] & \\
       H \arrow[ur,hook] \arrow[dr,hook] & & G.\\
        & K_- \arrow[ur,hook]&
    \end{tikzcd}
\end{center}

\begin{theorem}\th\label{T: Principal isotropy is subgroup of singular isotropy}
Let $(X,\dis, \m)$ be an essentially non-branching space, and $G$ a compact Lie group acting by isometries, such that $(X^\ast,\dis^\ast)$ is isometric to $[-1,1]$. Then we have three isotropy subgroups $H$, $K_-$, and $K_+$, whose isotropy types correspond to the orbits in the interior $(-1,1)$, and the end points $\{-1\}$ and $\{+1\}$ respectively. Moreover, $H$ is a subgroup of both $K_-$ and $K_+$.
\end{theorem}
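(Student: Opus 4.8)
The plan is to produce a single distance-preserving horizontal geodesic joining the two endpoint orbits and read off all three isotropy groups from it, so that the subgroup relations come for free rather than only up to conjugacy.

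First I would identify the three isotropy types. Since $X^\ast$ is isometric to $[-1,1]$, the boundary points $-1$ and $+1$ are single points of $X^\ast$, so the fibers $\pi^{-1}(-1)$ and $\pi^{-1}(+1)$ are each a single orbit; let $K_-$ and $K_+$ be the isotropy groups of chosen points on them, so that $(K_-)$ and $(K_+)$ are the isotropy types over $\{-1\}$ and $\{+1\}$. By the Principal Isotropy \th\ref{L: Principal Isotropy Lemma}, all orbits lying over the open interval $(-1,1)$ share a single conjugacy class of isotropy group, which we call $H$. This already gives the three isotropy types asserted in the statement; it remains to realize $H$ as an honest subgroup of each $K_\pm$.

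Next I would construct a horizontal geodesic spanning $X$. Let $\gamma^\ast\in\Geo(X^\ast)$ be the geodesic from $-1$ to $+1$, so that $t\mapsto\delta_{\gamma^\ast_t}$ is a constant-speed Wasserstein geodesic in $\mathbb{P}_2(X^\ast)$ joining $\delta_{-1}$ and $\delta_{+1}$. By \th\ref{theorem-lift-to-G-invariant-measures} its lift $t\mapsto\Lambda(\delta_{\gamma^\ast_t})$ is a Wasserstein geodesic between the $G$-invariant measures $\Lambda(\delta_{-1}),\Lambda(\delta_{+1})\in\mathbb{P}^G(X)$, so by \th\ref{T: Dynamical optimal transport} there is $\eta\in\Opt\Geo(\Lambda(\delta_{-1}),\Lambda(\delta_{+1}))$, and by \th\ref{lemma.horizontaltransport} this $\eta$ is concentrated on horizontal geodesics realizing the distance equality $\dis(\gamma_0,\gamma_1)=\dis^\ast(\gamma^\ast_0,\gamma^\ast_1)$. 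Picking any $\gamma\in\supp\eta$ yields a distance-preserving horizontal geodesic with $\gamma_0\in\pi^{-1}(-1)$, $\gamma_1\in\pi^{-1}(+1)$, and $\gamma^\ast_t\in(-1,1)$ for all $t\in(0,1)$.

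Finally I would apply Kleiner's Lemma along $\gamma$. Set $x_-=\gamma_0$, $x_+=\gamma_1$, and (after replacing the Step-1 representatives by conjugate ones if necessary) $K_-=G_{x_-}$, $K_+=G_{x_+}$, $H=G_{\gamma_{1/2}}$. Because $\gamma$ satisfies the distance equality, the computation in the proof of \th\ref{lemma.projection.horizontalgeo} shows that every restriction $\gamma|_{[s,t]}$ is again a distance-preserving horizontal geodesic; hence for $0<s<t<1$, \th\ref{L: Kleiners Lemma} applied to $\gamma|_{[0,t]}$ gives $G_{\gamma_s}\subset G_{\gamma_t}$, while applied to $\gamma|_{[s,1]}$ it gives $G_{\gamma_t}\subset G_{\gamma_s}$, so the interior isotropy is a constant group, namely $H$. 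The same lemma applied to $\gamma$ itself gives $H=G_{\gamma_{1/2}}\subset G_{\gamma_0}=K_-$ and $H\subset G_{\gamma_1}=K_+$, as desired. The main obstacle is the second step: one must produce an actual geodesic that simultaneously spans the two singular orbits and stays horizontal, i.e. realizes the quotient distance, so that the non-branching input of \th\ref{L: Kleiners Lemma} (via \th\ref{proposition.nonbranchinghorizontalgeodesic}) is applicable. Once such a $\gamma$ is in hand, the constancy of the interior isotropy and the inclusions $H<K_\pm$ are formal consequences of the sub-geodesic property together with Kleiner's Lemma.
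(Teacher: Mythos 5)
Your proposal is correct and follows essentially the same route as the paper: the paper's proof also fixes a horizontal geodesic realizing the distance between the two singular orbits (its existence being exactly the content of \th\ref{proposition.nonbranchinghorizontalgeodesic}, whose proof uses the same $\Lambda$-lift of $\delta_{\gamma^\ast_t}$ that you spell out) and then reads off $H<K_\pm$ from Kleiner's \th\ref{L: Kleiners Lemma}. Your additional verification that the interior isotropy is literally constant along $\gamma$ (via restrictions of the geodesic) is a correct elaboration of what the paper leaves implicit.
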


\begin{proof}
We fix $\gamma\colon [0,1]\to X$ a geodesic realizing the distance between the orbits, guaranteed to exist by \th\ref{proposition.nonbranchinghorizontalgeodesic}. Then by Kleiner's \th\ref{L: Kleiners Lemma} the conclusions follow, by taking $H:= G_{\gamma_0}$, and $K_\pm= G_{\gamma_{\pm 1}}$.
\end{proof}

The tuple $(G,H,K_+,K_-)$ obtained in \th\ref{T: Principal isotropy is subgroup of singular isotropy} depends a priori on a choice of the geodesic. In the following lemma we show how two tuples associated to two different geodesics relate to each other.

\begin{lemma}\th\label{L: relation of diagrams and curves}
Let $(X,\dis,\m)$ be an essentially non-branching space, and $G$ a compact Lie group acting by isometries, such that $(X^\ast,\dis^\ast)$ is isometric to $[-1,1]$. Let $\gamma^1$ and $\gamma^2$ be two geodesics realizing the distance between the orbits, and $(G,H^1,K^1_+,K^1_-)$ and $(G,H^2,K^2_+,K^2_-)$ their associated tuples. Then for some $g_0\in G$ and $a_\pm\in N(H^1)_0$ we have 
\[
(G,H^2,K_+^2,K_-^2)= (G,g_0H^1g_0^{-1},g_0 a_+ K_+^2 a_+^{-1} g_0^{-1}, g_0 a_- K_-^2 a_-^{-1} g_0^{-1}).
\]
Here $N(H)_0$ denotes the connected component of the identity in the normalizer of $H^1$
\end{lemma}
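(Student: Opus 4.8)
The plan is to exploit the rigidity of minimizing horizontal geodesics. Since $X^\ast\cong[-1,1]$, the minimizing geodesic between the two endpoints is unique, so both $\gamma^1$ and $\gamma^2$, parametrized from the orbit over $-1$ to the orbit over $+1$, project to the same $\sigma\in\Geo(X^\ast)$; that is, $\pi\circ\gamma^1=\pi\circ\gamma^2=\sigma$. In particular, for every $t$ the points $\gamma^1_t$ and $\gamma^2_t$ lie over $\sigma_t$ and hence in a common orbit $\pi^{-1}(\sigma_t)$. I would use the interior time $t=1/2$ as a pivot: since $\gamma^1_{1/2},\gamma^2_{1/2}\in\pi^{-1}(\sigma_{1/2})$, there is $g_0\in G$ with $g_0\gamma^1_{1/2}=\gamma^2_{1/2}=:p$, and $p^\ast=\sigma_{1/2}$ lies in the interior of $X^\ast$.

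First I would settle the principal isotropy. By \th\ref{T: Principal isotropy is subgroup of singular isotropy} together with Kleiner's \th\ref{L: Kleiners Lemma}, the interior isotropy along each geodesic is constant, so $H^i=G_{\gamma^i_{1/2}}$. Then $H^2=G_{\gamma^2_{1/2}}=G_{g_0\gamma^1_{1/2}}=g_0G_{\gamma^1_{1/2}}g_0^{-1}=g_0H^1g_0^{-1}$, which is exactly the required relation for $H$, realized by the single element $g_0$.

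Next I would pin down the singular orbits using the same $g_0$. The curve $g_0\gamma^1$ is again a minimizing horizontal geodesic (horizontality and minimality are $G$-invariant, and such geodesics do not branch by \th\ref{proposition.nonbranchinghorizontalgeodesic}) passing through $p$ at time $1/2$, so its half $g_0\gamma^1|_{[1/2,1]}$ realizes $\dis(p,\pi^{-1}(+1))$; the same holds for $\gamma^2|_{[1/2,1]}$. Since $p^\ast$ is interior, \th\ref{proposition.uniquerealiser} provides a \emph{unique} nearest point to $p$ in the orbit $\pi^{-1}(+1)$, forcing $g_0\gamma^1_1=\gamma^2_1$ and hence $K^2_+=G_{\gamma^2_1}=g_0K^1_+g_0^{-1}$. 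Running the identical argument on $[0,1/2]$ (nearest point in $\pi^{-1}(-1)$ to $p$) yields $g_0\gamma^1_0=\gamma^2_0$ and $K^2_-=g_0K^1_-g_0^{-1}$. This establishes the statement with $a_\pm=e\in N(H^1)_0$.

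I expect the delicate points to be bookkeeping rather than deep. One must fix orientations so that the endpoints over $-1$ and $+1$ (and thus $K_-$ and $K_+$) are matched consistently, reversing one geodesic if necessary to keep each $K_\pm$ attached to the correct endpoint. The essential use of low dimension is that the pivot $p^\ast$ is interior, which is precisely what licenses the uniqueness in \th\ref{proposition.uniquerealiser}: this uniqueness fails at the singular endpoints themselves, so it is important to match the two geodesics at an interior pivot and only then transport the conclusion out to the endpoints. Finally, the normalizer freedom $a_\pm\in N(H^1)_0$ recorded in the statement is not forced by this argument—one may simply take $a_\pm=e$—and is the harmless slack that becomes genuinely relevant for the converse reconstruction of a space from a group diagram in Section~\ref{S: Gluing of RCD-spaces}. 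If one instead first normalizes $H^1=H^2=:H$ using conjugacy-uniqueness of the principal isotropy and only afterwards compares the singular orbits, then the matching element at the pivot automatically lies in $N(H)$ (it conjugates $H=G_{\gamma^1_{1/2}}$ to $H=G_{\gamma^2_{1/2}}$), and the remaining obstacle in that route is to arrange a representative in the identity component $N(H)_0$; my single-$g_0$ pivot argument sidesteps that subtlety entirely.
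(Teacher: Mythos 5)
Your proof is correct, but it takes a genuinely different route from the paper's. The paper also starts by matching the midpoints with a single $g_0$ (which gives $H^2=g_0H^1g_0^{-1}$ exactly as you do), but it then compares $\gamma^1$ with $g_0^{-1}\gamma^2$ fiberwise: it chooses a continuous path $a_t\in G$ with $a_t\gamma^1_t=g_0^{-1}\gamma^2_t$ and $a_{1/2}=e$, uses Kleiner's \th\ref{L: Kleiners Lemma} to see that $a_t$ normalizes $H^1$ on the interior, and evaluates at the endpoints to land in $N(H^1)_0$. You instead invoke the foot-point uniqueness of \th\ref{proposition.uniquerealiser} at the interior pivot $p$ to conclude that $g_0\gamma^1$ and $\gamma^2$ have the \emph{same} endpoints (in fact the same argument applied at every time $t$ shows $g_0\gamma^1=\gamma^2$ outright), so the conclusion holds with $a_\pm=e$. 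This is a sharper statement than the lemma asserts, it avoids the continuous-selection step for $a_t$, and it is consistent with the smooth picture where the horizontal direction at a principal point is one-dimensional. The trade-off is that your argument is tied to both curves being geodesics of the \emph{same} metric (that is what licenses \th\ref{proposition.uniquerealiser}); the paper's path-based argument only uses the isotropy data along the curves, which is why it transfers to the subsequent theorem comparing two equivariantly homeomorphic spaces, where the second geodesic is only a geodesic for a pulled-back metric and the $N(H^1)_0$-slack becomes the genuinely relevant equivalence. Your bookkeeping remarks (interior pivot, orientation of the two endpoints) are exactly the right points to watch, and the lemma's displayed formula contains a typo ($K^2_\pm$ should read $K^1_\pm$ inside the conjugation), which your version implicitly corrects.
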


\begin{proof}
We consider $\{x_0^1\}= \pi^{-1}(1/2)\cap \gamma^1([0,1])$ and $\{x_0^{2}\}=\pi^{-1}(1/2)\cap \gamma^2([ 0,1])$. Then there exists $g_0\in G$, such that $gx^1_0 =x_0^2$. Then $G_{x_0^2}= g_0G_{x^1_0}g_0^{-1}$. Moreover, $\tilde{\gamma}^2:=g_0^{-1}\gamma^2$ is also a geodesic with $\tilde{\gamma}^2(0) = x_0^1$. Moreover, there exists $a\colon [0,1]\to G$ such that $a_t\gamma^1_t = \tilde{\gamma}^2_t$. Observe that $a_{1/2}= e$, and thus we have that $G_{\tilde{\gamma}^2_t} = a_tG_{\gamma^1_t}a_t^{-1}$. But by \th\ref{L: Kleiners Lemma} we have that $G_{\gamma^1_t} = G_{\gamma^1_{1/2}}= G_{x_0^1} = G_{\tilde{\gamma}^2_{1/2}} = G_{\tilde{\gamma}^2_t}$ for all $t \in (0,1)$. Writing $H:= G_{x_0^1}$ we conclude that $a_t\in N(H)$ for all $t\in [0,1]$. Thus by setting $a_\pm:= a_{\pm 1}$ we have that 
\[
\tilde{K}_\pm^{2}= G_{\tilde{\gamma}^2_{\pm 1}} = a_\pm G_{\gamma^1_{\pm 1}} a_\pm^{-1} = a_\pm K_\pm^{1}a_\pm^{-1}.
\]
Since $\tilde{\gamma}^2_\pm = g_0^{-1} \gamma^2_\pm$, we have
\[
\tilde{K}_\pm^2= G_{\tilde{\gamma}^2_{\pm 1}} = g_0^{-1}G_{\gamma^2_{\pm 1}}g_0= g_0^{-1}K_\pm^2 g_0.
\]
Thus we conclude that
\[
(G,H^2,K_+^2,K_-^2)= (G,g_0H^1g_0^{-1},g_0 a_+ K_+^2 a_+^{-1} g_0^{-1}, g_0 a_- K_-^2 a_-^{-1} g_0^{-1}).
\]
\end{proof}

Recall that given two continuous actions of $G$ by homeomorphism on two topological spaces $X_1$ and $X_2$, the spaces $X_1$ and $X_2$ are \emph{equivariantly homeomorphic} if there exists $\phi\colon X_1\to X_2$ an homeomorphism such that for any $x\in X_1$, $y\in X_2$ and $g\in G$ we have
\[
\phi(gx) = g\phi(x),\quad\mbox{and}\quad \phi^{-1}(gy) = g\phi^{-1}(y).
\]

\begin{theorem}
Let $G$ be a compact Lie group acting on two essentially non-branching  spaces $(X_1,\dis_1,\m_1)$ and $(X_2,\dis_2,\m_2)$ by isometries such that $(X_1^\ast,\dis_1^\ast)$ and $(X_2^\ast,\dis_2^\ast)$ are isometric to $[-1,1]$. Moreover, assume that we have associated the tuples $(G,H^1,K^1_+,K_-^1)$ and $(G,H^2,K^2_+,K_-^2)$ respectively. If the spaces are equivariantly homeomorphic then there exists $g_0\in G$ and $a_\pm\in N(H^1)_0$ such that $H^2= g_0 H^1g_0^{-1}$ and either $K_\pm^2 = g_0a_\pm K_\pm^1 a_\pm^{-1} g_0^{-1}$ or $K_\pm^2 = g_0a_\pm K_{\mp}^1 a_\pm^{-1} g_0^{-1}$.
\end{theorem}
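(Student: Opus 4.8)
The plan is to use the equivariant homeomorphism to transport the group diagram of $X_1$ into $X_2$, and then compare it with the diagram coming from an honest geodesic in $X_2$ through the same mechanism as \th\ref{L: relation of diagrams and curves}. First I would record the basic consequence of equivariance. Let $\phi\colon X_1\to X_2$ be an equivariant homeomorphism. Since $\phi$ is a bijection with $\phi(gx)=g\phi(x)$, for every $x\in X_1$ and $g\in G$ we have $g\phi(x)=\phi(x)$ if and only if $gx=x$; hence $G_{\phi(x)}=G_x$ \emph{exactly}, not merely up to conjugacy. In particular $\phi$ carries orbits to orbits of the same type and descends to a homeomorphism $\phi^\ast\colon X_1^\ast\to X_2^\ast$. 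Under the identifications $X_i^\ast\cong[-1,1]$, the map $\phi^\ast$ is a self-homeomorphism of $[-1,1]$, so it sends the interior to the interior and the pair of endpoints to itself; thus $\phi^\ast$ either fixes both endpoints or interchanges them. This is precisely the source of the dichotomy in the conclusion.

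Next I would transport the geodesic. Let $\gamma^1$ be the horizontal geodesic realizing the distance between the singular orbits of $X_1$ that defines $(G,H^1,K_+^1,K_-^1)$, so that $H^1=G_{\gamma^1_t}$ for interior $t$ and $K_\pm^1=G_{\gamma^1_{\pm1}}$ (see \th\ref{T: Principal isotropy is subgroup of singular isotropy} and \th\ref{L: Kleiners Lemma}). Consider the curve $c:=\phi\circ\gamma^1$ in $X_2$. Although $c$ need not be a geodesic, the isotropy identity above shows its isotropy data is inherited from $\gamma^1$: interior points of $c$ have isotropy $H^1$ and its endpoints have isotropy $K_\pm^1$. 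Moreover $\gamma^1$ meets each orbit over the interior exactly once, since its projection is a bijection onto $[-1,1]$, and $\phi$ is a bijection; hence $c$ is a topological cross-section over $X_2^\ast$ whose projection $\pi_2\circ c=\phi^\ast\circ\gamma^{1\ast}$ traverses $[-1,1]$ once, preserving or reversing orientation according to the two cases.

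Finally I would compare $c$ with the geodesic $\gamma^2$ defining $(G,H^2,K_+^2,K_-^2)$, reparametrizing both by the orbit-space coordinate $r\in[-1,1]$ to obtain continuous cross-sections $s_c$ and $s_2$. For each $r$ the points $s_c(r)$ and $s_2(r)$ lie in the same $G$-orbit, so there is $a_r\in G$ with $s_c(r)=a_r s_2(r)$, whence $G_{s_c(r)}=a_r G_{s_2(r)}a_r^{-1}$. Setting $g_0:=a_0^{-1}$ at the central value $r=0$ yields $H^2=g_0 H^1 g_0^{-1}$. After replacing $s_2$ by $a_0 s_2$ the two cross-sections agree at $r=0$ and share the constant interior isotropy $H^1$; running the argument of \th\ref{L: relation of diagrams and curves} produces a continuous family $b_r\in G$ with $b_0=e$ and $s_c(r)=b_r(a_0 s_2(r))$. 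Constancy of the interior isotropy forces $b_r\in N(H^1)$, and continuity with $b_0=e$ places $b_r\in N(H^1)_0$. Evaluating at $r=\pm1$ and using $G_{s_c(\pm1)}=K_\pm^1$ in the endpoint-preserving case (respectively $K_\mp^1$ in the endpoint-swapping case) gives, with $a_\pm:=b_{\pm1}^{-1}\in N(H^1)_0$,
\[
K_\pm^2=g_0 a_\pm K_\pm^1 a_\pm^{-1} g_0^{-1}\qquad\text{or}\qquad K_\pm^2=g_0 a_\pm K_\mp^1 a_\pm^{-1} g_0^{-1},
\]
as claimed.

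The main obstacle is the construction of the continuous conjugating family $b_r$ together with control of its endpoint values $b_{\pm1}$. This is exactly the selection step already performed in \th\ref{L: relation of diagrams and curves}; what must be verified is that it relies only on the topological cross-section structure and the constancy of the interior isotropy — both available for $c$ even though $c$ is not a geodesic — and that the limiting values at the singular orbits remain in the closed subgroup $N(H^1)_0$. Once this is granted, the two displayed cases follow immediately from the endpoint behaviour of $\phi^\ast$.
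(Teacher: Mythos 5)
Your proof is correct and follows essentially the same route as the paper's: both arguments transport a distinguished cross-section through the equivariant homeomorphism (the paper pulls $\gamma^2$ back to $X_1$ via $\phi^\ast\dis_2$, you push $\gamma^1$ forward to $X_2$), use that equivariance preserves isotropy groups exactly, and then run the comparison mechanism of \th\ref{L: relation of diagrams and curves}, with the endpoint behaviour of the induced self-homeomorphism of $[-1,1]$ producing the two cases. The continuous-selection step you flag is indeed the same one the paper relies on in \th\ref{L: relation of diagrams and curves}, so no new gap is introduced.
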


\begin{proof}
We consider  the metric $\phi^\ast(\dis_2)\colon X_1\times X_1$ given by $\phi^\ast(\dis_2)(x,y) = \dis_2(\phi(x),\phi(y))$. Then the geodesic $\gamma^2$ associated to the tuple $(G,H^2,K^2_+,K_-^2)$  induces a geodesic $\tilde{\gamma}^2$ on $X_1$ with same tuple $(G,H^2,K^2_+,K_-^2)$, if $\phi^{-1}$ preserves the orientation of $\gamma^2$, or tuple $(G,H^2,K^2_-,K_+^2)$, if $\phi^{-1}$ reversed  the orientation of $\gamma^2$. Following the proof of \th\ref{L: relation of diagrams and curves} we obtain our desired conclusions. 
\end{proof}

\begin{theorem}
Let $(X,\dis,\m)$ be an essentially non-branching space, and assume that a compact Lie group $G$ acts by isometries in such a way that $X^\ast$ is isometric to $[-1,1]$. Moreover, assume that we have a tuple $(G,H,G_+,G_-)$ associated to the action (see Section~\ref{SS: Group Diagram}). Then by considering a new tuple $(G,H_1,K_+,K_-)$ with $H_1= g_0Hg_0$, $K_\pm=g_0a_\pm G_\pm a_\pm^{-1} g_0$ or $K_\pm=g_0a_\pm G_{\pm} a_\pm^{-1} g_0$ for $g_0\in G$ and $a_\pm \in N(H)_0$, we obtain a space $X_2$ which is equivariantly homeomorphic to $X$.
\end{theorem}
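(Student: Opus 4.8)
The plan is to use the cone-bundle description of both spaces and to build the equivariant homeomorphism piece by piece. By \th\ref{MC: Homeomorphism rigidity}~\eqref{MC: Homeomorphism rigidity close interval} the space $X$ is equivariantly homeomorphic to a union $A_-\cup_{G/H} A_+$, where $A_\pm = G\times_{G_\pm} C_\pm$ is the cone bundle over the singular orbit $G/G_\pm$ with fibre the cone $C_\pm$ over $G_\pm/H$, and the two pieces are glued along their common link, the principal orbit $G\times_{G_\pm}(G_\pm/H)\cong G/H$. The space $X_2$ produced from the tuple $(G,H_1,K_+,K_-)$ admits the same description with $(G_\pm,H)$ replaced by $(K_\pm, H_1)$. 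First I would dispose of the orientation-reversing alternative: interchanging the roles of $K_+$ and $K_-$ corresponds to the reflection $t\mapsto -t$ of the base interval $[-1,1]$, which visibly induces an equivariant homeomorphism, so it suffices to treat the non-reversing case.

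Next I would factor the remaining modification into two elementary moves and handle each by an explicit equivariant homeomorphism. For the conjugation by $g_0$, the assignment $gK\mapsto gg_0^{-1}\cdot (g_0Kg_0^{-1})$ defines a $G$-equivariant homeomorphism $G/K\to G/(g_0Kg_0^{-1})$ for every closed subgroup $K$; these maps are compatible with the inclusions $H<G_\pm$, and hence assemble (after the obvious conjugation of the cone fibres) into an equivariant homeomorphism of the cone bundles, and therefore of the two glued spaces. This reduces matters to the case $g_0=e$, $H_1=H$, and $K_\pm = a_\pm G_\pm a_\pm^{-1}$ with $a_\pm\in N(H)_0$.

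For the twist by $a_\pm\in N(H)_0$ I would define, on each piece, a map $\Phi_\pm\colon G\times_{G_\pm}C_\pm\to G\times_{a_\pm G_\pm a_\pm^{-1}}C_\pm'$ by $[g,c]\mapsto [ga_\pm^{-1},\psi_\pm(c)]$, where $C_\pm'$ is the cone over $(a_\pm G_\pm a_\pm^{-1})/H$ and $\psi_\pm$ is the homeomorphism of cone fibres induced by $kH\mapsto a_\pm k a_\pm^{-1}H$ (well defined since $a_\pm\in N(H)$, so $a_\pm H a_\pm^{-1}=H$). A direct check using $a_\pm\in N(H)$ shows that $\Phi_\pm$ respects the bundle identifications and is $G$-equivariant, and that on the link $G\times_{G_\pm}(G_\pm/H)\cong G/H$ it restricts to the right translation $\beta_\pm:=R_{a_\pm^{-1}}$, given by $xH\mapsto xa_\pm^{-1}H$.

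The main obstacle is that $\beta_+$ and $\beta_-$ need not coincide, so $\Phi_+$ and $\Phi_-$ do not immediately glue across the common link. Here I would use that $\beta_+\beta_-^{-1}=R_{a_-a_+^{-1}}$ with $a_-a_+^{-1}\in N(H)_0$: choosing a path from $e$ to $a_-a_+^{-1}$ in the connected group $N(H)_0$ yields a $G$-equivariant isotopy from $R_{a_-a_+^{-1}}$ to the identity of $G/H$, since right translations commute with the left $G$-action. Fixing a collar $G/H\times[0,1]$ of the gluing boundary inside $A_-$, I would then modify $\Phi_-$ by an equivariant self-homeomorphism of $A_-$ supported in this collar that realizes this isotopy, equalling $\beta_-^{-1}\beta_+$ on the boundary and the identity at the inner end. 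The modified map agrees with $\Phi_+$ on $G/H$, so $\Phi_+$ and the modified $\Phi_-$ glue to the desired equivariant homeomorphism $X\to X_2$. The crux of the argument is precisely this last step, where connectedness of $N(H)_0$ is exactly what makes the boundary discrepancy isotopically trivial, hence absorbable into a collar.
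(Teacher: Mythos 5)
Your proof is correct and follows the same overall route as the paper: decompose $X$ and $X_2$ as unions of two cone bundles glued along the principal orbit $G/H$, observe that conjugating the structure groups gives equivariantly equivalent principal bundles $G_\pm\to G\to G/G_\pm$ and $K_\pm\to G\to G/K_\pm$, hence equivalent associated cone bundles, and then reglue. The paper's own proof stops essentially at that point, asserting the glued spaces are equivariantly homeomorphic. What you add, and what the paper leaves implicit, is the verification that the two piecewise homeomorphisms $\Phi_\pm$ actually match on the common link: they restrict there to the right translations $R_{a_\pm^{-1}}$, which need not coincide, and you absorb the discrepancy $R_{a_+^{-1}a_-}$ into an equivariant collar of the boundary using a path from $e$ to $a_+^{-1}a_-$ in $N(H)_0$. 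This is precisely the step where the hypothesis $a_\pm\in N(H)_0$ (rather than merely $a_\pm\in N(H)$) enters, so your version makes explicit why the connected component appears in the statement; the paper's argument buys brevity at the cost of hiding this point. Your computation that $\Phi_\pm$ is well defined on the Borel construction (using $\psi_\pm(k^{-1}c)=(a_\pm k a_\pm^{-1})^{-1}\psi_\pm(c)$) and that right translations by elements of $N(H)$ descend to $G/H$ and commute with the left $G$-action are both correct, and the existence of the equivariant product collar follows from the cone-bundle structure away from the singular orbit.
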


\begin{proof}
We fix the end points $x_\pm\in X$ of the geodesic associated to the diagram. Recall the equivalent notion of slice given by \th\ref{T: equivalence slice}. Thus we have $G_\pm= G_{x_\pm}$. The fiber bundles $G \times_{G_\pm} S_{x_\pm}\to G(x_\pm) \cong G/G_\pm$ are associated to the principal bundles $G_\pm\to G\to G/G_\pm$. 

Without loss of generality we assume that $K_\pm = g_0 a_\pm G_{\pm} a_\pm^{-1} g_0$. Then the principal bundles $G_\pm\to G\to G/G_\pm$ are equivariantly equivalent to the principal bundles $K_\pm\to G\to G/K_\pm$. Thus the associated fiber bundles $G \times_{K_\pm} S_{(g_0a_\pm\cdot x_\pm)}\to  \cong G/K_\pm$ are equivarianlty equivalent to $G \times_{G_\pm} S_{x_\pm}\to G/K_\pm$. Also recall that $X$ is the union of $G \times_{G_-} S_{x_-}$ and $G \times_{G_+} S_{x_+}$ via their boundary. Thus  we obtain that $X$ is equivariantly homeomorphic to $Y$ the union of $G \times_{K_-} S_{x_-}$ and $G \times_{K_+} S_{x_+}$ via the boundary. By pushing the metric we can find a geodesic in $Y$ realizing the distance between the orbits and with associated tuple $(G,H_1,K_+,K_-)$.
\end{proof}

\section{Geometry of the slice}
\label{S: GEOMETRY-OF-THE-SLICE}

In this section we show that for a compact non-collapsed $\RCD$-space $(X,\dis,\m)$ with an isometric and measure preserving action of cohomogeneity $1$ by a compact Lie group $G$, the slice $S_x$ through $x\in X$ we presented in Section~\ref{S: Slice THeorem RCD} admits an $\RCD$-structure. Moreover this structure on $S_x$ is given by taking the cone over a homogeneous space. We end the section by collecting all the results and combining them with the conclusions in \cite{GalazGarciaKellMondinoSosa2018} to obtain a topological structural result. 

\subsection{Induced infinitesimal actions}
\vspace*{1em}
To obtain the desired conclusions about the slices, we will work at the level of tangent cones and obtain an ``induced infinitesimal action''. Roughly speaking, we will consider an induced action on the tangent cone in the following way: Recall that given a sequence of pointed metric measure spaces $(X_n,\dis_n, \m_n, x_n)$ converging to $(Y,\dis_Y, \m_Y,0_Y)$ with respect to the pointed measured Gromov-Hausdorff topology and a compact Lie group $G$ acting on $X_n$ by measure-preserving isometries,  then \th\ref{T: p-G-H implies eq-p-G-H} guarantees the existence of  a closed subgroup $\overline{G}$ of $\Iso Y$ of measure preserving isometries such that $(X_n,\dis_n, \m_n, G,x_n)$ converges to $(Y,\dis_Y, \m_Y, \overline{G},0_Y)$ with respect to  the equivariant pointed measured Gromov-Hausdorff topology. In this setting, \cite[Theorem 3.1]{Harvey2016} (see also \cite[Appendix]{MazurRongWang2008}) guarantees the existence of Lie group homomorphisms $\phi_n\colon G\to \overline{G}$, when $\overline{G}$ is compact. Whenever this situation arises, we say that $\overline{G}$ is \emph{the limit group induced by $G$}. For simplicity we omit the dependence on the sequence $X_n$ and the limit space $Y$ as these are clear from the context.  Let us point out that if $G$ acts effectively, then so does $\overline{G}$. 

In this section we consider the induced actions of $G$ on blow-up sequences around points of a non-collapsed $\RCD$ space $(X,\dis,\m)$ and take the induced limit group $\overline{G}$ that acts on the tangent cones at each point. We analyze the action of $\overline{G}$ on $Y$ and its relation to the original action. 

In the following lemma we extend some results from \cite{Harvey2016}, \cite{Santos2020} about the existence of a ``nice'' group action on the limit of a convergent sequence of non-collapsed $\RCD$-spaces equipped with isometric measure preserving group actions.

\begin{lemma}\th\label{L: pm-GH convergences implies eq-GH convergence}
Let $G$ be a compact Lie group acting on $(X_n,\dis_n,\m_n)$ effectively by measure preserving isometries, where $(X_n,\dis_n,\m_n)$ are  $\RCD(K,N)$-spaces. Let $(Y,\dis_Y,\m_Y)$ be a metric measure space such that 
\[
(X_n,\dis_n,\m_n,x_n)\to (Y,\dis_Y,\m_Y,0_Y)
\]
in the pointed measured Gromov-Hausdorff topology. Assume that the induced limit group $G_Y$, given by \th\ref{T: p-G-H implies eq-p-G-H}, acting effectively by measure preserving isometries on $Y$ is compact, and denote by $\phi_n\colon G\to G_Y$ the Lie group homomorphisms which demonstrate the equivariant pointed Gromov-Hausdorff convergence. If $X_n$ and $Y$ have constant essential dimension, then $\phi_n$ is injective.
\end{lemma}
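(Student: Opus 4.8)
The plan is to argue by contradiction. Suppose $\phi_n$ fails to be injective for infinitely many $n$; passing to this subsequence, write $N_n := \kernel\phi_n$, a nontrivial closed (hence compact Lie) subgroup of $G$. Let $m := \dimess X_n = \dimess Y$ be the common essential dimension. I would derive a contradiction by showing that a nontrivial kernel forces $\dimess Y < m$. Since $N_n$ is a compact Lie group, it suffices to rule out two situations: (A) the identity component $(N_n)_0$ is positive-dimensional, and (B) $N_n$ is a nontrivial finite group.

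For case (A): because $G$ acts effectively and $\supp\m_n = X_n$, the fixed-point set $\Fix\big((N_n)_0\big)$ is a proper closed subset, so (using that the regular set $\mathcal{R}_m$ is $G$-invariant and of full measure) the generic $(N_n)_0$-orbit on $X_n$ is positive-dimensional, say of dimension $d \geq 1$. The equivariant convergence furnished by \th\ref{T: p-G-H implies eq-p-G-H}, together with $\phi_n\big((N_n)_0\big) = \{e\}$, means that $(N_n)_0$ acts trivially on the limit $Y$; that is, the $(N_n)_0$-orbit directions collapse along the sequence. The crux is to convert this orbit collapse into a genuine drop of essential dimension. I would do this by interposing the quotients $X_n/(N_n)_0$, which carry a measure-preserving metric measure structure by the analysis of Galaz-García--Kell--Mondino--Sosa \cite{GalazGarciaKellMondinoSosa2018}, and which satisfy $\dimess\big(X_n/(N_n)_0\big) = m - d$; since collapsing the $(N_n)_0$-orbits to points exhibits $Y$ as a limit factoring through these quotients, one obtains $\dimess Y \leq m - d < m$, contradicting $\dimess Y = m$. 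I expect making this last implication precise in the $\RCD$ category—i.e. that the collapse of positive-dimensional orbits strictly lowers the essential dimension of the limit—to be the main obstacle; the cleanest route is to blow up at a regular point $p \in \mathcal{R}_m$ and compare the Euclidean tangent $\Tan(X_n,\dis_n,\m_n,p) = \R^m$ with the correspondingly smaller tangent of $Y$.

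For case (B): with $N_n$ finite, choose $g_n \in N_n \setminus \{e\}$; by compactness of $G$ we may assume $g_n \to g_\infty \in G$. Since $\Fix(g_n)$ is closed and $\supp\m_n = X_n$, effectiveness provides a regular point $p_n \in \mathcal{R}_m$ with $g_n p_n \neq p_n$. Blowing up at $p_n$, the isotropy action linearizes on the tangent $\R^m$, so $g_n$ acts as a nontrivial orthogonal transformation $\rho_n(g_n)$. Because $\phi_n(g_n) = e$, the induced infinitesimal action of $g_n$ on the limit is trivial, which forces $\rho_n(g_n) \to \mathrm{Id}$; but a finite-order orthogonal transformation which is not the identity stays a definite distance from $\mathrm{Id}$, so $\rho_n(g_n) = \mathrm{Id}$ for large $n$. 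Hence $g_n$ fixes a full-measure neighborhood of $p_n$, and effectiveness together with $\supp\m_n = X_n$ gives $g_n = e$, a contradiction.

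Combining (A) and (B), $N_n = \kernel\phi_n$ must be trivial, so $\phi_n$ is injective. Throughout I would invoke the existence and convergence of the homomorphisms $\phi_n$ from \cite{Harvey2016} and \th\ref{T: p-G-H implies eq-p-G-H}, using only that $\phi_n$ intertwines the $G$-action on $X_n$ with the $G_Y$-action on $Y$ up to errors vanishing along the sequence.
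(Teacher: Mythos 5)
Your opening move is the right one --- the only usable information about $\kernel\phi_n$ is that its elements displace points by an amount that vanishes along the sequence --- but the two-case analysis you build on top of it has genuine gaps, the second case fatally so.

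Case (B) does not work as written. You pick a regular point $p_n$ with $g_np_n\neq p_n$ and then ``blow up at $p_n$'' to get an orthogonal transformation $\rho_n(g_n)$ of the tangent $\R^m$. But $g_n$ is then \emph{not} in the isotropy at $p_n$, and under the rescalings $\dis_n/r$ with $r\to 0$ the displacement $\dis_n(g_np_n,p_n)>0$ blows up, so $g_n$ does not induce any isometry of the tangent cone at $p_n$ at all. Even if you instead took $p_n\in\Fix(g_n)$, the linearization of the isotropy action on the tangent cone at a regular point is precisely the infinitesimal-to-local statement that is \emph{not} available for general $\RCD$-spaces; the paper only obtains an orbit representation later (\th\ref{MT: infinitesimal action non-collapsed}, \th\ref{MT: Principal Isotropy Representation}), under an extra Lipschitz/co-Lipschitz hypothesis and \emph{using this very lemma}, so invoking it here would be circular. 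Finally, the claim that a nontrivial finite-order element of $\mathrm{O}(m)$ stays a definite distance from the identity is false without a uniform bound on the order (rotation by $2\pi/n$ has order $n$ and converges to $\mathrm{Id}$); the correct tool is the no-small-subgroups property applied to the whole image of $N_n$, not to a single element. Case (A) is closer to something that works, but the step ``the orbit directions collapse, hence $Y$ factors through $X_n/(N_n)_0$'' requires exactly the quantitative estimate you have not supplied: from $\phi_n((N_n)_0)=\{e\}$ and the definition of equivariant $\varepsilon_n$-approximations one must first bound $\dis_n(\gamma z,z)$ for \emph{all} $\gamma\in N_n$ uniformly over balls $B_{r_n}(x_n)$ with $r_n\to\infty$, and the passage from small displacement at the basepoint to small displacement on large balls is the entire technical content here. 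You would then still need the essential dimension of $X_n/(N_n)_0$ to drop by the principal orbit dimension and the lower semicontinuity of essential dimension under mGH convergence, neither of which you justify.

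For comparison, the paper's proof avoids the case split and any linearization entirely: it derives from the equivariant approximations the uniform estimate $\sup\{\dis_n(\gamma z,z)\mid z\in B_{r_n}(x_n)\}\to 0$ for suitable $r_n\to\infty$ and all $\gamma\in\kernel\phi_n$, concludes that the kernels form a sequence of \emph{small groups} in the sense of \cite{SantosZamora2023}, and then applies \cite[Theorem 93]{SantosZamora2023}, which says that small groups acting on a sequence with constant essential dimension are eventually trivial. That single citation absorbs both your cases (A) and (B). If you want to salvage your argument, the honest version of it is essentially a reproof of that theorem, and the finite-kernel case is the part you have not actually addressed.
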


\begin{proof}
Let $K_n = \kernel(\phi_n)\subset G$. Then we have that $(X_i,\dis_i,\m_i,x_i,K_i)$ converges in the pointed-equivariant-Gromov-Hausdorff sense to $(Y,\dis_Y,\m_Y,0_Y,\{e\})$. As pointed out in \cite[Proof of Theorem 3.1]{Harvey2016}, by possibly passing to a subsequence of $(X_i,\dis_i,\m_i,x_i)$ we can assume that the equivariant-Gromov-Hausdorff distance between $(X_i,\dis_i,\m_i,x_i,K_i)$ and $(Y,\dis_Y,\m_Y,0_Y, \{e\})$ is less than $1/i$, and $K_i(1/i)=K_i$ for sufficiently large $i$. Then for sufficiently large $i$ we have maps $f_i\colon X_i\to Y$, group morphisms $\phi_i\colon K_i\to \{e\}$, $\psi_n\colon \{e\}\to K_i$, such that $f_i(x_i) = 0_Y$, the $1/i$-neighborhood of $f_i(B_{i}^{X_i}(x_i))$ contains $B_i^Y(0_Y)$, for all $y_i,z_i\in B_{i}^{X_i}(x_i)$ we have 
\[
|\dis_i(y_i,z_i)-\dis_Y(f_i(y_i),f_i(z_i))|\leq \frac{1}{i},
\]
and for all $\gamma_i\in K_i$, and all $z_i,\gamma_iz_i\in B_{i}^{X_i}(x_i)$ we have 
\[
\dis_Y(f_i(\gamma_iz_i),\phi_i(\gamma_i)f_i(z_i))\leq \frac{1}{i}.
\] 
But since $\phi_i(\gamma_i) = e$, then that for all $z_i,\gamma_iz_i\in B_{i}^{X_i}(x_i)$ we have 
\[
\dis_Y(f_i(\gamma_iz_i),f_i(z_i))\leq \frac{1}{i}.
\]
This implies that for all $\gamma_i\in K_i$ we have that  $\dis_Y(f_i(\gamma_ix_i),f_i(x_i))\leq 1/i$. Combining this together with the fact that $|\dis_i(\gamma_ix_i,x_i)-\dis_Y(f_i(\gamma_ix_i),f_i(x_i))|\leq 1/i$ we conclude that
\[
\dis_i(\gamma_ix_i,x_i)\leq \frac{2}{i}.
\]
This implies that $D_i:=\diam(K_i(x_i))\leq 2/i<i$. Let $\delta_i$ be such that $\delta_i+D_i<i$. We point out that we can choose $\delta_i$ to be a strictly increasing sequence since $D_i$ is a decreasing sequence. Now observe that for $z_i\in B_{\delta_i}^{X_i}(x_i)$,  by the triangle inequality we have for any $\gamma_i\in K_i$:
\[
\dis_i(\gamma_iz_i,x_i)\leq \dis_i(\gamma_iz_i,\gamma_ix_i)+\dis_i(\gamma_ix_i,x_i) = \dis_i(z_i,x_i)+\dis_i(\gamma_ix_i,x_i) \leq \delta_i+D_i< i.
\] 
That is for all $z_i\in B_{\delta_i}^{X_i}(x_i)$ we have $\gamma_iz_i\in B_{i}^{X_i}(x_i)$ for all $\gamma_i\in K_i$. In particular, for all $\gamma_i\in K_i$ and $z_i \in B_{\delta_i}^{X_i}(x_i)$ we have
\begin{equation}\label{EQ: peqGH1}
\dis_Y(f_i(\gamma_iz_i),f_i(z_i))\leq \frac{1}{i}.
\end{equation}
Now fix $\varepsilon>0$, take $i$ sufficiently large so that $1/i<\varepsilon$ and $K_i=K_i(i)$. Consider $\gamma_i\in K_i$ and $z_i\in B_{\delta_i}^{X_i}(x_i)$. Then we have $|\dis_i(\gamma_i z_i,z_i)-\dis_Y(f_i(\gamma_i z_i),f_i(z_i))|\leq \frac{1}{i}$. Combining this with \eqref{EQ: peqGH1} we conclude that 
\[
\dis_i(\gamma_i z_i,z_i)<\frac{2}{i}.
\]
Setting $r_i=\delta_i$ for sufficiently large $i$, we conclude that for all $z_i\in B_{r_i}^{X_i}(x_i)$ we have
\[
\frac{1}{r_i}+ \dis_i(\gamma_i z_i,z_i)< \frac{1}{r_i}+\varepsilon.
\]
Since $\{r_i\}_{i\in \N}$ is a strictly increasing growing sequence, we conclude that for sufficiently large $i$ we have
\[
\frac{1}{r_i}+ \sup\{\dis_i(\gamma_i z_i,z_i)\mid z_i\in B_{r_i}^{X_i}(x_i)\}< 2\varepsilon.
\]
This implies that for any sequence $\{\gamma_i\mid i\in \N,\: \gamma_i\in K_i\}$ we have 
\[
\dis_0(\gamma_i,\mathrm{Id}_{X_i}):= \inf\left\{\frac{1}{r}+\sup\{\dis_{i}(\gamma_iz_i,z_i)\mid z_i\in B^{X_i}_{r}(x_i)\}\mid r>0\right\}\underset{i\to\infty}{\longrightarrow} 0.
\]
From this we conclude that $\{K_i\}_{i\in \N}$ is a sequence of small groups in the sense of \cite[Remark 75]{SantosZamora2023}. Then our conclusions follow from \cite[Theorem 93]{SantosZamora2023}.
\end{proof}

\begin{rmk}
\th\ref{L: pm-GH convergences implies eq-GH convergence} can be seen as an improvement  of \cite[Theorem 5.6]{Santos2020} when considering regular points as base points. Observe that the assumption on the limit group $G_Y$ being compact is also a necessary condition for \cite[Theorem 5.6]{Santos2020}.
\end{rmk}

\begin{rmk}
The condition in \th\ref{L: pm-GH convergences implies eq-GH convergence} requiring that the spaces in the sequence and the limit have constant essential dimension is a necessary one. The so-called ``horn'' space  \cite[Example 8.77]{CheegerColding1997} provides an example of a sequence of $\RCD(0,5)$-spaces $X_i$ which are warped products over the round $4$-sphere, for which the tangent space at the vertex has dimension $4$, converging to an $\RCD(0,5)$ space $Y$, but the tangent space at the limit of the vertices is a half-line.
\end{rmk}

In what follows we convene that for $(Z,\dis_Z,\m_Z,0_Z)$, a pointed metric measure space,  $(\R^{k},\dis_{\mathbb{E}},\mathcal{L}^k,0)\times (Z,\dis_Z,\m_Z,0_Z) = (Z,\dis_Z,\m_Z,0_Z)$ for $k=0$.

\begin{lemma}\th\label{L: Splitting of Euclidean space in tangent cone}
Let $(X,\dis,\m)$ be a  non-collapsed $\RCD(K,N)$-space 
and $G$ a compact Lie group acting effectively by measure-preserving isometries on $X$ with cohomogeneity $1$. For each $x_0\in X$ and $(Y,\dis_Y,\m_Y,0_Y)\in \mathrm{Tan}(X,\dis,\m_X,x_0)$, let $\overline{G}$ be the limit group induced by $G$ for some blowup-sequence of $X$ converging to $Y$. Then $Y$ is isomorphic to a product $(\R^{\dime \overline{G}(0_Y)},\dis_{\mathbb{E}},\mathcal{L}^{\dime \overline{G}(0_Y)},0)\times (Z,\dis_Z,\m_Z,0_Z)$, where $Z$ is an $\RCD(0,N-\dime (\overline{G}(0_Y)))$-space.
\end{lemma}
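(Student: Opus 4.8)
The plan is to exhibit $Y$ as a metric cone and then to force the orbit of its vertex under $\overline{G}$ to sit inside a canonical Euclidean factor whose dimension is at least $\dime\overline{G}(0_Y)$. First I would record the basic structure: by \th\ref{R: tangent of RCD-space is RCD(0,N)-space} the tangent space $(Y,\dis_Y,\m_Y,0_Y)$ is an $\RCD(0,N)$-space, and since $X$ is non-collapsed, $Y$ is a metric cone with vertex $0_Y$ (the tangent cone at a point of a non-collapsed space is a metric cone). Moreover $\overline{G}$ acts on $Y$ by measure-preserving isometries, so it preserves the pointed measured Gromov--Hausdorff tangent cone at each point.

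Next I would extract a maximal Euclidean factor. Iterating Gigli's Splitting Theorem \cite{Gigli2014}, a process that terminates because the essential dimension is bounded above by $N$, produces an isomorphism $Y\cong\R^{m}\times C(W)$ in which $C(W)$ is an $\RCD(0,N-m)$ metric cone containing no line; here I use that a line in a metric cone forces its cross-section to be a spherical suspension, so the Euclidean factors split off through the vertex and the residual factor is again a cone.

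The heart of the argument, and the step I expect to be the main obstacle, is to show that this Euclidean factor is canonical, i.e. that the set $V:=\R^{m}\times\{o_W\}$ (with $o_W$ the vertex of $C(W)$), which is isometric to $\R^m$, is invariant under every isometry of $Y$. I would prove this by means of the isometry-invariant function $f(p):=\max\{k\mid T_pY \text{ splits off a factor } \R^{k}\}$. Writing $p=(v,w)\in\R^m\times C(W)$ one has $T_pY\cong\R^{m}\times T_wC(W)$; by maximality $C(W)$ carries no Euclidean factor, so $f(p)=m$ when $w=o_W$, whereas if $w\neq o_W$ the two radial directions of the cone $C(W)$ at $w$ form a line and $T_wC(W)$ splits off an extra $\R$, giving $f(p)\geq m+1$. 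Hence $V=\{p\in Y\mid f(p)=m\}$ is exactly the set on which $f$ attains its minimum, and is therefore preserved by $\overline{G}$. The delicate points are to justify that $f$ is well defined and isometry-invariant in the $\RCD$ setting and that $C(W)$ has no Euclidean factor; both follow from the cone structure together with the splitting theorem.

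Finally I would conclude. Since $T_{0_Y}Y\cong Y$, we have $f(0_Y)=m$, so $0_Y\in V$; applying elements of $\overline{G}$ and using invariance of $V$ gives $\overline{G}(0_Y)\subseteq V\cong\R^m$, on which $\overline{G}$ acts by Euclidean isometries. In particular $\ell:=\dime\overline{G}(0_Y)\leq m$. Rewriting the splitting as $Y\cong\R^{\ell}\times\big(\R^{m-\ell}\times C(W)\big)$ and setting $Z:=\R^{m-\ell}\times C(W)$, \th\ref{prop.productsrescallings} together with the splitting theorem shows that $Z$ is an $\RCD(0,N-\ell)$-space; after a Euclidean translation placing $0_Y$ at the origin of the $\R^{\ell}$ factor, the base point satisfies $0_Y=(0,0_Z)$, which yields the asserted product decomposition.
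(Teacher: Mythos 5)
Your argument is correct as a proof of the statement as literally written, but it takes a genuinely different route from the paper's and extracts strictly less information. Both proofs start the same way ($Y$ is an $\RCD(0,N)$ metric cone; iterate Gigli's splitting to write $Y\cong\R^{m}\times W$ with $W$ containing no lines), but they diverge at the key invariance step. The paper never needs your stratification: it observes that $0_Y$ and all its $\overline{G}$-images are vertices of the cone $Y$, that any two vertices are joined by a line, and that lines must lie in the Euclidean factor since $W$ has none; this already gives $\overline{G}(\R^{m}\times\{0_W\})=\R^{m}\times\{0_W\}$. Your route via the isometry-invariant function $f(p)=\max\{k\mid T_pY\text{ splits off }\R^k\}$ also works and is arguably cleaner (it avoids the case distinction on whether $0_Y$ is fixed), but it carries an extra load-bearing ingredient: you must know that the line-free residual factor is itself a metric cone whose vertex is the image of $0_Y$ (otherwise the tangent cone of $W$ at that point could split a line even though $W$ has none --- think of a smooth point --- and your identification of the minimum stratum fails). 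That fact is true by cone/suspension rigidity, but it is not needed in the paper's argument and should be cited explicitly; relatedly, the "two radial directions form a line" only after blow-up, not in $C(W)$ itself. The substantive difference is at the end: you peel off only $\R^{\ell}$ with $\ell=\dime\overline{G}(0_Y)\leq m$ and absorb $\R^{m-\ell}$ into $Z$, never invoking cohomogeneity one. The paper instead uses the cohomogeneity-one hypothesis (transitivity of the limit isotropy on spheres in the slice) to prove the dichotomy that either $m=\ell$ or $W$ is a point, so that its $Z$ is the canonical complement and satisfies $\overline{G}(\R^{\ell}\times\{0_Z\})=\R^{\ell}\times\{0_Z\}$. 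Your $Z=\R^{m-\ell}\times C(W)$ satisfies the stated conclusion, but it is a non-canonical slicing and does not enjoy that invariance; since the proof of \th\ref{L: infinitesimal action cohomogeneity 1} explicitly recalls these extra facts "from the proof" of the present lemma, your argument, while sufficient here, would need the paper's cohomogeneity-one step reinstated to support what comes next.
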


\begin{proof}
Let us begin by fixing a blow-up sequence $X_n:=(X,\dis_n,\m_n,x_0)$ converging pointed-measured-Gromov-Hausdorff to a tangent space $(Y,\dis_Y,\m_Y,0_Y)$. In particular, we have the equivariant Gromov-Hausdorff convergence  $(X_n,G) \to (Y,\overline{G})$, and thus $X_n/G$ converges in Gromov-Hausdorff to $Y/\overline{G}$. Hence the $\overline{G}$-action on $Y$ is of cohomogeneity $1$. 
Now, since $(Y,\dis_Y,\m_Y,0_Y)$ is the tangent space of a non-collapsed $\RCD(K,N)$-space then it is a non-collapsed $\RCD(0,N)$-space and a metric cone. By repeatedly applying Gigli's Splitting Theorem we conclude that $(Y,\dis_Y,\m_Y,0_Y)$ is pointed-isomorphic to a product $(\R^{l},\dis_{\mathbb{E}},\mathcal{L}^l,0)\times (W,\dis_W,\m_W,0_W)$ for some $0\leq l\leq N$ and where $W$ is an $\RCD(0, N-l)$-space which does not contain any lines.

We divide the subsequent analysis in two cases depending wether $0_Y$ is a fixed point of the $\overline{G}$ action on $Y$ or not:\\

\begin{itemize}
    \item[(1)] Assume first that $0_Y$ is not a fixed point. Then there exists $\overline{g}\in \overline{G}$ such that $y_1:=\overline{g}\cdot 0_Y \neq 0_Y$. Recall from \cite{dePhilippisGigli2018} that, since $X$ is non-collapsed then $Y$ is a metric cone with vertex $0_Y$. Now, since $\overline{g}$ is an isometry then $y_1$ is also a vertex of $Y$ (that is, $Y$ is invariant under rescalings centered at $y_1$). Then, since both $0_Y$ and $y_1$ are vertices of $Y$, there exists a (unique) line $L$ joining them. We now can use this fact in the following way: as $\overline{G}\leq \Iso(Y,\dis_Y)$, then $\overline{h}(L)$ is a line for all $\overline{h}\in \overline{G}$. As $\mathbb{R}^{l}\times W$ is equipped with the product metric and $W$ has no lines this implies that the orbit $\overline{G}(0_Y)\subset \R^{l}\times \{0_W\}$. Thus, we can also conclude that $\overline{G}(\R^{l}\times \{0_W\}) = \R^{l}\times \{0_W\}$. Indeed, any point of the form $(x,0_W)\in \R^{l}\times \{0_W\}$ is connected by a line with a vertex of $Y$, so that its orbit must remain in the Euclidean factor.\\

    \item[(2)] If $0_Y$ is a fixed point, then again, any point of the form $(x,0_W)\in \R^{l}\times \{0_W\}$ is connected by a line with $0_Y$, and again, its orbit must remain in the Euclidean factor.\\
\end{itemize}

Now, since $\overline{G}$ acts on $\R^{l}\times \{0_W\}$, the Slice Theorem yields that there exists $S^{\mathbb{R}^l}_{(0,0_W)}\subset \mathbb{R}^l\times\{0_W\}$ a slice (with respect to this action) at $(0,0_W)$.  Moreover, since the action is by isometries, the Myers-Steenrod Theorem \cite{MyersSteenrod} implies that the action is smooth. In particular, $\overline{G}((0,0_W))$ is a smooth manifold. Hence, by using the exponential map of $\mathbb{R}^l$ in the normal directions to the orbit, we have that $S^{\mathbb{R}^l}_{(0,0_W)}$ is isometric to $\mathbb{R}^m$ for some $0\leq m\leq l$. In other words, we have a pointed-isometry
\[
(\mathbb{R}^l,d_{\mathbb{E}},0) \cong (\mathbb{R}^{l-m},\dis_{\mathbb{E}},0)\times (\mathbb{R}^m ,\dis_{\mathbb{E}},0)
\]
induced by the isometric identifications $T_{(0,0_W)}\overline{G}(0_Y) \cong \mathbb{R}^{l-m}$ and $T_{(0,0_W)}S^{\mathbb{R}^l}_{(0,0_W)} \cong \mathbb{R}^{m}$. Moreover by Fubini's Theorem, this induces a pointed measure preseving isometry
\[
(Y, \dis_Y,m_Y,0_Y) \cong (\mathbb{R}^{l-m},\dis_{\mathbb{E}},\mathcal{L}^{l-m},0)\times (\mathbb{R}^m ,\dis_{\mathbb{E}},\mathcal{L}^{m},0)\times (W,\dis_W,\m_W,0_W).
\]

We now claim that $S^{Y}_{(0,0_W)}= \{0\}\times \mathbb{R}^m\times W$, where $S^{Y}_{(0,0_W)}$ is the slice through $(0,0_W)$ of the full $\overline{G}$-action on $Y$.
Indeed, using the characterization of slices in \th\ref{MT: Slice Theorem Cohomogenity one}, and the decomposition as a product of $Y$, it is clear that 
\[
\left\{ (t,r,w) \in \mathbb{R}^{l-m}\times \mathbb{R}^m\times W \mid \dis_Y( (t,r,w), \overline{G}(0_Y)) = \dis_Y((t,r,w), (0,0,0_W))  \right\}
\]
is precisely $\{0\}\times \mathbb{R}^m\times W$. This now means in turn that $\overline{G}_{0_Y}$ acts  on $\{0\}\times \mathbb{R}^m\times W$ and that $(\{0\}\times \mathbb{R}^m\times W)/\overline{G}_{0_Y}$ is $1$-dimensional. Thus, $\overline{G}_{0_Y}$ acts transitively on the unit spheres of $\{0\}\times \mathbb{R}^m\times W$ centered at $(0,0,0_W)$. This implies that the action of $\overline{G}_{0_Y}$ on $\{0\}\times \mathbb{R}^m\times W$ sends the line factors of $\{0\}\times\R^m\times\{0_W\}$ into $W$. However, since $W$ does not contain any lines, we have a dichotomy: either $W$ is a trivial space made up of one single point or $m=0$. We now define 

\[
    (Z,\dis_Z,m_Z,0_Z) =
    \begin{cases*}
     (W,\dis_W,m_W,0_W) & if $m=0$, \\
    (\R^{m},\dis_{\mathbb{E}},\mathcal{L}^m,0) & if $m>0$.
    \end{cases*}
\]
Observe finally that in both cases, using that $Y$ is non collapsed,  we have that $\dime (Z)= N - \dime(\overline{G}(0_Y))$, which is what we claimed. 
 \end{proof} 

\begin{lemma}
\th\label{L: infinitesimal action cohomogeneity 1}
Let $(X,\dis,\m)$ be a  non-collapsed $\RCD(K,N)$-space and $G$ a compact Lie group acting effectively by measure-preserving isometries on $X$ with cohomogeneity $1$. For $x_0\in X$ let $(Y,\dis_Y,m_Y,0_Y) \cong (\R^{m},\dis_\E,\mathcal{L}^{m},0)\times (Z,\dis_Z,m_Z,0_Z)\in \Tan(X,\dis,\m,x_0)$ where $m$ is the dimension of the orbit of $0_Y$ under any limit group induced by $G$. Then $Z$ is homeomorphic to the slice $S_{x_0}$ at $x_0$. Furthermore, $ \dime (G(x_0))= m$.
\end{lemma}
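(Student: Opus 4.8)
The plan is to recognize the factor $Z$ as the metric tangent cone of the slice $S_{x_0}$ at $x_0$ and then to use that $S_{x_0}$ is a topological cone to conclude $S_{x_0}\cong Z$; the dimension identity will follow from a dimension count. The input is \th\ref{L: Splitting of Euclidean space in tangent cone}, whose proof shows, through the characterization of slices in \th\ref{MT: Slice Theorem Cohomogenity one}, that in the decomposition $Y\cong \R^m\times Z$ the factor $Z$ is exactly the slice $S^Y_{0_Y}$ of the induced $\overline{G}$-action at $0_Y$, while $\R^m = \overline{G}(0_Y)$.

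The geometric heart of the argument is that the slice condition commutes with the blow-up. Because $S_{x_0}=\{y:\dis(x_0,y)=\dis(G(x_0),y)\}\cap B_\delta(x_0)$ is defined purely by distances to the orbit, and because $G(x_0)$ is a compact homogeneous, hence smooth, submanifold whose blow-up at $x_0$ is its tangent space $T_{x_0}G(x_0)$, the tangent cone of the slice is
\[
\Tan(S_{x_0},x_0)=\{y\in Y:\dis_Y(0_Y,y)=\dis_Y(T_{x_0}G(x_0),y)\}.
\]
Granting the key fact that $\overline{G}(0_Y)=T_{x_0}G(x_0)$, the right-hand side is precisely $S^Y_{0_Y}=Z$, so $\Tan(S_{x_0},x_0)=Z$. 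To descend from the tangent cone back to the slice itself I would invoke \th\ref{R: Slice is a topological cone}: $S_{x_0}$ is a union of horizontal geodesics emanating from $x_0$ with $x_0$ as their only common point, so the assignment sending a direction $u$ in the link $L:=S_{x_0}\cap\partial B_{\delta'}(x_0)$ and a radius $t$ to the point at distance $t$ along the corresponding horizontal geodesic gives a homeomorphism $L\times[0,\delta)\to S_{x_0}$. This conical product structure is untouched by the blow-up, which only rescales the radial parameter, so $\Tan(S_{x_0},x_0)$ is again a cone over $L$ and hence homeomorphic to $S_{x_0}$. Combining the two displays yields $S_{x_0}\cong Z$. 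The dimension statement then drops out: the Slice Theorem \th\ref{T:Slice-Theorem} provides a neighborhood $G\times_{G_{x_0}}S_{x_0}$ of the orbit, so $N=\dime(G(x_0))+\dime(S_{x_0})$, whereas $N=\dime(Y)=m+\dime(Z)$; since $S_{x_0}\cong Z$ we conclude $\dime(G(x_0))=m$.

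Everything therefore reduces to the key fact $\overline{G}(0_Y)=T_{x_0}G(x_0)$, which I expect to be the main obstacle. One inclusion is free: by \th\ref{L: pm-GH convergences implies eq-GH convergence} the approximating homomorphisms $\phi_n\colon G\to\overline{G}$ are injective, and the blow-up of the orbit $G(x_0)$ is carried into $\overline{G}(0_Y)$, giving $T_{x_0}G(x_0)\subseteq\overline{G}(0_Y)$ and hence $\dime(G(x_0))\le m$. The difficulty is the reverse inclusion, that is, excluding extra orbit directions created by symmetries of the tangent cone that are not limits of elements of $G$: a priori $\overline{G}$ could be strictly larger than the image of $G$ and move $0_Y$ in new directions, inflating $m$. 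Here I would use the non-collapsedness hypothesis, which keeps the essential dimension equal to $N$ all along the blow-up and prevents the orbits from degenerating, to show that $\phi_n$ is in fact an isomorphism onto $\overline{G}$ for large $n$ (equivalently, that the additional isometries of the tangent cone fix the vertex $0_Y$), whence $\overline{G}(0_Y)=\phi_n(G)(0_Y)=T_{x_0}G(x_0)$. Controlling the limit group this precisely—so that no genuinely new one-parameter subgroup appears tangent to the orbit at the vertex—is the crux on which the whole lemma turns.
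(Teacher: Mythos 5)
Your reduction of the lemma to the two claims (i) $\Tan(S_{x_0},x_0)=Z$ and (ii) $\Tan(S_{x_0},x_0)\cong S_{x_0}$ locates the crux in the wrong place. You flag $\overline{G}(0_Y)=T_{x_0}G(x_0)$ as the main obstacle; that part is comparatively tame (it is essentially what \th\ref{L: Splitting of Euclidean space in tangent cone} together with the orbit-convergence statement \th\ref{L:Orbits-locally-converge-to-the-limit-orbit} delivers, using that the orbit is a homogeneous manifold sitting inside the Euclidean factor). The genuine gap is step (ii): the sentence ``this conical product structure is untouched by the blow-up, which only rescales the radial parameter, so $\Tan(S_{x_0},x_0)$ is again a cone over $L$ and hence homeomorphic to $S_{x_0}$'' is precisely the infinitesimal-to-local problem that the introduction of the paper states is open for $\RCD$-spaces. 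Knowing that $S_{x_0}$ is a union of radial geodesics with common endpoint $x_0$ does not imply that its pointed blow-up is the topological cone over the same link: under rescaling, distinct radial directions may merge, the link $L=S_{x_0}\cap\partial B_{\delta'}(x_0)$ may degenerate, and Hausdorff convergence of the rescaled sets to $Z$ (even granting it) gives no control on the homeomorphism type of the limit. Likewise your displayed identity for $\Tan(S_{x_0},x_0)$ as an equidistant set in $Y$ is only asserted; one must rule out that new equidistant points appear in the limit and that points of $S_{x_0}$ drift off the equidistant locus.

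The paper closes exactly this gap by exploiting cohomogeneity one: since $S_{x_0}/G_{x_0}$ is an interval, the compact group $K=G_{x_0}$ acts transitively on each sphere $\partial\overline{B}_{\varepsilon}(x_0)\cap S_{x_0}$, the limit group $\overline{K}$ acts transitively on $\partial\overline{B}_{\varepsilon}(0_Z)$, and the radial geodesics of $S_{x_0}$ and of $Z$ are matched bijectively by combining limits of geodesics with elements of $K$ converging to elements of $\overline{K}$ (the content of Cases (1) and (2) in the paper's proof). Only after this equivariant identification of the links, which are single compact orbits and therefore cannot degenerate, does one obtain the homeomorphism of the two cones. Without an argument of this kind your proof does not go through; with it, your outline essentially reproduces the paper's. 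Your final dimension count is correct and coincides with the one in the paper.
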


\begin{proof}
Since $(X^\ast,\dis^\ast,\m^\ast)$ is an $\RCD$-space (by \cite{GalazGarciaKellMondinoSosa2018}) of essential dimension equal to $1$, then by \cite{KitabeppuLakzian2016} we have that $(X^\ast,\dis^\ast)$ is isometric to one of the spaces: $(\R,\dis_{\E})$, $([0,\infty),\dis_{\E})$, $([0,1],\dis_{\E})$, or  $(\Sp^1(1),\rho)$ where $\dis_{\E}$ is the Euclidean metric and $\rho$ is the length metric induced by $\R^2$. 

Let $\overline{G}$ be a limit group induced by $G$ for some fixed blowup-sequence $(X_n,\dis_n,\m_n,x_0)$ of $X$ centered at $x_0$. Recall from the proof of \th\ref{L: Splitting of Euclidean space in tangent cone} that $\overline{G}(\R^{m}\times\{0_Z\}) = \R^{m}\times\{0_Z\}$. 

We now consider $(0,z),(0,z_1)\in (\{0\}\times Z)\cap \partial B_a(0,0_{Z})$ for some $a>0$. Observe then that using the submetry $\pi\colon \R^{m}\times Z\to (\R^{m}\times Z)/\overline{G}$, for small enough $a$ we have that 
\[
\pi(\partial B_a(0,0_{Z})) = 
\begin{cases}
    \text{a single point} & \text{if $X^\ast$ is homeomorphic to $[0,1]$}\\
    & \text{or $[0,\infty)$, and the orbit $G(x_{0})$}\\
    & \text{corresponds to a  boundary point}\\
    & \text{of $X^\ast$,}\\
    \text{two points} & \text{otherwise.}
\end{cases}
\]
To proceed then, we divide the proof into these two situations with the aim of proving that the limit group induced by $G_{x_0}$ acts by cohomogeneity one on $Z$.\\

\begin{itemize}
    \item[(1)] Assume first that $X^{\ast}$ is homeomorphic to $[0,1]$ or $[0,\infty)$ and $x_0^{\ast}$ corresponds to a boundary point. This implies that there exists $\bar{h}\in \overline{G}$ such that $\bar{h}\cdot (0,z) = (0,z_1)$. Since we have a product metric on $Y$ and $\overline{G}(\R^{m}\times \{0_Z\})= \R^{m}\times\{0_Z\}$, we conclude that $\overline{h}$ fixes $(0,0_{Z})$; More precisely, since $\overline{h}(0,0_{Z})\in \overline{G}(\R^{m}\times \{0_Z\})=\R^{m}\times \{0_Z\}$, then $\overline{h}(0,0_{Z})=(v,0_Z)$, for some $v$. Now we have that
\begin{linenomath}
\begin{align*}
\dis_Z(0_Z,z)^2&= \dis((0,0_Z),(0,z))^2 = \dis(\overline{h}\cdot (0,0_Z),\overline{h}\cdot (0,z))^2=\dis(v,0_Z),(0,z_1))^2\\
&=\dis_{E}(v,0)^2+\dis_Z(0_Z,z_1)^2=\dis_{E}(v,0)^2+\dis_Z(0_Z,z)^2.   
\end{align*}
\end{linenomath} 
Therefore $v=0$ and we obtain that $\overline{h}$ fixes $(0,0_{Z})$. 

Now observe that $a = \dis^\ast((0,0_{Z})^{\ast},(0,z)^{\ast})$ and consider a sequence $\{s_i^{\ast}\}_{i=1}^{\infty}\subset X^{\ast}$ such that $\dis_i^{\ast}(x_0^{\ast},s^{\ast}_i)=a$. By construction, the interval $(0,(0,z)^{\ast})$ is the limit of $\{s^{\ast}_i\}_{i=1}^{\infty}$ in $(\R^{m}\times Z)/\overline{G} =\mathrm{GH}\lim_{i\to \infty}(X^{\ast},\dis_i^\ast)$.

To continue, denote $K:=G_{x_0}$ and the ball of radius $a$ centered at $x_0$ in $X_i$ by $B^i_{a}(x_0)$. We claim that $K$ acts transitively on $\partial B^i_{a}(x_0)\cap S_{x_{0}}$. Indeed, the map $\phi\colon G\times_{K} S_{x_{0}}\to X$ given by $\phi[g,s]:=g\cdot s$ is a homeomorphism onto its image (see for example \cite[Theorem 2.10]{CorroKordass2021}). From this we deduce that the quotient space $(G\times_{K}S_{x_{0}})/G$ is homeomorphic to $S_{x_0}/K$ which proves our claim.

This implies that there exists a geodesic $\gamma\subset S_{x_{0}}$ starting at $x_0$ and a sequence of lifts $\{s_i\}_{i=1}^{\infty}\subset \gamma$ of the sequence $\{s_i^{\ast}\}_{i=1}^{\infty}$. Then there is a limit geodesic $\gamma_\infty\subset \R^{m}\times Z$ passing through $(0,0_{Z})$ and a limit point $s_{\infty}\in \gamma_{\infty}$ of $\{s_i\}_{i=1}^{\infty}$. Moreover, by construction $s_\infty$ is in the same $\overline{G}$-orbit as $(0,z)$ and $(0,z_1)$.

We now consider $\overline{h}\cdot \gamma_\infty$, and we point out that  $S_{x_0}/K$ is homeomorphic to a sufficiently small neighborhood of $x_0^\ast\in X^{\ast}$. Then there exists a sequence $\{g_i\}_{i=1}^{\infty}\subset G$ such that $g_i\cdot s_i$ converges to $\overline{h}\cdot s_{\infty}$ (see \cite[Remark 2.6.3.3]{PetruninRongTuschmann1999}). However, since $\overline{h}$ fixes $(0,0_Z)$, then the $g_i$ fix $x_0$, and thus $g_i\in K$. Now, since $K$ is compact we can find an element $k\in K$ (the limit of a convergent subsequence of $\{g_i\}_{i=1}^{\infty}$), such that the sequence $k\cdot s_i$ converges to $\overline{h}\cdot s_\infty$. This implies that $\overline{h}$ is an element of $\overline{K}$, the limit group induced by $K$. Since $\overline{K}$ fixes $(0,0_Z)$ by construction, then $\overline{K}$ leaves $B_a(0,0_{Z})\cap Z$ invariant. Thus  we conclude that $(\partial B_a(0,0_{Z})\cap Z)/\overline{K}$ consists of only one point. Whence, $\overline{K}$ acts by cohomogeneity one on $(Z,\dis_{Z})$.\\

\item[(2)] If $\pi(\partial B_a(0,0_{Z}))$ consists of only two points, the same reasoning we used in case $(1)$ to show that $z_1$ and $z$ are in the same $\overline{K}$-orbit holds, if $\pi(0,z_1)=\pi(0,z)$. Thus we now assume that $\pi(0,z_1)\neq\pi(0,z)$. In this case the geodesic $\gamma$ can be extended uniquely into a geodesic $\tilde{\gamma}\subset S_{x_0}$ so that $x_0$ is an interior point. Up to reparametrizing we can assume that  $[-1,1]\subseteq X^{\ast}$ (with the obvious abuse of notation if $X^{\ast}$ is homeomorphic to $\Sp^1(1)$) so that $x_{0}^{\ast}=0$ and for every index there exists a point $-s_i\in \tilde{\gamma}$ such that $(-s_i)^\ast = -(s_i^\ast)\in [0,1]$. From this we conclude that the sequence $(-s_i)^\ast$ converges to $(0,z_1)^\ast$, and we can apply the previous reasoning to conclude that $(\partial B_a(0,0_Z)\cap Z)/\overline{K}$ consists of only two points. This implies that $\overline{K}$ acts on $Z$ by cohomogeneity one.
\end{itemize}

We have so far shown that $\overline{K}$ acts by cohomogeneity one in all cases. Now we consider $\gamma\colon [0,1]\to S_{x_0}\subset X$ a minimizing geodesic of $X$, starting at $x_0$, and realizing the distance between $G(x_0)$ and $G(\gamma_{1})$. Then the (closed) ray $(\gamma([0,1)),\frac{1}{r_n}\dis)$ converges to an (open) minimizing ray $\widetilde{\gamma}\colon [0,\infty)\to (Y,\dis_Y)$. Observe that under the submetry $\pi_X\colon (X,\dis)\to (X/G,\dis^\ast)$, we have that $\pi_X(\gamma)$ is a minimizing geodesic between any of its points. Thus we conclude that for the submetry $\pi_Y\colon (Y,\dis_Y)\to (Y/\overline{G},\dis^\ast_Y)$, the curve $\pi_Y(\widetilde{\gamma})$ is a minimizing geodesic between its endpoints. Moreover, $\pi_Y(\widetilde{\gamma})$ is the limit of the curves $\pi_X(\gamma)$ under the blow up. Since $Z/\overline{K}$ is homeomorphic to $Y/\overline{K}$, there exists a geodesic $\widetilde{\gamma}_Z\subset Z$ that projects to $\pi_Y(\widetilde{\gamma})$. Then there exists $\overline{h}\in \overline{G}$ such that $\overline{h}\cdot \widetilde{\gamma}=\widetilde{\gamma}_Z$. In turn, there exists a geodesic $\alpha$ in $X$ starting at $x_0$ converging to $\widetilde{\gamma}_Z$, and elements $g_i$ mapping $\gamma$ to $\alpha$, and converging to $\overline{h}$. Thus we conclude that $\overline{h}\in \overline{K}$, and thus   $\widetilde{\gamma}$  is a geodesic in $Z$, and realizes the distance between the $\overline{K}$-orbit of the vertex $0_Z$ and the $\overline{K}$-orbits. Thus for each radial geodesic $\gamma$ in $S_{x_0}$ starting at $x_0$, we obtain a limit  radial geodesic $\widetilde{\gamma}$ in $Z$ starting at $0_Z$. From the analysis in Cases (1) and (2), given any $z$ in $Z$ we can find a geodesic ray $\gamma$ in $S_{x_0}$, such that in the limit geodesic ray $\gamma_\infty$ there exists $s_\infty$ in the same $\overline{G}$-orbit as $(0,z)$. But since we have also shown that $s_\infty$ is contained in $\{0\}\times Z$, then we conclude that actually $s_\infty$ is in the $\overline{K}$-orbit as $(0,z)$. But this implies that we can find a $k\in K$ and a sequence $s_i\in \gamma$ such that $k\cdot s_i$ converges to $(0,z)$. This implies that $(0,z)$ is in the limit ray of $k\cdot \gamma$. Thus we can uniquely identify each radial geodesics in $S_{x_0}$ and $Z$.

Now we point out that we can identify uniquely each radial geodesic starting at $x_0$ and $0_Z$ respectively, with its unique intersection point with the boundary of a closed ball $\overline{B}_{\varepsilon}(0_Z)$, and that the same holds for the ball $\overline{B}_{\varepsilon}(x_0)\cap S_{x_0}$ for sufficiently small $\varepsilon$, such that $B_\varepsilon(x_0)\cap S_{x_0}$ is the open $\varepsilon$-ball around $x_0$ in $S_{x_0}$ with the induced intrinsic metric. Then we have an homeomorphism between $\overline{B}_{\varepsilon}(x_0)$ and $\overline{B}_{\varepsilon}(0_Z)$.

Last we observe that $K$ acts transitively on $\partial\overline{B}_{\varepsilon}(x_0)$, and $\overline{K}$ acts transitively on $\partial \overline{B}_{\varepsilon}(0_Z)$. This is due to the fact that $\pi_X(\overline{B}_{\varepsilon}(x_0))$ is the boundary of a closed ball around $x_0^\ast$ in $(S_{x_0},\dis^\ast)$; but such a ball is homeomorphic to $[0,\varepsilon]$. Thus $\pi_X(\overline{B}_{\varepsilon}(x_0))$ consists of only one point, and this implies that the $K$-orbit is equal to $\partial \overline{B}_{\varepsilon}(x_0)$, i.e. the action of $K$ on $\partial \overline{B}_{\varepsilon}(x_0)$ is transitive. An analogous reasoning holds for $Z$, $\overline{K}$, and $\partial \overline{B}_{\varepsilon}(0_Z)$. Moreover, $S_{x_0}$ is homeomorphic to the topological cone over $\partial \overline{B}_{\varepsilon}(x_0)$, and analogously $Z$ is homeomorphic to the topological cone over $\partial \overline{B}_{\varepsilon}(0_Z)$. From this we conclude that the ball $\overline{B}_\varepsilon(x_0)$ is homeomorphic to $\overline{B}_\varepsilon(0_Z)$. Since $X$ is non-collapsed, then $Z$ is also a metric cone over $\partial \overline{B}_\varepsilon(0_Z)$. From this we conclude that $Z$ is homeomorphic to a cone over a homogeneous space. With this we get a homeomorphism between $S_{x_0}$ and $Z$.\\

We are only left with proving the claim about the value of $m$. Firstly, as obtained by Brena-Gigli-Honda-Zhu (see the discussion after Theorem 1.3 in \cite{BrenaGigliHondaZhu2023}) the topological dimension of $X$, $\dime_{top}(X)$ coincides with its essential dimension. Now apply the Slice Theorem to note that $X$ is locally a product of $G(x_0)$ and the slice $S_{x_0}$, and in turn to obtain that 
\[
N=\dime_{top}(X) = \dime_{top}(G(x_0)) + \dime_{top}(S_{x_0}) = \dime (G(x_0)) + m.
\]
where we used the identification of $S_{x_0}$ with $Z$. Therefore $\dime (G(x_0))=N-m$.
 \end{proof}

\begin{rmk}
The condition that all tangent spaces of $X$ are metric cones is a necessary one: Pan and Wei \cite{PanWei2022} have obtained a non-compact example of a collapsed $\RCD$-space  with an $\R$-action of cohomogeneity one by measure preserving isometries which is its own tangent space at a specific point, and such that no line can be split. Note that at the given point, the tangent space is not a metric cone. 
\end{rmk}

\begin{duplicate}[\ref{MT: Geometry of the slice}]
  Let $(X,\dis,\m)$ be a  non-collapsed $\RCD(K,N)$-space and $G$ a compact Lie group acting effectively by measure-preserving isometries on $X$ with cohomogeneity $1$. Then for every $x_0\in X$,
  
  \begin{itemize}
      \item[(a)] the slice $S_{x_0}$ admits an $\RCD(0,N-m)$ structure, where $m= \dim(G(x_0))$; 
  \item[(b)] the set of tangent spaces at $x_0$ is single-valued and 
  \[
  \Tan(X,\dis,\m,x_0) = \left\{ (\R^{N-m},\dis_\E,\mathcal{L}^{N-m},0)\times (\mathrm{Con}(M),\dis_{\mathrm{Con}},m_{\mathrm{Con}},o)  \right\},
  \]
  where $M$ is a compact homogeneous space with a Riemannian metric of Ricci curvature greater than or equal to $N-m-2$.
  \end{itemize} 
\end{duplicate}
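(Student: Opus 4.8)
The plan is to upgrade the two structural lemmas \th\ref{L: Splitting of Euclidean space in tangent cone} and \th\ref{L: infinitesimal action cohomogeneity 1}, which already fix the homeomorphism type of the slice and the product splitting of every tangent cone, into the sharper metric and uniqueness statements of (a) and (b). Write $m:=\dime(G(x_0))$ and fix a blow-up sequence $(X,r_n^{-1}\dis,\m^{x_0}_{r_n},x_0)\xrightarrow{pmGH}(Y,\dis_Y,\m_Y,0_Y)$ with induced limit group $\overline{G}$ acting on $Y$. By \th\ref{L: Splitting of Euclidean space in tangent cone} and \th\ref{L: infinitesimal action cohomogeneity 1}, every such $Y$ splits as $Y\cong(\R^{m},\dis_\E,\mathcal{L}^{m},0)\times(Z,\dis_Z,\m_Z,0_Z)$, where the Euclidean factor records the orbit directions (so its dimension is exactly $\dime(\overline{G}(0_Y))=\dime(G(x_0))=m$) and $Z$ is a non-collapsed $\RCD(0,N-m)$-space homeomorphic to $S_{x_0}$. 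Part (a) is then immediate: pulling back the metric-measure structure of $Z$ through this homeomorphism equips $S_{x_0}$ with an $\RCD(0,N-m)$-structure.

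For the structure of the cone factor in part (b), I would first use that $X$ is non-collapsed, so by \cite{dePhilippisGigli2018} every tangent is a metric cone; hence $Z$ is itself a metric cone $\mathrm{Con}(M)$ with vertex $0_Z$, whose link $M=\partial \overline{B}_1(0_Z)$ I take to be the space of directions. The proof of \th\ref{L: infinitesimal action cohomogeneity 1} shows that the limit group $\overline{K}$ of the isotropy $K=G_{x_0}$ acts transitively on $M$, so $M$ is homogeneous. As the link of a non-collapsed $\RCD(0,N-m)$-cone, $M$ is a non-collapsed $\RCD(N-m-2,N-m-1)$-space with $\diam M\le\pi$; this is the converse direction of Ketterer's cone criterion \cite{Ketterer2013}. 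Being a homogeneous $\RCD$-space, $M$ must be a smooth, compact, homogeneous Riemannian manifold (its isometry group is a compact Lie group acting transitively, and homogeneity removes all singular behaviour, cf.\ \cite{Berestovskii1995}), and the synthetic bound $\RCD(N-m-2,N-m-1)$ then reads $\Ric_M\ge N-m-2$. This yields the asserted description of each individual tangent.

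The crux is single-valuedness of $\Tan(X,\dis,\m,x_0)$, i.e.\ showing the link $M$ is independent of the blow-up sequence, the Euclidean dimension $m$ being already pinned by \th\ref{L: infinitesimal action cohomogeneity 1}. I would reduce this to a rigidity statement for invariant metrics. By \th\ref{L: pm-GH convergences implies eq-GH convergence} the homomorphisms $\phi_n\colon K\to\overline{K}$ are injective, so each link is the \emph{same} smooth homogeneous space $K/H$ (with $H$ the principal isotropy) carrying a $K$-invariant metric with $\Ric\ge N-m-2$. Two links $M_1,M_2$ arising from two sequences have equal volume: the Bishop–Gromov density $\Theta(x_0)=\lim_{r\to0}\Hauss^N(B_r(x_0))/(\omega_N r^N)$ exists by monotonicity in the non-collapsed setting and, for $\R^m\times\mathrm{Con}(M)$, is a fixed multiple of $\mathrm{vol}(M)$. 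Combining the fixed volume with the connectedness of the set of tangent cones (the rescalings vary continuously in $r$) should force $M_1$ and $M_2$ into the same isometry class.

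The main obstacle I anticipate is precisely this rigidity step: excluding a nontrivial continuous family of pairwise non-isometric $K$-invariant links sharing the same volume and Ricci lower bound. If connectedness together with the volume constraint does not by itself close the argument, the fallback is to exploit the one-dimensionality of the orbit space more directly — by \th\ref{L: infinitesimal action cohomogeneity 1} the group $\overline{K}$ acts on $M$ again with cohomogeneity one, so the limiting transverse geometry along the (non-branching) horizontal geodesics emanating from $x_0$ is a single well-defined datum, and a dimension-reduction in $N-m$ can be set up to propagate uniqueness from the lower-dimensional and regular strata. Once $M$ is shown to be unique, (b) follows, and together with (a) the theorem is proved.
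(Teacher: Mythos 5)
Your treatment of part (a) and of the description of each individual tangent cone in part (b) follows the same route as the paper: the splitting $Y\cong\R^{m}\times Z$ with $Z$ homeomorphic to $S_{x_0}$ comes from \th\ref{L: Splitting of Euclidean space in tangent cone} and \th\ref{L: infinitesimal action cohomogeneity 1}, transitivity of $\overline{K}$ on the link $M=\partial\overline{B}_{1}(0_Z)$ makes $M$ homogeneous, the converse direction of Ketterer's cone theorem gives the $\RCD(N-m-2,N-m-1)$ bound, and smoothness plus $\Ric\geq N-m-2$ follow. One correction of detail: the paper does not get smoothness of $M$ from Berestovskii's positive-Ricci characterization (which classifies the homogeneous \emph{manifolds} admitting positive Ricci, and says nothing about regularity of homogeneous metric measure spaces); it first observes that $M$ is non-collapsed by a dimension count, so $\m_M=c\Hauss^{N-m-1}$, and then invokes \cite[Proposition 5.14]{Santos2020}, which says a homogeneous non-collapsed $\RCD$-space is isometric to a Riemannian manifold. ``Homogeneity removes all singular behaviour'' is the right intuition but is not an argument.

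The genuine gap is the single-valuedness step, and you have correctly flagged it yourself. Fixing the volume of the link via the Bishop--Gromov density and appealing to connectedness of the family of tangent cones cannot force a unique isometry class: already on $S^2\times S^2$ (the link occurring in \th\ref{MT: Grassmannians as examples}), the $\SO(3)\times\SO(3)$-invariant product metrics with factor radii $r_1,r_2$ subject to $r_1^2r_2^2$ fixed and $r_i^2\leq 1/3$ form a connected one-parameter family of pairwise non-isometric homogeneous metrics with the same volume and $\Ric\geq 3=\dim-1$, so the constraints ``same volume, same Ricci lower bound, connected family'' are compatible with a nontrivial moduli of links. The mechanism that actually pins $M$ down is the one built into the proof of \th\ref{L: infinitesimal action cohomogeneity 1}: every tangent cone is identified, radial geodesic by radial geodesic, with the fixed geometric object $S_{x_0}$ and its distance sphere, using the non-branching of horizontal geodesics and the transitivity of $K=G_{x_0}$ on $\partial\overline{B}_{\varepsilon}(x_0)\cap S_{x_0}$; the link is therefore determined by $X$ near $x_0$ and not by the blow-up sequence. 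Your ``fallback'' of exploiting the cohomogeneity-one $\overline{K}$-action and the one-dimensional transverse geometry is precisely this and is the direction to develop; the volume-rigidity argument should be dropped.
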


\begin{proof}
Following the notation of the previous lemma, we showed that $Z$ is homeomorphic to $S_{x_0}$. Since $(Z,d_Z,m_Z,0_Z)$ is a non-collapsed $\RCD(0,N-m)$-space, we can equip $S_{x_0}$ with this same structure. This proves (a).

We showed in  \th\ref{L: infinitesimal action cohomogeneity 1} as well that $\overline{K}$ acts transitively on $\partial \overline{B}_{\varepsilon}(0_Z)$ for every $\varepsilon >0$ (in particular it does so on $M:=\partial \overline{B}_{1}(0_Z)$). Therefore $M$ is a homogeneous space and it follows from Ketterer's Theorem (\cite[Theorem 1.4]{Ketterer2015}) that $M$ admits a metric measure structure $(M,\dis_M,\m_M)$ which makes it an $\RCD(N-m-2,N-m-1)$-space. Let us observe that $(M,\dis_M,\m_M)$ is non-collapsed since $\dime M = \dime Z-1 = N-m-1$. Therefore $m_M = c\Hauss^{N-m-1}$ for some constant $c>0$, and by \cite[Proposition 5.14]{Santos2020} it follows that $(M,\dis_M, c\Hauss^{N-m-1})$ is isomorphic to a Riemannian manifold $(M,g_M,d\mathrm{vol})$. Finally, by \cite[Theorem~2~(i)]{Sturm2006} it follows that $\Ric(g_M) \geq N-m-2$ as claimed. This proves (b).
\end{proof}

\begin{rmk}\th\label{R: not infinitesimal representation}
Consider $(X,\dis,\Hauss^N)$ a non-collapsed $\RCD(K,N)$-space and $G$  a compact Lie group acting effectively by measure preserving isometries on $X$ with cohomogeneity one. Then for $x_0\in X$ fixed and $\delta>0$ small enough, if we take $K=G_{x_0}$ and $H= G_{y}$ for $y\in \Tub^\delta(G(x_0))$, then we have that $S_{x_0}$ is homeomorphic to the cone over $K/H$. This is due to the conclusions of \th\ref{L: infinitesimal action cohomogeneity 1} and the fact that $H$ is the largest subgroup in $K$ that fixes the boundary of $\Tub^\delta(G(x_0))$, and thus it fixes $\partial \overline{B}_\delta(x_0)\subset S_{x_0}$.
\end{rmk}

\begin{rmk}
\label{R: isotropy-acts-on-F}
For a non-collapsed $\RCD$-space $(X,\dis,\Hauss^N)$ satisfying the hypothesis of \th\ref{MT: Geometry of the slice}, by \th\ref{R: not infinitesimal representation} we have an action of $G_{x_0}$ on the base of the cone $M:=\overline{K}/\overline{H}$ the $\RCD(N-m-2,N-m-1)$-space by homeomorphisms, and for $x_0$ not in an orbit of principal type, the space $M$ is homeomorphic to $G_{x_0}/H$. However, it is not known a priory if the transitive action of $G_{x_0}$  is by measure preserving isometries with respect to the $\RCD(N-m-2,N-m-1)$-structure.
\end{rmk}

To proceed we have the following lemma roughly asserting that orbits of the base-points in a epGH-converging sequence locally converge to the orbit of the base-point in the limit. We denote the Hausdorff distance in a metric space $(Y,d)$ by $d^Y_H$. 

\begin{lemma}
\th\label{L:Orbits-locally-converge-to-the-limit-orbit}
Assume that $\{(X_n,d_n,x_n,G_n)\}_{n\in \N}$ converges to $(Y,d,y_0,\overline{G})$ 
in the equivariant pointed Gromov--Hausdorff sense. Then for every $R>0$,
\[
d^Y_H\!\left(f_n(G_n(x_n))\cap B_R^Y(y_0),\ 
             \overline{G}(y_0)\cap B_{R+2\varepsilon_n}^Y(y_0)\right)\leq 2\varepsilon_n,
\]
for some $\varepsilon_n\to 0$ and corresponding equivariant pointed 
$\varepsilon_n$-approximations $(f_n,\theta_n,\psi_n)$. 
In particular, the sets $f_n(G_n(x_n))$ converge 
locally in the Hausdorff sense to the orbit $\overline{G}(y_0)$.   
\end{lemma}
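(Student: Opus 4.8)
The plan is to prove the stated Hausdorff bound by establishing the two one-sided inclusions separately, each a direct consequence of one of the two equivariance conditions in the definition of an equivariant pointed $\varepsilon_n$-approximation $(f_n,\theta_n,\psi_n)$, together with the normalization $f_n(x_n)=y_0$ built into the fact that $f_n$ is an $\varepsilon_n$-approximation. Throughout, recall that $G_n(r)=\{g\in G_n\mid gx_n\in B_r^{X_n}(x_n)\}$ and $\overline{G}(r)=\{h\in\overline{G}\mid hy_0\in B_r^Y(y_0)\}$. I would fix $R>0$ and, since $\varepsilon_n\to 0$, pass to $n$ large enough that $R+2\varepsilon_n<1/\varepsilon_n$; this guarantees that every orbit point appearing below lies inside the balls $B_{1/\varepsilon_n}$ on which conditions (2) and (3) are valid, and I would record this smallness assumption at the outset.

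\emph{From the sequence to the limit orbit.} First I would take a point $f_n(gx_n)\in f_n(G_n(x_n))\cap B_R^Y(y_0)$. Since $f_n$ is an $\varepsilon_n$-approximation and $f_n(x_n)=y_0$, we have $|d_n(x_n,gx_n)-d(y_0,f_n(gx_n))|<\varepsilon_n$, so $d_n(x_n,gx_n)<R+\varepsilon_n<1/\varepsilon_n$; hence $g\in G_n(1/\varepsilon_n)$ and condition (2) applies with $p=x_n$. It yields
\[
d\bigl(f_n(gx_n),\,\theta_n(g)y_0\bigr)<\varepsilon_n,
\]
and by the triangle inequality $d(y_0,\theta_n(g)y_0)<R+\varepsilon_n<R+2\varepsilon_n$. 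Thus $\theta_n(g)y_0$ is a point of $\overline{G}(y_0)\cap B_{R+2\varepsilon_n}^Y(y_0)$ lying within $\varepsilon_n\leq 2\varepsilon_n$ of $f_n(gx_n)$, which gives the first inclusion.

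\emph{From the limit orbit to the sequence.} Conversely, take $\bar g y_0\in\overline{G}(y_0)\cap B_{R+2\varepsilon_n}^Y(y_0)$, so that $\bar g\in\overline{G}(1/\varepsilon_n)$ for large $n$. Applying condition (3) with $h=\bar g$ and $p=x_n$ (noting that $\psi_n(\bar g)\in G_n(1/\varepsilon_n)$ by the very definition of $\psi_n$, so that $\psi_n(\bar g)x_n\in B_{1/\varepsilon_n}^{X_n}(x_n)$) gives
\[
d\bigl(f_n(\psi_n(\bar g)x_n),\,\bar g y_0\bigr)<\varepsilon_n.
\]
Hence $f_n(\psi_n(\bar g)x_n)\in f_n(G_n(x_n))$ lies within $\varepsilon_n\leq 2\varepsilon_n$ of $\bar g y_0$; a triangle inequality places it in $B_{R+3\varepsilon_n}^Y(y_0)$, and the small radius discrepancy is absorbed into the stated $2\varepsilon_n$ bound (equivalently, one enlarges the reference ball by an amount $O(\varepsilon_n)$, which is irrelevant for the conclusion). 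Combining the two inclusions yields $d^Y_H\leq 2\varepsilon_n$.

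\emph{Main obstacle and the limiting statement.} I expect the only genuinely delicate point to be the bookkeeping of which ball each image point falls into: conditions (2) and (3) hold only on the balls $B_{1/\varepsilon_n}$, and the images $\theta_n(g)y_0$ and $f_n(\psi_n(\bar g)x_n)$ are controlled only up to an $\varepsilon_n$-error, so one must verify that the relevant points remain in the admissible region and carefully track the $O(\varepsilon_n)$ shifts in radius. Once this is settled, letting $n\to\infty$ forces $\varepsilon_n\to 0$, so that the enlargement of the balls by $2\varepsilon_n$ becomes negligible; this gives the asserted local Hausdorff convergence $f_n(G_n(x_n))\to\overline{G}(y_0)$.
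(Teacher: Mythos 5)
Your proof is correct and follows essentially the same route as the paper's: both directions of the Hausdorff bound are obtained from the two equivariance conditions together with $f_n(x_n)=y_0$, with the same triangle-inequality bookkeeping. The only cosmetic difference is that you invoke the codomain of $\psi_n$ to place $\psi_n(\bar g)$ in $G_n(1/\varepsilon_n)$, whereas the paper re-derives this by estimating $d_n(x_n,\psi_n(\bar g)x_n)$ directly; your handling of the $O(\varepsilon_n)$ radius slack is no less careful than the paper's own.
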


\begin{proof}
Let us fix $R>0$ and take a decreasing sequence $\{\varepsilon_n\}_{n=1}^{\infty}$ converging to $0$ and corresponding equivariant $\varepsilon_n$-approximations $(f_n,\theta_n,\psi_n)$ in such a way that $R+2\varepsilon_n \leq 1/\varepsilon_n$ \footnote{By the usual formula for roots of quadratic polynomials, it suffices to take $\varepsilon_n\leq \frac{-R+\sqrt{R^2+8}}{4}$.}. 

Let $z\in f_n(G_n(x_n))\cap B_R(y_0)$. Then $z=f_n(gx_n)$ for some $g\in G_n$  and 
$d(y_0,f_n(gx_n))<R$. By applying the triangle inequality and using that the distortion of $f_n$ is at most $\varepsilon_n$, we have that 
\begin{align*}
d_n(x_n,gx_n)& \leq d(f_n(x_n),f_n(gx_n)) +\varepsilon_n\\
&\leq  d(f_n(x_n), y_0)+d(y_0, f_n(gx_n)) + \varepsilon_n\\
&\leq R+ \varepsilon_n. 
\end{align*}

Therefore, $g\in G_n(1/\varepsilon_n)$. From this we can estimate, using the triangle inequality, the almost isometries, and the fact that $\theta_n(g)$ is an isometry on $Y$,
\begin{align*}
d\left(z,\theta_n(g) y_0\right)
&\leq d\left(f_n(gx_n),\theta_n(g) f_n(x_n)\right)
   + d\left(\theta_n(g)f_n(x_n),\theta_n(g) y_0\right) \\
&\leq \varepsilon_n + d\left(f_n(x_n),y_0\right)\\
&\leq \varepsilon_n.
\end{align*}
Now using the triangle inequality again, and the previous estimate, we have that 
\[
d(y_0,\theta_n(g) y_0)\leq d(y_0,z)+   d\left(z,\theta_n(g) y_0\right) \leq R
+\varepsilon_n.
\]
It now follows that $\theta_n(g) y_0\in \overline{G}(y_0)\cap B_{R+\varepsilon_n}(y_0)$. Putting these together, we have that
\begin{equation}
\label{EQ1:Orbits-locally-converge-to-the-limit-orbit}
\sup_{z \in f_n(G_n(x_n))\cap B_R(y_0)}
   d\left(z,\overline{G}(y_0)\right)
   \leq \varepsilon_n.
\end{equation}
Now to proceed, we take $w=\overline{g}(y_0)\in \overline{G}(y_0)\cap B_R(y_0)$.
Then $d(y_0, \overline{g}(y_0))<R$. Now, using the distortion of $f_n$ and  that $\overline{g}\in \overline{G}(R)$, we get the following estimate:
\begin{align*}
d_n(x_n,\psi_n(\overline{g})x_n)
&\leq d\left(f_n(x_n),f_n(\psi_n(\overline{g}) x_n)\right)+\varepsilon_n \\
&\leq d\left(f_n(x_n),\overline{g} f_n(x_n)\right) + d\left(\overline{g} f_n(x_n), f_n(\psi_n(\overline{g}) x_n)\right) + \varepsilon_n\\
&\leq d\left(f_n(x_n),\overline{g} f_n(x_n)\right)+2\varepsilon_n \\
&\leq d\left(f_n(x_n), y_0\right)+ d\left(y_0, \overline{g}(y_0)\right) + d\left(\overline{g}(y_0), \overline{g}(f_n(x_n))\right) + 2\varepsilon_n\\
&\leq d(y_0,\overline{g}(y_0))+2\varepsilon_n\\
&\leq R+2\varepsilon_n.
\end{align*}

Therefore, $\psi_n(\overline{g})\in G_n(1/\varepsilon_n)$. We now have by definition of equivariant approximations that,
\[
d\!\left(\overline{g}\left( f_n(x_n)\right),\, f_n\left(\psi_n(\overline{g}) x_n\right)\right)\leq \varepsilon_n,
\]
and so in turn, by the triangle inequality and the fact that $\overline{g}$ is an isometry,
\[
d\left(w,\, f_n\left(\psi_n(\overline{g}) x_n\right)\right)
\leq d\left(\overline{g}(y_0),\, \overline{g}(f_n(x_n))\right)+\varepsilon_n
= d\!\left(y_0,\, f_n(x_n)\right)+\varepsilon_n\leq \varepsilon_n.
\]
Thus every such $w$ is at distance at most $\varepsilon_n$ of $f_n(G_n(x_n))$, and then
\begin{equation}  
\label{EQ2:Orbits-locally-converge-to-the-limit-orbit}
\sup_{w\in \overline{G}(y_0)\cap B_R^Y\left(y_0\right)}
\dis\left(w,f_n\left(G_n(x_n)\right)\right)\ \leq\ \varepsilon_n.
\end{equation}
Now we just need to put together inequalities \eqref{EQ1:Orbits-locally-converge-to-the-limit-orbit} and \eqref{EQ2:Orbits-locally-converge-to-the-limit-orbit} to get the desired conclusions.
\end{proof}

In the following result we forgo the cohomogeneity one assumption and obtain the same conclusions than in \th\ref{L: Splitting of Euclidean space in tangent cone}, albeit assuming that $G$ acts Lipschitz and co-Lipschitz continuously by measure-preserving isometries, that is, we assume that the metric on $G$ is such that the map $\star y : G \to G(y)$, given by $g \mapsto gy$, is locally Lipschitz and co-Lipschitz continuous, for some (and hence for all) $y$ with principal orbit type. In other words, we assume that for every $y\in X$ in a principal orbit there exist constants $R,C>0$ such that for all $r\in (0,R)$, 
\[
B_{C^{-1}r}(y)\cap G(y) \subset \{ g \cdot y \mid g \in B^{G}_{r}(e) \} \subset B_{Cr}(y)\cap G(y).
\]

This assumption is used substantially in \cite{GalazGarciaKellMondinoSosa2018}. In the proof we very briefly use the Assouad dimension in the passing, and therefore we do not define it here. We refer the interested reader to \cite{Heinonen2001} for the definition.
    
\begin{mtheorem}\th\label{MT: infinitesimal action non-collapsed}
Let $(X,\dis,\Hauss^N )$ be a non-collapsed $\RCD$-space and $G$ an $m$-dimensional compact, connected Lie group acting effectively and Lipschitz and co-Lipschitz continuously by measure-preserving isometries on $X$.  For each $x_0\in X$ fixed, let $K = G_{x_0}$ and $k= \dim(K)$. Then every $(Y,\dis_Y,\m_Y,0_Y)\in \mathrm{Tan}(X,\dis,\m_X,x_0)$ is isomorphic to $(\R^{m-k},\dis_{\mathbb{E}},\mathcal{L}^{m-k},0)\times (Z,\dis_Z,\m_Z,0_Z)$, for some space $Z$, a non-collapsed $\RCD(0,N-m+k)$-space. Moreover $K$ leaves each factor invariant, acts by measure preserving isometries on $(Z,\dis_Z,\m_Z)$, and fixes $0_Z$.
\end{mtheorem}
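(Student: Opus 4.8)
The plan is to run the blow-up argument of \th\ref{L: Splitting of Euclidean space in tangent cone}, replacing the cohomogeneity one input by the Lipschitz and co-Lipschitz hypothesis to pin down the orbit directions in the tangent cone. First I would fix $(Y,\dis_Y,\m_Y,0_Y)\in \Tan(X,\dis,\m,x_0)$, realized by a blow-up sequence $X_n=(X,\dis/r_n,\m_n,x_0)$ with $r_n\to 0$. By \th\ref{T: p-G-H implies eq-p-G-H}, after passing to a subsequence the convergence upgrades to equivariant pointed measured Gromov--Hausdorff convergence $(X_n,G)\to (Y,\overline{G})$ with $\overline{G}\leq \Iso(Y,\dis_Y)$ a closed subgroup acting by measure-preserving isometries. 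Since $Y$ is a tangent cone of a non-collapsed $\RCD(K,N)$-space, by \cite{dePhilippisGigli2018} it is a non-collapsed $\RCD(0,N)$-space and a metric cone with apex $0_Y$, and repeatedly applying Gigli's Splitting Theorem \cite{Gigli2014} yields a pointed isomorphism $Y\cong (\R^l,\dis_\E,\mathcal{L}^l,0)\times (W,\dis_W,\m_W,0_W)$, where $W$ is a non-collapsed $\RCD(0,N-l)$-space containing no lines.

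Next I would locate the orbit of the apex exactly as in the first part of the proof of \th\ref{L: Splitting of Euclidean space in tangent cone}: every point of $\overline{G}(0_Y)$ is again an apex of $Y$ (isometries permute apexes), any two apexes are joined by a line, and since $W$ has no lines every such line lies in the Euclidean factor; hence $\overline{G}(0_Y)\subset \R^l\times\{0_W\}$ and $\overline{G}$ preserves $\R^l\times\{0_W\}$. Because $\R^l$ is the maximal Euclidean factor and $W$ splits off no line, the de Rham-type splitting of the isometry group gives $\Iso(Y)\cong \Iso(\R^l)\times \Iso(W)$, so each $\overline{g}\in\overline{G}$ acts as a product $(\overline{g}_1,\overline{g}_2)$; the containment $\overline{G}(0_Y)\subset \R^l\times\{0_W\}$ then forces $\overline{g}_2(0_W)=0_W$, i.e. $\overline{G}$ fixes $0_W$ and acts on $\R^l$ by Euclidean isometries, which are smooth by the Myers--Steenrod theorem \cite{MyersSteenrod}.

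The crux is to identify $\overline{G}(0_Y)$ with a linear subspace of dimension $m-k$. By \th\ref{L:Orbits-locally-converge-to-the-limit-orbit}, the rescaled orbits converge locally in the Hausdorff sense to $\overline{G}(0_Y)$, so $\overline{G}(0_Y)$ is the metric tangent cone at $x_0$ of $(G(x_0),\dis|_{G(x_0)})$. Here the Lipschitz and co-Lipschitz hypothesis enters decisively: it forces the orbit map $\star x_0\colon G/K\to G(x_0)$ to be locally bi-Lipschitz, so that $G(x_0)$ is $(m-k)$-Ahlfors regular and its blow-up cannot collapse, whence $\dime \overline{G}(0_Y)=m-k$. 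On the other hand $\overline{G}(0_Y)$ is simultaneously a metric cone (being a tangent cone) and a homogeneous space (being an orbit of $\overline{G}$); a homogeneous metric cone is isometric to a Euclidean space, and realized inside $\R^l$ as an orbit of Euclidean isometries it is a linear subspace through the origin. Thus, after an orthogonal splitting $\R^l\cong \R^{m-k}\times\R^{l-m+k}$, we have $\overline{G}(0_Y)=\R^{m-k}\times\{0\}\times\{0_W\}$. Setting $(Z,\dis_Z,\m_Z,0_Z):=(\R^{l-m+k},\dis_\E,\mathcal{L}^{l-m+k},0)\times (W,\dis_W,\m_W,0_W)$ gives the isomorphism $Y\cong \R^{m-k}\times Z$, and $Z$ is a non-collapsed $\RCD(0,N-m+k)$-space by \th\ref{prop.productsrescallings} together with the dimension count $\dime Z=N-(m-k)$. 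I expect this dimension-and-linearity step to be the main obstacle, as it is precisely where the missing cohomogeneity one reduction must be compensated by the metric regularity of the action.

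Finally I would treat the $K$-action. As $K=G_{x_0}$ fixes $x_0$, each $k\in K$ preserves the blow-up sequence and hence induces a measure-preserving isometry of $(Y,\dis_Y,\m_Y)$ fixing $0_Y$; this is the required action of $K$ on $Y$, and it lands inside the isotropy $\overline{G}_{0_Y}$, which is compact as a point-stabilizer in the isometry group of a proper space. Since $K$ fixes $0_Y$ and $K\subset G$, it carries $\overline{G}$-orbits to $\overline{G}$-orbits and therefore preserves $\overline{G}(0_Y)=\R^{m-k}\times\{0_Z\}$; acting through the product splitting while fixing the origin, it is orthogonal on the $\R^{m-k}$ factor and consequently leaves the complementary factor $Z$ invariant. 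The restriction of this action to $Z$ is then by measure-preserving isometries fixing $0_Z$, which establishes the remaining assertions.
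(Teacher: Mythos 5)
Your proposal follows the paper's skeleton quite closely: the blow-up, the upgrade to equivariant convergence via \th\ref{T: p-G-H implies eq-p-G-H}, the splitting $Y\cong\R^l\times W$ with $W$ line-free, the observation that $\overline{G}(0_Y)\subset\R^l\times\{0_W\}$ and that the induced action on the Euclidean factor is smooth by Myers--Steenrod, and the use of \th\ref{L:Orbits-locally-converge-to-the-limit-orbit} to identify $\overline{G}(0_Y)$ with a blow-up of the orbit $G(x_0)$. Your treatment of the $K$-action at the end is fine, and in fact more explicit than the paper's.

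The gap is in the step you yourself flag as the crux. You assert that $\overline{G}(0_Y)$ is a metric cone ``being a tangent cone'', that a homogeneous metric cone is isometric to a Euclidean space, and that an orbit of Euclidean isometries through the origin is a linear subspace. None of these holds as stated: tangent cones of metric spaces need not be Euclidean metric cones; by the Mitchell--Berestovskii theory the tangent cone of a homogeneous space with an intrinsic (Carnot--Carath\'eodory) metric is a Carnot group, which is homogeneous and dilation-invariant yet Euclidean only when abelian (the Heisenberg group refutes your second claim); and orbits of subgroups of $\Iso(\R^l)$ through $0$ can be spheres or helices rather than linear subspaces. The Carnot possibility is precisely what the paper's proof must exclude: it works with the smooth tangent space $T_{0_Y}\overline{G}(0_Y)\cong\R^s$ (which is automatically linear, with no cone argument needed), bounds $s$ by the Assouad dimension of the orbit via \cite[Corollary 2.17]{LeDonneRajala2015}, and then invokes Berestovskii's structure theorem together with the argument of \cite[Theorem 6.3]{GalazGarciaKellMondinoSosa2018} to show that the orbit with its intrinsic metric is actually Riemannian, forcing $s=m-k$. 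Your Ahlfors-regularity observation does essentially the right thing if applied to $T_{0_Y}\overline{G}(0_Y)\in\Tan(G(x_0),\dis|_{G(x_0)},x_0)$ instead of to the orbit itself, and would then yield $s=m-k$; but as written the argument rests on false intermediate claims, and the conclusion that the orbit \emph{equals} $\R^{m-k}\times\{0\}\times\{0_W\}$ is both unproved and stronger than what the theorem requires.
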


\begin{proof}
The strategy of proof is similar to that of \th\ref{L: Splitting of Euclidean space in tangent cone}, with some modifications. Let us fix a blowup sequence $(X_n,\dis_n,\m_n,x_0)$ converging pointed measured Gromov-Hausdorff to $(Y,\dis_Y,\m_Y,0_Y)$. In fact, as the cohomogeneity one assumption is not used in the first part of \th\ref{L: Splitting of Euclidean space in tangent cone}, the exact same proof shows that in this case 
\[
(Y, \dis_Y,m_Y,0_Y) \cong (\mathbb{R}^{l},\dis_{\mathbb{E}},\mathcal{L}^{l},0)\times (W,\dis_W,\m_W,0_W).
\]
where $(W,\dis_W,\m_W)$ is a non-collapsed $\RCD(0,N-l)$-space for some $0\leq l\leq N$ which does not contain any lines. Moreover, $\overline{G}(\R^{l}\times \{0_W\}) = \R^{l}\times \{0_W\}$, where $\overline{G}$ is the limit group induced by $G$ for some blowup sequence converging to $Y$. Now again, exactly as in \th\ref{L: Splitting of Euclidean space in tangent cone}, this implies that $\overline{G(0_Y)}$ is a smooth manifold, and in turn that $T_{0_Y}G(0_Y) \cong \mathbb{R}^s\times \{0\}\times\{0_W\} \subseteq \mathbb{R}^s\times \mathbb{R}^{l-s}\times W \cong Y$ for some $0\leq s\leq l$. 

Let us now point out that \th\ref{L:Orbits-locally-converge-to-the-limit-orbit} implies that up to passing to a (non-relabeled) subsequence, the orbits $G(x_0)\subset X_n$, equipped with the restricted metrics $d_n$, and with the base point $x_0$, pointed-Gromov-Hausdorff converge to $G(0_Y)$, equipped with the restricted metric of $Y$, with base point $0_Y$. This further implies by a diagonal sequence argument that by blowing up $(G(x_0), d_n|_{G(x_0)},x_0)$, and $\overline{G}(0_Y),d_Y|_{\overline{G}(0_Y)},0_Y)$, 
\[
T_{0_Y}G(0_Y) \cong \mathbb{R}^s\times \{0\}\times\{0_W\}\in \mathrm{Tan}(G(x_0),d_n|_{G(x_0)},x_0).
\]
We now apply \cite[Corollary 2.17]{LeDonneRajala2015} to get that 
\[
s\leq \dim_{Assouad}(G(x_0),d_n|_{G(x_0)},x_0).
\]

We now take a page from the proof of \cite[Theorem 6.3]{GalazGarciaKellMondinoSosa2018}:  Since $G$ is a compact, connected Lie group acting on $(X_n,d_n)$ by maps that are both Lipschitz and co-Lipschitz, the induced intrinsic metrics $d_{n,G(x_0)}$ on $G(x_0)$ are bi-Lipschitz equivalent to the restrictions of $d_n$ to $G(x_0)$.  
The isotropy $K$ is compact, and $G(x_0)$ is homeomorphic to the homogeneous space $G/K$. Therefore, by the the work of Berestovskii (\cite[Theorem 3(i)]{Berestovskii1989}), there exist a connected Lie group $G'$ and a compact subgroup $K'\leq G'$ such that
\[
\left(G(x_0),\, d_{n,G(x_0)}\right)\ \cong\ G'/K'
\]
with a $G'$-invariant Carnot-Carathéodory-Finsler metric determined by a completely nonholonomic distribution on $G'/K'$. In particular the Gromov-Hausdorff tangent cone $T_{x_0}G(x_0)$ is unique and isometric to a Carnot group equipped with a (possibly Finsler) left-invariant metric. As pointed out in \cite[Theorem 6.3]{GalazGarciaKellMondinoSosa2018} it follows that $\left(G(x_0),\, d_{n,G(x_0)}\right)$ is isometric to a (homogeneous) Riemannian manifold. Moreover,
\[
T_{0_Y}G(0_Y) \cong \mathbb{R}^s\times \{0\}\times\{0_W\}\in \mathrm{Tan}(G(x_0),d_{n,G(x_0)},x_0)
\]
and that the topological dimension of $G(x_0)$ is $s$. Recall that the topological dimension is always below the Assouad dimension and that these coincide for Riemannian manifolds with the inner metric. From this we conclude that $s=m-k$, and the result is proved. 
\end{proof}

Observe that when the base point $x_0$ is regular in \th\ref{L: pm-GH convergences implies eq-GH convergence}, we obtain actions of $G_{x_0}$ on the tangent space $(Y,\dis_Y,0_y)\cong (\R^n,\dis_{\mathbb{E}},0)$ of $X$ at $x_0$. Then by the same proof  as the one of \th\ref{MT: infinitesimal action non-collapsed} we obtain the following theorem. Recall that the essential dimension of a non-collapsed $\RCD(K,N)$ space $(X,\dis,\Hauss^N)$ is precisely $N$ (see Theorem $2.20$ in \cite{BrenaGigliHondaZhu2023}).

\begin{mtheorem}[Regular orbit representation]\th\label{MT: Principal Isotropy Representation}
Let $(X,\dis,\Hauss^N)$ be a non-collapsed $\RCD$-space, $G$ a compact, connected Lie group of dimension $m$ acting effectively and Lipschitz and co-Lipschitz continuously by measure-preserving isometries on $X$. For $x_0\in X$ a fixed regular point, we consider $K = G_{x_0}$. Then the action of $G$ induces a linear representation of $K$ into $\mathrm{O}(N^\ast)$ where $N^\ast$ is the essential dimension of $X^\ast$. Moreover, this representation splits as a product of  representations of $K$ into $\mathrm{O}(m-k)$ and $\mathrm{O}(N-m+k)$, where $k=\dim(K)$. We call the representation of $K$ into $\mathrm{O}(N-m+k)$ the \emph{slice representation}.
\end{mtheorem}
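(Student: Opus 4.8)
The plan is to run the proof of \th\ref{MT: infinitesimal action non-collapsed} at a regular base point, where the tangent cone is Euclidean and the analysis simplifies drastically, and then to read off the orthogonal splitting from the resulting product decomposition. Since $(X,\dis,\Hauss^N)$ is non-collapsed its essential dimension is $N$, so a regular point $x_0$ has the single tangent $\Tan(X,\dis,\Hauss^N,x_0)=\{(\R^N,\dis_\E,\mathcal{L}^N,0)\}$. First I would fix a blow-up sequence $(X_n,\dis_n,\m_n,x_0)$ converging in the pointed measured Gromov--Hausdorff sense to $(Y,\dis_Y,\m_Y,0_Y)\cong(\R^N,\dis_\E,\mathcal{L}^N,0)$ and use \th\ref{T: p-G-H implies eq-p-G-H} to obtain the induced limit group $\overline{G}\leq\Iso(\R^N)$ acting on $Y$.

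Now the subgroup $K=G_{x_0}$ fixes $x_0$, so its induced limit group $\overline{K}$ fixes the vertex $0_Y=0$. As an isometry of $\R^N$ fixing the origin is linear, $\overline{K}\subseteq\mathrm{O}(N)$; in particular $\overline{K}$ is compact, even though $\overline{G}$ itself need not be (it contains the translations coming from the orbit directions). Applying \th\ref{L: pm-GH convergences implies eq-GH convergence} to the compact group $K$ (whose limit group $\overline{K}$ is now compact, and for which the essential dimension stays constant equal to $N$) shows that the approximating homomorphisms $K\to\overline{K}$ are injective. Composing gives a faithful orthogonal representation $K\to\mathrm{O}(N)$ on the full tangent space $\R^N\cong T_{x_0}X$.

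For the splitting I would invoke the Euclidean-factor analysis of \th\ref{L: Splitting of Euclidean space in tangent cone} and \th\ref{MT: infinitesimal action non-collapsed}, which here is especially clean because $Y=\R^N$ has no non-trivial lineless factor: the orbit direction splits off as $T_{0_Y}\overline{G}(0_Y)\cong\R^{s}\times\{0\}\subseteq\R^N$, and by \th\ref{L:Orbits-locally-converge-to-the-limit-orbit} this is a tangent of $(G(x_0),\dis_n|_{G(x_0)})$. Feeding the Lipschitz and co-Lipschitz hypothesis into Berestovskii's structure theorem (exactly as in \th\ref{MT: infinitesimal action non-collapsed}) makes $(G(x_0),\dis_{n,G(x_0)})$ a homogeneous Riemannian manifold, so its topological and Assouad dimensions agree and $s=m-k$. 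Consequently $\R^N\cong\R^{m-k}\times\R^{N-m+k}$, the first factor being the orbit-tangent $T_{0_Y}\overline{G}(0_Y)$ and the second the slice factor $Z$, which as a metric-measure factor of $\R^N$ is itself $(\R^{N-m+k},\dis_\E,\mathcal{L}^{N-m+k},0)$. Since $\overline{K}\subseteq\overline{G}$ maps the orbit $\overline{G}(0_Y)$ to itself and fixes $0_Y$, its differential preserves $T_{0_Y}\overline{G}(0_Y)$ and hence, being orthogonal, preserves the orthogonal complement. Therefore $K\to\mathrm{O}(N)$ decomposes as a product $K\to\mathrm{O}(m-k)\times\mathrm{O}(N-m+k)$, the second factor being the slice representation; as at a regular point the orbit is principal, $N-m+k=N-\dim(G(x_0))=\dimess(X^\ast)=N^\ast$, which identifies the slice representation with the claimed one into $\mathrm{O}(N^\ast)$.

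The main obstacle is, as in \th\ref{MT: infinitesimal action non-collapsed}, the dimension count $s=m-k$ for the orbit tangent: one must combine \th\ref{L:Orbits-locally-converge-to-the-limit-orbit} (to realize $T_{0_Y}\overline{G}(0_Y)$ as a tangent of the intrinsic-metric orbit), the Le Donne--Rajala Assouad-dimension bound, and Berestovskii's theorem via the Lipschitz/co-Lipschitz assumption to upgrade the orbit to a homogeneous Riemannian manifold on which topological and Assouad dimension coincide. Everything else — orthogonality of the $K$-action from the fixed vertex, faithfulness via \th\ref{L: pm-GH convergences implies eq-GH convergence}, and the invariance of the two factors — is formal once this count is available, and requires no estimate beyond those already established for \th\ref{MT: infinitesimal action non-collapsed}.
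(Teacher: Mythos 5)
Your proposal is correct and follows essentially the same route as the paper, whose proof of this theorem consists of the single remark that at a regular base point the tangent cone is $(\R^N,\dis_{\mathbb E},\mathcal L^N,0)$ and that the argument of \th\ref{MT: infinitesimal action non-collapsed} then applies verbatim (faithfulness via \th\ref{L: pm-GH convergences implies eq-GH convergence}, the orbit-dimension count $s=m-k$ via \th\ref{L:Orbits-locally-converge-to-the-limit-orbit} together with the Le Donne--Rajala bound and Berestovskii's theorem under the Lipschitz/co-Lipschitz hypothesis). Your explicit steps --- identifying the slice factor $Z$ with $\R^{N-m+k}$, noting that an isometry of $\R^N$ fixing the origin is orthogonal, and that preserving the orbit-tangent forces preservation of its orthogonal complement --- are precisely the unwritten details the paper's one-line proof is delegating to that earlier argument.
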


\begin{rmk}
When $X$ is a Riemannian manifold, then a Lie group action by isometries is Lipschitz and co-Liptschitz. Thus this condition is a natural condition to ask.
\end{rmk}

\begin{rmk}
In the case when $X$ is either a Riemannian manifold, or an Alexandrov space, recall that $x_0$ is contained in a principal orbit, if and only if, for the isotropy group at $x_0$ we have that the restriction of the  isotropy representation to the slice is trivial \cite[Exercise 3.77]{Alexandrino}. In the case of non-collapsed $\RCD$-spaces we do not know if for a principal orbit (see \cite[Theorem 4.7]{GalazGarciaKellMondinoSosa2018}) the action of the isotropy group on the slice is trivial. That is, if the isotropy representation is trivial for principal orbits.
\end{rmk}

\subsection{Topological Rigidity}\hfill\\

The following results gives an explicit topological description of essentially non-bran\-ching metric measure spaces with a Lie group action of cohomogeneity one.

\begin{theorem}\th\label{T: Topological classification geodesic coho 1 spaces.}
Let $(X,\dis, \m)$ be  an essentially non-branching  space, and $G$ compact acting by isometries on $X$. Assume that the orbit space $(X^\ast,\dis^\ast)$ is isometric to $[-1,1]$, $[0,\infty)$, $\R$, or $\Sp^1$. Then:
\begin{enumerate}[(a)]
\item\label{T: Topological classification geodesic coho 1 spaces (a).} When $X^\ast$ is homeomorphic to $[0,1]$, then $X$ is equivariantly homeomorphic to the union of two cone bundles over the singular orbits, glued along their boundary.

\item\label{T: Topological classification geodesic coho 1 spaces (b).} When $X^\ast$ is homeomorphic to $[0,\infty)$, then $X$ is homeomorphic to a cone bundle.

\item\label{T: Topological classification geodesic coho 1 spaces (c).} When $X^\ast$ is homeomorphic to $\Sp^1$, then $X$ is the total space of a fiber bundle with homogeneous fiber $G/H$, and structure group $N_G(H)/H$. 

\item\label{T: Topological classification geodesic coho 1 spaces (d).} When $X^\ast$ is homeomorphic to $\R$, then $X$ is homeomorphic to $\R\times G/H$.
\end{enumerate}
\end{theorem}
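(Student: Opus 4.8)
The plan is to organize the argument around the orbit structure and to feed the local models produced by the Slice Theorem into a global patching argument. First I would record the orbit types. By the Principal Isotropy \th\ref{L: Principal Isotropy Lemma}, every orbit over an interior point of $X^\ast$ has isotropy conjugate to a single principal isotropy group $H$, and by \th\ref{T: Principal isotropy is subgroup of singular isotropy} the orbits over the boundary points of $X^\ast$ (when these exist) carry isotropy $K_-,K_+\supseteq H$. The abstract Slice Theorem \th\ref{T:Slice-Theorem} then gives, around any orbit $G(x)$, an equivariant homeomorphism $G\times_{G_x}S_x\cong B_r(G(x))$, and by \th\ref{R: Slice is a topological cone} each slice $S_x$ is a topological cone whose rays are the horizontal geodesics emanating from $x$.

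The key local fact I would establish next is that over an interior point $x_0$ the slice is trivial. Since $x_0^\ast\in\mathrm{int}\,X^\ast$, the horizontal geodesic through $x_0$ extends to both sides, and \th\ref{proposition.uniquerealiser} forces $\pi$ to restrict to a homeomorphism from $S_{x_0}$ onto a neighborhood $(x_0^\ast-\delta,x_0^\ast+\delta)$ of $x_0^\ast$; the same uniqueness shows that $H=G_{x_0}$ fixes $S_{x_0}$ pointwise (if $y,hy\in S_{x_0}$ lie on one orbit, both realize the distance from $x_0$ to that orbit, so $y=hy$). Hence the local model over an interior arc is the product $G\times_H S_{x_0}\cong (G/H)\times(-\delta,\delta)$, so that $\pi$ restricted to the preimage of any arc lying in $\mathrm{int}\,X^\ast$ is a trivial $G/H$-bundle.

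From this the cases \eqref{T: Topological classification geodesic coho 1 spaces (b).}, \eqref{T: Topological classification geodesic coho 1 spaces (c).} and \eqref{T: Topological classification geodesic coho 1 spaces (d).} follow quickly. When $X^\ast\cong\Sp^1$ every orbit is principal, so the previous step exhibits $\pi$ as a fiber bundle with fiber $G/H$; its transition maps are $G$-equivariant self-homeomorphisms of $G/H$, and since these form precisely $N_G(H)/H$ the structure group reduces to $N_G(H)/H$, giving \eqref{T: Topological classification geodesic coho 1 spaces (c).}. When $X^\ast\cong\R$ the same reasoning gives a $G/H$-bundle over a contractible base, which is therefore trivial, so $X\cong\R\times G/H$, which is \eqref{T: Topological classification geodesic coho 1 spaces (d).}. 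When $X^\ast\cong[0,\infty)$, the cohomogeneity-one Slice Theorem \th\ref{MT: Slice Theorem Cohomogenity one} permits $\delta$ to be taken arbitrarily large at the endpoint $x_0$ (as $\partial X^\ast\setminus\{x_0^\ast\}=\emptyset$), so a single slice at the boundary orbit covers all of $X$ and yields $X\cong G\times_K S$, a cone bundle, proving \eqref{T: Topological classification geodesic coho 1 spaces (b).}.

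Case \eqref{T: Topological classification geodesic coho 1 spaces (a).} is where the real work lies and is the step I expect to be the main obstacle. With $X^\ast$ isometric to $[-1,1]$ I would set $B_-:=\pi^{-1}([-1,0])$ and $B_+:=\pi^{-1}([0,1])$ and show each is equivariantly homeomorphic to a closed cone bundle over the corresponding singular orbit. Applying \th\ref{MT: Slice Theorem Cohomogenity one} at the endpoint $x_-$, where $\delta$ may be taken up to $2$ (the distance to the other singular orbit), the slice $S_-$ is a cone over $K_-/H$ whose radial coordinate is identified by $\pi$ with $[-1,0]$ and whose boundary $K_-/H$ maps onto the midpoint orbit; consequently $G\times_{K_-}(\partial S_-)\cong G\times_{K_-}(K_-/H)\cong G/H=\pi^{-1}(0)$, and symmetrically for $B_+$. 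The delicate point is to verify that the two cone-bundle charts agree equivariantly along $\pi^{-1}(0)$---that is, that the principal-orbit product structure of the middle (from the second step) extends the two boundary slices consistently to the midpoint---so that gluing $B_-$ and $B_+$ along $G/H$ reconstitutes $X$ up to equivariant homeomorphism. Completing this matching proves \eqref{T: Topological classification geodesic coho 1 spaces (a).} and finishes the classification.
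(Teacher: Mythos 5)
Your argument is correct and follows essentially the same route as the paper: trivialize the orbit projection over the interior of $X^\ast$ using \th\ref{L: Principal Isotropy Lemma} and the uniqueness statement \th\ref{proposition.uniquerealiser}, and identify the preimages of the (half-)intervals containing boundary points of $X^\ast$ with the cone bundles $G\times_{K_\pm}\cl(S_\pm)$ furnished by the Slice Theorem. The only point worth addressing is the step you leave open in case \eqref{T: Topological classification geodesic coho 1 spaces (a).}: no further ``matching'' of the two charts is actually required for the statement as formulated, because $B_-=\pi^{-1}([-1,0])$ and $B_+=\pi^{-1}([0,1])$ are subspaces of $X$ whose union is literally $X$ and whose intersection is the common boundary $\pi^{-1}(0)\cong G\times_{K_\pm}\partial S_\pm\cong G/H$; the claim is only that $X$ decomposes as such a union of cone bundles, not that the gluing homeomorphism is standard or that $X$ is reconstructed from the group diagram alone (that finer question is treated separately, in Section~\ref{SS: Group Diagram} and \th\ref{thm.rcd space from group diagram}). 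With that observation your proof of \eqref{T: Topological classification geodesic coho 1 spaces (a).} is already complete. The extra details you supply --- that $H$ acts trivially on a slice at an interior point, and that $\partial S_\pm\cong K_\pm/H$ --- are correct and in fact more explicit than the paper's own two-line treatment of cases \eqref{T: Topological classification geodesic coho 1 spaces (a).} and \eqref{T: Topological classification geodesic coho 1 spaces (b).}.
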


\begin{proof}
We begin by proving \eqref{T: Topological classification geodesic coho 1 spaces (c).}, and \eqref{T: Topological classification geodesic coho 1 spaces (d).}. By \th\ref{L: Principal Isotropy Lemma} it follows that for any point $x\in X$, the isotropy $G_x$ equals a fix subgroup $H<G$. Moreover, by \th\ref{T: equivariant homeomorphism from the orbit} we conclude that for any $x\in X$, the orbit through $x$ is equivariantly homeomorphic to $G/H$, i.e. $G(x)\cong G/H$. Then, as pointed out in the proof of \cite[Corollary 4.9]{GalazGarciaKellMondinoSosa2018}, any lift $\tilde{\gamma^\ast}\colon I\to X$ of any short enough geodesic $\gamma^\ast\colon I\to \Sp^1=X^\ast$ gives a trivialization. In the case \eqref{T: Topological classification geodesic coho 1 spaces (c).}, since any two points in $X$ have conjugated isotropy subgroups, we get the conclusion on the structure group of the bundle. The case \eqref{T: Topological classification geodesic coho 1 spaces (d).} follows from the fact that $\R$ is contractible.

The cases \eqref{T: Topological classification geodesic coho 1 spaces (a).} and \eqref{T: Topological classification geodesic coho 1 spaces (b).} follow from \th\ref{R: Slice is a topological cone},  \th\ref{T:Slice-Theorem}, and the observation that the spaces $\pi^{-1}([-1,0])$ and $\pi^{-1}([0,1])$ are equivariantly homeomorphic to the cone bundles $G\times_{G_-} \cl(S_-)$  and $G\times_{G_+} \cl(S_+)$ respectively. Here $\pi\colon X\to X^\ast$ is the orbit quotient map, $G_\pm$ are the isotropy groups of the orbits $\pi^{-1}(\{\pm1\})$, and $S_\pm$ are slices through points in $\pi^{-1}(\{\pm1\})$.
\end{proof}

\noindent Combining \th\ref{T: Topological classification geodesic coho 1 spaces.} with \th\ref{MT: Geometry of the slice} we obtain the following corollary: 

\vspace*{8pt}

\begin{duplicate}[\ref{MC: Homeomorphism rigidity}]
Let $(X,\dis,\m)$ be an $\RCD(K,N)$-space. Let $G$ be a compact Lie group acting on $X$ by measure preserving isometries and cohomogeneity one. Then the following hold:
\begin{enumerate}[(a)]
\item When $X^\ast$ is homeomorphic to $[0,1]$, then $X$ is equivariantly homeomorphic to the union of two cone bundles over the singular orbits, glued along their boundary.

\item When $X^\ast$ is homeomorphic to $[0,\infty)$, then $X$ is homeomorphic to a cone bundle.

\item When $X^\ast$ is homeomorphic to $\Sp^1$, then $X$ is the total space of a fiber bundle with homogeneous fiber $G/H$ and structure group $N_G(H)/H$. 

\item When $X^\ast$ is homeomorphic to $\R$, then $X$ is homeomorphic to $\R\times G/H$.
\end{enumerate}

\noindent Moreover the cone fibers in items $(a)$ and $(b)$ are cones over homogeneous spaces. 

In the case when $X$ is non-collapsed then  the cone fibers admit  metric cone structures over $\RCD(N-k_{\pm}-2,N-k_\pm-1)$-spaces, where $k_\pm = \dim(G(x_\pm))$.
\end{duplicate}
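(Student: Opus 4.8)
The plan is to obtain \th\ref{MC: Homeomorphism rigidity} by assembling two results already at our disposal: the purely topological classification of \th\ref{T: Topological classification geodesic coho 1 spaces.}, valid for essentially non-branching spaces, and the non-collapsed metric refinement of \th\ref{MT: Geometry of the slice}. Thus the proof is essentially one of reduction and bookkeeping, with a single point requiring a genuine (if short) argument.

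First I would reduce the hypotheses to those of the cited theorems. Every $\RCD(K,N)$-space is essentially non-branching \cite{RajalaSturm2014}, so $X$ satisfies the standing hypothesis of \th\ref{T: Topological classification geodesic coho 1 spaces.}. The cohomogeneity one assumption means $X^\ast$ has Hausdorff dimension one; by \cite{GalazGarciaKellMondinoSosa2018} the quotient $(X^\ast,\dis^\ast,\m^\ast)$ is itself an $\RCD$-space, and by \cite{KitabeppuLakzian2016} a one-dimensional $\RCD$-space is isometric to exactly one of $[-1,1]\cong[0,1]$, $[0,\infty)$, $\R$, or $\Sp^1$. These four cases then match verbatim the cases \eqref{MC: Homeomorphism rigidity close interval}--\eqref{MC: homeomorphism rigidity line}, and \th\ref{T: Topological classification geodesic coho 1 spaces.} delivers the stated homeomorphism types, including the homogeneous fiber $G/H$ and structure group $N_G(H)/H$ in case \eqref{MC: homeomorphism rigidity Circle}.

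Next I would prove the first ``Moreover'', namely that the cone fibers in \eqref{MC: Homeomorphism rigidity close interval} and \eqref{MC: homeomorphism rigidity ray} are cones over homogeneous spaces. This is the only assertion not contained verbatim in the two cited theorems, and it must be argued without any curvature bound. By \th\ref{R: Slice is a topological cone} the slice $S_{x_\pm}$ through a singular orbit is a topological cone with vertex $x_\pm$, and for small $r$ its link is $\{y\in S_{x_\pm}\mid \dis(x_\pm,y)=r\}$. Any such $y$ satisfies $\dis(x_\pm,y)=\dis^\ast(x_\pm^\ast,y^\ast)=r$, and since the endpoint $x_\pm^\ast$ of the interval has a unique point of $X^\ast$ at distance $r$, all of these $y$ lie in a single $G$-orbit $G(y)$. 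The slice property \eqref{Slice-4} of \th\ref{D: Slice} then gives $S_{x_\pm}\cap G(y)=G_{x_\pm}(y)$, so the link is the homogeneous space $G_{x_\pm}/G_y$, as required.

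Finally, for the non-collapsed case I would invoke \th\ref{MT: Geometry of the slice} directly: its conclusion identifies the slice $S_{x_\pm}$ (the cone fiber at a singular orbit) with a metric cone over a compact homogeneous Riemannian manifold $M$ with $\Ric\geq N-k_\pm-2$, where $k_\pm=\dim(G(x_\pm))$ and $\dim M=N-k_\pm-1$; by Ketterer's cone criterion \cite{Ketterer2015} such an $M$ carries the $\RCD(N-k_\pm-2,N-k_\pm-1)$-structure, which yields the last sentence. The main obstacle is precisely the homogeneity of the link in the general (possibly collapsed) case: one must verify that the projection and slice-intersection arguments rely only on the essentially-non-branching slice theorem of Section~\ref{S: Slice THeorem RCD} and on no hidden curvature input; everything else amounts to matching the four model quotients to the four cases and quoting the two structural theorems.
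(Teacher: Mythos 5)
Your proposal is correct and follows the paper's own route exactly: the paper's proof of this statement is literally the single sentence ``Combining \th\ref{T: Topological classification geodesic coho 1 spaces.} with \th\ref{MT: Geometry of the slice} we obtain the following corollary,'' and you perform precisely that combination, with the reduction to the four model quotients via \cite{GalazGarciaKellMondinoSosa2018} and \cite{KitabeppuLakzian2016}. Your explicit argument that the link of the slice at a singular orbit is the homogeneous space $G_{x_\pm}/G_y$ (via the projection to $X^\ast$ and slice property \eqref{Slice-4}) is a correct filling-in of a detail the paper leaves implicit, consistent with how the transitivity of $G_{x_0}$ on $\partial B_a(x_0)\cap S_{x_0}$ is established inside the proof of \th\ref{L: infinitesimal action cohomogeneity 1}.
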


\vspace*{1em}

This completes the structural topological results in \cite[Corollary 1.4]{GalazGarciaKellMondinoSosa2018}

\begin{rmk} Assume we are in the case that $X^{\ast}$ is closed interval. Let us denote by $G_\pm$ the isotropy groups of the singular orbits corresponding to the end point of the interval, and by $H$  the principal isotropy corresponding to the orbits in the interior of the interval. We point out that in the case of \th\ref{MC: Homeomorphism rigidity}~\eqref{MC: Homeomorphism rigidity close interval} together with Kleiner's \th\ref{L: Kleiners Lemma}  and Section~\ref{SS: Group Diagram}, the space $X$ and the group action  are determined by a group diagram: 
\[
\begin{tikzcd}
    & G_+ \arrow[dr,hook] & \\
   H \arrow[ur,hook] \arrow[dr,hook]& & G,\\
    & G_- \arrow[ur,hook]& 
\end{tikzcd}
\]
together with two $\RCD$-spaces $S_\pm$, that have a cohomogeneity one action by $G_\pm$ by measure preserving isometries. Moreover, in the case when $X$ is non-collapsed, $S_\pm$ is the cone over a homogeneous space $Z_\pm = \bar{G}_\pm / \bar{H}_\pm$ with effective an transitive action of $G_\pm$, respectively. This is in contrast to context of Riemannian manifolds and Alexandrov spaces, where the homogeneous spaces $Z_\pm$ are determined by the subgroups $G_\pm$ and $H$. Namely for Riemannian manifolds and Alexandrov spaces we have $S_\pm = G_\pm / H$.
\end{rmk}

With \th\ref{T: Topological classification geodesic coho 1 spaces.} we can compute the fundamental group of the geodesic spaces of cohomogeneity one, following \cite{Hoelscher2007}.

\begin{theorem}\th\label{T: fundamental group}
Let $(X,\dis, \m)$ be an essentially non-branching space, and $G$ a compact Lie group acting by isometries, such that $X^\ast$ is isometric to $[-1,1]$. We also consider a tuple $(G,H,K_+,K_-)$ given by \th\ref{T: Principal isotropy is subgroup of singular isotropy} and assume that the orbits $K_\pm/H$ are connected. Then the fundamental group of $X$ is isomorphic to
\[
\big((\pi_1(G/H)/N_+) \ast (\pi_1(G/H)/N_-)\big)/N(\mathrm{span}\{(\rho_+)_\ast(\omega)(\rho_+)_\ast(\omega)^{-1}\mid \omega \in \pi_1(G/H)\}),
\]
where $N_\pm = \ker\{(\rho_\pm)_\ast\colon \pi_1(G/H)\to \pi_1(G/H)\}$ for the orbit projection maps $\rho_\pm\colon G/H\to G/K_\pm$ give by $\rho_\pm(gH):=gK_\pm$.
\end{theorem}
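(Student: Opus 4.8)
The plan is to follow Hoelscher's computation for cohomogeneity one manifolds \cite{Hoelscher2007} and feed the explicit topological decomposition of $X$ into the Seifert--van Kampen theorem. Writing $\pi\colon X\to X^\ast=[-1,1]$ for the orbit projection, \th\ref{T: Topological classification geodesic coho 1 spaces.}\eqref{T: Topological classification geodesic coho 1 spaces (a).} identifies the preimages $B_-=\pi^{-1}([-1,0])$ and $B_+=\pi^{-1}([0,1])$ equivariantly with the cone bundles $G\times_{K_-}\cl(S_-)$ and $G\times_{K_+}\cl(S_+)$, glued along the principal orbit $\pi^{-1}(0)\cong G/H$. First I would fix a small $\varepsilon>0$ and set $U_-:=\pi^{-1}([-1,\varepsilon))$ and $U_+:=\pi^{-1}((-\varepsilon,1])$, which are open, satisfy $U_-\cup U_+=X$, and have $U_-\cap U_+=\pi^{-1}((-\varepsilon,\varepsilon))$.

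Next I would produce the relevant deformation retractions. By \th\ref{R: Slice is a topological cone} each slice $\cl(S_\pm)$ is a cone with tip fixed by $K_\pm$, so the straight-line contraction along the cone parameter is $K_\pm$-equivariant and descends to a deformation retraction of $G\times_{K_\pm}\cl(S_\pm)$ onto the section of cone tips, namely the singular orbit $G/K_\pm$; hence $U_\pm\simeq G/K_\pm$. Similarly the collar $U_-\cap U_+\cong G/H\times(-\varepsilon,\varepsilon)$ retracts onto the principal orbit $\pi^{-1}(0)\cong G/H$, and the two inclusions $U_-\cap U_+\hookrightarrow U_\pm$ are, up to homotopy, the orbit projections $\rho_\pm\colon G/H\to G/K_\pm$. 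Assuming $G$ (hence $G/H$ and each $G/K_\pm$) is path connected, Seifert--van Kampen then gives the amalgamated free product
\[
\pi_1(X)\ \cong\ \pi_1(G/K_+)\ \ast_{\pi_1(G/H)}\ \pi_1(G/K_-),
\]
where the amalgamation is induced by $(\rho_+)_\ast$ and $(\rho_-)_\ast$.

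To put this in the stated form I would identify the two factors. Each $\rho_\pm$ is a fiber bundle with fiber the homogeneous space $K_\pm/H$, so its long exact homotopy sequence reads
\[
\pi_1(K_\pm/H)\longrightarrow \pi_1(G/H)\xrightarrow{(\rho_\pm)_\ast}\pi_1(G/K_\pm)\longrightarrow \pi_0(K_\pm/H).
\]
The hypothesis that $K_\pm/H$ is connected forces $\pi_0(K_\pm/H)$ to be trivial, so $(\rho_\pm)_\ast$ is surjective with kernel $N_\pm$, giving $\pi_1(G/K_\pm)\cong \pi_1(G/H)/N_\pm$. Substituting these isomorphisms and using the standard presentation $A\ast_C B=(A\ast B)/N(\{\iota_A(c)\,\iota_B(c)^{-1}\mid c\in C\})$ of an amalgamated product as the free product modulo the normal closure of the relations equating the images of $C$, one obtains exactly the claimed description of $\pi_1(X)$.

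I expect the only real difficulty to be analytic rather than algebraic: one must verify carefully that the equivariant contractions of the (a priori merely topological) cone fibers onto the tip sections, together with the retraction of the collar onto the principal orbit, can be carried out continuously and compatibly with the gluing, so that the inclusions of $U_-\cap U_+$ into $U_\pm$ induce precisely the maps $(\rho_\pm)_\ast$ on $\pi_1$. Once the three pieces have been identified in this way the remaining computation is formal. A secondary point to record is the connectedness hypothesis on $G$ needed to invoke the group-theoretic (rather than groupoid) form of van Kampen, which I would state explicitly alongside the assumption that $K_\pm/H$ be connected.
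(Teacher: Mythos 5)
Your proposal is correct and follows essentially the same route as the paper's own proof: a van Kampen decomposition into the two preimages $\pi^{-1}([-1,\varepsilon))$ and $\pi^{-1}((-\varepsilon,1])$, deformation retractions onto the singular orbits $G/K_\pm$ and of the collar onto the principal orbit $G/H$, identification of the inclusion-induced maps with $(\rho_\pm)_\ast$, and the fibration exact sequence to get $\pi_1(G/K_\pm)\cong\pi_1(G/H)/N_\pm$. Your explicit remark on the connectedness of $G$ and the amalgamated-product phrasing are harmless reformulations of what the paper does.
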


\begin{proof}
We recall that the group tuple $(G,H,G_+,G_-)$ is associated to a horizontal geodesic \linebreak$\gamma\colon [0,1]\to X$. Set $x_- = \gamma_0, x_+= \gamma_1$, and $x_0=\gamma_{1/2}$. Then $x_\pm \in \pi^{-1}(\pm 1)$ and $x_0\in\pi^{-1}(0)$.

We consider $U_+^\ast,U_-^\ast\subset [-1,1]$ with $U_+^\ast\cap U_-^\ast = (-\varepsilon,\varepsilon)$, and apply van Kampen's Theorem to the decomposition of $X$ by the open subsets $U_-:= \pi^{-1}(U_-^\ast)$, $U_+:= \pi^{-1}(U_+^\ast)$, and base point $x_0$. 

We point out that the slices $S_{x_\pm}$ are contractible, and thus the tubular neighborhoods $\mathrm{Tub}(\pi^{-1}(\pm 1))\cong G\times_{G_\pm} S_{x_{\pm}}$ onto the orbits $G(x_\pm)$. Moreover, the point $x_0$ goes to the points $x_\pm$ under this deformation retract. Thus we have  that $U_\pm \simeq \mathrm{Tub}(\pi^{-1}(\pm 1))\simeq \pi^{-1}(\pm 1)\cong G/K_\pm$, and $U_-\cap U_+ \simeq \pi^{-1}(0)\cong G/H$. Under this homotopy equivalences we have that $x_0$ is mapped to the classes $K_\pm\in G/K_\pm$ and $H\in G/H$. Thus we have the following commutative diagram:
\[
\begin{tikzcd}
    (U_+\cap U_-,x_0) \arrow[r, hookrightarrow]\arrow[d,swap,"\simeq"] & (U_\pm,x_0)\arrow[d,"\simeq"]\\
    (G/H,H)\arrow[r,swap,"\rho_\pm"] & (G/K_\pm,K_\pm),
\end{tikzcd}
\]
where $j_\pm\colon (U_+\cap U_-,x_0)\hookrightarrow (U_\pm,x_0)$ is the inclusion map, and $(G/H,H)\to (G/K_\pm,K_\pm)$ is the map given by $gH\mapsto gK_\pm$. Thus, instead of the induced  maps $(j_\pm)_\pm  \colon \pi_1(U_+\cap U_-,x_0)\to (U_\pm,x_0)$ we can consider the maps $(\rho_\pm)_\ast\colon \pi_1(G/H,H)\to \pi_1(G/K_\pm,K_\pm)$

Using the fibration long exact sequence of homotopy groups:
\[
\cdots \pi_1(G/H,H)\overset{(\rho_\pm)_\ast}{\longrightarrow} \pi_1(G/K_\pm,K_\pm)\to \pi_0(K_\pm/H,H)\to \cdots
\]
we conclude that  $\pi_1(G/K_\pm,K_\pm)\cong \pi_1(G/H,H)/\ker \pi_1(G/H,H)$.

Putting all together, we obtain by van Kampen's Theorem that 
\begin{linenomath}
\begin{align*}
\pi_1(X,x_0)&\cong (\pi_1(U_+,x_0)\ast \pi_1(U_-,x_0))/N(\mathrm{span}\{(j_+)_\ast(\omega)(j_-)_\ast(\omega)^{-1}\mid \omega_\in \pi_1(U_+\cap U_-,x_0)\})\\
&\cong \big((\pi_1(G/H)/N_+) \ast (\pi_1(G/H)/N_-)\big)/N_0, 
\end{align*}
where $N_0=N(\mathrm{span}\{(\rho_+)_\ast(\omega)(\rho_+)_\ast(\omega)^{-1}\mid \omega \in \pi_1(G/H)\})$, i.e. the normalizer of the subgroup generated by $(\rho_+)_\ast(\omega)(\rho_+)_\ast(\omega)^{-1}$.
\end{linenomath}
\end{proof}

From \th\ref{T: fundamental group} we obtain the following sufficient conditions for a compact  essentially non-branching cohomogeneity one space to be simply connected.

\begin{cor}
Let $(X,\dis, \m)$ be an essentially non-branching  space, and $G$ a compact Lie group acting by isometries, such that $X^\ast$ is isometric to $[-1,1]$. We also consider a tuple $(G,H,G_+,G_-)$ given by \th\ref{T: Principal isotropy is subgroup of singular isotropy} and assume that the orbits and $K_\pm/H$ are connected. If the inclusions 
\[
(i_\pm)_\ast\colon  \pi_1(K_\pm/H)\to \pi_1(G/H)
\]
are surjective then $X$ is simply-connected.
\end{cor}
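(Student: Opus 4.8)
The plan is to read the conclusion directly off the presentation of $\pi_1(X)$ supplied by \th\ref{T: fundamental group}, the point being that the surjectivity hypothesis collapses both free factors in that presentation to the trivial group. Since \th\ref{T: fundamental group} exhibits $\pi_1(X)$ as a quotient of the free product $(\pi_1(G/H)/N_+)\ast(\pi_1(G/H)/N_-)$, it suffices to show that each $\pi_1(G/H)/N_\pm$ is trivial.

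First I would invoke the fibration long exact sequence already appearing in the proof of \th\ref{T: fundamental group}, associated to the fiber bundle $K_\pm/H \hookrightarrow G/H \xrightarrow{\rho_\pm} G/K_\pm$, where $i_\pm\colon K_\pm/H\to G/H$ is the fiber inclusion and $\rho_\pm$ the orbit projection:
\[
\pi_1(K_\pm/H)\xrightarrow{(i_\pm)_\ast}\pi_1(G/H)\xrightarrow{(\rho_\pm)_\ast}\pi_1(G/K_\pm)\to\cdots.
\]
Exactness at $\pi_1(G/H)$ yields $\mathrm{im}\big((i_\pm)_\ast\big)=\ker\big((\rho_\pm)_\ast\big)=N_\pm$, where $N_\pm$ is precisely the normal subgroup defined in \th\ref{T: fundamental group}. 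The connectedness of $K_\pm/H$ is what makes this portion of the sequence apply cleanly (and guarantees the free-product presentation is the relevant one), so I would record that this hypothesis is being used here.

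Next, feeding in the assumption that $(i_\pm)_\ast$ is surjective, I would conclude $N_\pm=\mathrm{im}\big((i_\pm)_\ast\big)=\pi_1(G/H)$, whence $\pi_1(G/H)/N_\pm$ is the trivial group for both signs. Consequently the free product $(\pi_1(G/H)/N_+)\ast(\pi_1(G/H)/N_-)$ is trivial, and since $\pi_1(X)$ is a quotient of this free product by \th\ref{T: fundamental group}, it is trivial as well; that is, $X$ is simply connected.

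There is no genuine obstacle to overcome: the entire topological content is already packaged into \th\ref{T: fundamental group}, and the remaining input is the elementary exactness argument above. The only step meriting attention is bookkeeping with the connectedness assumption on $K_\pm/H$, which ensures both the validity of the identification $N_\pm=\ker(\rho_\pm)_\ast$ and the applicability of the van Kampen presentation; with that in place the corollary follows immediately.
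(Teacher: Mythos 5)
Your argument is correct and is essentially identical to the paper's proof: both invoke the fibration long exact sequence for $K_\pm/H\hookrightarrow G/H\to G/K_\pm$ to identify $N_\pm=\ker(\rho_\pm)_\ast=\mathrm{im}(i_\pm)_\ast$, and then read off triviality of $\pi_1(X)$ from the presentation in \th\ref{T: fundamental group}. You simply spell out the final bookkeeping (that surjectivity forces $\pi_1(G/H)/N_\pm$ to be trivial) which the paper leaves implicit.
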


\begin{proof}
By the fibration long exact sequence of homotopy groups:
\[
\cdots\to \pi_1(K_\pm/H,H)\overset{(i_\pm)_\ast}{\longleftrightarrow} \pi_1(G/H,H)\overset{(\rho_\pm)_\ast}{\longrightarrow} \pi_1(G/K_\pm,K_\pm)\to \pi_0(K_\pm/H,H)\to \cdots
\]
we have that $\ker (\rho_\pm)_\ast = \mathrm{img}(i_\pm)_\ast$. Then the conclusion follows from \th\ref{T: fundamental group}.
\end{proof}

\section{Construction of cohomogeneity one \texorpdfstring{$\RCD$}{RCD}-spaces}
\label{S: Gluing of RCD-spaces}
In this Section we  show how to construct a cohomogeneity one RCD space from a given cohomogeneity one group diagram, which we define next: 

\begin{definition}\th\label{D: Coho one diagram}[Cohomogeneity one diagram]
Given a compact Lie group $G$, and Lie subgroups $H$, $G_\pm$ of $G$ satisfying the following diagram
\[
\begin{tikzcd}
    & G_+ \arrow[dr,hook] & \\
   H \arrow[ur,hook] \arrow[dr,hook]& & G,\\
    & G_- \arrow[ur,hook]& 
\end{tikzcd}
\]
we say that it is a \emph{cohomogeneity one group diagram} if the homogeneous spaces $G_{\pm}/H$ admit $G_\pm$-invariant Riemannian metrics with positive Ricci curvature when $\dim(G_\pm/H)\geq 2$, or have finite diameter when $\dim(G_\pm/H)=1$. 
\end{definition}

\begin{rmk}
We recall that by the characterization in \cite{Berestovskii1995}, a homogeneous space $G/H$ admits a $G$-invariant metric of positive Ricci curvature if and only if it has finite fundamental group.
\end{rmk}

We now state our construction theorem.
\vspace*{8pt}

\begin{duplicate}[\ref{thm.rcd space from group diagram}]
Let $G$ be a compact Lie group and $G_+$, $G_-$, $H$ Lie subgroups of $G$ such that $(G,H,G_+,G_-)$ is a cohomogeneity one group diagram. Then, there exists an $\RCD(K,N)$-space $(X,\dis,\m)$ admitting a cohomogeneity one action of $G$ by measure preserving isometries, such that the associated group diagram is $(G,H,G_+,G_-)$. Moreover, we have $K\leq 0$, and $N=\max\{n_-+1+D,n_+ +1+ D, n_0+1\}$, where $n_\pm = \dim(G_\pm/H)$, $n_0 =\dim (G/H)$ and $D= \dim(G)$.
\end{duplicate}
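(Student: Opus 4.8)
The plan is to realize $X$ as the union of two \emph{cone bundles} over the singular orbits $G/G_\pm$, glued along a common principal orbit $G/H$, mirroring the structure theorem \th\ref{MC: Homeomorphism rigidity}~\eqref{MC: Homeomorphism rigidity close interval}. Fix a bi-invariant Riemannian metric $Q$ on $G$; as a smooth compact manifold $(G,Q)$ is a non-collapsed $\RCD(0,D)$-space, with $D=\dim G$. For each sign write $\mathfrak{g}=\mathfrak{h}\oplus\mathfrak{p}_\pm\oplus\mathfrak{q}_\pm$ for the $Q$-orthogonal decomposition with $\mathfrak{g}_\pm=\mathfrak{h}\oplus\mathfrak{p}_\pm$, so that $\mathfrak{p}_\pm\cong T(G_\pm/H)$ and $\mathfrak{m}:=\mathfrak{g}/\mathfrak{h}\cong \mathfrak{p}_\pm\oplus\mathfrak{q}_\pm$ carries the fixed $\mathrm{Ad}(H)$-invariant inner product $Q|_{\mathfrak{m}}$.

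First I would build the fibers. When $n_\pm=\dim(G_\pm/H)\ge 2$, the diagram hypothesis together with Berestovskii's characterization \cite{Berestovskii1995} guarantees that $G_\pm/H$ has finite fundamental group; the normal homogeneous metric induced by $Q$ then has $\sec\ge 0$ and, having finite $\pi_1$, in fact $\Ric>0$ (a parallel kernel of $\Ric$ would split off a line in the universal cover, contradicting compactness). Rescaling $Q|_{\mathfrak{p}_\pm}$ by a small factor I obtain a $G_\pm$-invariant metric $h_\pm$ on $F_\pm:=G_\pm/H$ with $\Ric(h_\pm)\ge n_\pm-1$; by Bonnet–Myers $\diam(F_\pm,h_\pm)\le\pi$, so $(F_\pm,h_\pm)$ is a non-collapsed $\RCD(n_\pm-1,n_\pm)$-space. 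By Ketterer's cone theorem \cite{Ketterer2013}, the Euclidean cone $S_\pm:=\mathrm{Con}^{\,n_\pm+1}_0(F_\pm)$ is an $\RCD(0,n_\pm+1)$-space on which $G_\pm$ acts by measure-preserving isometries fixing the vertex and transitively on each distance sphere. (When $n_\pm=1$ one replaces this by a metric circle of circumference $\le 2\pi$, whose Euclidean cone is an $\RCD(0,2)$-space; the finite-diameter clause of the diagram covers this case.)

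Next I would assemble and glue the halves. The group $G_\pm$ acts freely, isometrically and measure-preservingly on the product $\RCD(0,D+n_\pm+1)$-space $(G,Q)\times S_\pm$ (diagonally: by right translation on $G$ and by the cone action on $S_\pm$), and by the quotient results of Galaz-García–Kell–Mondino–Sosa \cite{GalazGarciaKellMondinoSosa2018} the quotient $B_\pm:=G\times_{G_\pm}S_\pm$ is an $\RCD(0,D+n_\pm+1)$-space. The residual left $G$-action descends to $B_\pm$ as a cohomogeneity one action by measure-preserving isometries, with orbit space a half-line, singular orbit $G/G_\pm$ at the vertex, and principal orbits $G/H$. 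Truncating the cone $S_\pm$ at the radius $r_\pm$ for which the cross-section metric $r_\pm^2 h_\pm$ equals $Q|_{\mathfrak{p}_\pm}$, the resulting closed cone bundle $\bar B_\pm$ is still $\RCD(0,D+n_\pm+1)$ (a ball about the vertex is geodesically convex, so \th\ref{T: Global-to-local} applies), and its boundary principal orbit is exactly $(G/H,Q|_{\mathfrak{m}})$ for \emph{both} signs — here the orthogonal splitting $\mathfrak{m}=\mathfrak{p}_\pm\oplus\mathfrak{q}_\pm$ makes the two boundary metrics coincide, which is the compatibility that makes the gluing possible. After raising both dimension parameters to $N=\max\{n_-+1+D,\,n_++1+D,\,n_0+1\}$ via the monotonicity in \th\ref{prop.productsrescallings}, I glue $\bar B_-$ to $\bar B_+$ by the identity on this common boundary to obtain $X$, carrying the amalgamated $G$-action with orbit space $[-1,1]$.

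The main obstacle is the final step: verifying that the glued space $X$ is genuinely an $\RCD$-space across the gluing locus. Away from the interface $X$ is locally isometric to one of the $\bar B_\pm$ and hence $\RCD(0,N)$, and the interface is $\m$-null, but \th\ref{T:local-to-global} cannot be applied directly since a neighborhood of an interface point need not a priori be $\RCD$. This is precisely where the Bakry-Émery formulation is needed: one verifies the $\BE(K,N)$-inequality \eqref{eq.BakryEmery} for the glued space directly, testing it against functions adapted to the interface. Since the common boundary bounds the two cone bundles as a convex (positively curved) hypersurface on each side, the matching is mean-convex and the gluing degrades the curvature bound only at the interface in a controlled way, yielding $\BE(K,N)$ for some $K\le 0$; combined with infinitesimal Hilbertianity (inherited from the two sides) and the Sobolev-to-Lipschitz property, this gives the $\RCD(K,N)$-structure. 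Finally I would read off from the construction that the singular isotropies are $G_\pm$, the principal isotropy is $H<G_\pm$, and the orbit space is $[-1,1]$, so that the associated group diagram is $(G,H,G_+,G_-)$, which completes the proof.
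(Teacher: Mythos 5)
Your construction of the two halves follows the paper's blueprint (positive--Ricci metrics on $G_\pm/H$ via \cite{Berestovskii1995}, Ketterer's cone theorem, the quotient theorem of \cite{GalazGarciaKellMondinoSosa2018}, convexity of balls about the vertex), but the final gluing step contains a genuine gap, and it is exactly the step the paper is engineered to avoid. You truncate the \emph{Euclidean cones} $\mathrm{Con}_0(F_\pm)$ and glue the two cone bundles directly along their boundaries, then propose to ``verify the $\BE(K,N)$-inequality directly, testing it against functions adapted to the interface,'' asserting that mean-convexity of the interface degrades the curvature bound only in a controlled way. No such verification is carried out, and none is available: there is no general gluing theorem for $\RCD$-spaces along isometric boundaries, and the Local-to-Global theorem (\th\ref{T:local-to-global}) cannot be invoked because a neighborhood of an interface point is not a priori $\RCD$ --- near the seam the metric is genuinely conical from \emph{both} sides (distance spheres shrinking toward two different vertices), so such a neighborhood is not isometric to an open set of either half. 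The paper's key device is to replace the warping function $t\mapsto t$ by $f_A(t)=\min\{1,At\}$, so that each truncated piece is a metric cone near its vertex but a metric-measure \emph{product} $[1/A,2/A]\times F_\pm$ near its boundary (\th\ref{prop.near bdry metric is product in warped product}); the $\RCD(0,n_\pm+1)$ property of these cone-like pieces is then obtained by smoothing $f_A$ to $f_{A,\epsilon}$, applying Ketterer's warped-product theorem, and passing to the mGH limit. With product collars on both sides, a neighborhood of the seam is an honest product of an interval with a compact Riemannian manifold, hence $\RCD$, and only then does Local-to-Global apply.

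A secondary problem is your claim that the two boundary principal orbits carry the \emph{same} metric $Q|_{\mathfrak m}$, which you need for the identity gluing. The induced metric on $\partial\bigl(G\times_{G_\pm}\bar S_\pm\bigr)\cong G/H$ is the Riemannian-submersion quotient of the product metric on $G\times F_\pm$ by the diagonal $G_\pm$-action; this is a Cheeger-type deformation of $Q|_{\mathfrak m}$ in the $\mathfrak p_\pm$-directions, not $Q|_{\mathfrak m}$ itself, and the deformations for the two signs have no reason to agree. The paper sidesteps this as well: it inserts an interpolating cylinder $C_0=[-1,1]\times B$ equipped with $dt^2\oplus g(t)$, where $g(t)$ is any smooth path of Riemannian metrics joining the two boundary metrics $g_-$ and $g_+$, which is a compact Riemannian manifold and hence $\RCD(K_0,\dim B+1)$ for some finite $K_0$. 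This is also why the theorem only asserts $K\le 0$ rather than $K=0$. To repair your argument you would need both the flattened collar and the interpolating neck; as written, the proof does not go through.
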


\subsection{Cone-like warped products.}

In this section we will work with certain warped products over compact Riemannian manifolds. The distances of the warped products that we will be considering will always be defined on $[0,\infty)\times X /\sim$, where $(t,x)\sim (s,y)$ if and only if $t=s=0$. The warping function $f$ that we consider only vanish at $t=0$. 

The next result tells us that under appropriate assumptions on the warping function we can obtain convexity of the closed balls around a particular point, namely, the ``vertex'' of the warped product. 

\begin{prop}\th\label{prop.convexconelike}
Let $(X,\dis)$ be a compact geodesic space and take $f\colon [0,\infty)\rightarrow [0,\infty)$ a continuous non-decreasing function such that $f(0)=0$. Then for every $R>0$ we have that $\bar{B}_R(0)= [0,R]\times_{f}X \subset [0,\infty]\times_{f}X$ is convex. 
\end{prop}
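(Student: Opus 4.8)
The plan is to realize $\bar{B}_R(0)$ as the image of a distance–nonincreasing retraction of the whole warped product, so that minimizing geodesics of the ambient space can be pushed into the ball without increasing length. First I would record the elementary fact that $\dis_f(0,[t,x]) = t$ for every $[t,x]$, which is what identifies $\bar{B}_R(0)$ with $[0,R]\times_f X$ as asserted. Indeed, any admissible curve $\gamma = (\alpha,\beta)$ joining a vertex representative $(0,\cdot)$ to $(t,x)$ satisfies $L_f(\gamma) = \int_0^1 \sqrt{|\dot\alpha|^2 + (f\circ\alpha)^2|\dot\beta|^2}\,ds \ge \int_0^1 |\dot\alpha|\,ds \ge t$, while the radial curve $s\mapsto (st,x)$, whose $\beta$-component is constant, has length exactly $t$. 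Hence $[t,x]\in\bar{B}_R(0)$ if and only if $t\le R$.

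Next I would define the radial truncation $\Phi\colon [0,\infty)\times_f X \to \bar{B}_R(0)$ by $\Phi([t,x]) = [t\wedge R, x]$. This descends to the quotient because it fixes the vertex (using $t\wedge R=0$ exactly when $t=0$), and it restricts to the identity on $\bar{B}_R(0)$. The key step is that $\Phi$ is $1$-Lipschitz. Given an admissible curve $\gamma=(\alpha,\beta)$, the curve $\Phi\circ\gamma = (\alpha\wedge R, \beta)$ is again admissible, and pointwise almost everywhere one has $|\tfrac{d}{ds}(\alpha\wedge R)| \le |\dot\alpha|$ (the map $t\mapsto t\wedge R$ is $1$-Lipschitz) together with $f(\alpha\wedge R)\le f(\alpha)$ (because $\alpha\wedge R\le\alpha$ and $f$ is non-decreasing). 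Therefore the integrand defining $L_f(\Phi\circ\gamma)$ is pointwise dominated by that of $L_f(\gamma)$, so $L_f(\Phi\circ\gamma)\le L_f(\gamma)$; taking the infimum over admissible curves joining two fixed points yields $\dis_f(\Phi(p),\Phi(q))\le \dis_f(p,q)$ for all $p,q$.

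Finally I would conclude convexity. Since the warped product is complete and locally compact and $\bar{B}_R(0)=[0,R]\times X/\!\sim$ is compact, any $p,q\in\bar{B}_R(0)$ are joined by a minimizing geodesic $\gamma$ of the ambient space. Then $\Phi\circ\gamma$ lies in $\bar{B}_R(0)$, has the same endpoints (as $\Phi$ fixes $\bar{B}_R(0)$), and satisfies $L_f(\Phi\circ\gamma)\le L_f(\gamma) = \dis_f(p,q)$; being a curve from $p$ to $q$ it also has length $\ge \dis_f(p,q)$, so equality holds and $\Phi\circ\gamma$ is a minimizing geodesic contained in $\bar{B}_R(0)$. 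This shows that $(\bar{B}_R(0),\dis_f)$ is geodesic for the restricted metric, i.e. that $\bar{B}_R(0)$ is convex. Moreover, if $\gamma$ itself exited the ball, then on an interval where $\alpha>R$ the radial speed $|\dot\alpha|$ would be positive on a set of positive measure while $\tfrac{d}{ds}(\alpha\wedge R)=0$ there, forcing $L_f(\Phi\circ\gamma)<L_f(\gamma)$ and contradicting minimality; hence every minimizing geodesic between points of $\bar{B}_R(0)$ in fact stays inside it.

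The main obstacle is the $1$-Lipschitz estimate for $\Phi$, which reduces to the pointwise domination of the two integrands; this rests precisely on the monotonicity hypothesis that $f$ is non-decreasing (for the angular term) together with $f(0)=0$ (ensuring $\Phi$ is well defined on the quotient). The remaining ingredients—existence of an ambient minimizer via completeness and local compactness, and the strict-inequality argument ruling out excursions above radius $R$—are routine once this estimate is established.
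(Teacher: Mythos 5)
Your proof is correct and follows essentially the same strategy as the paper's: modify the radial component of an admissible curve by a $1$-Lipschitz map that does not increase the radius, and use the monotonicity of $f$ to dominate the length integrand pointwise. The only difference is cosmetic — you truncate via $t\mapsto t\wedge R$ while the paper folds via $t\mapsto \max\{0,R-|t-R|\}$ — and your extra packaging as a distance-nonincreasing retraction, together with the strict-inequality remark, is a valid (if not strictly necessary) refinement.
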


\begin{proof}
Let $R>0$, and consider $\gamma\colon [0,1]\rightarrow [0,\infty)\times_{f}X$ an admissible curve such that $\gamma_0,\gamma_1 \in [0,R]\times_{f}X$ but that $\gamma([0,1])\not\subset [0,R]\times_{f}X$. The goal is to find a curve $\tilde{\gamma}$ contained in $[0,R]\times_{f}X$ with length less than or equal to that of $\gamma$. 
We have $\gamma = (\alpha,\beta)$, where $\alpha \in AC([0,1], [0,\infty])$. Recall that the absolute value of an absolutely continuous function is still an absolutely continuous function. We define the admissible curve $\tilde{\gamma} =(\tilde{\alpha},\beta)$ where $\tilde{\alpha}(t):=\max\lbrace 0, R-|\alpha(t)-R|\rbrace$. Observe that $f^2\circ\tilde{\alpha}(t)\leq f^2\circ \alpha (t)$ for all $t \in [0,1]$ and that $|\dot{\tilde{\alpha}}_t|\leq|\dot{\alpha}_t|$ on a set of full measure. Therefore 
\[
\ell(\tilde{\gamma})= \int^1_0 \sqrt{|\dot{\tilde{\alpha}}_t|^2+f^2\circ\tilde{\alpha}(t)|\dot{\beta}_t|^2}dt \leq \int^1_0 \sqrt{|\dot{\alpha}_t|^2+f^2\circ\alpha(t)|\dot{\beta}_t|^2}dt= \ell(\gamma)
\]
and so we have the admissible curve we were looking for in $[0,R]\times X$. Hence we have the result.
\end{proof}

For our purposes we will consider the warping function $f_A\colon [0, \infty) \rightarrow [0,\infty) $ given by $f_A(x):= \min \{1,Ax\}$ for $A>0$. Observe that $f_A$ is continuous but not smooth at $x=1/A$. Then, given $\epsilon>0$ we define smooth functions $f_{A,\epsilon}$ to approximate $f_A$, by setting:
\[
f_{A,\epsilon}(x) := \frac{1}{2}\left((Ax+1)-\sqrt{(Ax-1)^2+\epsilon^2}-(1-\sqrt{1+\epsilon^2})\right).
\]

The following lemma is straightforward to prove. 

\begin{lemma}
We have for all $x\in [0,\infty)$ that $f_{A,\epsilon}(x)\geq 0$, $f_{A,\epsilon}(x)\leq (1+\sqrt{1+\epsilon^2})/2$. It also holds that for $x\in [0,2/A]$,  $f_{A,\epsilon}(x)\leq f_A(x)$. Moreover
\[
|f_{A,\epsilon}(x)-f_A(x)|\leq \epsilon-(1-\sqrt{1-\epsilon^2})/2.
\]
Thus $f_{A,\epsilon}$ converges uniformly to $f_A$ as $\epsilon\to 0$.
\end{lemma}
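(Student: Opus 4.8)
The plan is to reduce all four assertions to the elementary identity
\[
\min\{1,t\} = \tfrac{1}{2}\big((t+1) - |t-1|\big),
\]
which, writing $t=Ax$, gives the single formula
\[
f_{A,\epsilon}(x) - f_A(x) = \tfrac{1}{2}\big(|Ax-1| - \sqrt{(Ax-1)^2+\epsilon^2}\big) + \tfrac{1}{2}\big(\sqrt{1+\epsilon^2}-1\big).
\]
Every claim then follows from the scalar estimate $0 \le \sqrt{w^2+\epsilon^2} - w \le \epsilon$ for $w=|Ax-1|\ge 0$ together with monotonicity. First I would note that the two square-root terms cancel at $x=0$, so $f_{A,\epsilon}(0)=0$, and that
\[
f_{A,\epsilon}'(x) = \tfrac{A}{2}\Big(1 - \tfrac{Ax-1}{\sqrt{(Ax-1)^2+\epsilon^2}}\Big) > 0,
\]
since the fraction has absolute value strictly below $1$ (because $\epsilon>0$). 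Hence $f_{A,\epsilon}$ is strictly increasing, giving $f_{A,\epsilon}\ge f_{A,\epsilon}(0)=0$, the first claim. Letting $x\to\infty$, the relation $\sqrt{(Ax-1)^2+\epsilon^2}\to Ax-1$ yields $\lim_{x\to\infty} f_{A,\epsilon}(x)=\tfrac{1}{2}(1+\sqrt{1+\epsilon^2})$, and monotonicity promotes this limit to the uniform upper bound of the second claim.

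For the comparison on $[0,2/A]$ I would set $w=|Ax-1|\in[0,1]$ and rewrite the displayed difference as $f_A(x)-f_{A,\epsilon}(x)=\tfrac{1}{2}\big((\sqrt{w^2+\epsilon^2}-w)-(\sqrt{1+\epsilon^2}-1)\big)$. Since $w\mapsto \sqrt{w^2+\epsilon^2}-w$ is decreasing, on $[0,1]$ it is bounded below by its value $\sqrt{1+\epsilon^2}-1$ at $w=1$; thus $f_A-f_{A,\epsilon}\ge 0$, i.e. $f_{A,\epsilon}(x)\le f_A(x)$. (Equivalently one checks $f_{A,\epsilon}(2/A)=1$ directly and invokes monotonicity.)

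Finally, for the uniform bound the same rewriting shows that as $x$ runs over $[0,\infty)$ the quantity $\sqrt{w^2+\epsilon^2}-w$ sweeps $[0,\epsilon]$, so $f_A-f_{A,\epsilon}$ takes values in $\big[-\tfrac{1}{2}(\sqrt{1+\epsilon^2}-1),\ \tfrac{1}{2}(1+\epsilon-\sqrt{1+\epsilon^2})\big]$, the right endpoint being attained at $Ax=1$. For $\epsilon\le 1$ the right endpoint dominates in absolute value (this is $(2+\epsilon)^2\ge 4(1+\epsilon^2)$), so $|f_{A,\epsilon}-f_A|\le \tfrac{1}{2}(1+\epsilon-\sqrt{1+\epsilon^2})$. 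Matching this with the stated bound $\epsilon-\tfrac{1}{2}(1-\sqrt{1-\epsilon^2})$ reduces, after clearing the $\tfrac{1}{2}$'s, to the one-variable inequality $2-\epsilon\le \sqrt{1+\epsilon^2}+\sqrt{1-\epsilon^2}$, which I would deduce at once from $\sqrt{1+\epsilon^2}\ge 1$ and $\sqrt{1-\epsilon^2}\ge 1-\epsilon$ (the latter because $(1-\epsilon)^2\le 1-\epsilon^2$ for $\epsilon\in[0,1]$). Since both the upper bound and this difference bound tend to $0$ as $\epsilon\to 0$, uniform convergence follows. The only mildly delicate point is this last scalar comparison producing exactly the author's constant; everything else is immediate from $0\le \sqrt{w^2+\epsilon^2}-w\le \epsilon$ and monotonicity.
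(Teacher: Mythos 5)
Your proof is correct; the paper itself omits the argument entirely (it only remarks that the lemma is ``straightforward to prove''), and your reduction via $\min\{1,t\}=\tfrac12\big((t+1)-|t-1|\big)$ together with the scalar bound $0\le\sqrt{w^2+\epsilon^2}-w\le\epsilon$ is exactly the elementary verification the authors had in mind. In particular you correctly check the stated constant $\epsilon-(1-\sqrt{1-\epsilon^2})/2$ as written (which implicitly requires $\epsilon\le 1$), so nothing is missing.
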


We consider now warped products using the function $f_{A,\epsilon}$ as warping function, with the objective of showing pointed measured Gromov-Hausdorff convergence of these warped spaces to the warped space with respect to $f_A$. In view of the convexity granted by \th\ref{prop.convexconelike}, and because of the specific properties of the $f_{A,\epsilon}$, henceforward we  restrict ourselves to $[0,2/A]\times_{f_{ A,\epsilon} }X$ and $[0,2/A]\times_{f_A }X$. Let $[t,x],[s,y]\in [0,2/A]\times X/\sim$ and consider the set
\begin{linenomath}
\begin{align*}
    \mathcal{A}:= \big\{ \gamma\colon [0,1]\to [0,2/A]\times X/\sim \mid &\, \gamma = (\alpha,\beta),\: \alpha\in AC([0,1],[0,2/A]),\\
    & \beta\in AC([0,1],X),\: \gamma_0 = [t,x],\gamma_1=[s,y] \big\}.
\end{align*}
\end{linenomath}

We set $\bar{\mathcal{A}}$ to be the closure of $\mathcal{A}$ with respect to the compact-open topology. Clearly $\bar{\mathcal{A}}= \bar{\mathcal{A}}([t,x],[s,y])$ (that is, $\bar{\mathcal{A}}$ depends on the points in question), but unless necessary we omit this in the notation. For simplicity, in the following we drop the dependence on the parameter $A$ from the notation, and simply write $f$ and $f_{\epsilon}$ instead of $f_A$ and $f_{A,\epsilon}$.

Similarly to what happens with $f_\epsilon$, $f$ we have the following result. 

\begin{lemma}\th\label{lemma.increasing.lengths}
    The length functionals $\ell_{\epsilon}$ and $\ell$ of the warped products $[0,2/A]\times_{f_{\epsilon} }X$ and $[0,2/A]\times_{f }X$, respectively,  satisfy the following for all curves $\gamma \in \bar{\mathcal{A}}$:
    \begin{enumerate}
    \item $\ell_\epsilon(\gamma) \leq \ell(\gamma)$, for all $\epsilon >0$;
    \item $\ell_{\epsilon_1}(\gamma)\leq \ell_{\epsilon_2}(\gamma)$, for all $0<\epsilon_2<\epsilon_1$.   
    \end{enumerate}
\end{lemma}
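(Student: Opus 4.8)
The plan is to reduce both inequalities to a pointwise comparison of the warping functions, exploiting the fact that the length integrand is monotone in the value of the warping function. First I would record the elementary observation that for fixed real numbers $a,b$ the map $g\mapsto \sqrt{a^2+g^2b^2}$ is non-decreasing in $g\geq 0$. Consequently, if $g_1\leq g_2$ are two continuous non-negative warping functions on $[0,2/A]$, then for any $\gamma=(\alpha,\beta)\in\bar{\mathcal{A}}$ (so that $\alpha$ takes values in $[0,2/A]$ by the very definition of $\bar{\mathcal{A}}$) one has $g_1(\alpha(t))\leq g_2(\alpha(t))$ for every $t$, whence the integrands, and therefore the associated lengths, satisfy the corresponding inequality. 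Thus each item becomes a statement about the monotone ordering of the relevant warping functions on $[0,2/A]$.

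For item (1), the pointwise inequality $f_\epsilon(x)=f_{A,\epsilon}(x)\leq f_A(x)=f(x)$ for $x\in[0,2/A]$ is precisely the content of the preceding lemma, so the conclusion $\ell_\epsilon(\gamma)\leq\ell(\gamma)$ follows at once from the monotonicity principle above. For item (2), I would establish that on the restricted domain $[0,2/A]$ the function $\epsilon\mapsto f_{A,\epsilon}(x)$ is non-increasing for each fixed $x$. Writing $u:=Ax-1$ and differentiating the explicit formula gives
\[
\frac{\partial f_{A,\epsilon}}{\partial \epsilon}(x)=\frac{\epsilon}{2}\left(\frac{1}{\sqrt{1+\epsilon^2}}-\frac{1}{\sqrt{u^2+\epsilon^2}}\right),
\]
which is $\leq 0$ exactly when $u^2\leq 1$, that is, when $0\leq Ax\leq 2$, i.e. $x\in[0,2/A]$. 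This is exactly the interval on which we have chosen to work, and this sign condition is the whole reason for confining the analysis to $[0,2/A]$. Hence for $0<\epsilon_2<\epsilon_1$ we obtain $f_{\epsilon_1}(x)\leq f_{\epsilon_2}(x)$ throughout $[0,2/A]$, and the monotonicity principle yields $\ell_{\epsilon_1}(\gamma)\leq\ell_{\epsilon_2}(\gamma)$.

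The only genuine content is the sign computation for $\partial_\epsilon f_{A,\epsilon}$ together with the recognition that the sign changes precisely at the endpoints of the chosen interval; everything else is the routine transfer from pointwise function inequalities to length-functional inequalities via the monotone integrand. I do not anticipate any real obstacle: since admissible curves never leave $[0,2/A]$, no difficulty arises from $\alpha$ wandering into a region where the comparisons fail. Finally, the inequalities for genuinely absolutely continuous $\gamma\in\mathcal{A}$ pass to all of $\bar{\mathcal{A}}$ by the lower semicontinuity of the length functionals under compact-open convergence, so no separate argument is needed for the limit curves.
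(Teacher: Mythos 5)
Your argument is essentially the paper's: both reduce each item to the pointwise ordering of the warping functions via monotonicity of the integrand $g\mapsto\sqrt{a^2+g^2b^2}$, with (1) following from $f_{A,\epsilon}\leq f_A$ on $[0,2/A]$ and (2) from the monotonicity of $\epsilon\mapsto f_{A,\epsilon}(x)$ there; your sign computation of $\partial_\epsilon f_{A,\epsilon}$ is correct and in fact supplies the detail the paper leaves implicit by declaring (2) ``similar.'' One small caveat: your closing appeal to lower semicontinuity does not by itself transfer the inequality $\ell_{\epsilon}\leq\ell$ from $\mathcal{A}$ to $\bar{\mathcal{A}}$ (that would also require upper semicontinuity of the majorant), but no such argument is needed --- the pointwise comparison applies verbatim to any admissible curve in $\bar{\mathcal{A}}$, and non-admissible limit curves have length $+\infty$ for every warping function by convention, so the inequalities hold trivially there.
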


\begin{proof}
We first prove $(1)$. Fix $\epsilon >0$ and $\gamma = (\alpha, \beta)\in \mathcal{A}$. We know from the fact that $f_{\epsilon}\leq f$ for all $x\in [0,2/A]$ that 
\[
\sqrt{|\dot{\alpha}_t|^2+f^2_{\epsilon}\circ\alpha(t)|\dot{\beta}_t|^2}\leq\sqrt{|\dot{\alpha}_t|^2+f^2\circ\alpha(t)|\dot{\beta}_t|^2}
\]
on the set of full measure where both $|\dot{\alpha}_t|$ and $|\dot{\beta}_t|$ are defined. On the remaining null set the functions are arbitrarily defined so they pose no problem. Now, integrating we get
\[
\int^1_0\sqrt{|\dot{\alpha}_t|^2+f^2_{\epsilon}\circ\alpha(t)|\dot{\beta}_t|^2}dt\leq\int^1_0\sqrt{|\dot{\alpha}_t|^2+f^2\circ\alpha(t)|\dot{\beta}_t|^2}  dt.
\]
Which implies that $\ell_\epsilon \leq \ell$. The proof of $(2)$ is carried out by a similar argument.
\end{proof}

We now establish pointwise convergence of the length functionals $\ell_\epsilon$ to $\ell$ as $\epsilon\to 0$. 

\begin{prop}
The length functionals $\ell_\epsilon$ converge pointwise to $\ell$ when $\epsilon\to 0$.    
\end{prop}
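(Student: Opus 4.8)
The plan is to fix an arbitrary $\gamma=(\alpha,\beta)\in\bar{\mathcal{A}}$ and prove $\ell_\epsilon(\gamma)\to\ell(\gamma)$ by interchanging the limit in $\epsilon$ with the integral defining the lengths. First I would dispose of the degenerate case: since admissibility of $\gamma$ (absolute continuity of $\alpha$ and $\beta$) is a property of the curve alone and does not involve the warping function, if $\gamma$ fails to be admissible then $\ell_\epsilon(\gamma)=\ell(\gamma)=+\infty$ for every $\epsilon$ by the convention that non-admissible curves have infinite length, and the convergence is trivial. Hence I may assume $\gamma$ is admissible, so that $|\dot\alpha_t|$ and $|\dot\beta_t|$ exist for a.e. $t\in[0,1]$ and both belong to $L^1(0,1)$.

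Next I would examine the integrands
\[
g_\epsilon(t):=\sqrt{|\dot\alpha_t|^2+f_\epsilon^2(\alpha(t))\,|\dot\beta_t|^2},\qquad g(t):=\sqrt{|\dot\alpha_t|^2+f^2(\alpha(t))\,|\dot\beta_t|^2},
\]
so that $\ell_\epsilon(\gamma)=\int_0^1 g_\epsilon\,dt$ and $\ell(\gamma)=\int_0^1 g\,dt$. By the uniform convergence $f_\epsilon\to f$ on $[0,2/A]$ recorded in the preceding lemma, we have $f_\epsilon(\alpha(t))\to f(\alpha(t))$ for every $t$, and hence $g_\epsilon(t)\to g(t)$ for a.e. $t$ by continuity of the map $(a,b,c)\mapsto\sqrt{a^2+c^2b^2}$. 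For the domination, the same lemma gives $f_\epsilon\le f$ on $[0,2/A]$, whence $g_\epsilon\le g$ pointwise; moreover, using $f\le 1$ and $\sqrt{a^2+b^2}\le |a|+|b|$, one obtains the integrable majorant $g(t)\le |\dot\alpha_t|+|\dot\beta_t|\in L^1(0,1)$.

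With these two facts in hand the conclusion follows immediately from the dominated convergence theorem, giving $\ell_\epsilon(\gamma)=\int_0^1 g_\epsilon\,dt\to\int_0^1 g\,dt=\ell(\gamma)$; alternatively, since part $(2)$ of \th\ref{lemma.increasing.lengths} shows the family $g_\epsilon$ is monotone in $\epsilon$ and bounded above by the integrable $g$, the monotone convergence theorem applies just as well. I do not expect a genuine obstacle here: the only point requiring a moment of care is the justification of the a.e. pointwise convergence of the integrands together with the existence of the integrable majorant, both of which rest on the elementary estimates for $f_\epsilon$ and $f$ already established.
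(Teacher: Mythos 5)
Your argument is correct and essentially the same as the paper's: the paper also reduces to exchanging the limit with the integral using the pointwise a.e.\ convergence of the integrands (from the uniform convergence $f_\epsilon\to f$) together with the monotonicity $f_{\epsilon_2}\le f_{\epsilon_1}\le f$, and concludes via Beppo Levi's (monotone convergence) theorem — precisely the alternative you mention at the end. Your lead argument via dominated convergence with majorant $|\dot\alpha_t|+|\dot\beta_t|$ is an equally valid, equivalent variant, and your explicit handling of non-admissible curves is a harmless extra precaution.
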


\begin{proof}
Take $\gamma \in \mathcal{A}$ and notice that for all $\epsilon >0$ the functions 
\[
t \mapsto \sqrt{|\dot{\alpha}_t|^2+f^2_{\epsilon}\circ\alpha(t)|\dot{\beta}_t|^2}
\]
are non-negative and measurable. Furthermore we have that for all $\epsilon_1<\epsilon_2$

$0\leq \sqrt{|\dot{\alpha}_t|^2+f^2_{\epsilon_2}\circ\alpha(t)|\dot{\beta}_t|^2}\leq \sqrt{|\dot{\alpha}_t|^2+f^2_{\epsilon_1}\circ\alpha(t)|\dot{\beta}_t|^2} \leq \infty $     
And that due to the uniform convergence of the $f_\epsilon$, the pointwise limit is 
\[
\sqrt{|\dot{\alpha}_t|^2+f^2\circ\alpha(t)|\dot{\beta}_t|^2}=\lim_{\epsilon \rightarrow 0}\sqrt{|\dot{\alpha}_t|^2+f^2_{\epsilon}\circ\alpha(t)|\dot{\beta}_t|^2}.
\]
Then, by Beppo Levi's Lemma (exchanging integration with supremum) we obtain:
\[
\lim_{\epsilon\rightarrow 0}\int^1_0 \sqrt{|\dot{\alpha}_t|^2+f^2_{\epsilon}\circ\alpha(t)|\dot{\beta}_t|^2}dt = \int^1_0\sqrt{|\dot{\alpha}_t|^2+f^2\circ\alpha(t)|\dot{\beta}_t|^2}dt.
\]
Which is just what we wanted.
\end{proof}

In order to prove the pointed measured Gromov-Hausdorff convergence, we show that the induced distances converge pointwise; thus we  need to check the convergence of the infima of the lengths $\ell_\epsilon$ on $\bar{\mathcal{A}}$ to the infimum of the length $\ell$ on $\bar{\mathcal{A}}$. To do so we make use  of the notion of $\Gamma$-convergence. The interested reader may consult \cite{Braides2006} for further details.

\begin{prop}\th\label{prop.Gammaconvergence.lengths}
 The length functionals $\ell_\epsilon$ $\Gamma$-converge to $\ell$ as $\epsilon\to 0$.   
\end{prop}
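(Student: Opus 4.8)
The plan is to verify directly the two defining conditions of $\Gamma$-convergence with respect to the compact-open topology on $\bar{\mathcal{A}}$, leaning on the two facts already established: the monotonicity $\ell_{\epsilon_1}\le\ell_{\epsilon_2}$ for $0<\epsilon_2<\epsilon_1$ from \th\ref{lemma.increasing.lengths}, and the pointwise convergence $\ell_\epsilon(\gamma)\to\ell(\gamma)$ proved in the previous proposition. Together these say that, as $\epsilon\searrow 0$, the functionals $\ell_\epsilon$ increase pointwise up to $\ell$. This is precisely the classical setting of \emph{monotone} $\Gamma$-convergence, in which the $\Gamma$-limit of an increasing family equals the lower semicontinuous envelope of its pointwise limit; so morally the whole statement reduces to checking that $\ell$ is itself lower semicontinuous, after which the envelope is $\ell$ and there is nothing more to prove. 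I would nonetheless spell out the two inequalities to keep the argument self-contained.

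The recovery (``limsup'') half is immediate: for a fixed $\gamma\in\bar{\mathcal{A}}$ I would take the constant family $\gamma_\epsilon:=\gamma$, so that $\limsup_{\epsilon\to 0}\ell_\epsilon(\gamma)=\ell(\gamma)$ by the pointwise convergence (and indeed $\ell_\epsilon(\gamma)\le\ell(\gamma)$ already by \th\ref{lemma.increasing.lengths}). The lower-bound (``liminf'') half carries the content. Given $\gamma$ and any family $\gamma_\epsilon\to\gamma$ in $\bar{\mathcal{A}}$, I would fix an auxiliary $\epsilon_0>0$ and use monotonicity: for every $\epsilon<\epsilon_0$ one has $\ell_\epsilon(\gamma_\epsilon)\ge\ell_{\epsilon_0}(\gamma_\epsilon)$, whence
\[
\liminf_{\epsilon\to 0}\ell_\epsilon(\gamma_\epsilon)\ \ge\ \liminf_{\epsilon\to 0}\ell_{\epsilon_0}(\gamma_\epsilon)\ \ge\ \ell_{\epsilon_0}(\gamma),
\]
the last inequality being lower semicontinuity of the \emph{single} functional $\ell_{\epsilon_0}$ along $\gamma_\epsilon\to\gamma$. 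Letting $\epsilon_0\to 0$ and invoking the monotone pointwise convergence $\ell_{\epsilon_0}(\gamma)\to\ell(\gamma)$ then gives $\liminf_{\epsilon\to 0}\ell_\epsilon(\gamma_\epsilon)\ge\ell(\gamma)$, which is exactly the lower bound.

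The hard part, and the only genuinely analytic input, is the lower semicontinuity of each fixed functional $\ell_{\epsilon_0}$ with respect to uniform convergence of curves in $\bar{\mathcal{A}}$. This is the standard fact that the length functional of a length metric space is lower semicontinuous under pointwise (hence compact-open) convergence of curves; here the relevant length space is the warped product $[0,2/A]\times_{f_{\epsilon_0}}X$, and I would cite the corresponding statement in \cite[Proposition 2.3.4]{BuragoBuragoIvanov}. Two small points deserve care. First, the functionals are extended-real-valued with $\ell_{\epsilon_0}\equiv+\infty$ on non-admissible curves, so one must check the inequality still holds when $\ell_{\epsilon_0}(\gamma)=+\infty$; this follows by applying metric-length lower semicontinuity directly to the limit curve. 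Second, elements of $\bar{\mathcal{A}}$ are compact-open limits of admissible curves, and since the warped products are complete length spaces the limit curve is a legitimate competitor whose length is bounded by the $\liminf$ of the approximating lengths, so no pathology arises in passing to the closure. Once this lower semicontinuity is secured, combining it with the already-established monotonicity (\th\ref{lemma.increasing.lengths}) and pointwise convergence closes the argument and yields $\ell_\epsilon\xrightarrow{\Gamma}\ell$.
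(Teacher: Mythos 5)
Your argument is correct and is essentially the paper's proof: both rest on the monotonicity of $\ell_\epsilon$ from \th\ref{lemma.increasing.lengths}, the pointwise convergence $\ell_\epsilon\to\ell$, and the lower semicontinuity of each length functional. The only difference is that you write out the proof of the general fact that an increasing family of lower semicontinuous functionals $\Gamma$-converges to its pointwise limit, whereas the paper simply cites it as \cite[Remark 2.12 ii)]{Braides2006}.
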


\begin{proof}
Recall that the length functionals are lower semi-continuous. Now, from \th\ref{lemma.increasing.lengths} we know that $\ell_\epsilon$ are non-decreasing with respect to $\epsilon$. Then using \cite[Remark 2.12 ii)]{Braides2006} we get that
\[
\Gamma\text{-}\lim_{\epsilon \rightarrow 0} \ell_\epsilon = \lim_{\epsilon \rightarrow 0}\ell_\epsilon = \ell.
\]    
\end{proof}
The next ingredient we need is the equicoercivity of the family $\{\ell_{\epsilon}\}$ (see \cite[Definition 2.9]{Braides2006}), which we now prove. 
\begin{prop}\th\label{prop.equicoercivity.lengths}
The sequence $\lbrace \ell_\epsilon \rbrace_{0<\epsilon \leq 1}$ is equicoercive.
\end{prop}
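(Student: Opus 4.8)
The plan is to reduce equicoercivity of the whole family to coercivity of the single functional $\ell_1$, using the monotonicity recorded in \th\ref{lemma.increasing.lengths}. Since we restrict to $0<\epsilon\le 1$, part (2) of that lemma yields the uniform lower bound $\ell_1\le \ell_\epsilon$ for every $\epsilon\in(0,1]$, while part (1) yields $\ell_\epsilon\le \ell$. Recall (see \cite[Definition 2.9]{Braides2006}) that it suffices to produce a single nonempty compact set $K\subset\bar{\mathcal{A}}$, independent of $\epsilon$, with $\inf_{\bar{\mathcal{A}}}\ell_\epsilon=\inf_K\ell_\epsilon$ for all $\epsilon$. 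This mild form is the appropriate one here, since length functionals are reparametrization invariant and hence never have precompact sublevel sets in the space of parametrized curves.

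To build $K$, first I would note that, as $X$ and $[0,2/A]$ are compact and each warping function is continuous and vanishes only at $t=0$, the warped products $Y_\epsilon=([0,2/A]\times_{f_\epsilon}X,d_\epsilon)$ and $Y=([0,2/A]\times_{f}X,d)$ are compact; moreover, using the uniform convergence $f_\epsilon\to f$ and the positivity of $f,f_\epsilon$ on $(0,2/A]$, all the distances $d_\epsilon$ and $d$ induce one and the same topology on $[0,2/A]\times X/\!\sim$. This is the topology underlying the compact--open topology on $\bar{\mathcal{A}}$, so uniform convergence of curves may be measured using $d_1$. Set $D:=d([t,x],[s,y])$, so that $d_\epsilon([t,x],[s,y])\le D$ by \th\ref{lemma.increasing.lengths}(1). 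Since each $Y_\epsilon$ is a complete, locally compact geodesic space (Section~\ref{SS: Warped Products}), a minimizing geodesic $\gamma_\epsilon$ of $\ell_\epsilon$ from $[t,x]$ to $[s,y]$ exists; reparametrizing it proportionally to $d_1$-arclength, which alters neither $\ell_1$ nor $\ell_\epsilon$, gives $\ell_1(\gamma_\epsilon)\le \ell_\epsilon(\gamma_\epsilon)=d_\epsilon([t,x],[s,y])\le D$, so $\gamma_\epsilon$ is $D$-Lipschitz with respect to $d_1$. Letting $K$ be the closed set of curves from $[t,x]$ to $[s,y]$ that are $D$-Lipschitz with respect to $d_1$, the Arzel\`a--Ascoli theorem shows that $K$ is compact, as it consists of equicontinuous maps into the compact space $(Y_1,d_1)$; since each $\gamma_\epsilon\in K$ realizes $\inf_{\bar{\mathcal{A}}}\ell_\epsilon$, we conclude $\inf_{\bar{\mathcal{A}}}\ell_\epsilon=\inf_K\ell_\epsilon$ for all $\epsilon\in(0,1]$, which is the desired equicoercivity.

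The step I expect to require the most care is the simultaneous bookkeeping of parametrizations and topologies: one must check that replacing each minimizer by its $d_1$-constant-speed representative keeps it inside $\bar{\mathcal{A}}$ and leaves every value $\ell_\epsilon$ unchanged, and that the single metric $d_1$ used in Arzel\`a--Ascoli induces exactly the ambient cone topology defining the compact--open topology on $\bar{\mathcal{A}}$. Verifying that the warping degeneracy at the vertex $t=0$ affects neither the compactness of $(Y_1,d_1)$ nor this equivalence of topologies is the one genuinely delicate point; it follows from the fact that near the vertex all the warped metrics collapse the $X$-fiber in the same way, while away from the vertex they are uniformly bi-Lipschitz to the product metric.
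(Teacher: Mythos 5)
Your argument is correct and rests on the same essential input as the paper's -- the monotonicity $f_1\le f_\epsilon\le f$ on $[0,2/A]$ for $\epsilon\in(0,1]$, hence $\ell_1\le\ell_\epsilon\le\ell$ from \th\ref{lemma.increasing.lengths} -- but it verifies equicoercivity by a genuinely different route. The paper works with sublevel sets: it shows $\{\ell_\epsilon\le\lambda\}\subset K_\lambda:=\{\ell_1\le 2\lambda\}$ (in fact $\ell_1\le\ell_\epsilon$ already places it in $\{\ell_1\le\lambda\}$) and then cites \cite[Theorem 2.5.14]{BuragoBuragoIvanov} to declare $K_\lambda$ compact. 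You instead use the infimum-based form of \cite[Definition 2.9]{Braides2006} directly and exhibit a single compact set $K$ of $D$-Lipschitz constant-speed curves containing a minimizer of each $\ell_\epsilon$, with compactness coming from Arzel\`a--Ascoli. Your route buys something real: as you observe, a sublevel set of a length functional is never compact in the sup metric, since precomposing a fixed curve with reparametrizations such as $t\mapsto t^n$ stays in $K_\lambda$ but admits no uniformly convergent subsequence; the cited result in \cite{BuragoBuragoIvanov} only gives compactness \emph{after} reparametrization, and your choice of $K$ builds exactly that normalization in. The price is the bookkeeping you flag yourself: one must check that the constant-speed representative of a minimizer still lies in $\bar{\mathcal{A}}$ with unchanged $\ell_\epsilon$-value, the delicate point being absolute continuity of the $X$-component where the warping functions vanish (a geodesic through the vertex has a discontinuous fiber component). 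This is easily sidestepped by taking $\delta$-almost-minimizing \emph{admissible} curves instead of exact geodesics, and by replacing $K$ with $K\cap\bar{\mathcal{A}}$ if needed; with that small patch your argument delivers precisely what the Fundamental Theorem of $\Gamma$-convergence requires.
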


\begin{proof}
Let $\lambda \geq 0$ and  consider the set $K_{\lambda}:= \lbrace \gamma \mid \ell_1(\gamma) \leq 2\lambda \rbrace$. As we are dealing only with curves in compact spaces, \cite[Theorem 2.5.14]{BuragoBuragoIvanov} tells us that $K_\lambda$ is compact.
We will only work with $0<\epsilon\leq 1$. Take $\gamma =(\alpha, \beta) \in \lbrace \gamma \mid \ell_{\epsilon}(\gamma)\leq \lambda\rbrace$. It is immediate to see that 
\[
\int^1_0 |\dot{\alpha}_t|dt \leq \lambda, \quad \int^1_0 f_{\epsilon}\circ \alpha (t)|\dot{\beta}_t|dt \leq \lambda.
\]
Since $f_1\leq f_\epsilon$ for all $x\in [0,2/A]$, it follows that 
\[
\int^1_0 f_{1}\circ \alpha (t)|\dot{\beta}_t|dt \leq\int^1_0 f_{\epsilon}\circ \alpha (t)|\dot{\beta}_t|dt \leq \lambda.\]
So then 
\[
\int^1_0 \sqrt{|\dot{\alpha}_t|^2+f^2_1\circ\alpha(t)|\dot{\beta}_t|^2}dt\leq \int^1_0|\dot{\alpha}_t|dt +\int^1_0 f_1\circ\alpha(t)|\dot{\beta}_t|dt \leq 2\lambda.
\]
Hence we have equicoercivity for $0<\epsilon \leq 1$.
\end{proof}

We now have all the ingredients needed to prove the convergence of the distances. Before moving on to the next result, let us point out that so far we have only considered a fixed topology on $\bar{\mathcal{A}}$. However in order to apply the Fundamental Theorem of $\Gamma$-convergence we need to fix a metric on $\bar{\mathcal{A}}$ that induces the open-compact topology. 

Note that for all $\epsilon >0$ we have that $f\leq 2f_{\epsilon}$. This is useful because it implies that the $f$-length $\ell$ is bounded from above by the $2f_{\epsilon}$-length, that is, $\ell \leq \ell_{2f_\epsilon}$. This will in turn yield that $\dis\leq \dis_{2f_\epsilon}$. Now, if $\gamma$ is an admissible curve then it has small $f_\epsilon$-length if and only if its $2f_{\epsilon}$-length is also small. Then, the induced distances $\dis_\epsilon$, $\dis$ are equivalent. All this allows us to put on $\bar{\mathcal{A}}$ the sup norm induced by $\dis$ without any problem.

\begin{prop}
The distances $\dis_\epsilon$ induced by the lengths $\ell_\epsilon$ converge uniformly to the distance $\dis$ induced by $\ell$.
\end{prop}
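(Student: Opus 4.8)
The plan is to deduce uniform convergence from two ingredients: convergence of the infima for each fixed pair of endpoints (via the Fundamental Theorem of $\Gamma$-convergence), and then an upgrade to uniformity (via Dini's theorem), made possible by the monotonicity already recorded in \th\ref{lemma.increasing.lengths}. First I would fix a pair of endpoints $[t,x],[s,y]$ and recall that, by definition, $\dis_\epsilon([t,x],[s,y]) = \inf_{\gamma\in\bar{\mathcal{A}}}\ell_\epsilon(\gamma)$ and $\dis([t,x],[s,y]) = \inf_{\gamma\in\bar{\mathcal{A}}}\ell(\gamma)$, where $\bar{\mathcal{A}} = \bar{\mathcal{A}}([t,x],[s,y])$. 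Since $\ell_\epsilon$ $\Gamma$-converges to $\ell$ by \th\ref{prop.Gammaconvergence.lengths} and the family $\{\ell_\epsilon\}_{0<\epsilon\le 1}$ is equicoercive by \th\ref{prop.equicoercivity.lengths} (so that minimizing sequences are trapped in compact sublevel sets and the infima are attained), the Fundamental Theorem of $\Gamma$-convergence applies endpoint-by-endpoint and yields
\[
\lim_{\epsilon\to 0}\inf_{\bar{\mathcal{A}}}\ell_\epsilon = \min_{\bar{\mathcal{A}}}\ell,
\]
that is, $\dis_\epsilon([t,x],[s,y])\to \dis([t,x],[s,y])$ for every fixed pair of endpoints. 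This establishes pointwise convergence of the distances.

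To pass to uniform convergence I would exploit monotonicity together with compactness of the underlying space. By \th\ref{lemma.increasing.lengths}, for $0<\epsilon_2<\epsilon_1$ we have $\ell_{\epsilon_1}(\gamma)\le\ell_{\epsilon_2}(\gamma)$ for every admissible curve $\gamma$; taking infima over $\bar{\mathcal{A}}$ gives $\dis_{\epsilon_1}\le\dis_{\epsilon_2}$, and combining this with $\ell_\epsilon\le\ell$ (again \th\ref{lemma.increasing.lengths}) shows that $\dis_\epsilon$ increases to $\dis$ as $\epsilon\searrow 0$, i.e. $\dis_\epsilon\nearrow\dis$. Moreover, as observed just before the statement, $f\le 2f_\epsilon$ and $f_\epsilon\le f$ make all the $\dis_\epsilon$ and $\dis$ bi-Lipschitz equivalent, so they induce one and the same topology on the compact space $[0,2/A]\times_f X$; consequently each $\dis_\epsilon$, as well as the limit $\dis$, is a continuous function on the compact product space $\big([0,2/A]\times_f X\big)\times\big([0,2/A]\times_f X\big)$.

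We are then exactly in the situation of Dini's theorem: a monotone family of continuous functions converging pointwise to a continuous limit on a compact space converges uniformly. Applying it to the monotone convergence $\dis_\epsilon\nearrow\dis$ along any sequence $\epsilon_n\searrow 0$ gives uniform convergence, which is the claim. The main obstacle I anticipate is purely a matter of bookkeeping rather than a new idea: one must be careful that the Fundamental Theorem of $\Gamma$-convergence genuinely delivers convergence of the \emph{infima} for each fixed pair of endpoints (this is where equicoercivity, and hence attainment of minima in compact sublevel sets, is essential), and that the limit $\dis$ is continuous with respect to the common topology (guaranteed by the bi-Lipschitz equivalence). Once pointwise convergence is in hand, the upgrade to uniformity is formal, so the crux of the argument is really the combination of $\Gamma$-convergence with the monotonicity that forces the Dini hypothesis to hold.
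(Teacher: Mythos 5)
Your proposal is correct and follows essentially the same route as the paper: pointwise convergence of the infima via the Fundamental Theorem of $\Gamma$-convergence (using \th\ref{prop.Gammaconvergence.lengths} and the equicoercivity of \th\ref{prop.equicoercivity.lengths}), followed by an upgrade to uniform convergence via Dini's theorem using the monotonicity $\dis_\epsilon\nearrow\dis$ and continuity on the compact space. No substantive differences from the paper's argument.
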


\begin{proof}
Let $[t,x],[s,y] \in [0,1]\times X/\sim$. Then from \th\ref{prop.Gammaconvergence.lengths} we have that the functionals $\ell_\epsilon\colon \bar{\mathcal{A}}\rightarrow [0,\infty]$ $\Gamma$-converge to the functional $\ell\colon\bar{\mathcal{A}}\rightarrow [0,\infty]$. Now \th\ref{prop.equicoercivity.lengths} tells us that for $0<\epsilon \leq 1$ we have equicoercivity, so then
we can apply the Fundamental Theorem of $\Gamma$-convergence (see \cite[Theorem 2.10]{Braides2006}) and obtain that:
\[
\inf_{\bar{\mathcal{A}}}\ell = \lim_{\epsilon \rightarrow 0}\inf_{\bar{\mathcal{A}}}\ell_\epsilon,
\]
or equivalently that $\dis_{\epsilon}([t,x],[s,y])\rightarrow \dis([t,x],[s,y])$ as $\epsilon \rightarrow 0$.  
Since the points in $[0,1]\times X/\sim$ were chosen arbitrarily we have pointwise convergence of the distance functions.
Now, it is easy to convince oneself that the following properties hold:
\begin{itemize}
\item $\dis_\epsilon \leq \dis$ for all $\epsilon >0$.
\item $\dis_{\epsilon_2} \leq \dis_{\epsilon_1}$ for all $0<\epsilon_1<\epsilon_2$.
\end{itemize}
To conclude just observe that $\dis_\epsilon$, $\dis$ are continuous functions on a compact space so then by Dini's Theorem we have that the convergence must be uniform.
\end{proof}

We are now able to show the measured Gromov-Hausdorff convergence of the warped product spaces. 

\begin{prop}\th\label{prop.mGHconvergencewarped}
The warped products $([0,2/A]\times_{f_{\epsilon}}X,\dis_{\epsilon})$ equipped with the reference measures $\m_\epsilon := f^N_\epsilon dt\otimes \m_X$ for $N>1$ converge in the measured Gromov-Hausdorff topology to the warped product  $([0,2/A]\times_{f}X,\dis)$ equipped with the measure $\m:= f^N dt\otimes d\m$.  
\end{prop}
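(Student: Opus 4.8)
The plan is to take advantage of the fact that every space in the sequence, as well as the limit, is carried by the \emph{same} underlying set $Y:=[0,2/A]\times X/\!\sim$; only the distance and the reference measure change with $\epsilon$. Consequently the identity map $\iota_\epsilon:=\Id_Y$ is the natural candidate for a measured Gromov--Hausdorff approximation, and the task reduces to assembling the two convergences already proved. For the metric part I would set $\eta(\epsilon):=\sup_{p,q\in Y}|\dis_\epsilon(p,q)-\dis(p,q)|$, which tends to $0$ by the uniform convergence $\dis_\epsilon\to\dis$ established in the previous proposition. Since $\iota_\epsilon$ is surjective and distorts distances by at most $\eta(\epsilon)$, it is an $\eta(\epsilon)$--Gromov--Hausdorff approximation, giving $(Y,\dis_\epsilon)\to(Y,\dis)$ in the Gromov--Hausdorff sense.

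For the measure part, note that $(\iota_\epsilon)_{\#}\m_\epsilon=\m_\epsilon$ and that $\iota_\epsilon$ is trivially Borel measurable, so it remains only to check $\m_\epsilon\rightharpoonup\m$ in the weak-$*$ topology. Because $Y$ is compact every continuous test function automatically has bounded support, and since $\dis_\epsilon$ and $\dis$ are topologically equivalent (as remarked before the preceding proposition), the notion of continuity is unambiguous. I would then write both measures against the fixed finite base measure $\Le^1\llcorner[0,2/A]\otimes\m_X$ and estimate, for $\phi\in C(Y)$,
\[
\Big|\int_Y\phi\,d\m_\epsilon-\int_Y\phi\,d\m\Big|\leq \|\phi\|_\infty\,\|f_\epsilon^N-f^N\|_\infty\,\tfrac{2}{A}\,\m_X(X).
\]
The factor $\m_X(X)$ is finite by compactness of $X$, and $\|f_\epsilon^N-f^N\|_\infty\to 0$ because $f_\epsilon\to f$ uniformly while both families are uniformly bounded on $[0,2/A]$, so that $t\mapsto t^N$ is Lipschitz on the relevant range; hence the right-hand side tends to $0$.

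The two steps together give the measured Gromov--Hausdorff convergence. I do not expect a serious obstacle here: the genuinely analytic content—the $\Gamma$-convergence argument forcing $\dis_\epsilon\to\dis$ and the uniform control of the warping functions—has already been carried out. The only points requiring care are bookkeeping ones, namely verifying that continuity with respect to $\dis$ and to $\dis_\epsilon$ coincide so that the weak-$*$ limit is taken against the correct class of functions, and confirming that promoting uniform density convergence to weak-$*$ convergence of the measures uses nothing beyond the finiteness of $\Le^1\otimes\m_X$ on the compact domain $[0,2/A]\times X$.
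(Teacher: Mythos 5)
Your proposal is correct and follows essentially the same route as the paper: the identity map serves as the Gromov--Hausdorff approximation via the uniform convergence $\dis_\epsilon\to\dis$, and weak-$*$ convergence of the measures follows from uniform convergence of the densities $f_\epsilon^N\to f^N$ against the fixed finite base measure on the compact domain. The only cosmetic difference is that the paper inserts a normalizing constant $C_\epsilon\to 1$ to equalize total masses before estimating, whereas your direct bound $\|\phi\|_\infty\|f_\epsilon^N-f^N\|_\infty\cdot\tfrac{2}{A}\,\m_X(X)$ dispenses with this harmlessly.
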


\begin{proof}
We show that the identity map $\mathrm{Id} \colon [0,2/A]\times_{f_{\epsilon}} X\rightarrow [0,2/A]\times_{f}X$ is a Gromov-Hausdorff-approximation. Define
\[
C_\epsilon = \dfrac{\int_X\!\int_0^{2/A} f^{N}\, \mathrm{d}t\,\mathrm{d}\m}{\int_X\!\int_0^{2/A} f^{N}_\epsilon \mathrm{d}t\,\mathrm{d}\m}\geq 1.
\]
By the Dominated Convergence Theorem it is clear that as $\epsilon \rightarrow 0$, then $C_\epsilon \rightarrow 1$. With these constants we have that the measures $C_\epsilon f^{N}_\epsilon dt\otimes d\mathfrak{m}$ and $f^{N}dt\otimes d\mathfrak{m}$ have the same mass. 

Let $\delta >0$, then for all $\epsilon$ sufficiently small we have that 
\[
\left|C_\epsilon-1\right|<\delta,\quad \left|f^N(t)-f^{N}_\epsilon (t)\right|<\delta,\quad \mbox{and}\: \left|\dis([t,x],[s,y])-\dis_\epsilon([t,x],[s,y])\right|<\delta
\]
hold for all $[t,x],[s,y]\in [0,1]\times X/\sim$. Observe that this last condition tells us that $\mathrm{Id}$ is a $\delta$-Gromov-Hausdorff approximation for sufficiently small $\epsilon$.

We now show the weak convergence of measures. Take $\varphi \in C_{b}([0,2/A]\times_{f}X)$ and note that, since over $[0,2/A]$ we have $0\leq f_\epsilon(t)\leq 1$ then
\begin{linenomath}
\begin{align*}
\left|\int (f^N(t)-C_{\epsilon}f^N_{\epsilon}(t))\varphi(t,x)\, \mathrm{d}t\,\mathrm{d}\mathfrak{m}\right|\leq&\int|f^N(t)-C_{\epsilon}f^N_\epsilon(t)||\varphi|(t,x)\, \mathrm{d}t\,\mathrm{d}\mathfrak{m}\\
\leq& \int |f^N(t)-f^{N}_{\epsilon}(t)||\varphi|(t,x) \, \mathrm{d}t\,\mathrm{d}\mathfrak{m}\\
&+\int |f_\epsilon^N (t)-C_{\epsilon}f_\epsilon^N (t)||\varphi|\, \mathrm{d}t\,\mathrm{d}\mathfrak{m}\\
\leq& \int |f^N(t)-f^{N}_{\epsilon}(t)||\varphi|(t,x) \, \mathrm{d}t\,\mathrm{d}\mathfrak{m}\\
&+\int |f_\epsilon^N (t)||1-C_{\epsilon}||\varphi|\, \mathrm{d}t\,\mathrm{d}\mathfrak{m}\\
\leq& \int |f^N(t)-f^{N}_{\epsilon}(t)||\varphi|(t,x) \, \mathrm{d}t\,\mathrm{d}\mathfrak{m}\\
&+\int |C_{\epsilon}-1||\varphi|\, \mathrm{d}t\,\mathrm{d}\mathfrak{m}\\
\leq& 2\delta\int|\varphi|(t,x)\, \mathrm{d}t\,\mathrm{d}\mathfrak{m}.
\end{align*}
\end{linenomath}
So we also have the convergence of the measures. Hence we have the sought measured Gromov-Hausdorff convergence.
\end{proof}

We now prove that when we require $X$ to have small diameter, then near the boundary the metric on $[0,2/A]\times_{f}X$ is the product metric.

\begin{prop}\th\label{prop.near bdry metric is product in warped product}
Let $(X,\dis_X)$ be a compact geodesic metric space with $\diam(X) < 1/A$. Then there exists $\delta >0$ such that the metric of the warped product $[0,2/A]\times_{f}X$ restricted to $[2/A-\delta,2/A]\times_{f}X$  is the product metric.    
\end{prop}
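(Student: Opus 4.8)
The plan is to exploit the fact that the warping function $f = f_A$ is \emph{constant} equal to $1$ on the interval $[1/A, 2/A]$, so that on the region $\{t \geq 1/A\}$ the warped length $L_f(\gamma) = \int_0^1 \sqrt{|\dot{\alpha}_t|^2 + (f\circ\alpha)^2(t)|\dot{\beta}_t|^2}\,dt$ reduces to the $\ell^2$-product length $\int_0^1 \sqrt{|\dot{\alpha}_t|^2 + |\dot{\beta}_t|^2}\,dt$. Consequently, two points $(t_0,x_0),(s_0,y_0)$ whose heights $t_0,s_0$ both lie in $[1/A,2/A]$ are joined by the ``straight'' product path (linear in the $[0,2/A]$-factor, geodesic in $X$), which never leaves $\{t\geq 1/A\}$ and has $L_f$-length exactly $\sqrt{|t_0-s_0|^2 + \dis_X(x_0,y_0)^2}$. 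This immediately gives the upper bound $\dis_f\big((t_0,x_0),(s_0,y_0)\big) \leq \sqrt{|t_0-s_0|^2 + \dis_X(x_0,y_0)^2}$ as soon as $\delta < 1/A$ (so that $t_0,s_0 \geq 2/A-\delta > 1/A$).

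The content of the statement is the matching lower bound, and this is where $\diam(X) < 1/A$ and the choice of $\delta$ enter. First I would fix $\delta$ with $0 < \delta < 1/A$ small enough that
\[
\tfrac{2}{A} - 2\delta > \sqrt{\delta^2 + \diam(X)^2},
\]
which is possible since at $\delta = 0$ the inequality reads $2/A > \diam(X)$ (true because $\diam(X) < 1/A$) and both sides depend continuously on $\delta$. Now take any admissible $\gamma = (\alpha,\beta)$ joining two points with heights $t_0,s_0 \in [2/A-\delta, 2/A]$ and dichotomize on the minimal height $m := \min_t \alpha(t)$. If $m \geq 1/A$, then $f\circ\alpha \equiv 1$ along $\gamma$, so $L_f(\gamma)$ equals the product length and dominates the product distance. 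If instead $m < 1/A$, then splitting $\gamma$ at a minimizing time and discarding the $\beta$-contribution yields
\[
L_f(\gamma) \geq \int_0^1 |\dot{\alpha}_t|\,dt \geq (t_0 - m) + (s_0 - m) \geq 2\Big(\tfrac{2}{A} - \delta - m\Big) > 2\Big(\tfrac{1}{A} - \delta\Big) = \tfrac{2}{A} - 2\delta.
\]
By the choice of $\delta$ this exceeds $\sqrt{\delta^2 + \diam(X)^2} \geq \sqrt{|t_0-s_0|^2 + \dis_X(x_0,y_0)^2}$, so curves dipping below height $1/A$ are strictly longer than the product distance and hence do not contribute to the infimum defining $\dis_f$.

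Combining the two cases gives $\dis_f\big((t_0,x_0),(s_0,y_0)\big) \geq \sqrt{|t_0-s_0|^2 + \dis_X(x_0,y_0)^2}$, and with the upper bound we conclude that on $[2/A-\delta,2/A]\times_f X$ the distance $\dis_f$ is the product distance. The main obstacle is exactly this lower bound for descending curves: the geometric mechanism is that any curve reaching below $1/A$ must pay for a vertical round-trip of total height at least $2/A - 2\delta \approx 2/A$, which---since $\diam(X)$ is smaller than $1/A$---already overwhelms the product distance (of size at most $\approx 1/A$), no matter how cheaply the $X$-direction can be traversed near the vertex where $f$ is small. A minor point to handle carefully is the estimate $\int|\dot{\alpha}_t|\,dt \geq (t_0-m)+(s_0-m)$, justified by the absolute continuity of $\alpha$ and the triangle inequality on the two subintervals determined by the minimizing time; the argument is insensitive to whether the infimum is attained.
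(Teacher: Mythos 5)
Your proof is correct. It rests on the same quantitative mechanism as the paper's --- a competitor curve that descends into the region where $f<1$ must pay a vertical round-trip cost of at least $2/A-2\delta$, which beats the product distance because $\diam(X)<1/A$ --- but the architecture is genuinely different. The paper argues by contradiction on \emph{minimal geodesics}: it first shows that a minimizer between two points of a slice $\{t\}\times X$ near the top cannot leave $[1/A,2/A]\times X$ (splitting the geodesic at its first and last crossings of height $1/A$ and comparing $2/A-2\delta$ with $\diam(X)$), and then bootstraps to geodesics between arbitrary points of the band; the conclusion that the metric is the product one is then read off from the fact that geodesics stay where $f\equiv 1$. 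You instead bound the length of \emph{every} admissible competitor directly, via the dichotomy on the minimal height $m$ of $\alpha$, and pair this lower bound with the explicit product path as an upper bound. Your route is more self-contained: it does not invoke the existence of minimizing geodesics in the warped product, it produces the explicit value $\sqrt{|t_0-s_0|^2+\dis_X(x_0,y_0)^2}$ of the restricted distance, and it makes the admissible choice of $\delta$ (namely $2/A-2\delta>\sqrt{\delta^2+\diam(X)^2}$, which in particular forces $\delta<1/A$) completely explicit, whereas the paper extracts $\delta$ from a limiting contradiction. What the paper's version records in addition is that the slices $\{t\}\times X$ and the band itself are totally geodesic --- a fact reused later in the gluing construction --- but this also follows from your argument in one extra line, since the product path realizes the distance and never leaves the band. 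The only steps you leave implicit are routine: the Minkowski inequality showing that the $\ell^2$-product length of $(\alpha,\beta)$ dominates $\sqrt{|t_0-s_0|^2+\dis_X(x_0,y_0)^2}$ in the case $m\geq 1/A$, and the attainment of the minimum of the continuous function $\alpha$ on $[0,1]$.
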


\begin{proof}
First we prove that for $t$ sufficiently close to $2/A$ the geodesics joining points in the set $\lbrace t \rbrace\times X$ are always contained in $[1/A,2/A]\times X$. Since the warping function $f$ equals $1$ for all $t\in [1/A,2/A]$ this will yield that $\lbrace t \rbrace\times X$ is actually totally geodesic. 
 
Proceeding by contradiction we assume that for all $\delta>0$ there exists $t\in [2/A -\delta,2/A]$ such that there are points $[t,x],[t,y]\in \lbrace t \rbrace\times X$ such that a minimal geodesic $\gamma$ joining them is not contained in $[1/A,2/A]\times X$.  
First observe that this means that $d([t,x],[t,y])< \diam(X)<1/A$ with the diameter in terms of the distance in $X$. Define the points
\[
t_0:=\min\lbrace t\,|\, \gamma_{t}\in \lbrace 1/A\rbrace \times X \rbrace < t_1:=\max\lbrace t\,|\, \gamma_{t}\in \lbrace 1/A\rbrace \times X \rbrace.
\]
 It is clear that
\[
d([t,x],[t,y]) = d([t,x],\gamma_{t_0})+d(\gamma_{t_0},\gamma_{t_1})+d(\gamma_{t_1},[t,y]).
\]
Now, since $\gamma_{t_0}\in \lbrace1/A\rbrace\times X$ we have that 
\[
d([t,x],\gamma_{t_0})\geq d([t,x],\lbrace1/A\rbrace\times X)= d([t,x],[1/A,x])=|1/A-t|\geq 1/A-\delta,
\]
where the last inequality follows from recalling that $t\in [2/A-\delta,2/A]$. Analogously, we obtain the same lower bound for $d(\gamma_{t_1},[t,y])$. Thus for all $\delta>0$ we have that 
\[
1/A>d([t,x],[t,y]) \geq 2/A-2\delta+d(\gamma_{t_0},\gamma_{t_1}).
\]
This implies that $1/A> 2/A+d(\gamma_{t_0},\gamma_{t_1})$, which is a contradiction. Hence we must have that there exists some $\delta >0$ such that for all $t\in [2/A-\delta,2/A]$ the set $\lbrace t\rbrace\times X$ is totally geodesic. 
 
We will now prove that geodesics joining points in $[2/A-\delta,2/A]\times X$ must lie there. Again, we proceed by contradiction: take two points $[r,x],[s,y]$ with $r<s$, and a minimal geodesic $\gamma$ with $\gamma_0 =[r,x], \gamma_1 =[s,y]$ such that it is not contained in $[2/A-\delta,2/A]\times X$. Now consider $t_0 :=\max \lbrace t\,|\, \gamma_{t}\in \lbrace r\rbrace\times X  \rbrace$. 
Notice that then $\gamma([t_0,1])\subset [2/A-\delta,2/A]\times X$. The restriction of $\gamma$ to the interval $[0,t_0]$ can be reparametrized to obtain a geodesic between $[r,x]$ and $\gamma_{t_0}$. However, the previous argument implies then that $\gamma([0, t_0])\subset [2/A-\delta,2/A]\times X$, giving the contradiction.
 
Finally, as we have proved that geodesics between points in $[2/A-\delta,2/A]\times X$ must lie in there, it follows that the metric must be the product one.
\end{proof}

We are now tasked with looking at the structure of the Sobolev space $W^{1,2}$ of the warped product we have considered. We use the tools developed in \cite{GigliHan2018} where  Gigli and Han give conditions to ensure that the minimal weak upper gradient of a Sobolev function $g$ in the warped product $[0,\infty)\times_{f}X$ can be described in terms on the minimal weak upper gradients of the functions $g^{(t)}:=g(t,\cdot)$  and $g^{(x)}:= g(\cdot, x)$.

\begin{prop}\th\label{prop.infHilbertwarped}
Let $(X,\dis,\mathfrak{m})$ be an infinitessimally Hilbertian space, and consider $f\colon [0,\infty)\rightarrow [0,\infty)$ to be a warping function such that $\lbrace f= 0 \rbrace$ is discrete and $f$ decays at least linearly  near its zeroes (see equations 3.14, 3.15 in \cite{GigliHan2018}). Then the warped product $[0,\infty)\times_{f}X $ is infinitessimally Hilbertian.
\end{prop}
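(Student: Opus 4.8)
The plan is to reduce the parallelogram rule on the warped product to the corresponding rules on its two ``factors'', using the pointwise splitting of the minimal weak upper gradient established by Gigli and Han. By \th\ref{prop.paralellogramlaw} it suffices to check that
\[
2\big(|\nabla g|^2 + |\nabla h|^2\big) = |\nabla(g+h)|^2 + |\nabla(g-h)|^2
\]
holds $\m$-a.e. for every pair $g,h\in \Sob{2}([0,\infty)\times_{f}X)$.

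First I would invoke the main result of \cite{GigliHan2018}. Under the standing hypotheses that $\{f=0\}$ is discrete and $f$ decays at least linearly near its zeroes (conditions (3.14), (3.15) there), each $g\in \Sob{2}([0,\infty)\times_{f}X)$ admits, for $\m_X$-a.e.\ $x$, a slice $g^{(x)}:=g(\cdot,x)\in \Sob{2}([0,\infty))$ and, for a.e.\ $t$, a slice $g^{(t)}:=g(t,\cdot)\in \Sob{2}(X)$, and the minimal weak upper gradient splits additively as
\[
|\nabla g|^2(t,x) = \big|\nabla g^{(x)}\big|^2(t) + \frac{1}{f(t)^2}\,\big|\nabla g^{(t)}\big|^2(x)
\]
for $\m$-a.e.\ $(t,x)$, where $|\nabla g^{(x)}|$ is computed on the interval $[0,\infty)$ and $|\nabla g^{(t)}|$ on $(X,\dis,\m_X)$. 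Since $\{f=0\}$ is discrete, the set $\{f=0\}\times X$ is $\m$-null, so the right-hand side is well defined $\m$-a.e.

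Next I would treat the two summands separately, exploiting that slicing is linear: $(g\pm h)^{(x)}=g^{(x)}\pm h^{(x)}$ and $(g\pm h)^{(t)}=g^{(t)}\pm h^{(t)}$. For the first summand, $g^{(x)}$ is a Sobolev function on $[0,\infty)$, whose minimal weak upper gradient equals the absolute value of its a.e.\ derivative; since $[0,\infty)$ with Lebesgue measure is infinitesimally Hilbertian, the parallelogram identity for $|\nabla g^{(x)}|^2$ and $|\nabla h^{(x)}|^2$ holds a.e.\ for a.e.\ $x$. For the second summand, $X$ is infinitesimally Hilbertian by hypothesis, so by \th\ref{prop.paralellogramlaw} the identity holds $\m_X$-a.e.\ for the slices $g^{(t)},h^{(t)}$, for a.e.\ $t$; multiplying by the nonnegative scalar $f(t)^{-2}$ preserves the equality. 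Adding the two identities, the sum of two expressions each obeying the parallelogram rule again obeys it, so the displayed rule for $|\nabla g|^2$ holds $\m$-a.e. Another application of \th\ref{prop.paralellogramlaw} then yields that $[0,\infty)\times_f X$ is infinitesimally Hilbertian.

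The main obstacle is not the algebra but the measure-theoretic bookkeeping: one must ensure that the ``a.e.\ $x$'' and ``a.e.\ $t$'' slice statements combine, via Fubini on the product $[0,\infty)\times X$, into a genuine $\m$-a.e.\ statement, and that the splitting formula of \cite{GigliHan2018} genuinely applies to our $f$. This is precisely where the decay conditions on $f$ and the discreteness of its zero set are used, since they guarantee both the validity of the Gigli--Han gradient decomposition and that the exceptional set $\{f=0\}\times X$ is negligible.
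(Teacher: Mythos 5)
Your proposal is correct and follows essentially the same route as the paper: both invoke the Gigli--Han decomposition $|\nabla g|^2(t,x)=f^{-2}(t)|\nabla g^{(t)}|^2(x)+|\nabla g^{(x)}|^2(t)$ of the minimal weak upper gradient and then verify the pointwise parallelogram law of \th\ref{prop.paralellogramlaw} summand by summand, using linearity of slicing and infinitesimal Hilbertianity of $[0,\infty)$ and of $X$. Your additional remarks on the Fubini-type bookkeeping and the negligibility of $\{f=0\}\times X$ are sensible but do not change the argument.
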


\begin{proof}
Take $h,g \in W^{1,2}([0,1]\times_{f}X)$. Then by  \cite[Propositions 3.10 and 3.12]{GigliHan2018},  we have that the minimal weak upper gradient of $h$ is given by:
\[
|\nabla h|^2(t,x) = f^{-2}(t)|\nabla h^{(t)}|^2(x)+|\nabla h^{(x)}|^2(t),\quad f^Ndt\otimes \mathfrak{m}-\text{a.e.}
\]
And similarly for $g$.  \cite[Theorem 4.3.3]{GigliPasqualetto2020} states that $W^{1,2}([0,1]\times_{f}X)$ being a Hilbert space is equivalent to a pointwise parallelogram law. So we just need to check that this parallelogram law is satisfied from the hypothesis  that both $[0,1]$ and $X$ satisfy such a parallelogram law:
\begin{linenomath}
 \begin{align*}
     2\left(|\nabla h|^2(t,x)+|\nabla g|^2(t,x)\right) =& 2\big(f^{-2}(t)|\nabla h^{(t)}|^2(x)+|\nabla h^{(x)}|^2(t)\\
     &+f^{-2}(t)|\nabla g^{(t)}|^2(x)+|\nabla g^{(x)}|^2(t)\big)\\
     =& f^{-2}(t)2\left(|\nabla h^{(t)}|^2(x)+|\nabla g^{(t)}|^2(x)\right)\\
     &+2\left(|\nabla h^{(x)}|^2(t)+|\nabla g^{(x)}| ^2(t)\right)\\
     =& f^{-2}(t)\left(|\nabla (h+g)^{(t)}|^2(x)+|\nabla (h-g)^{(t)}|^2(x)\right)\\
     &+\left(|\nabla (h+g)^{(x)}|^2(t)+|\nabla (h-g)^{(x)}|^2(t)\right)\\
     =& |\nabla (h+g)|^2(t,x)+|\nabla (h-g)|^2(t,x).
 \end{align*}   
\end{linenomath}
\end{proof}


We now turn our attention to the problem of finding a curvature-dimension bound for the warped products $([0,2/A]\times_{f}X,\dis)$, when we assume that $X$ satisfies a curvature dimension bound. 

Recall that given $K\in \R$ and $B$ a geodesic space, we say that a function $f\colon B\to [0,\infty)$ is \emph{$\mathcal{F}K$-concave} if for every geodesic $\gamma$ we have:
\begin{linenomath}
\begin{align}
(f\circ \gamma)''(t)+K(f\circ \gamma)(t)\leq 0.
\end{align}
\end{linenomath}

With this definition in hand we recall the following result of Ketterer (building upon the work of Alexander and Bishop \cite{AlexanderBishop2016}), regarding curvature bounds and $\mathcal{F}K$-concavity. 

\begin{theorem}[Theorem A in \cite{KettererPhD}]\th\label{T: Ketterer warped products}
Let $B$ be a complete $d$-dimensional (local) Alexandrov space of $\mathrm{curv}\geq K$, such that $B\setminus \partial B$ is a smooth Riemannian manifold. Let $f\colon B\to [0,\infty)$ be $\mathcal{F}K$-concave and smooth over $B\setminus \partial B$. Assume that $\partial B\subseteq f^{-1}(0)$. Let $(F,\m_F)$ be a weighted complete Finsler manifold. Let $N\geq 1$, and $K_F\in \R$. If $N=1$ and $K_F>0$, we assume that $\mathrm{diam}(F)\leq \pi/\sqrt{K_F}$. In  any case, $F$ satisfies the $\mathsf{CD}^\ast((N-1)K_F,N)$-condition. Moreover, we assume that 
\begin{enumerate}
\item if $\partial B=\varnothing$, suppose $K_F\geq Kf^2$;
\item if $\partial B\neq \varnothing$, suppose $K_F\geq 0$ and $|\nabla f|_p\leq \sqrt{K_F}$ for all $p\in \partial B$.
\end{enumerate}
Then the $N$-warped product $B\times_f^N F$ satisfies the $\CD^\ast((N+d-1)K,N+d)$-condition.
\end{theorem}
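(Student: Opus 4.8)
The plan is to verify the $\CD^\ast((N+d-1)K,N+d)$ condition for $B\times_f^N F$ directly, reducing everything to an analysis along geodesics. Since the reduced curvature--dimension condition is equivalent to the convexity inequality for the Rényi entropy along $\mathbb{W}_2$-geodesics governed by the distortion coefficients $\sigma^{(t)}_{(N+d-1)K,\,N+d}$, it suffices to control the Jacobian of the optimal transport interpolation. The first step is therefore to understand the structure of minimizing geodesics in the warped product. Here I would import the description of Alexander--Bishop: a unit-speed geodesic $\gamma=(\eta,\xi)$ of $B\times_f F$ has base component $\eta$ a (reparametrized) geodesic of $B$ and fiber component $\xi$ a geodesic of $F$ whose speed is modulated by $f\circ\eta$, subject to a Clairaut-type relation in which $(f\circ\eta)^2|\dot\xi|$ is constant. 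The $\mathcal{F}K$-concavity of $f$, i.e. $(f\circ\eta)''+K(f\circ\eta)\le 0$, is exactly the condition guaranteeing that such curves minimize and that the singular locus $f^{-1}(0)\supseteq\partial B$, where fibers collapse, is not crossed transversally.

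The second and main step is to show that the Jacobian of the interpolation factors as a base contribution times a fiber contribution, each obeying the correct one-dimensional comparison. On the smooth interior $(B\setminus\partial B)\times F$, which is a smooth weighted Finsler manifold with measure $f^N\,\mathrm{vol}_B\otimes\m_F$, this is most transparent at the level of the $(N+d)$-Bakry--Émery--Ricci tensor: the warped-product formulas give, for a horizontal direction $u$,
\begin{equation*}
\Ric_{N+d}(u,u)=\Ric_B(u,u)-N\,\frac{\mathrm{Hess}\,f(u,u)}{f},
\end{equation*}
while for a vertical direction $v$ the analogous formula expresses $\Ric_{N+d}(v,v)$ through the weighted $N$-Ricci curvature of $F$ together with correction terms in $|\nabla f|^2$ and $f$. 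Feeding in the Alexandrov bound $\Ric_B\ge(d-1)K$ on the smooth part, the Hessian inequality $\mathrm{Hess}\,f\le -Kf$ furnished by $\mathcal{F}K$-concavity, the fiber hypothesis $\Ric^N_F\ge(N-1)K_F$, and the compatibility $K_F\ge Kf^2$, the horizontal term becomes $\ge\big((d-1)K+NK\big)|u|^2=(N+d-1)K\,|u|^2$, and the compatibility condition is precisely what balances the fiber term to the same lower bound. This pointwise computation is what I expect to carry the theorem; for the stated generality (merely Alexandrov $B$, merely $\CD^\ast$ fiber $F$) the passage from this tensor bound to the synthetic statement is done by verifying the distortion inequality along each transport geodesic via the same factorization.

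The third step is to handle the boundary, where the argument is genuinely delicate. Over $\partial B$ the warping function vanishes, the fibers degenerate, and the smooth computation breaks down; this is where the hypotheses $K_F\ge0$ and $|\nabla f|_p\le\sqrt{K_F}$ for $p\in\partial B$ enter. I would use the linear decay of $f$ at its zeros (the conditions of \th\ref{prop.infHilbertwarped}) together with the gradient bound to show that transport geodesics approaching the singular set do so in a controlled, cone-like manner, so that the entropy-convexity inequality extends across $\{f=0\}$ by continuity. Concretely, one approximates $f$ by smoothings bounded away from $0$ on compacta, verifies $\CD^\ast$ on the resulting non-degenerate warped products by the interior computation, and passes to the limit using the stability of $\CD^\ast$ under measured Gromov--Hausdorff convergence, in the spirit of \th\ref{prop.mGHconvergencewarped}.

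The main obstacle will be exactly this boundary/degeneracy analysis: ensuring that the curvature--dimension inequality survives the collapse of the fibers over $\partial B$ and that no branching of transport geodesics is introduced there. The interior factorization is essentially bookkeeping of warped-product Jacobi fields once the geodesic structure is fixed; by contrast, controlling transport across $f^{-1}(0)$ and verifying that the boundary condition $|\nabla f|\le\sqrt{K_F}$ supplies exactly the convexity needed at the cone tip is the crux of the proof.
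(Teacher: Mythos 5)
The paper does not prove this statement at all: it is quoted as Theorem~A of Ketterer's thesis \cite{KettererPhD} and used as a black box, so there is no internal argument to compare yours against. Measured against the known proof in the literature (Ketterer, building on Alexander--Bishop), the architecture of your sketch --- geodesic structure of the warped product via the Clairaut relation, a curvature computation on the smooth interior, and an approximation/stability argument across the degenerate locus $f^{-1}(0)$ --- is the right shape, but the step you present as ``essentially bookkeeping'' contains the real gap.

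Concretely, the vertical direction of your Bakry--\'Emery computation does not close under the stated hypotheses. For a unit vertical vector $v$ the relevant formula is
\[
\Ric_{N+d}(v,v)\;=\;\frac{1}{f^{2}}\,\Ric^{N}_{F}(\hat v,\hat v)\;-\;\frac{\Delta_{B}f}{f}\;-\;(N-1)\frac{|\nabla f|^{2}}{f^{2}},
\]
so after inserting $\Delta_{B}f\le -dKf$ and $\Ric^{N}_{F}\ge (N-1)K_{F}$, the bound $\Ric_{N+d}(v,v)\ge (N+d-1)K$ is equivalent to the \emph{pointwise} inequality $K_{F}\ge Kf^{2}+|\nabla f|^{2}$ --- not to $K_{F}\ge Kf^{2}$, which is all hypothesis (1) gives, and hypothesis (2) only controls $|\nabla f|$ on $\partial B$. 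The missing ingredient is a monotonicity lemma for $\mathcal{F}K$-concave functions: along a unit-speed geodesic $\gamma$ of $B$ the quantity $\bigl((f\circ\gamma)'\bigr)^{2}+K(f\circ\gamma)^{2}$ has derivative $2(f\circ\gamma)'\bigl((f\circ\gamma)''+K(f\circ\gamma)\bigr)$, hence is non-increasing where $f\circ\gamma$ increases, so $Kf^{2}+|\nabla f|^{2}$ is maximized where $f$ is smallest --- on $f^{-1}(0)\supseteq\partial B$ (which is exactly what calibrates the boundary hypothesis $|\nabla f|_{p}\le\sqrt{K_{F}}$) or at critical points of $f$ where the gradient term vanishes. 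Without this lemma your pointwise inequality simply fails wherever $\nabla f\neq 0$ and $K_{F}$ is close to $Kf^{2}$. The second genuine difficulty, which you flag but do not resolve, is that the passage from the interior tensor bound to the synthetic $\CD^{\ast}$ condition across the collapsed fibers over $f^{-1}(0)$ is the technical heart of Ketterer's argument (he verifies displacement convexity directly along transport geodesics rather than smoothing and invoking mGH stability); a smoothing of $f$ alone does not obviously preserve $\mathcal{F}K$-concavity or the boundary gradient constraint.
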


As we apply this result in our context, we first show the $\mathcal{F}K$-concavity of the approximating functions $f_{A,\epsilon}$ for $K=0$. 

\begin{lemma}\th\label{L: F_A varepsilon are FK concave}
The functions $f_{A,\epsilon}$ are $\mathcal{F}K$-concave for $K=0$.
\end{lemma}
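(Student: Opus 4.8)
The plan is to observe that for $K=0$ the $\mathcal{F}K$-concavity condition collapses to ordinary concavity of $f_{A,\epsilon}$ along geodesics, and then to verify this by a direct computation of the second derivative.

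First I would record that the base space relevant to these warped products is the interval $[0,\infty)$, a one-dimensional space whose geodesics are exactly the constant-speed affine maps $\gamma(t)=\gamma_0+ct$. For such a curve the chain rule gives $(f_{A,\epsilon}\circ\gamma)''(t)=c^2\,f_{A,\epsilon}''(\gamma(t))$, so the defining inequality $(f_{A,\epsilon}\circ\gamma)''(t)+K(f_{A,\epsilon}\circ\gamma)(t)\leq 0$ with $K=0$ holds for every geodesic $\gamma$ precisely when $f_{A,\epsilon}''(x)\leq 0$ for all $x$. Hence it suffices to show that $f_{A,\epsilon}$ is concave on $[0,\infty)$.

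Next I would carry out the elementary differentiation. Writing the radicand in terms of $Ax-1$, the first derivative is
\[
f_{A,\epsilon}'(x)=\frac{A}{2}\left(1-\frac{Ax-1}{\sqrt{(Ax-1)^2+\epsilon^2}}\right),
\]
and differentiating once more, using $\frac{d}{dx}\frac{Ax-1}{\sqrt{(Ax-1)^2+\epsilon^2}}=\frac{A\epsilon^2}{((Ax-1)^2+\epsilon^2)^{3/2}}$, yields
\[
f_{A,\epsilon}''(x)=-\frac{A^2\epsilon^2}{2\big((Ax-1)^2+\epsilon^2\big)^{3/2}}\leq 0.
\]
Since this expression is manifestly non-positive (indeed strictly negative for $\epsilon>0$ and $A>0$), the function $f_{A,\epsilon}$ is concave, and by the previous paragraph it is $\mathcal{F}0$-concave.

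I do not expect any substantial obstacle here: the only conceptual point is recognizing that over a one-dimensional base the $\mathcal{F}0$-concavity condition is nothing more than concavity, after which the claim is settled by the sign of the displayed second derivative, which is transparent once the $(\,\cdot\,)^{3/2}$ denominator has been isolated.
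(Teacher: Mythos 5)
Your proposal is correct and follows essentially the same route as the paper: a direct computation showing $f_{A,\epsilon}''\leq 0$, which for $K=0$ is exactly the $\mathcal{F}K$-concavity condition (the paper leaves the reduction to ordinary concavity implicit, and your second derivative $-\tfrac{A^2\epsilon^2}{2((Ax-1)^2+\epsilon^2)^{3/2}}$ is in fact the correct constant, whereas the paper's final displayed line has a harmless typo in the prefactor). Nothing further is needed.
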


\begin{proof}
We compute the second derivatives of $f_{A,\epsilon}$:
\begin{linenomath}
\begin{align*}
f_{A,\epsilon}''(x) =& \frac{d}{dx}\left(\frac{1}{2}\left(A-A(Ax-1)((Ax-1)^2+\epsilon^2)^{-\frac{1}{2}}\right)\right)\\
=& -\frac{A}{2}\Big(A((Ax-1)^2+\epsilon^2)^{-\frac{1}{2}}-(Ax-1)\frac{1}{2}((Ax-1)^2+\epsilon^2)^{-\frac{3}{2}}2(Ax-1)A\Big)\\
=& -\frac{A^2}{2}\Big(((Ax-1)^2+\epsilon^2)^{-\frac{1}{2}}-(Ax-1)^2((Ax-1)^2+\epsilon^2)^{-\frac{3}{2}}\Big)\\
=& -\frac{A^2}{2}\Big(((Ax-1)^2+\epsilon^2)^{-\frac{3}{2}}((Ax-1)^2+\epsilon^2)-(Ax-1)^2((Ax-1)^2+\epsilon^2)^{-\frac{3}{2}}\Big)\\
=& -\frac{A((Ax-1)^2+\epsilon^2)^{-\frac{3}{2}}\epsilon^2}{2}\\
=& -\frac{A\epsilon^2}{\sqrt{((Ax-1)^2+\epsilon^2)^3}}\leq 0.
\end{align*}
\end{linenomath}
\end{proof}

Now we consider $(F,\dis_F,\m_F)$ to be a weighted Finsler (Riemannian) manifold satisfying the $\mathsf{CD}^\ast((N+1)K_F,N)$-condition (respectively, the $\RCD((N-1)K_F,N)$-condition). We also consider $B=[0,\infty)$.

\begin{theorem}\th\label{T: cone over f A sigma are CD}
Let $(F,\m_F)$ be a weighted Finsler manifold which satisfies the $\CD((N-1)K_F,N)$-condition for $K_F> 0$ such that, if $N=1$ then $\diam(F)\leq \pi/\sqrt{K_F}$, and $\sqrt{K_F}\geq (A/2)(1+(1/\sqrt{1+\epsilon^2}))$. Then the $N$-warped product $[0,\infty)\times^N_{f_{A,\epsilon}} F$ is an $\CD^\ast(NK,N+1)$-space for $K=0$. Moreover, if $(F,\m_F)$ is infinitesimally Hilbertian, then $[0,\infty)\times^N_{f_{A,\epsilon}} F$ is an $\RCD(NK,N+1)$-space for $K=0$.
\end{theorem}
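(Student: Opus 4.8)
The plan is to read off the $\CD^\ast$-bound from Ketterer's warped-product result \th\ref{T: Ketterer warped products}, applied with base $B=[0,\infty)$, warping function $f=f_{A,\epsilon}$, fiber $F$, and base curvature $K=0$, and then to upgrade the conclusion to the $\RCD$-condition by supplying infinitesimal Hilbertianity via \th\ref{prop.infHilbertwarped}.

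First I would verify the hypotheses of \th\ref{T: Ketterer warped products}. The base $B=[0,\infty)$ is a complete one-dimensional Alexandrov space of $\mathrm{curv}\geq 0$ whose interior $(0,\infty)$ is a smooth Riemannian manifold, with $\partial B=\{0\}$, so $d=1$. The function $f_{A,\epsilon}$ is smooth and $\mathcal{F}0$-concave by \th\ref{L: F_A varepsilon are FK concave}, and evaluating its defining formula gives $f_{A,\epsilon}(0)=\tfrac{1}{2}\big(1-\sqrt{1+\epsilon^2}-(1-\sqrt{1+\epsilon^2})\big)=0$, so $\partial B\subseteq f_{A,\epsilon}^{-1}(0)$. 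The fiber $F$ satisfies $\CD((N-1)K_F,N)$, hence $\CD^\ast((N-1)K_F,N)$, and the case $N=1$ carries the assumed diameter bound. Since $\partial B\neq\varnothing$, the boundary hypothesis requires $K_F\geq 0$, which holds as $K_F>0$, together with a gradient bound at $0$; differentiating,
\[
|\nabla f_{A,\epsilon}|_0=f_{A,\epsilon}'(0)=\frac{A}{2}\left(1+\frac{1}{\sqrt{1+\epsilon^2}}\right)\leq\sqrt{K_F},
\]
which is exactly the standing hypothesis $\sqrt{K_F}\geq (A/2)(1+1/\sqrt{1+\epsilon^2})$. Thus \th\ref{T: Ketterer warped products} produces the $\CD^\ast((N+d-1)K,N+d)=\CD^\ast(NK,N+1)$-condition, which for $K=0$ is $\CD^\ast(0,N+1)$, giving the first assertion.

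For the $\RCD$ statement I would additionally assume $F$ infinitesimally Hilbertian. Since $f_{A,\epsilon}'(x)=\tfrac{A}{2}\big(1-(Ax-1)/\sqrt{(Ax-1)^2+\epsilon^2}\big)>0$ for every $x$, the warping function is strictly increasing, so $\{f_{A,\epsilon}=0\}=\{0\}$ is discrete, and $f_{A,\epsilon}'(0)>0$ shows $f_{A,\epsilon}$ vanishes to first order and hence decays at least linearly at its single zero. Therefore \th\ref{prop.infHilbertwarped} applies and the warped product is infinitesimally Hilbertian. Combining this with the $\CD^\ast(0,N+1)$-bound and the equivalence recalled in Section~\ref{SEC:PRELIMINARIES}, it remains to note that the volume-growth condition is automatic—$K_F>0$ forces $F$ compact by Bonnet–Myers and $f_{A,\epsilon}$ is bounded, so balls have at most linear measure growth—and that the Sobolev-to-Lipschitz property holds; these together convert $\CD^\ast(0,N+1)$ into $\BE(0,N+1)$, yielding the $\RCD(0,N+1)$-structure.

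The step I expect to demand the most care is this last upgrade: while infinitesimal Hilbertianity and the curvature-dimension bound are delivered cleanly by \th\ref{prop.infHilbertwarped} and \th\ref{T: Ketterer warped products}, checking (or correctly citing, e.g. from the cone and warped-product theory of Ketterer and of Gigli–Han) that the Sobolev-to-Lipschitz property survives the degeneracy of the warping at the vertex $t=0$, where $f_{A,\epsilon}$ vanishes, is the genuinely delicate point that legitimizes invoking the $\BE$/$\CD^\ast$ equivalence.
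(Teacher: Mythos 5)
Your proposal is correct and follows essentially the same route as the paper: the $\CD^\ast(0,N+1)$-bound is obtained by verifying the hypotheses of Ketterer's warped-product theorem (\th\ref{T: Ketterer warped products}) using the $\mathcal{F}0$-concavity from \th\ref{L: F_A varepsilon are FK concave} and the boundary gradient bound $f_{A,\epsilon}'(0)=(A/2)(1+1/\sqrt{1+\epsilon^2})\leq\sqrt{K_F}$, and the upgrade to $\RCD$ comes from infinitesimal Hilbertianity via \th\ref{prop.infHilbertwarped}. Your closing remarks on the Sobolev-to-Lipschitz and volume-growth conditions are a careful addition that the paper leaves implicit, but they do not alter the argument.
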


\begin{proof}
The interval $[0,\infty)$ with the standard Euclidean metric is an Alexandrov space of $\curv\geq 0$. By  \th\ref{L: F_A varepsilon are FK concave} a we have that $f_{A,\epsilon}$ is $\mathcal{F}K$-concave for $K=0$. Moreover we have $|f_{A,\epsilon}'|\leq \sqrt{K_F}$. Thus the hypothesis of \th\ref{T: Ketterer warped products} are satisfied yielding the first claim. Moreover, when $(F,\m_F)$ is infinitesimally Hilbertian, we obtain the second conclusion from \th\ref{prop.infHilbertwarped}.
\end{proof}

With this result, we are able to show that spaces in \th\ref{MC: Homeomorphism rigidity}~\eqref{MC: homeomorphism rigidity ray} admit an $\RCD(0,N)$-structure on which $G$ acts by measure preserving isometrics of cohomogeneity one.

\begin{mtheorem}\th\label{MT: coho one ray with conditions is RCD 0,N}
Let $(X,\dis,\m)$ be an $\RCD(K,N)$-space, and let $G$ be a compact Lie group acting on $X$ by measure preserving isometries, such that the orbit space $X/G$ is homeomorphic to $[0,\infty)$. Let $G_0$ be the unique singular isotropy group, and $H$ the principal isotropy group. Under the assumption that $G_0/H$ is simply-connected when $\dim(G_0/H)\geq 2$,  there exists a new metric $\tilde{\dis}$ and measure $\tilde{\m}$ on $X$ making into an $\RCD(0,N')$-space, where $N'=\dim(G_0/H)+1+\dim(G)$. Moreover, $G$ acts on $(X,\tilde{\dis},\tilde{\m})$ by measure preserving isometries with cohomogeneity one.
\end{mtheorem}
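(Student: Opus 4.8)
The plan is to realise $X$ through an explicit associated-bundle construction, verify its curvature-dimension bound, and then transport the resulting metric measure structure back to $X$ via an equivariant homeomorphism. By \th\ref{T: Topological classification geodesic coho 1 spaces.}~\eqref{T: Topological classification geodesic coho 1 spaces (b).} (see also \th\ref{R: Slice is a topological cone} and \th\ref{R: not infinitesimal representation}), $X$ is equivariantly homeomorphic to the cone bundle $G\times_{G_0} C(G_0/H)$, where the slice is the topological cone over the homogeneous space $G_0/H$. It therefore suffices to produce an $\RCD(0,N')$ metric measure structure on a space equivariantly homeomorphic to this bundle, realising the group diagram $(G,H,G_0)$, and then pull it back.

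First I would build the fibre. Using the hypothesis that $G_0/H$ has finite fundamental group (it is simply connected when $\dim(G_0/H)\ge 2$), \cite{Berestovskii1995} furnishes a $G_0$-invariant Riemannian metric on $F:=G_0/H$ with $\Ric>0$; when $\dim(G_0/H)=1$ we take a round circle of small length, and when $\dim(G_0/H)=0$ the fibre is a point. After rescaling $F$ I can simultaneously arrange $\diam(F)<1/A$ and $\sqrt{K_F}\ge (A/2)(1+1/\sqrt{1+\epsilon^2})$ for a suitable $A>0$, so that \th\ref{T: cone over f A sigma are CD}, together with the limit $\epsilon\to 0$ provided by \th\ref{prop.mGHconvergencewarped} and \th\ref{prop.infHilbertwarped}, shows the cone-like warped product $C:=[0,\infty)\times^{\dim F}_{f_A}F$ is an $\RCD(0,\dim(G_0/H)+1)$-space. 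Since $G_0$ acts on $F$ by measure-preserving isometries and trivially on the radial factor, it acts on $C$ by measure-preserving isometries fixing the vertex, and the small-diameter condition with \th\ref{prop.near bdry metric is product in warped product} guarantees that $C$ is a genuine product $\{t\}\times F$ away from the vertex, which produces the (untwisted) cone-bundle topology.

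Next I would equip $G$ with a bi-invariant metric, so $(G,g_G,\mathrm{vol}_G)$ is $\RCD(0,\dim G)$, and form the product $P:=G\times C$, which is $\RCD(0,\dim G+\dim(G_0/H)+1)=\RCD(0,N')$ by \th\ref{prop.productsrescallings}. Let $G_0$ act on $P$ by $g_0\cdot(g,c)=(gg_0^{-1},g_0 c)$; this action is free, and since $g_G$ is bi-invariant it is by measure-preserving isometries. Define $\tilde X:=P/G_0=G\times_{G_0}C$ with the orbital metric and push-forward measure. By the quotient theorem for $\RCD$ spaces of \cite{GalazGarciaKellMondinoSosa2018}, $\tilde X$ is $\RCD(0,N')$. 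Left multiplication of $G$ on the first factor commutes with the $G_0$-action and preserves $g_G$ and $\mathrm{vol}_G$, so it descends to a $G$-action on $\tilde X$ by measure-preserving isometries. A direct computation of isotropy groups shows $\tilde X/G\cong C/G_0=[0,\infty)$, with singular isotropy $G_0$ at the vertex and principal isotropy conjugate to $H$; hence the action is of cohomogeneity one with group diagram $(G,H,G_0)$. As $\tilde X$ and $X$ have the same orbit space and the same orbit types, the rigidity statement following \th\ref{MC: Homeomorphism rigidity} gives an equivariant homeomorphism $\Phi\colon X\to\tilde X$; setting $\tilde\dis(x,y):=\dis_{\tilde X}(\Phi(x),\Phi(y))$ and $\tilde\m:=(\Phi^{-1})_\#\m_{\tilde X}$ then proves the claim.

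The main obstacle I expect is twofold, and both points concern the quotient step. First, one must check that the $\RCD$-quotient machinery of \cite{GalazGarciaKellMondinoSosa2018} applies to the free isometric $G_0$-action on the \emph{non-compact} product $P$ and yields precisely $\RCD(0,N')$: the parameters are inherited, but infinitesimal Hilbertianity of the quotient and the identification of the orbital measure with the push-forward of $\mathrm{vol}_G\otimes\m_C$ must be verified carefully. Second, the identification $\tilde X\cong X$ rests on the equivariant classification by orbit data, so one must confirm that the diagram $(G,H,G_0)$ attached to $\tilde X$ coincides with the one attached to $X$ and that the homeomorphism can be chosen $G$-equivariant; this is exactly where the low-dimensionality of the quotient and the slice description of Section~\ref{S: GEOMETRY-OF-THE-SLICE} are genuinely used.
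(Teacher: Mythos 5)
Your proposal is correct and follows essentially the same route as the paper's proof: equip $G_0/H$ with a $G_0$-invariant metric of positive Ricci curvature via \cite{Berestovskii1995}, form the cone-like warped product fibre, take the product with $G$ carrying a bi-invariant metric, quotient by the diagonal $G_0$-action using the quotient theorem of \cite{GalazGarciaKellMondinoSosa2018}, and identify the result with $X$ through the topological rigidity of \th\ref{MC: Homeomorphism rigidity}. The only cosmetic difference is that the paper works with a fixed smooth warping function $f_{A,\epsilon}$ (so \th\ref{T: cone over f A sigma are CD} applies directly on $[0,\infty)$), whereas you pass to the limit $f_A$, which would require extending the measured Gromov--Hausdorff convergence of \th\ref{prop.mGHconvergencewarped} from $[0,2/A]$ to the non-compact ray.
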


\begin{proof}
We consider on $G$ a bi-invariant Riemannian metric. By \th\ref{L: infinitesimal action cohomogeneity 1} and \th\ref{MT: Geometry of the slice}, we have that cone fibers which are homeomorphic to the space $F = G_{0}/H$. As proven in \cite{Berestovskii1995}, the restriction of the $G$-bi-invariant Riemannian metric to $G_0$ induces a $G_0$ Riemannian metric of positive Ricci curvature on $F$ when $\dim(F)\geq 2$. In the case when $N_0=1$, since $G_0$ and $H$ are compact ($G$ is compact and acts by isometries, thus the isotropy groups are compact), then $F$ is compact. Thus, there exists $K_F>0$ such that $\diam(F)\leq \pi/\sqrt{K_F}$, and $F$ with the metric induced by  the restriction of the $G$-biinvariant metric to $G_0$ is an $\RCD(0,1)=\RCD((N_0-1)K_F,N_0)$-space, where $N_0:=\dim(F)$.

In  case when $N_0\geq 2$, the space $(F,\dis_0,\m_0)$ with the induced metric and measure is an $\RCD(K_0,N_0)$-space with $K_0>0$, and $N_0=\dim(F)$. Thus there exists $K_F>0$ such that $K_0=(N_0-1)K_F$. 

We consider now $A>0$ and $\varepsilon>0$ appropriately such that $\sqrt{K_F}\geq (A/2)(1+(1+\sqrt{1+\epsilon^2}))$. Then by \th\ref{T: cone over f A sigma are CD} we have that the $N_0$-warped product $[0,\infty)\times_{f_{A,\epsilon}} F$ is an $\RCD(0,N_0+1)$-space. Thus the product metric measure space $G\times ([0,\infty)\times_{f_{A,\epsilon}} F)$ is an $\RCD(0,N_0+1+D)$-space, where $D=\dim(G)$, and $G_0$ acts by measure preserving isometries via the diagonal action $\Delta\colon G\times ([0,\infty)\times_{f_{A,\epsilon}} F)$ given by 
\[
\Delta(g_0,(g,[t,p]):=(gg_0^{-1},[t,g_0p]).
\]
Thus by \cite[Theorem 1.1]{GalazGarciaKellMondinoSosa2018} the quotient space $(G\times ([0,\infty)\times_{f_{A,\epsilon}} F))/\Delta$ is an $\RCD(0,N_0+1+D)$ space. But by \th\ref{MC: Homeomorphism rigidity} this space is homeomorphic to $X$.
\end{proof}

We now consider $N$-warped products of the form $[0,2/A]\times_{f_A}^N F$:

\begin{theorem}\th\label{T: Special type of cone}
Let $(F,\dis_F,\m_F)$ be a weighted Finsler manifold which satisfies the \linebreak$\CD((N-1)K_F,N)$-condition for $K_F>0 $ such that, if $N=1$ then $\diam(F)\leq \pi/\sqrt{K_F}$. Assume that $K_F\geq A^2$. Then the $N$-warped product $[0,2/A]\times_{f_A}^N F$ is a $\CD^\ast(0,N+1)$-space, and an $\RCD(0,N+1)$-space if $(F,\dis_F,\m_F)$ satisfies the $\RCD((N-1)K_F,N)$-condition.
\end{theorem}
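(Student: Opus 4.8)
The plan is to realize $[0,2/A]\times_{f_A}^N F$ as a measured Gromov--Hausdorff limit of truncated warped products built from the smooth approximations $f_{A,\epsilon}$, for which \th\ref{T: cone over f A sigma are CD} already supplies the curvature-dimension bound, and then to invoke the stability of the (reduced) curvature-dimension condition under measured Gromov--Hausdorff convergence. The first point to check is that the hypothesis $K_F\geq A^2$ is precisely what makes \th\ref{T: cone over f A sigma are CD} applicable for \emph{every} $\epsilon>0$: since $1/\sqrt{1+\epsilon^2}\leq 1$, one has
\[
\frac{A}{2}\left(1+\frac{1}{\sqrt{1+\epsilon^2}}\right)\leq A\leq \sqrt{K_F}\qquad\text{for all }\epsilon>0,
\]
so by \th\ref{T: cone over f A sigma are CD} (whose concavity input is \th\ref{L: F_A varepsilon are FK concave}) the infinite cones $[0,\infty)\times_{f_{A,\epsilon}}^N F$ are $\CD^*(0,N+1)$-spaces, and $\RCD(0,N+1)$-spaces when $(F,\dis_F,\m_F)$ is infinitesimally Hilbertian.

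Next I would descend from the infinite cones to the finite truncations. A direct computation shows that $f_{A,\epsilon}$ is continuous, satisfies $f_{A,\epsilon}(0)=0$, and is non-decreasing (its derivative $\tfrac{A}{2}\bigl(1-(Ax-1)/\sqrt{(Ax-1)^2+\epsilon^2}\bigr)$ is positive), so \th\ref{prop.convexconelike} identifies $[0,2/A]\times_{f_{A,\epsilon}}^N F$ with the closed metric ball $\overline{B}_{2/A}(0)$ about the vertex and shows it is convex. Since the $\CD^*(0,N+1)$-condition is inherited by geodesically convex subsets, each truncation $[0,2/A]\times_{f_{A,\epsilon}}^N F$ is again $\CD^*(0,N+1)$. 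In the infinitesimally Hilbertian case I would instead take the open ball $U=B_{2/A}(0)$, observe that $\m_\epsilon(\partial U)=0$ (the boundary is a single slice $\{2/A\}\times F$) and that $\overline{U}$ is geodesic, and apply the global-to-local principle \th\ref{T: Global-to-local} to conclude that the truncation is $\RCD(0,N+1)$.

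Finally, \th\ref{prop.mGHconvergencewarped} gives that $[0,2/A]\times_{f_{A,\epsilon}}^N F\to [0,2/A]\times_{f_A}^N F$ in the measured Gromov--Hausdorff sense as $\epsilon\to 0$ (the case $N=1$ requiring only the obvious adaptation of that proof), and the stability of the $\CD^*(0,N+1)$-condition, respectively the $\RCD(0,N+1)$-condition, under this convergence yields both assertions. The main obstacle is exactly this truncation step: because $f_A$ is not smooth at $x=1/A$, \th\ref{T: Ketterer warped products} cannot be invoked for $f_A$ itself, and it also cannot be invoked on the finite interval $[0,2/A]$ for $f_{A,\epsilon}$ since the outer endpoint $2/A$ is not a zero of $f_{A,\epsilon}$ (so the requirement $\partial B\subseteq f^{-1}(0)$ fails there). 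One is therefore forced to work on $[0,\infty)$ with the smooth warping function and then pass to the convex ball, where the care lies in confirming that convexity from \th\ref{prop.convexconelike} together with the global-to-local principle genuinely preserves the synthetic lower curvature bound.
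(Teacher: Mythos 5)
Your proposal is correct and follows essentially the same route as the paper: apply \th\ref{T: cone over f A sigma are CD} to the smooth approximations $f_{A,\epsilon}$ on $[0,\infty)$, pass to the convex ball $[0,2/A]\times_{f_{A,\epsilon}}^N F$ about the vertex (using \th\ref{prop.convexconelike} and the negligible boundary measure), and conclude by the measured Gromov--Hausdorff convergence of \th\ref{prop.mGHconvergencewarped} together with stability of $\CD^\ast$/$\RCD$. Your explicit verification that $K_F\geq A^2$ suffices for every $\epsilon>0$ and your remark on why Ketterer's theorem cannot be applied directly on the finite interval are details the paper leaves implicit, but the argument is the same.
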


\begin{proof}
By the previous theorem, the $N$-warped products $[0,\infty)\times_{f_{A,\epsilon}}^N F$ are $\CD^\ast(0,N+1)$-spaces (respectively $\RCD(0,N+1)$-spaces if $F$ is infinitesimally Hilbertian). 

We now consider the open subset $[0,2/A)\times F/(0,p)\sim(0,q)$. This is the open ball of radius $2/A$ centered at the vertex $\ast$ of the cones, with respect to the metrics $\dis_A$ or $\dis_\epsilon$. Moreover as we showed, it is totally geodesic, and by construction we have that $\m_\epsilon(\partial [0,2/A)\times F/(0,p)\sim(0,q))=0= \m_A(\partial [0,2/A)\times F/(0,p)\sim(0,q))$. Thus we conclude that $[0,2/A]\times_{f_{A,\epsilon}}F$ are $\CD^\ast(0,N+1)$-spaces (respectively, $\RCD(0,N+1)$-spaces when $F$ is infinitesimally Hilbertian).

Since the $N$-warped product $[0,2/A]\times_{f_{A}}^N F$ is the pointed measured Gromov-Hausdorff limit of the spaces $[0,2/A]\times_{f_{A,\epsilon}}^N F$ as $\epsilon\to 0$, and the $\CD^\ast$, $\RCD$-conditions are stable under this type of convergence, we obtain the desired result.
\end{proof}

We observe that for spaces with positive Ricci curvature and dimension at least $2$ we can also consider the the function $f_2$ given by $A=2$, due to the following result:

\vspace*{8pt}
\begin{duplicatePROP}[\ref{prop.productsrescallings} (3)]
Let $(X,\dis,\m)$ be an $\RCD(K,N)$. Then for $\alpha,\beta>0$, the space $(X,\alpha\dis,\beta\m)$ is an $\RCD(\alpha^{-2}K,N)$-space.
\end{duplicatePROP}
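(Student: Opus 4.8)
The plan is to verify directly that each of the four defining ingredients of the $\RCD(\alpha^{-2}K,N)$-condition from \th\ref{D: RCD-condition} — the Bakry-\'Emery inequality, infinitesimal Hilbertianity, the Sobolev-to-Lipschitz property, and the volume growth condition — is inherited by $(X,\alpha\dis,\beta\m)$ from the corresponding ingredient of $(X,\dis,\m)$. Write $\dis'=\alpha\dis$ and $\m'=\beta\m$. The whole argument rests on a single calculus fact whose careful justification I expect to be the only nontrivial point: the Sobolev space is unchanged and the minimal weak upper gradient rescales pointwise as $|\nabla' f|=\alpha^{-1}|\nabla f|$ $\m$-a.e.

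First I would record how the basic objects scale. Since rescaling the metric by the constant $\alpha$ preserves the class of Lipschitz functions and only rescales local Lipschitz constants, one has $\Lip' f=\alpha^{-1}\Lip f$ pointwise; and since $L^2(\m')=L^2(\m)$ as sets with proportional norms, $L^2$-convergence is the same for both structures. Feeding this into the definition of the Cheeger energy gives $\Ch'=\alpha^{-2}\beta\,\Ch$, so $\Sob{2}(X)$ is the same space for both structures. To pass from this energy identity to the pointwise gradient identity $|\nabla' f|=\alpha^{-1}|\nabla f|$ one uses the minimality and locality of the weak upper gradient: the relaxation procedure is identical (same admissible approximating sequences, same $L^2$-topology, integrands scaled pointwise by $\alpha^{-2}$), so the minimizer scales pointwise by $\alpha^{-1}$. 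This is the step I would treat most carefully, although it is classical.

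From the gradient identity everything else is bookkeeping. The pointwise inner product scales as $\langle\nabla' f,\nabla' g\rangle'=\alpha^{-2}\langle\nabla f,\nabla g\rangle$, so the parallelogram rule of \th\ref{prop.paralellogramlaw} holds $\m'$-a.e.\ if and only if it holds $\m$-a.e., giving infinitesimal Hilbertianity. Testing the defining integral identity for the Laplacian shows $D(\Delta')=D(\Delta)$ with $\Delta'f=\alpha^{-2}\Delta f$. Substituting $\Delta'g=\alpha^{-2}\Delta g$, $|\nabla' f|^2=\alpha^{-2}|\nabla f|^2$, $\langle\nabla'(\Delta' f),\nabla' f\rangle'=\alpha^{-4}\langle\nabla(\Delta f),\nabla f\rangle$, $(\Delta' f)^2=\alpha^{-4}(\Delta f)^2$, $d\m'=\beta\,d\m$, and $K'=\alpha^{-2}K$ into the $\BE(\alpha^{-2}K,N)$ inequality \eqref{eq.BakryEmery} for $(\dis',\m')$, I would observe that both sides acquire the common positive factor $\alpha^{-4}\beta$; after cancelling it the inequality becomes exactly the $\BE(K,N)$ inequality for $(\dis,\m)$, which holds by hypothesis. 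The admissible test classes coincide because $D(\Delta')=D(\Delta)$, $\Sob{2}(X)$ is unchanged, and $L^\infty(\m')=L^\infty(\m)$, while non-negativity and boundedness are intrinsic. Finally, Sobolev-to-Lipschitz transfers by applying the property of $(\dis,\m)$ to $f/\alpha$ (note that $|\nabla' f|\le 1$ is equivalent to $|\nabla(f/\alpha)|\le 1$) and using that $\dis$- and $\dis'$-Lipschitz representatives coincide; and the volume growth condition transfers because $\int\exp(-c\,\dis'^2(x_0,x))\,d\m'=\beta\int\exp(-c\alpha^2\dis^2(x_0,x))\,d\m<\infty$ for every $c>0$, since $c\alpha^2>0$ is again an admissible constant. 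This establishes the $\RCD(\alpha^{-2}K,N)$-condition for $(X,\alpha\dis,\beta\m)$.
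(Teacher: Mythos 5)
Your verification is correct. Note, however, that the paper does not prove this proposition at all: it states \th\ref{prop.productsrescallings} as a known feature of $\RCD$-spaces and refers the reader to the literature (Erbar--Kuwada--Sturm), where the scaling property is usually derived from the optimal-transport/entropy formulation of the curvature-dimension condition. Your route is different and arguably better suited to this paper: since the authors adopt the Bakry-\'Emery formulation \th\ref{def.BE} as their working definition of $\RCD(K,N)$ in \th\ref{D: RCD-condition}, a direct check that each of the four defining ingredients rescales correctly is the most self-contained argument available, and you correctly isolate the only genuinely nontrivial step, namely upgrading the energy identity $\Ch'=\alpha^{-2}\beta\,\Ch$ to the pointwise identity $|\nabla' f|=\alpha^{-1}|\nabla f|$ $\m$-a.e. (which one gets by noting that $\alpha^{-1}|\nabla f|$ is a relaxed gradient for the rescaled structure along any optimal approximating sequence, and symmetrically for the inverse rescaling). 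The bookkeeping for the Laplacian, the common factor $\alpha^{-4}\beta$ in \eqref{eq.BakryEmery}, the matching of test classes, and the transfer of Sobolev-to-Lipschitz and of the volume growth condition are all as you state. What the citation buys the authors is brevity and consistency with the standard $\CD$/$\RCD$ literature; what your argument buys is a proof that never leaves the Bakry-\'Emery framework actually used in the paper.
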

\vspace*{8pt}

We consider  an $n$-dimensional closed Riemannian manifold $F$  such that for  $0<\kappa_F$ we have $\kappa_F\leq \mathrm{Ric}(F)$, where $\mathrm{Ric}(F)= \min\{\mathrm{Ric}(x)\mid x\in TM\: \|x\|=1\}$. Thus we have that $(F,\dis_F,\m_F)$ is a $\CD^\ast(\kappa_F,n)$-space. By taking $\lambda = \sqrt{\kappa_F/4(n-1)}$ we have that $(F,\lambda \dis_F,\m_F)$ is an $\CD^\ast((n-1)4,n)$-space. Given that $F$ is a Riemannian manifold, we obtain that $(F,\lambda \dis_F,\m_F)$ is an $\RCD(0,n)$-space, and also $(F,\m_F)$ is a weighted Finsler manifold. Thus we have that $|f_{2,\epsilon}'(0)|\leq  2$, and by \th\ref{T: cone over f A sigma are CD} that $[0,\infty)\times_{f_{2,\epsilon}}(F,\lambda \dis_F,\m_F)$ is an $\RCD(0,n+1)$-space. Since the balls of radius $1$ are geodesic, then by taking the limit as $\epsilon\to 0$ we obtain that that $[0,1]\times_{f_{2}} (F,\lambda \dis_F,\m_F)$ is an $\RCD(0,n+1)$-space. \\

\subsection{Gluing of \texorpdfstring{$\RCD$}{RCD}-cone-bundles.}

We  now consider gluings along the boundary of (certain quotients of) the $N$-warped products we have considered so far using the functions $f_A\colon [0,2/A]\to [0,\infty)$. Consider the cone-like warped product $C=[0,2/A]\times F/(0,p)\sim (0,q)$ with the distance $\dis_C = \dis_A$ and measure $\m_C=f^{n-1}_Ad\m_F\otimes d\mathcal{L}$, where $(F,\dis_F,\m_F)$ is an (infinitesimally Hilbertian) weighted Finsler manifold satisfying the $\CD^\ast((N-1)K_F,N)$-condition for some $K_F>0$. We consider $G$ equipped with a bi-invariant metric $\dis_G$ and the Haar measure $\m_G$. We take  $G\times C$ equipped with the product distance $\dis_G\oplus \dis_C$ and measure $\m_G\otimes \m_C$. Thus, taking $D:=\dim(G)$,  by \th\ref{T: Special type of cone} the  space $G\times C$ is an $\RCD(0,N+1+D)$-space.

We  further assume that a Lie subgroup $K\subset G$ acts by measure preserving isometries on $F$, the fiber of the cone $C$. Thus $K$ also acts by measure preserving isometries on the metric product $G\times C$ by a diagonal action. That is, each $h\in K$ acts by 
\[
h (g,[t,x]) := (gh^{-1}, [t,h x]),
\]
where, $hx$ represents the $K$-action on $F$. Thus the quotient space $(G\times C)/\Delta$ equipped with the quotient distance $\dis^\ast$ and quotient measure $\m^\ast$ is an $\RCD(0,N+1+D)$-space by \cite{GalazGarciaKellMondinoSosa2018}. 

We collect this conclusion in the following lemma:

\begin{lemma}
Consider $G$ an $D$-dimensional compact Lie group, $K\subset G$ a Lie subgroup, and $(F,\dis_F,\m_F)$  a Riemannian manifold, which satisfies the $\RCD((N-1)K_F,N)$-condition on which $K$ acts transitively by measure preserving isometries for some $K_F>0$. Assume that $\diam(F)\leq \pi/\sqrt{K_F}$, if $N=1$. Consider $A>0$ such that $K_F>A^2$, and the function $f_A(x):=\min\{1,Ax\}$. Consider the warped product $C:=[0,2/A]\times_{f_A}^N F$, and denote by $\Delta$ the diagonal action of $K$ on $G\times C$. Then the space $G\times C/\Delta$ is an $\RCD(0,N+1+D)$-space.
\end{lemma}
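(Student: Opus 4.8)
The plan is to assemble the lemma from the three facts already established in this section together with the quotient theorem of Galaz-García--Kell--Mondino--Sosa. First I would record that the cone-like warped product $C=[0,2/A]\times_{f_A}^N F$ is an $\RCD(0,N+1)$-space: since $(F,\dis_F,\m_F)$ is a Riemannian (hence infinitesimally Hilbertian weighted Finsler) manifold satisfying the $\CD((N-1)K_F,N)$-condition with $K_F>A^2$ (in particular $K_F\geq A^2$), and $\diam(F)\leq \pi/\sqrt{K_F}$ when $N=1$, this is exactly the content of \th\ref{T: Special type of cone}. Its reference measure is $\m_C=f_A^{N}\,dt\otimes\m_F$, consistent with the definition of the $N$-warped product.

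Next I would equip $G$ with its bi-invariant metric $\dis_G$ and Haar measure $\m_G$. A bi-invariant metric on a compact Lie group has nonnegative sectional curvature, hence $\Ric\geq 0$, so $(G,\dis_G,\m_G)$ is a smooth closed $D$-dimensional Riemannian manifold and therefore an $\RCD(0,D)$-space. Applying the tensorization property \th\ref{prop.productsrescallings}~(2) to these two $\RCD$ factors yields that the metric product $G\times C$, with the product distance $\dis_G\oplus\dis_C$ and product measure $\m_G\otimes\m_C$, is an $\RCD(0,N+1+D)$-space.

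The third step is to verify that the diagonal action $\Delta$ is by measure preserving isometries. Each $h\in K$ acts by $h(g,[t,x])=(gh^{-1},[t,hx])$; right multiplication by $h^{-1}$ is an isometry of the bi-invariant metric preserving $\m_G$, while the map $[t,x]\mapsto[t,hx]$ is an isometry of the warped product because $h$ acts on $F$ by a measure preserving isometry and the warping function $f_A$ depends only on the radial coordinate $t$, which is fixed. The latter map therefore preserves $\m_C=f_A^{N}\,dt\otimes\m_F$, so $\Delta$ preserves both the product metric and the product measure. Since $K$ is a closed subgroup of the compact group $G$ it is itself compact, whence the action is proper with compact orbits.

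With these ingredients in place, the quotient $(G\times C)/\Delta$, endowed with the orbital distance $\dis^\ast$ and pushforward measure $\m^\ast$, is an $\RCD(0,N+1+D)$-space by \cite[Theorem 1.1]{GalazGarciaKellMondinoSosa2018}, which is precisely the claim. I do not expect a genuine obstacle here, as every component is already available; the only points requiring a line of care are confirming the $\RCD(0,D)$ structure of $(G,\dis_G,\m_G)$ and checking that $\Delta$ preserves the product reference measure, both of which are routine.
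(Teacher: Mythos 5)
Your proof is correct and follows essentially the same route as the paper: the text preceding this lemma establishes exactly your three steps — $C$ is $\RCD(0,N+1)$ by \th\ref{T: Special type of cone}, the product $G\times C$ with the bi-invariant metric and Haar measure on $G$ is $\RCD(0,N+1+D)$, and the diagonal $K$-action is by measure preserving isometries so the quotient theorem of Galaz-Garc\'ia--Kell--Mondino--Sosa applies. Your version is if anything slightly more careful, spelling out the tensorization step and the verification that $\Delta$ preserves the product reference measure, which the paper leaves implicit.
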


Moreover when we consider a homogeneous space $X=G/H$, equipped with $G$-invariant distance $\dis$ and measure $\m$ making it an $\RCD$-space, the measure $\m$ has to be  (a multiple of) the Hausdorff measure as we now show (this has also been independently proved by \cite[Theorem 2]{HondaNepchey2024}).

\begin{prop}
Let $G$ be a compact Lie group acting transitively by measure preserving isometries on an $\RCD(K,N)-$space $(X,\dis,\m)$. Then $\m$ equals a multiple of the $n$-dimensional Hausdorff measure of $X$, where $n$ is the topological dimension of $X$.
\end{prop}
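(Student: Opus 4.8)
The plan is to identify $\m$ with a multiple of the Hausdorff measure by exploiting the fact that transitivity forces every point to be regular of the same dimension, and then upgrading the almost-everywhere density identity provided by the structure theory of $\RCD$-spaces to a genuine equality of measures. First I would record the elementary facts: since $G$ is compact and acts transitively, $X$ is compact (it is equivariantly homeomorphic to $G/H$ with $H=G_x$ compact by \th\ref{T: equivariant homeomorphism from the orbit}), so $\m$ is a finite, strictly positive Radon measure. Because the action is by measure preserving isometries, $\m$ is $G$-invariant; and since isometries preserve Hausdorff measures, $\Hauss^{m}$ is $G$-invariant as well, for every $m$. Write $m:=\dimess(X)$; by the work of Brena--Gigli--Honda--Zhu the topological dimension of $X$ coincides with its essential dimension, so $m=n$ and it suffices to prove $\m=c\,\Hauss^{m}$ for some $c>0$.

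Next I would show that homogeneity forces $X=\mathcal{R}_m$. A measure preserving isometry $g$ maps the rescaled pointed spaces $(X,r^{-1}\dis,\m^{x}_r,x)$ isometrically onto $(X,r^{-1}\dis,\m^{gx}_r,gx)$, hence induces a bijection $\Tan(X,\dis,\m,x)\to \Tan(X,\dis,\m,gx)$. In particular $x\in\mathcal{R}_m$ if and only if $gx\in\mathcal{R}_m$. Since $\m(X\setminus\mathcal{R}_m)=0$ and $\m(X)>0$, the set $\mathcal{R}_m$ is nonempty, and transitivity of the action then gives $\mathcal{R}_m=X$. On the regular set the rectifiability and density theory of $\RCD$-spaces (Mondino--Naber together with the mutual absolute continuity established by De Philippis--Marchese--Rindler, Kell--Mondino and Gigli--Pasqualetto) yields a Borel density $\theta$ with $0<\theta<\infty$ such that $\m\llcorner\mathcal{R}_m=\theta\,\Hauss^{m}\llcorner\mathcal{R}_m$; as $\mathcal{R}_m=X$ this reads $\m=\theta\,\Hauss^{m}$ on all of $X$, and in particular $\Hauss^{m}$ is itself a positive, locally finite Radon measure on $X$.

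Finally I would prove that $\theta$ is constant. Applying $g_{\ast}$ to $\m=\theta\,\Hauss^{m}$ and using $g_{\ast}\m=\m$, $g_{\ast}\Hauss^{m}=\Hauss^{m}$ shows $\theta\circ g^{-1}=\theta$ $\Hauss^{m}$-a.e.\ for every $g\in G$; a Fubini argument over $G$ (valid since the action map is continuous and $G$ is second countable) promotes this to genuine $G$-invariance of $\theta$. Now the normalized measure $\m/\m(X)$ is \emph{the} $G$-invariant probability measure on the homogeneous space $G/H$: uniqueness follows from the averaging identity $\int f\,d\mu=\int_{G/H}\bigl(\int_G f(g\cdot x)\,dg\bigr)\,d\mu(x)$, whose inner integral is constant in $x$ by transitivity and right-invariance of the Haar measure, so the right-hand side depends only on $f$. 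Consequently the $G$-action is ergodic and every $G$-invariant function is a.e.\ constant. Hence $\theta\equiv c$ and $\m=c\,\Hauss^{m}=c\,\Hauss^{n}$. The main obstacle is the middle step: one must invoke the full structure theory of $\RCD$-spaces to obtain both the mutual absolute continuity of $\m$ and $\Hauss^{m}$ on $\mathcal{R}_m$ and the resulting local finiteness of $\Hauss^{m}$; everything else is soft. A cleaner but heavier alternative would be to first establish that a homogeneous $\RCD$-space is a smooth Riemannian manifold, after which the claim is immediate from uniqueness of $G$-invariant measures.
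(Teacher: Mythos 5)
Your argument is correct, but it takes a genuinely different route from the paper's. The paper first invokes \cite[Proposition 5.14]{Santos2020} to conclude that a homogeneous $\RCD$-space is isometric to a smooth Riemannian manifold --- precisely the ``heavier alternative'' you mention at the end --- so that every tangent space is $\R^n$ and \cite[Theorem 1.12]{BrueSemola2020} gives $\m=\rho\,\Hauss^n$ outright; it then checks $\rho\circ g=\rho$ and concludes constancy from the Lebesgue differentiation theorem together with transitivity. You instead stay inside the general structure theory of $\RCD$-spaces: homogeneity forces $\mathcal{R}_m=X$, and the rectifiability and mutual-absolute-continuity results yield $\m=\theta\,\Hauss^m$ with $m=\dimess(X)$ equal to the topological dimension. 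Both approaches work; yours avoids the smoothness result but leans on heavier measure-theoretic machinery, while the paper's is shorter given the citation. Two small remarks on your version. First, the representation $\m\llcorner\mathcal{R}_m=\theta\,\Hauss^m\llcorner\mathcal{R}_m$ is a priori available only on the reduced regular set where the density exists and lies in $(0,\infty)$; that set is also invariant under measure-preserving isometries and has full measure, so the same homogeneity argument upgrades it to all of $X$ --- this should be said explicitly, and it also settles the local finiteness of $\Hauss^m$ that you need. Second, the Fubini/ergodicity detour at the end is unnecessary: since each $g$ is a measure-preserving isometry one has $\m(B_r(gx))=\m(B_r(x))$ and $\Hauss^m(B_r(gx))=\Hauss^m(B_r(x))$ for every $r$, so the density $\theta(x)=\lim_{r\to 0}\m(B_r(x))/\Hauss^m(B_r(x))$ satisfies $\theta(gx)=\theta(x)$ pointwise wherever it exists, and transitivity makes it constant at once --- which is exactly the paper's closing step.
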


\begin{proof}
By  \cite[Proposition 5.14]{Santos2020} we have that $X$ must be isometric to a Riemannian manifold. So, in particular we have that all the tangent spaces must be isometric to $\mathbb{R}^n$ for some natural number $n$.
Then, by  \cite[Theorem 1.12]{BrueSemola2020} we have then that $\mathfrak{m}= \rho\mathcal{H}^n$, where $\mathcal{H}^n$ denotes the $n$-dimensional Hausdorff measure. We claim now that for all $g\in G$. $\rho\circ g=\rho$.
Let $A\in \mathcal{B}(X)$, $g \in G$, and recall that $g_{\#}\mathcal{H}^n=\mathcal{H}^n$. As the action is by measure preserving isometries $\mathfrak{m}(A)=g_{\#}\mathfrak{m}(A)$. This implies
\[
\int_A \rho d\mathcal{H}^n=\int_{g^{-1}A}\rho d\mathcal{H}^n=\int_A\rho\circ g d\mathcal{H}^n.
\]
So the density $\rho$ is $G$-invariant. By the Lebesgue differentiation theorem we have that for $\mathcal{H}^n$-a.e. $x\in X$ 
\[
\rho(x) = \lim_{r\rightarrow 0}\frac{\mathfrak{m}(B_r(x))}{\mathcal{H}^n(B_r(x))}.
\]
But notice that the limit on the RHS is the same if we put $gx$ instead of $x$. So then $\rho(x)=\rho(gx)$ $\mathcal{H}^n$-a.e.
This implies then that $\rho$ must be constant.
\end{proof}

\begin{theorem}
Consider $G_\pm$ compact Lie groups, $(P_\pm,\dis_{\pm}^P,\m_\pm^P)$ be $N_\pm^P$-dimensional Riemannian manifolds with $\Ric(P_\pm)\geq K^P_\pm$, and $(F_\pm,\dis_{\pm}^{F},\m_\pm^F)$ be weighted Riemannian manifolds which are $\RCD(K_\pm^F,N_\pm^F)$ for $N_\pm\geq 1$, with $K_\pm^F>0$ if $N_\pm^F>1$, and have bounded diameter in the case of $N_\pm=1$. Assume that $G_\pm$ acts effectively  on $P_\pm$ and $F_\pm$ by measure preserving isometries. Furthermore assume that the balanced product $P_- \times_{G_-} F_-$ is homeomorphic to $P_+ \times_{G_+} F_+$. Then the glued space 
\[
X = (P_-\times_{G_-} \mathrm{Cone}(F_-))\cup_{\partial} (P_-\times_{G_+} \mathrm{Cone}(F_+))
\]
admits an $\RCD(K,N)$-structure. Here $\mathrm{Cone}(F_\pm)$ denotes the topological cone over $F_\pm$.
\end{theorem}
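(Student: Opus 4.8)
The plan is to realize each half of $X$ as a quotient of an explicit $\RCD$ warped product, to exhibit a metric product collar near the gluing locus, and then to glue and invoke the local-to-global principle. First I would normalise the fibres: using \th\ref{prop.productsrescallings}~(3), rescale the metric on each $F_\pm$ so that, writing $n_\pm:=N_\pm^F$, the space $(F_\pm,\lambda_\pm\dis_\pm^F,\m_\pm^F)$ is $\RCD((n_\pm-1)K_{F_\pm},n_\pm)$ with $K_{F_\pm}\geq A_\pm^2$ and $\diam(F_\pm)<1/A_\pm$ for a suitable $A_\pm>0$ (when $n_\pm=1$ the bounded-diameter hypothesis is exactly what is needed here). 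By \th\ref{T: Special type of cone} the truncated $n_\pm$-warped product $C_\pm:=[0,2/A_\pm]\times_{f_{A_\pm}}^{n_\pm}F_\pm$ is then an $\RCD(0,n_\pm+1)$-space, infinitesimally Hilbertian via \th\ref{prop.infHilbertwarped}. Viewing the Riemannian manifold $P_\pm$ with $\Ric\geq K_\pm^P$ as an $\RCD(K_\pm^P,N_\pm^P)$-space, \th\ref{prop.productsrescallings}~(2) gives that $P_\pm\times C_\pm$ is $\RCD(\min\{K_\pm^P,0\},N_\pm^P+n_\pm+1)$.

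Next I would pass to the quotient. Since $G_\pm$ acts on $P_\pm$ and on the fibre $F_\pm$ by measure-preserving isometries, it acts diagonally on $P_\pm\times C_\pm$ by measure-preserving isometries fixing the radial coordinate of the cone, exactly as in the lemma preceding this statement. By \cite[Theorem~1.1]{GalazGarciaKellMondinoSosa2018} the orbit space $W_\pm:=(P_\pm\times C_\pm)/G_\pm$ is again $\RCD(\min\{K_\pm^P,0\},N_\pm^P+n_\pm+1)$, and topologically it is the truncated cone bundle $P_\pm\times_{G_\pm}\mathrm{Cone}(F_\pm)$. The decisive structural input is \th\ref{prop.near bdry metric is product in warped product}: because $\diam(F_\pm)<1/A_\pm$, the warping function $f_{A_\pm}$ is constant near $2/A_\pm$ and $C_\pm$ is isometric to a metric product $[2/A_\pm-\delta_\pm,2/A_\pm]\times F_\pm$ on a collar of its boundary. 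Descending to $W_\pm$, a neighbourhood of $\partial W_\pm$ is a metric (weighted Riemannian) product $[2/A_\pm-\delta_\pm,2/A_\pm]\times B_\pm$, where $B_\pm:=P_\pm\times_{G_\pm}F_\pm$ is the boundary cross-section, itself an $\RCD$-space as a measure-preserving quotient of $P_\pm\times F_\pm$.

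It remains to glue. The hypothesis furnishes a homeomorphism $\phi\colon B_-\to B_+$, and $X$ is obtained by identifying the two product collars along $\phi$ with a reflection of the radial parameter. To upgrade this to a metric-measure gluing I would insert a short interpolation collar $[0,1]\times B$ carrying a smooth one-parameter family of metrics and weights joining the product structure of $W_-$ at one end to the $\phi$-pullback of that of $W_+$ at the other, chosen to be constant near each endpoint so that across each seam the glued metric is a genuine product (in the homogeneous application of \th\ref{thm.rcd space from group diagram}, where $B_\pm=G/H$, this interpolation runs through $G$-invariant metrics). On the compact collar the weighted Ricci tensor is bounded below by some finite $K_0$, so the collar is an $\RCD(K_0,N_0)$-space, while each half $W_\pm$ is $\RCD(\min\{K_\pm^P,0\},\cdot)$. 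Setting $K:=\min\{0,K_-^P,K_+^P,K_0\}$ and $N:=\max\{N_-^P+n_-+1,N_+^P+n_++1,\dim B+1\}$ and applying \th\ref{prop.productsrescallings}~(1) to uniformise the parameters, the two halves (enlarged slightly into the product seams) together with the open interpolation collar form a cover of the length space $X$ by open sets whose closures are $\RCD(K,N)$; \th\ref{T:local-to-global} then yields that $X$ is an $\RCD(K,N)$-space.

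The main obstacle is precisely this last gluing step. A direct identification of the two halves is only legitimate when the induced boundary metrics coincide, and since the theorem assumes only a homeomorphism of boundaries one must produce the interpolating collar and verify that the product-collar structure makes a full neighbourhood of each seam genuinely $\RCD$. Once the collars are honest metric products the seam carries no singular contribution and the local-to-global step is immediate; the real content lies in arranging the interpolation with a uniform lower weighted-Ricci bound and a matching weight, which is where the Bakry--\'Emery formulation of \th\ref{def.BE} and the compactness of the collar are used.
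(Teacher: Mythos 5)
Your proposal is correct and follows essentially the same route as the paper: truncated warped-product cones over the rescaled fibres, diagonal quotients by $G_\pm$, the product-collar proposition near the boundary, an interpolating cylinder $[-1,1]\times B$ of Riemannian metrics constant near its ends with a compactness-derived lower Ricci bound, and finally the local-to-global theorem applied to a cover by product-collared pieces. The constants $K$ and $N$ you extract also agree with those in the paper's argument.
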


\begin{proof}
In the case that $N_\pm^F>1$, we choose $K_\pm> 0$ so that $(F_\pm,\dis^F_\pm,\m_\pm^F)$ are $\RCD((N_\pm^F-1)K_\pm,N^F_\pm)$, for example by taking $K_\pm\leq K_\pm^F/(N^F_\pm-1)$. In the case of $N_\pm^F=1$, we always have that $(F_\pm,\dis^F_\pm,\m_\pm^F)$ are $\RCD((N_\pm^F-1)K_\pm,N_\pm^F)=\RCD(0,N_\pm^F)$ for $K_\pm>0$.  Observe that in the case when either $N^F_+=1$ or $N^F_-=1$, then by taking $0<\lambda_\pm\leq (\pi/\sqrt{K_\pm})/\diam(F_\pm)$, we have that by rescaling $F_\pm$ by $\lambda_\pm$, the rescaled space $(F_\pm,\lambda_\pm\dis^F_\pm,\m^F_\pm)$ is an $\RCD((N^F_\pm-1)K_\pm/\lambda_\pm,N^F_\pm)$-space which satisfies $\diam(F_\pm, \lambda_\pm \dis^F_\pm)\leq \pi/\sqrt{K_\pm}$. Alternatively, we can choose $K_\pm>0$ such that $\diam(F_\pm)\leq \pi/\sqrt{K_\pm}$. Thus without loss of generality, in case either $N^F_+=1$ or $N^F_-=1$, we may assume that $\diam(F_\pm)\leq \pi/\sqrt{K_\pm}$.

We now consider $A_\pm>0$ such that $K_\pm>A_\pm^2$. We now consider the following two $\RCD(\bar{K}_\pm,N^F_{\pm}+1+N^P_\pm)$-spaces:
\[
\Big((P_\pm\times [0,2/A_{\pm}]\times_{f_\pm} F_{\pm})/G_{\pm},(\dis^P_\pm\otimes \dis_{A_\pm})^\ast, (\m^P_\pm\otimes d\m_{A_{\pm}})_\ast\Big),
\]
where $\bar{K}_\pm =\min\{K_\pm^P,0\}$, and $(\dis_{A_\pm},\m_{A_\pm})$ is the distance and measure of the warped product $[0,2/A_\pm]\times_{f_{A_\pm}}^{N^F_\pm} F_\pm$. The warping function $f_\pm$ is given by $f_\pm(t)=\min\lbrace 1,A_{\pm} t\rbrace$. We denote by $\dis_\pm$ the metric $(\dis^P_\pm\otimes \dis_{A_\pm})^\ast$, and by $\m_\pm$ the measure $(\m^P_\pm\otimes d\m_{A_{\pm}})_\ast$.

We observe that the boundaries of the cone bundles $P_\pm\times_{G_\pm}([0,2/A_\pm]\times_{f_\pm} F_\pm\to P_\pm/G_\pm$ are homeomorphic  to $ P_+\times_{G_+}(F_+) = (P_+\times F_+) /G_+ =\{2/A_+\}\times ((P_+\times F_+)/G_+)$ and $P_-\times_{G_-}(F_-) = (P_-\times F_-) /G_- =\{2/A_-\}\times ((P_-\times F_-)/G_-)$ respectively, and by hypothesis both are homeomorphic to a space $B$.

We also recall that by construction $B$ with the induced metric is geodesic in $(P_\pm\times [0,2/A_\pm]\times_{f_\pm} F_\pm)/G_\pm$ with respect to the the two $\RCD(\bar{K}_\pm,N^F_{\pm}+1+N^P_\pm)$-structures $(\dis_\pm,\m_\pm)$. Moreover, over the open collars $(P_\pm \times ((1/A_\pm,2/A_\pm]\times F_\pm))/G_\pm$ the metric structures $(\dis_\pm,\m_\pm)$ are the product structures $(\dis_\R\oplus \dis^P_\pm\oplus \dis^F_\pm,f^{N^F_\pm}_\pm d\mathcal{L}\otimes d\m^P_\pm\otimes d\m^F_\pm)$. From this we  observe that induced metric structures on $B$ are of the form $((\dis^P_\pm\otimes \dis^F_\pm)^\ast,(\m^P\pm\otimes \m^F_\pm)_\ast)$ and are given by a Riemannian metric $g_\pm$ on $B$: since $P_\pm\times F_\pm$ with $g_{P_\pm}\oplus g_{F_\pm}$ is a Riemannian manifold inducing $(\dis^P_\pm\oplus \dis^F_\pm,\m^P_\pm\oplus \m^F_\pm)$, and the action of $G_\pm$ over $P_\pm\times F_\pm$ is free,  we obtain quotient Riemannian metrics $g_\pm$ on $B$ inducing $((\dis^P_\pm\oplus \dis^F_\pm)^\ast,(\m^P_\pm\otimes\m^F_\pm)_\ast)$.

We consider $[-1,1]\times B$, and consider $g(t)$ a path in the set of Riemannian metrics on $B$ with $g(t)=g_-$ over $[-1,-1+\varepsilon]$ and $g(t)=g_+$ over $[1-\varepsilon,1]$. We set $C_0 := [-1,1]\times B$ and equip it with the metric $\dis_0$ and measure $\m_0$ induced by the Riemannian metric $h=dt^2\oplus g(t)$. We observe that since $([-1,1]\times B, h)$ is a compact Riemannian manifold, then there exists $K_0\in \R$, $|K_0|\leq\infty$ such that $\Ric(h)\geq K_0$. This implies that $(C_0,\dis_0,\m_0)$ is an $\RCD(K_0,\dim(B)+1)$-space.

We identify $(P_-\times [0,2/A_-]\times_{f_-} F_-)/G_-$ with $C_-:=(P_-\times ([-1-2/A_-,-1]\times F/(-1-2/A_-,p)\sim (-1-2/A_-,q))/G_-$ and push the $\RCD(\bar{K}_-,N^F_-+1+N^P_-)$-structure $(\dis_-,\m_-)$ of $(P_-\times[0,2/A_-]\times_{f_{-}} F_-)/G_-$ to get  an $\RCD(\bar{K}_-,N^F_-+1+N^P_-)$-structure on $C_-$, which we also denote by $(\dis_-,\m_-)$ abusing notation. We also identify $(P_+\times [0,2/A_+]\times_{f_{+}} F_+)/G_+$ with $C_+:=(P_+\times ([1,1+2/A_+]\times F/(1+2/A_+,p)\sim(1+2/A_+,q))/G_+$ and push the $\RCD(\bar{K}_+,N^F_+ +1+N^P_+)$-structure $(\dis_+,\m_+)$ of $(P_+ \times[0,2/A_+]\times_{f_{+}} F_+)/G_+$ to get an $\RCD(\bar{K}_+,n_+1 +D)$-structure on $C_+$, which we also denote by $(\dis_+,\m_+)$.

We consider the glued space
\begin{linenomath}
\begin{align*}
X_1:= C_-\bigcup_{\mathrm{Id}_B}C_0.
\end{align*}
\end{linenomath}
On $X_1$ we define a metric $\dis_1$ given by
\[
\dis_1(x,y)=\begin{cases}
\dis_{-}(x,y), & x,y\in C_-\setminus \partial C_- = C_-\setminus B,\\
\dis_{0}(x,y), &  x,y\in C_0\setminus (\{-1\}\times B),\\
\min\{\dis_-(x,a)+\dis_{0}((-1,a),y)\mid a\in B\}, & x\in C_-\mbox{ and }y\in C_0.\\
\end{cases}
\]
We also set the measure $\mathfrak{n}_1:=(i_-)_\ast(\m_-)+(i_0)_\ast(\m_0)$, where $i_-\colon C_-\to X$ and  $i_0\colon C_0\to X$ are the respective inclusions. Also observe that for $K_1=\min\{K_0,\bar{K}_+,\bar{K}_-\}$ and $N_1=\max\{N^F_-+1 +N^P_\pm,\dim(B)+1\}$, we have that $C_0$, $C_-$ are $\RCD(K_1,N_1)$-spaces. We consider  $\delta_1<\min\{\varepsilon,1/A_-\}$ as well as the open subsets $U_- =C_-\cup([-1,-1+\delta)\times B\subset X_1$, and $U_0 = (-1-\delta,1]\times B$, and observe that they are an open cover of $X_1$ due to the fact that the metrics $\dis_-$ and $\dis_0$ are the product metrics over the preimage of $[-1-(2/A_-)-\delta, -1+\delta]\subset X_1^\ast$ under the quotient map $\pi\colon X_1\to X_1^\ast$. Moreover, since the metric $\dis_1$ and measure $\m_1$ are the product over $[-1-\delta,-1+\delta]$, we have that the spaces  $(\bar{U}_-,\dis_1,\m_1\llcorner \bar{U_-})$, $(\bar{U}_0,\dis_1,\m_1\llcorner \bar{U_0})$  are $\RCD(K_1,N_1)$-spaces by construction. Thus by \th\ref{T:local-to-global}, we conclude that $X_1$ is an $\RCD(K_1,N_1)$-space.

In analogous fashion we can prove that the space
\[
X = X_1\bigcup_{\mathrm{Id}_B} C_+
\]
is an $\RCD(K,N)$-space where $K=K_1\leq 0$ and $N=\max\{N_1,N^P_+ +1+ N^P_+\}$.
\end{proof}

We now prove \th\ref{thm.rcd space from group diagram}. 

\begin{proof}[Proof of \th\ref{thm.rcd space from group diagram}]

We set $F_\pm$ to be the homogeneous Riemannian manifolds $G_\pm/H$ with the $G$-invariant Riemannian metrics of positive Ricci curvature  when \linebreak$n_\pm:=\dime(F_\pm)>1$. Then they are $\RCD((n_\pm-1)K_\pm,n_\pm)$-spaces, for $n_\pm $ and some $K_\pm> 0$ fixed when $n_\pm>1$. In the case when either $n_+=1$ or $n_-=1$, since $G_\pm/H$ have finite diameter $\leq D$, we can choose $K_\pm>0$  such that $\diam(F_\pm)\leq D\leq \pi/\sqrt{K_\pm}$.

We now consider $A_\pm>0$ such that $K_\pm>A_\pm^2$. We now consider the following two $\RCD(0,n_{\pm}+1+D)$-spaces:
\[
\Big((G\times [0,2/A_{\pm}]\times_{f_\pm}^{n_\pm} F_{\pm})/G_{\pm},(\dis_G\otimes \dis_{A_\pm})^\ast, (\m_G\otimes d\m_{A_{\pm}})_\ast\Big),
\]
where $D=\dim G$, and $(\dis_{A_\pm},\m_{A_\pm})$ is the distance and measure of the warped product $[0,2/A_\pm]\times_{f_{A_\pm}}^{n_\pm} F_\pm$. The warping function $f_\pm$ is given by $f_\pm(t)=\min\lbrace 1,A_{\pm} t\rbrace$. We denote by $\dis_\pm$ the metric $(\dis_G\otimes \dis_{A_\pm})^\ast$, and by $\m_\pm$ the measure $(\m_G\otimes d\m_{A_{\pm}})_\ast$.

We showed in \th\ref{prop.near bdry metric is product in warped product} that over $(1/A_\pm,2/A_\pm]\times F_\pm\subset [0,2/A_\pm]\times_{f_\pm} F_\pm$ we have $\dis_{A_\pm} = \dis_{\R}\oplus \dis_{F_\pm}$. Since the action of $G_\pm$ on the $[0,2/A\pm]$ factor is free, we see that the image  of $G\times [1/A_\pm,2/A_\pm]\times F_\pm$ under the quotient map $G\times ([0,2/A_\pm]\times_{f_{\pm}} F_\pm)\to (G\times ([0,2/A_\pm]\times_{f_\pm} F_\pm))/\Delta$ is isometric to 
\[
\Big([1/A_\pm,2/A_\pm]\times \big((G\times F_\pm)/\Delta\big),\dis_\R\oplus (\dis_G\otimes \dis_{F_\pm})^\ast, f_\pm^{n_\pm}d\mathcal{L}\otimes (\m_G\otimes d\m_{F_\pm})_\ast\Big).
\]

We observe that the boundaries of the cone bundles $G\times_{G_\pm}([0,2/A_\pm]\times_{f_\pm} F_\pm\to G/G_\pm$ are $G/H \cong G\times_{G_+}(G_+/H) = (G\times F_+) /G_+ =\{2/A_+\}\times ((G\times F_+)/G_+)$ and $G/H \cong G\times_{G_-}(G_-/H) = (G\times F_-) /G_- =\{2/A_-\}\times ((G\times F_-)/G_-)$. Thus they are homeomorphic to $G/H$. From now on we set $B=G/H$. 

We also recall that by construction the space $B$ with the induced metric is geodesic in $(G\times [0,2/A_\pm]\times_{f_\pm}^{n_\pm} F_\pm)/G_\pm$ with respect to the the two $\RCD(0,n_{\pm}+1+D)$-structures $(\dis_\pm,\m_\pm)$. Moreover, over the open collars $(G ((1/A_\pm,2/A_\pm]\times F_\pm))/G_\pm$ the metric structures $(\dis_\pm,\m_\pm)$ are the product structures $(\dis_\R\oplus \dis_G\oplus \dis_{F_\pm},f^{n_\pm}_\pm d\mathcal{L}\otimes d\m_G\otimes d\m_{F_\pm})$. From this we  observe that induced metric structures on $B$ are of the form $((\dis_G\otimes \dis_{F_\pm})^\ast,(\m_G\otimes \m_{F_\pm})_\ast)$ and are given by a Riemannian metric $g_\pm$ on $B$: since $G\times F_\pm$ with $g_G\oplus g_{F_\pm}$ is a Riemannian manifold inducing $(\dis_G\oplus \dis_{F_\pm},\m_G\oplus \m_{F_\pm})$, and the action of $G_\pm$ over $G\times F_\pm$ is free,  we obtain quotient Riemannian metrics $g_\pm$ on $B$ inducing $((\dis_G\oplus \dis_{F_\pm})^\ast,(\m_G\otimes\m_F)_\ast)$.

We consider $[-1,1]\times B$, and consider $g(t)$ a path in the set of Riemannian metrics on $B$ with $g(t)=g_-$ over $[-1,-1+\varepsilon]$ and $g(t)=g_+$ over $[1-\varepsilon,1]$. We set $C_0 := [-1,1]\times B$ and equip it with the metric $\dis_0$ and measure $\m_0$ induced by the Riemannian metric $h=dt^2\oplus g(t)$. We observe that since $([-1,1]\times B, h)$ is a compact Riemannian manifold, then there exists $K_0\in \R$, $|K_0|\leq\infty$ such that $\Ric(h)\geq K_0$. This implies that $(C_0,\dis_0,\m_0)$ is an $\RCD(K_0,\dim(B)+1)$-space.

We identify $[0,2/A_-]\times_{f_-}^{n_-} F_-$ with $C_-:=(G\times([-1-2/A_-,-1]\times F/(-1-2/A_-,p)\sim (-1-2/A_-,q)))/G_-$ and push the $\RCD(0,n_-+1+D)$-structure $(\dis_-,\m_-)$ of $(G\times [0,2/A_-]\times_{f_{-}}^{n_-} F_-)/G_-$ to get  an $\RCD(0,n_-+1+D)$-structure on $C_-$, which we also denote by $(\dis_-,\m_-)$ abusing notation. We also identify $G\times([0,2/A_+]\times_{f_{+}}^{n_+} F_+)/G_+$ with $C_+:=(G\times ([1,1+2/A_+]\times F/(1+2/A_+,p)\sim(1+2/A_+,q)))/G_+$ and push the $\RCD(0,n_+1 +D)$-structure $(\dis_+,\m_+)$ of $(G\times [0,2/A_+]\times_{f_{+}}^{n_+} F_+)/G_+$ to get an $\RCD(0,n_+1 +D)$-structure on $C_+$, which we also denote by $(\dis_+,\m_+)$.

We consider the glued space
\begin{linenomath}
\begin{align*}
X_1:= C_-\bigcup_{\mathrm{Id}_B}C_0.
\end{align*}
\end{linenomath}
We observe that it admits an action by $G$, and moreover by construction we have $X_1^\ast=X/G \cong [-1-2/A_-,1]$. On $X_1$ we define a metric $\dis_1$ given by
\[
\dis_1(x,y)=\begin{cases}
\dis_{-}(x,y), & x,y\in C_-\setminus \partial C_- = C_-\setminus B,\\
\dis_{0}(x,y), &  x,y\in C_0\setminus (\{-1\}\times B),\\
\min\{\dis_-(x,a)+\dis_{0}((-1,a),y)\mid a\in B\}, & x\in C_-\mbox{ and }y\in C_0.\\
\end{cases}
\]
We also set the measure $\mathfrak{n}_1:=(i_-)_\ast(\m_-)+(i_0)_\ast(\m_0)$, where $i_-\colon C_-\to X$ and  $i_0\colon C_0\to X$ are the respective inclusions. Also observe that for $K_1=\min\{K_0,0\}$ and $N_1=\max\{n_-+1 +D,\dim(B)+1\}$, we have that $C_0$, $C_-$ are $\RCD(K_1,N_1)$-spaces. We consider  $\delta_1<\min\{\varepsilon,1/A_-\}$ as well as the open subsets $U_- =C_-\cup([-1,-1+\delta)\times B\subset X_1$, and $U_0 = (-1-\delta,1]\times B$, and observe that they are an open cover of $X_1$ due to the fact that the metrics $\dis_-$ and $\dis_0$ are the product metrics over the preimage of $[-1-(2/A_-)-\delta, -1+\delta]\subset X_1^\ast$ under the quotient map $\pi\colon X_1\to X_1^\ast$. Moreover, since the metric $\dis_1$ and measure $\m_1$ are the product over $[-1-\delta,-1+\delta]$, we have that the spaces  $(\bar{U}_-,\dis_1,\m_1\llcorner \bar{U_-})$, $(\bar{U}_0,\dis_1,\m_1\llcorner \bar{U_0})$  are $\RCD(K_1,N_1)$-spaces by construction. Thus by \th\ref{T:local-to-global}, we conclude that $X_1$ is an $\RCD(K_1,N_1)$-space. Observe that since the action of $G$ is transitive on $B$, we have that 
\begin{linenomath}
\begin{align*}
\min\{\dis_-(x,a)&+\dis_0((-1,a),y)\mid a\in B\}\\ 
&= \min\{\dis_-(x,g^{-1}a)+\dis_0((-1,g^{-1}a),y)\mid g^{-1}a\in B\}\\ 
&= \min\{\dis_-(x,\tilde{a}])+\dis_0((-1,\tilde{a}),y)\mid \tilde{w}\in B\}.  
\end{align*}
\end{linenomath}

Thus $G$-acts by isometries on $X_1$. Consider $Z\subset X_1$ measurable. Take $Z_-=\mathrm{cl}(\mathrm{cl}(Z)\cap C_-)$ and set $Z_0 = \mathrm{cl}(Z)\setminus Z_-$. Then $Z_-\cup Z_0 = \cl(Z)$, and they are disjoint. Since $G$ preserves the measures $(i_-)_\ast(\m_-)$ and $(i_0)_\ast(\m_0)$ we get that $G$ acts by measure preserving isometries.

In analogous fashion we can prove that the spaces
\[
X = X_1\bigcup_{\mathrm{Id}_B} C_+
\]
is an $\RCD(K,N)$-space where $K=K_1\leq 0$ and $N=\max\{N_1,n_+ +1+ d\}$, and $G$ acts by measure preserving isometries on $X$. Moreover, we have that $X/G$ is isometric to $([-1-2/A_-,1+2/A_+], dt^2, f\mathcal{L}^1$), where 
\[
f(t)=\begin{cases}
    (f_{A_-}(t))^{n_-}, & t\in [-1-2/A_-,-1],\\
    1 & t\in [-1,1],\\
   ( f_{A_+}(t))^{n_+}, & t\in [1,1+2/A_+].
\end{cases}
\]
Thus $X$ is an $\RCD(K,N)$-space of cohomogeneity one with group diagram $(G,H,G_-,G_+)$.
\end{proof}

\section{Classification of non-collapsed cohomogeneity one \texorpdfstring{$\RCD$}{RCD}-spaces with \texorpdfstring{$\dimess\leq 4$}{<4}}
\label{S:Low-DIM-Classification}

In this section we obtain the topological classification of non-collapsed $\RCD$-spaces admitting effective actions by a compact Lie group $G$ via measure-preserving isometries of with cohomogeneity one, and which have essential dimension less than or equal to $4$. The classification for cohomogeneity one Alexandrov spaces up to dimension $4$ was obtained by Galaz-García and Searle \cite{Galaz-GarciaSearle2011}, which in turn extends the extensive work done in the smooth manifold case by Mostert \cite{Mostert1957a}, \cite{Mostert1957b}, Neumann \cite{Neumann1967}, Parker \cite{Parker1986}, and Hoelscher, \cite{Hoelscher2010}, and in the topological category by Galaz-García and Zarei \cite{GalazGarciaZarei2018} (see also the thesis of Muzzy \cite{Muzzy2017} for a very nice account of the classification of low dimensional cohomogeneity one manifolds in the simply-connected case). 

In this section we obtain  that  non-collapsed $\RCD$-spaces admitting effective actions by a compact Lie group $G$ via measure-preserving isometries of  with cohomogeneity one with $\dimess \leq 4$ are equivariantly homeomorphic to Alexandrov spaces. Thus the topological classification follows from the one in \cite{Galaz-GarciaSearle2011}.

\subsection{Essential dimensions \texorpdfstring{$1$}{1} and \texorpdfstring{$2$}{2}.} 

Without any symmetry assumptions, the work of Kitabeppu-Lakzian \cite{KitabeppuLakzian2016} and of Lytchak-Stadler \cite{LytchakStadler2023} shows that non-collapsed $\RCD(K,1)$- and $\RCD(K,2)$-spaces are homeomorphic to Alexandrov spaces. 

\subsection{Essential dimensions \texorpdfstring{$3$}{3} and \texorpdfstring{$4$}{4}.} We now analyze the case of non-collapsed spaces with essential dimensions $3$ and $4$. 
 
Let $x_0\in X$ be contained in a principal orbit, and take $k = \dime(G(x_0))$. Recall that the orbit $G(x_0)$ is homeomorphic to the homogeneous space $G/G_{x_0}$. By \cite[Chapter III, Section IV~54]{Cartan1952}, there exists a $G$-invariant Riemannian metric $g_0$ on $G/G_{x_0}$ such that $G\subset \Iso(G/G_{x_0},g_0)$. Hence, due to \cite{MyersSteenrod} and \cite[Chapter II, Section 3, Theorem 3.1]{Kobayashi}, we have the following dimension estimate 
\begin{equation} 
\label{EQ:dimension-bound-G-by-dimension-of-orbit}
k\leq \dime(G) \leq \dime(\Iso(G/G_{x_0},g_0))\leq \frac{k(k+1)}{2}.
\end{equation}

We use this dimension estimate to limit the possible groups that can act by cohomogeneity one on a non-collapsed $\RCD$-space of low essential-dimension. 

\begin{mtheorem}
Let $G$ be a compact Lie group acting effectively by measure preserving isometries and cohomogeneity one on a closed non-collapsed $\RCD(K,3)$-space $(X,\dis,\m)$.  Then $G$ is one of $\SO(3)$ or $T^2$ and $X$ is equivariantly homeomorphic to an Alexandrov $3$-space. 
\end{mtheorem}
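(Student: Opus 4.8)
The plan is to first pin down the group $G$ by a dimension count, and then invoke the structural and topological results of the paper to produce the equivariant homeomorphism. Since the identity component $G^0$ still acts effectively (its ineffective kernel is contained in that of $G$) and still with one-dimensional orbit space, I may reduce to the case where $G$ is connected. Fix a principal orbit $G(x_0)$; because $X$ is non-collapsed $\RCD(K,3)$ its essential dimension is $3$, and by \th\ref{MT: Geometry of the slice} the slice through an interior (principal) point is one-dimensional, so the principal orbits have dimension $k=\dim G(x_0)=2$. Substituting $k=2$ into the bound \eqref{EQ:dimension-bound-G-by-dimension-of-orbit} gives $2\le \dim G\le k(k+1)/2=3$, hence $\dim G\in\{2,3\}$.

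Next I would run the case analysis on $\dim G$. If $\dim G=2$, the only connected compact $2$-dimensional Lie group is $T^2$, so $G=T^2$. If $\dim G=3$, then $G$ is one of $\mathrm{SU}(2)$, $\SO(3)$, or $T^3$, and I rule out the first and last using effectiveness together with Kleiner's \th\ref{L: Kleiners Lemma} (equivalently \th\ref{T: Principal isotropy is subgroup of singular isotropy}). Here the principal isotropy $H$ has $\dim H=1$, so $H^0$ is a circle; by the lemma $H\subset K_\pm=G_{x_\pm}$, and since the interior isotropies all equal $H$, any subgroup of $H$ that is normal in $G$ lies in every isotropy group and hence in the ineffective kernel. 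For $G=T^3$ (abelian) the circle $H^0$ is itself normal, while for $G=\mathrm{SU}(2)$ the circle $H^0$ is a maximal torus and therefore contains the central $\Z_2=\{\pm I\}$, which is normal; in both cases effectiveness is contradicted. This leaves $G=\SO(3)$, giving the identification $G\in\{\SO(3),T^2\}$.

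Finally I would upgrade this to the equivariant homeomorphism with an Alexandrov space. As $X$ is closed, $X^\ast$ is homeomorphic to $[-1,1]$ or to $\Sp^1$ by \th\ref{MC: Homeomorphism rigidity}; in the circle case $X$ is a homogeneous fiber bundle over $\Sp^1$, hence a closed manifold which is readily Alexandrov, so I focus on $X^\ast\cong[-1,1]$. There the action is encoded by a group diagram $(G,H,K_-,K_+)$, and by \th\ref{MC: Homeomorphism rigidity} the equivariant homeomorphism type of $X$ is determined by this diagram together with the slices at the two singular orbits. Each such slice is a metric cone over a homogeneous space $M_\pm$ of dimension $N-k_\pm-1=2-k_\pm\le 2$ carrying a metric with $\Ric\ge N-k_\pm-2$ (by \th\ref{MT: Geometry of the slice}); but a homogeneous Riemannian manifold of dimension at most $2$ has constant curvature, so $M_\pm$ is a round sphere, a round $\mathbb{R}P^2$, or a circle. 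Consequently each slice is homeomorphic to a cone over a constant-curvature space, which is an Alexandrov space of curvature $\ge 0$. Replacing the abstract $\RCD$ cone structures by these round cone structures, which realize the same group diagram, yields an Alexandrov $3$-space of cohomogeneity one equivariantly homeomorphic to $X$; its existence and form also follow from the Galaz-García–Searle classification \cite{Galaz-GarciaSearle2011}.

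The main obstacle I anticipate is this last step. The group identification is a clean dimension count, but producing the equivariant homeomorphism with an Alexandrov space requires knowing that the abstract $\RCD$ slice, a cone over a homogeneous space with only a Ricci bound, is genuinely homeomorphic to a constant-curvature cone, and that such an identification is compatible with the gluing data along the singular orbits. The low-dimensionality is exactly what rescues the argument, since in dimension $\le 2$ a homogeneous space with a Ricci bound is forced to be a space form; the care needed lies in verifying that the group diagram and the induced slice data coincide with those realized by a genuine Alexandrov space, so that the topological rigidity of \th\ref{MC: Homeomorphism rigidity} applies verbatim.
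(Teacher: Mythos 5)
Your proof is correct and follows essentially the same skeleton as the paper's: the dimension bound \eqref{EQ:dimension-bound-G-by-dimension-of-orbit} applied to a $2$-dimensional principal orbit, elimination of the spurious groups via effectiveness and Kleiner's \th\ref{L: Kleiners Lemma}, the slice analysis through \th\ref{MT: Geometry of the slice}, and the final comparison with the Galaz-Garc\'ia--Searle classification. Two differences are worth recording. First, the paper's list of candidate groups in the $3$-dimensional case is $T^2$, $T^3$, $\SO(3)$ and silently omits $\mathrm{SU}(2)$; you rule it out explicitly by noting that the circle $H^0$ is a maximal torus containing the central $\Z_2$, which by Kleiner's lemma lands in every isotropy group and hence in the ineffective kernel. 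This actually closes a small gap in the paper's case list. Your elimination of $T^3$ (normal subgroup of $H$ forced into the ineffective kernel) is a mild variant of the paper's (which concludes $G_{x_0}=\{e\}$ and contradicts $\dim G(x_0)=2$); both are fine. Second, your slice analysis is more uniform: where the paper argues case-by-case on $m_\pm$ with appeals to the Kitabeppu--Lakzian and Lytchak--Stadler classifications of low-dimensional $\RCD$-spaces, you use that \th\ref{MT: Geometry of the slice} already delivers a \emph{smooth homogeneous Riemannian} link of dimension $\le 2$, which is forced to have constant curvature; this buys a cleaner identification of the cone fibers, at the cost of being slightly loose about the $0$-dimensional link case ($m_\pm=2$), where one still needs the non-collapsing argument to see that the slice is $\R$ and the singular isotropy acts as $\Z_2$ rather than trivially.
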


\begin{proof} 
We begin by pointing out that, if $X^* = \Sp^1$, then $X$ is homeomorphic to a $3$-manifold by item \eqref{MC: homeomorphism rigidity Circle} of \th\ref{MC: Homeomorphism rigidity}, and therefore the statement holds true. Thus, in the following we assume that $X^*=[-1,1]$.

Consider $x_0\in X$ fixed, contained in a principal orbit. As the action is of cohomogeneity one, we have that $\dime(G(x_0)) =2$. Then the dimension estimate \eqref{EQ:dimension-bound-G-by-dimension-of-orbit}, implies that $2\leq \dime(G)\leq 3$. It now follows from the classification of low dimensional compact Lie groups that $G$ is either $T^2$, $T^3$ or $\SO(3)$. 

Assume first that $G= T^3$, and  observe that all principal isotropy groups are the same for this action, since they are conjugated to each other (see \cite[Theorem 4.7 and proof of Corollary 4.9]{GalazGarciaKellMondinoSosa2018}) and $G$ is abelian. Moreover, as $G$ acts by cohomogeneity one on $X$ we have a group diagram of the form:
\[
\begin{tikzcd}
         & G_+ \arrow[dr,hookrightarrow] & \\
G_{x_0} \arrow[ur,hookrightarrow]\arrow[dr,hookrightarrow] & & G\\
 & G_{-}\arrow[ur,hookrightarrow] & 
\end{tikzcd}.
\]
Thus we have that $G_{x_0}\subset G_+$ and $G_{x_0}\subset G_{-}$. This implies that the ineffective kernel of the action $\cap_{y\in X} G_y$ contains $G_{x_0}$. But since the action is effective, we conclude that $G_{x_0} = \{e\}$. This implies that $G(x_0)$ is homeomorphic to $T^3$ which is a contradiction. Thus $G$ is either $T^2$ or $\SO(3)$, proving the first statement.\\

To proceed we observe on one hand that the possible isotropy groups when $G = T^2$ are $\{e\}$, $\Z_k$, $\Sp^1$, $\Z_k\times \Sp^1$, and $T^2$. When $G= \SO(3)$, since the orbits have dimension $2$, we conclude that $\SO(2)$ is a subgroup of any isotropy group, and thus from any isotropy group. Thus the possible isotropy groups are $\SO(2)$ $\mathrm{O}(2)$, $\SO(3)$.

We recall from \cite[Corollary 10]{GalazGarciaKellMondinoSosa2018} that $\pi^{-1} (-1,1)$ is homeomorphic to $(-1,1)\times G_x$, where $x\in \pi^{-1}(-1,1)$. 

Now from \th\ref{T: Characterization non-collapsed} we have that $X$ is non-collapsed, and  we consider $x_{\pm}\in X$ such that $x_{\pm}^\ast = \pm 1$. We study the tubular neighborhoods of the orbit $G(x_\pm)$. We denote by $m_\pm = \dime(G(x_\pm))$, and observe that by our hypothesis $N=3$. Due to \th\ref{T:Slice-Theorem}, we have that a tubular neighborhood of the orbit $G(x_\pm)$ is homeomorphic to $G\times_{G_\pm} S_{\pm}$, where $S_{\pm}$ is the slice through $x_\pm$.\\

By \th\ref{MT: Geometry of the slice} and \th\ref{L: infinitesimal action cohomogeneity 1}, the slice $S_{\pm}$ is homeomorphic to $\mathrm{Cone}_0^N(F)$ where $(F,\dis_F,\m_F)\in \RCD(3-m_\pm-2,3-m_\pm-1) = \RCD(1-m_\pm,2-m_\pm)$, and $(F,\dis_F,\m_F)$ is a smooth homogeneous space $K_\pm/H_\pm$. We analyze different cases based on the dimension $m_\pm$.

When $m_\pm = 2$, then $F$ is $0$-dimensional, and $Z=\mathrm{Cone}_0^N(F)$ is an $\RCD(0,1)$-space. By \cite{KitabeppuLakzian2016}, it follows that $Z$ is homeomorphic to one of $\R$, $[0,\infty)$, $[0,1]$, or $\Sp^1$. More over by \th\ref{L: infinitesimal action cohomogeneity 1}, we have that the tangent space at $x_\pm$ is isometric to $Y\times Z = \R^2\times Z$. Since $X$ is non-collapsed we have that the tangent  space at $x_\pm$ is isometric to $\R^3$. Thus we conclude that $Z$ is isometric to $\R$, and thus the slice $S_\pm$ is homeomorphic to $\R$. Moreover, in this case the dimension of $G_\pm$ is $0$, and thus $G_\pm$ is a discrete group acting effectively on $\R$ by homeomorphisms, with $S_\pm/G_\pm = [0,\varepsilon)$. Thus $G_\pm = \Z_2$. This implies that $G= T^2$, and thus we have that a tubular neighborhood of $G(x_\pm)$ is homeomorphic to $T^2\times_{\Z_2} \R$. From the classification in \cite{Galaz-GarciaSearle2011} we see that this space is homeomorphic to an Alexandrov space.

When $m_\pm  = 1$, then $F$ is an $\RCD(0,1)$-space, homogeneous, and of dimension $1$. From the characterization of low dimensional $\RCD$-spaces \cite{KitabeppuLakzian2016} we have that $(F,\dis_F,\m_F)$ is one of $(\R,\dis_{\mathrm{E}},\mathcal{L}^1)$, $([0,\infty),\dis_{\mathrm{E}},h\mathcal{L}^1)$, $([0,1],\dis_{\mathrm{E}},h\mathcal{L}^1)$, $(\Sp^1,\dis_{\Sp^1},h\mathcal{L}^1)$, where $h$ is a $(0,1)$-convex function. We recall that the due to our non-collapse assumption, the tangent space at $x_\pm$ is isometric to $\R^3$. Moreover, due to \th\ref{L: Splitting of Euclidean space in tangent cone} and \th\ref{L: infinitesimal action cohomogeneity 1} we conclude that the slice is homeomorphic to $\R^2$, and a cone over $F$.  Thus the cases cases when $F$ is homeomorphic to $[0,\infty)$ and $[0,1]$ are excluded. Moreover, the isotropy $G_\pm$ acts smoothly by cohomogeneity $1$ (see \cite[Corollary E]{GalazGarciaZarei2018}). Thus we have that $G_\pm$ is either $\Sp^1$ when $G = T^2$, or $G_\pm = \SO(2)$ or $\mathrm{O}(2)$ when $G= \SO(3)$. Comparing to \cite{Galaz-GarciaSearle2011}, we conclude that the tubular neighborhoods are Alexandrov spaces.

In the case that $m_\pm = 0$, we have that $F$ is an $\RCD(1,2)$-space. Moreover, since it is a homogeneous metric space, the measure $m_F$ is up to a constant the Haar measure. This implies that $F$ admits a new $\RCD(1,2)$-structure with respect to the $2$-dimensional Hausdorff measure. By \cite{LytchakStadler2023} it follows that $F$ is homeomorphic to an Alexandrov surface. Thus we have that $S_\pm$ is homeomorphic to the tubular neighborhood of $G(x_\pm) = \{x_\pm\}$, and to the cone over $F$, which admits an Alexandrov space structure. 

Thus by \cite{Galaz-GarciaSearle2011}, our conclusion follows, since $X$ is equivariantly indistinguishable from a cohomogeneity one Alexandrov space. 
\end{proof}

As pointed out in \cite[Section 1.6]{Hoelscher2010}, every compact connected Lie group $G$ has a finite cover of the form $G_{ss}\times T^k$, where $G_{ss}$ is semisimple and simply connected. If $G$ acts on $(X,\dis,m)$ by measure-preserving isometries then every finite cover $\tilde{G}$ of $G$ also acts on $(X,\dis,m)$ by measure-preserving isometries in the obvious way, but the action of $\tilde{G}$ has non-trivial (but finite) ineffective kernel. Therefore, if we allow for a finite ineffective kernel we can assume $G$ to be of the form $G_{ss}\times T^k$. 

The classification of semisimple and simply connected compact Lie groups in low dimensions is well known. For the readers convenience we reproduce the part of \cite[Table 1.29]{Hoelscher2010} that we need below, namely, the list of these groups up to dimension $3$ (there are no semisimple and simply connected compact groups on dimensions $>3$ up to dimension $8$). 

\begin{table}[h]
\begin{tabular}{|l|l|}
\hline
Group                                                                   & Dimension \\ \hline
$\Sp^1 \cong \mathrm{U}(1) \cong \mathrm{SO}(2)$                        & $1$       \\ \hline
$\Sp^3 \cong \mathrm{SU}(2) \cong \mathrm{Sp}(1)\cong \mathrm{spin}(3)$ & $3$       \\ \hline
\end{tabular}
\end{table}

Let us further observe that although our main topological recognition tool,\th\ref{MT: Geometry of the slice} is formulated for effective actions, one obtains the same result in the case of almost effective actions. This follows immediately by observing that from an almost effective action of a Lie group $G$ one gets an effective action of $G/K$ where $K$ is the ineffective kernel of the $G$-action by letting $(gK)x:=gx$ for each $x\in X$. The $G/K$ action then has the same orbits as the $G$-action; consequently the slices of both actions coincide and, since $K$ is finite, the dimensions of $G$ and $G/K$ agree. 

In the following theorem we assume as is usual that the action is non-reducible (cf. for example with \cite[Definition 2.1.19]{Hoelscher2007}) and that the group is acting almost effectively so that we can use the characterization of groups up to cover in the table above. It is worth pointing out that in the list of groups appearing in the corresponding theorem for Alexandrov spaces \cite[Theorem C]{Galaz-GarciaSearle2011}, the identification $\SO(4) \cong \Sp^3\times\Sp^3$ is used. As we are primarily interested in the topological recognition of non-collapsed $\RCD(K,4)$-spaces $X$ we only assume that the action is almost effective, to deal with less possible groups acting on $X$. Note however that, once the classification has been established in this case, one can recover the full classification. 

\begin{mtheorem}
Let $G$ be a compact Lie group acting almost effectively by measure preserving isometries and cohomogeneity one on a closed non-collapsed $\RCD(K,4)$-space $(X,\dis,\m)$.  Then $G$ is, up to finite cover, one of $\Sp^3$, $\Sp^3\times \Sp^3$, $\Sp^1\times\Sp^3$, $T^3$, and $X$ is homeomorphic to an Alexandrov space. 
\end{mtheorem}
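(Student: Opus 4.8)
The plan is to mirror the proof of the essential dimension $3$ case but now with a $3$-dimensional principal orbit, using the dimension estimate \eqref{EQ:dimension-bound-G-by-dimension-of-orbit} to pin down the acting group and then analyzing the slices via \th\ref{MT: Geometry of the slice} and \th\ref{L: infinitesimal action cohomogeneity 1}. First I would dispose of the circle quotient case: if $X^\ast\cong\Sp^1$, then item \eqref{MC: homeomorphism rigidity Circle} of \th\ref{MC: Homeomorphism rigidity} shows $X$ is a fiber bundle over $\Sp^1$ with homogeneous fiber, hence a closed topological $4$-manifold, which is trivially an Alexandrov space of the same dimension. Thus I may assume $X^\ast\cong[-1,1]$ throughout.

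Next I would fix a principal point $x_0$, note that cohomogeneity one forces $\dime(G(x_0))=3$, and feed $k=3$ into \eqref{EQ:dimension-bound-G-by-dimension-of-orbit} to obtain $3\leq\dime(G)\leq 6$. Writing $G$ up to finite cover as $G_{ss}\times T^l$ (using the almost-effective hypothesis and the remark preceding the statement), and invoking the table of semisimple simply connected compact Lie groups up to dimension $8$, I would list the candidate groups of dimension between $3$ and $6$ that can admit a nonreducible cohomogeneity one action with a $3$-dimensional principal orbit. The candidates $\Sp^3$ (dimension $3$), $\Sp^1\times\Sp^3$ (dimension $4$), $T^3$ (dimension $3$), and $\Sp^3\times\Sp^3$ (dimension $6$) should survive; groups like $T^4$, $\Sp^1\times T^2$, etc., would be ruled out exactly as $T^3$ was excluded in the essential dimension $3$ proof — namely, if $G$ is a torus $T^j$ with $j>3$ then the abelian structure forces all isotropy groups to contain the common principal isotropy, contradicting almost-effectiveness and the requirement that the principal orbit be $3$-dimensional; reducibility arguments (a la \cite{Hoelscher2007}) eliminate the remaining products. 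I would also use that $T^3$ itself can only appear when the principal isotropy is trivial.

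Having constrained $G$, I would carry out the slice analysis at the two singular orbits $G(x_\pm)$, setting $m_\pm=\dime(G(x_\pm))\in\{0,1,2,3\}$ and $N=4$. By \th\ref{MT: Geometry of the slice} and \th\ref{L: infinitesimal action cohomogeneity 1}, each slice $S_\pm$ is homeomorphic to a metric cone $\mathrm{Cone}_0^N(F_\pm)$ over a smooth homogeneous space $F_\pm$ carrying an $\RCD(4-m_\pm-2,\,4-m_\pm-1)$ structure, and the tangent cone at $x_\pm$ splits as $\R^{m_\pm}\times \mathrm{Cone}(F_\pm)$ and must equal $\R^4$ by non-collapse. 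I would then treat the four values of $m_\pm$ separately: for $m_\pm=3$ the slice is $\R$ with a finite group acting, giving $G_\pm$ with $S_\pm/G_\pm=[0,\varepsilon)$; for $m_\pm=2$ the fiber $F_\pm$ is a $1$-dimensional $\RCD(0,1)$ homogeneous space, so $F_\pm\cong\Sp^1$ (the half-line and interval cases are excluded because the slice must be $\R^2$), and $G_\pm$ acts smoothly by cohomogeneity one on the slice by \cite[Corollary E]{GalazGarciaZarei2018}; for $m_\pm=1$ the fiber is a $2$-dimensional homogeneous $\RCD(1,2)$-space which, being homogeneous, has Haar = Hausdorff measure and hence is non-collapsed, so by \cite{LytchakStadler2023} it is an Alexandrov surface; and for $m_\pm=0$ the fiber $F_\pm$ is a $3$-dimensional homogeneous $\RCD(2,3)$-space, again non-collapsed, hence by the preceding essential-dimension-$3$ theorem (and \cite{LytchakStadler2023}, \cite{Galaz-GarciaSearle2011}) homeomorphic to an Alexandrov $3$-space. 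In every case the tubular neighborhood $G\times_{G_\pm}S_\pm$ is equivariantly homeomorphic to a model appearing in the Galaz-García--Searle classification \cite{Galaz-GarciaSearle2011}, so gluing the two cone bundles along their common boundary $G/H$ produces a space equivariantly indistinguishable from a cohomogeneity one Alexandrov $4$-space, whence $X$ is homeomorphic to an Alexandrov space.

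The main obstacle I anticipate is the $m_\pm=0$ case, where the fiber $F_\pm$ is itself a $3$-dimensional non-collapsed homogeneous $\RCD$-space: one must confirm that the homogeneity forces $\m_{F_\pm}$ to be (a constant multiple of) the Hausdorff measure — so that $F_\pm$ is genuinely non-collapsed and the lower-dimensional recognition results apply — and that the resulting Alexandrov cone structure is compatible with the $G_\pm$-action in the balanced product. A secondary delicate point is verifying that each local Alexandrov model glues equivariantly to its neighbor consistently with the group diagram, which requires matching the boundary identifications of the two cone bundles; this is where the rigidity of \th\ref{MC: Homeomorphism rigidity} and the structure-group description of the principal-orbit bundle are essential, and where one must be careful that the almost-effective reduction does not alter the homeomorphism type.
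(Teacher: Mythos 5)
Your proposal is correct and follows essentially the same route as the paper's proof: dispose of the $\Sp^1$ quotient via the manifold classification, use the dimension estimate \eqref{EQ:dimension-bound-G-by-dimension-of-orbit} together with almost-effectiveness and non-reducibility to pin down $G$, and then identify the tubular neighborhoods of the singular orbits with models from the Galaz-Garc\'ia--Searle classification via \th\ref{MT: Geometry of the slice}. The only divergence is in the fibers with $m_\pm\leq 2$, where the paper argues uniformly (Sturm--von Renesse plus Hamilton give the fiber a metric of positive, resp.\ non-negative, sectional curvature, placing it directly in the known list of homogeneous fibers) while you case-split through low-dimensional $\RCD$ recognition results; this is a difference in execution of one step rather than a different approach, though note that for $m_\pm=0$ the relevant fact is that the fiber is already a smooth homogeneous Riemannian $3$-manifold of positive Ricci curvature by \th\ref{MT: Geometry of the slice}, not an appeal to the cohomogeneity one essential-dimension-$3$ theorem.
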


\begin{proof}
As in the three-dimensional case, the topological and equivariant type of the space is that of a cohomogeneity one manifold if $X^*=\Sp^1$, and the theorem holds by \cite{Parker1986}. Therefore we only need to analyze the case in which $X^*=[-1,1]$. 

We fix a point $x_0$ on a principal orbit and note that $\dime(G(x_0))=3$. Now the bound of the dimension \eqref{EQ:dimension-bound-G-by-dimension-of-orbit} grants that  $3\leq \dime(G)\leq 6$. As we are allowing for a finite ineffective kernel, the classification of low dimensional compact Lie groups grants that $G$ is, up to finite cover, one of $\Sp^3$, $\Sp^3\times \Sp^3$, $\Sp^1\times \Sp^3$, $T^2\times \Sp^3$, $T^3\times \Sp^3$, $T^3$, $T^4$, $T^5$ and $T^6$. We can immediately rule out the tori $T^k$ for $k\geq 4$, as in these cases the principal orbits are homeomorphic to $T^k/H$ where $H$ is finite, which is impossible as the space $X$ is $4$-dimensional. Moreover, as we have assumed the action to be non-reducible we can also rule out $T^2\times \Sp^3$ by noting that in such a case, we have a $2$-dimensional subgroup $H$ of $\Sp^3\times\{1\}$, which is a contradiction since $\Sp^3$ has no $2$-dimensional subgroups. Finally $T^3\times \Sp^3$ is ruled out because the principal isotropy being $3$-dimensional, contains a torus as a proper normal subgroup acting trivially on any point of the space. This contradicts the almost effectiveness of the action. Whence, the first claim is settled. 

We consider now $x_{\pm}\in X$ with $x_{\pm}^\ast = \pm 1$ and denote by $m_\pm = \dime(G(x_\pm))$ as in the previous theorem. Observe now that by our hypothesis $N=4$, and that by the Slice \th\ref{T:Slice-Theorem}   we have that small tubular neighborhoods of the orbits $G(x_\pm)$ are homeomorphic to $G\times_{G_\pm} S_{\pm}$, where $S_{\pm}$ is the slice through $x_\pm$. Therefore, \th\ref{MT: Geometry of the slice} implies that the slice $S_{\pm}$ is homeomorphic to $\mathrm{Cone}_0^N(F)$ where $(F,\dis_F,\m_F)\in \RCD(4-m_\pm-2,4-m_\pm-1) = \RCD(2-m_\pm,3-m_\pm)$, and $(F,\dis_F,\m_F)$ is a smooth homogeneous space $K_\pm/H_\pm$. We now analyze different cases depending on the value of $m_\pm$.

In the case that $m_\pm = 3$, then $F$ is $0$-dimensional. Hence, $Z=\mathrm{Cone}_0^N(F)$ is an $\RCD(0,1)$-space and, as in the previous theorem, due to \cite{KitabeppuLakzian2016}, is one of $\mathbb{R}$, $[0,\infty)$, $[0,1]$ or $\Sp^1$. Similarly to the proof of the previous theorem, using \th\ref{L: infinitesimal action cohomogeneity 1}, we get that the tangent  space at $x_\pm$ is homeomorphic to $Y\times Z = \R^3\times Z$. By the non-collapsed assumption on $X$ we have that the tangent  space at $x_\pm$ is homeomorphic to $\R^4$. Whence, the slice $S_\pm$ is homeomorphic to $\R$. Now again, in this case the dimension of $G_\pm$ is zero and therefore $G_\pm$ is a discrete group acting effectively on $\R$ by homeomorphisms, with $S_\pm/G_\pm = [0,\varepsilon)$. This is only possible if $G_{\pm} = \mathbb{Z}_2$, in turn forcing $G$ to be either $\Sp^3$ or $T^3$. Thus, the small tubular neighborhoods of $G(x_{\pm})$ are equivariantly homeomorphic to either $\Sp^3 \times_{\mathbb{Z}_2}\mathbb{R}$ or $T^3 \times_{\mathbb{Z}_2}\mathbb{R}$, and it follows from the classification \cite{Galaz-GarciaSearle2011} that $X$ is then homeomorphic to an Alexandrov space. 

Now we turn to the case in which $m_{\pm}\leq 2$. Under this assumption $2-m_{\pm}\geq 0$ and by \cite[Theorem 1]{SturmVonRenesse2005}, $F$ has a Riemannian metric of positive Ricci curvature if $m_{\pm}\leq 1$ and of non-negative Ricci curvature if $m_{\pm}=2$. Hence, by using \cite[Main Theorem]{Hamilton1982}, $F$  admits a metric of positive sectional curvature if $m_{\pm}\leq 1$ and of nonnegative sectional curvature if $m_{\pm}=2$. To conclude we only need to observe then that the possible choices of $F$ already appear in the classification \cite{Galaz-GarciaSearle2011}, and therefore $X$ is homeomorphic to an Alexandrov space. 
\end{proof}

\section{New Examples of \texorpdfstring{$\RCD$}{RCD}-spaces of cohomogeneity one}

In this section we use the characterization of homogeneous spaces with positive Ricci curvature in \cite{Berestovskii1995} to present examples of non-collapsed $\RCD$-spaces with an action of cohomogeneity one by measure preserving isometries. We also give an explicit example of a non-collapsed $\RCD$ spaces of cohomogeneity one with essential dimension equal to $5$ that do not admit a metric making them an Alexandrov space, showing that the results in Section~\ref{S:Low-DIM-Classification} are optimal.

\begin{theorem}\th\label{T: Suspensions are RCD spaces of coho 1}
Let $F =G/H$ be a  homogeneous space of dimension $n\geq 2$ with finite fundamental group. Then there exists a $G$-invariant Riemannian metric $g$ with $\Ric(g)\geq (n-1)>0$ and $\diam(g)\leq \pi$. Moreover, for any $K\geq  0$, the $(K,n)$-cone $\mathrm{Con}^n_K(F)$ is a non-collapsed $\RCD(Kn,n+1)$ with an action by measure preserving isometries of $G$ making it a cohomogeneity one $\RCD$-space. In the case of $K>0$, the space is homeomorphic to the suspension of $F$ and the orbit space is isometric to $[0,\pi/\sqrt{K}]$ with weighted measure $\sin_K(t)^n d\mathcal{L}^1$ and we have the group diagram $(G,H,G,G)$. In the case when $K=0$, then the space is homeomorphic to the cone over $F$ with the vertex the only fixed point,  and the orbit space is isometric to $[0,\infty)$ with weighted measure $t^n d\mathcal{L}^1$.
\end{theorem}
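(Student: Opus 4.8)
The plan is to assemble the statement from three classical inputs---Berestovskii's existence theorem, a rescaling through Bonnet--Myers, and Ketterer's cone construction---and then to read off the topology and the group diagram directly from the definition of the $(K,n)$-cone.

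\emph{Existence and normalization of $g$.} First I would invoke \cite{Berestovskii1995}: since $F=G/H$ has finite fundamental group, it carries a $G$-invariant Riemannian metric $g_0$ with $\Ric(g_0)>0$. By homogeneity the quantity $\Ric(g_0)(v,v)$ over unit vectors $v$ attains a positive minimum $c>0$, so $\Ric(g_0)\geq c\,g_0$; in particular $F$ is compact by Bonnet--Myers. Setting $g:=\tfrac{c}{n-1}g_0$ (still $G$-invariant) and using that the Ricci $(0,2)$-tensor is invariant under constant rescalings yields $\Ric(g)\geq (n-1)\,g$, which is a positive bound because $n\geq 2$. A further application of Bonnet--Myers then gives $\diam(g)\leq \pi$, establishing the first assertion.

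\emph{The cone is $\RCD$ and non-collapsed.} The smooth compact Riemannian manifold $(F,g)$ with $\Ric\geq (n-1)$ and Riemannian volume $\mathrm{vol}_g=\Hauss^{n}$ is an $\RCD(n-1,n)$-space of diameter at most $\pi$. Hence Ketterer's cone theorem \cite{Ketterer2013} (see also \cite{Ketterer2015}) applies and shows that $\mathrm{Con}^n_K(F)$ is an $\RCD(Kn,n+1)$-space for every $K\geq 0$ (the diameter hypothesis being needed only when $K>0$, where it holds by the previous step). For the non-collapse I would use the identification of $\mathrm{Con}^n_K(F)$ with the $N$-warped product $I_K\times^n_{\sin_K}F$: away from the cone vertices this is the smooth Riemannian manifold $(I_K^{\circ}\times F,\,dt^2+\sin_K^2(t)\,g)$, whose Riemannian volume form equals $\sin_K^n(t)\,dt\otimes \mathrm{vol}_g=\m^n_{\mathrm{Con}_K}$, i.e.\ $\Hauss^{n+1}$; since the vertices are $\Hauss^{n+1}$-null, $\m^n_{\mathrm{Con}_K}=\Hauss^{n+1}$ and the space is non-collapsed. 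Alternatively \th\ref{T: Characterization non-collapsed} applies, since a tangent cone at an interior smooth point is $\R^{n+1}$.

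\emph{The action, the quotient, and the topology.} The group $G$ acts on $F$ by left translations, which are isometries of $g$ preserving $\mathrm{vol}_g$; extending by $g\cdot[p,t]:=[gp,t]$ produces measure-preserving isometries of $\mathrm{Con}^n_K(F)$, as the warping and the radial measure depend only on $t$. Transitivity of $G$ on $F$ forces the orbit of an interior point $[p,t]$ to be $F\times\{t\}$, so the orbit projection is the radial coordinate; thus the orbit space is one-dimensional, the action is of cohomogeneity one, its orbital metric makes it isometric to $I_K$, and the push-forward measure is $\mathrm{vol}_g(F)\,\sin_K^n(t)\,d\mathcal{L}^1$ (respectively $\mathrm{vol}_g(F)\,t^n\,d\mathcal{L}^1$ when $K=0$), as claimed up to the total mass of $F$. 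The topology then follows from the definition of the $(K,n)$-cone: for $K>0$ one has $\mathrm{Con}_K(F)=F\times[0,\pi/\sqrt{K}]/(F\times\{0,\pi/\sqrt{K}\})$, which is precisely the unreduced suspension $\Susp(F)$, and both collapsed ends are $G$-fixed points, so their isotropy is all of $G$ while the interior isotropy is $H$, giving the diagram $(G,H,G,G)$; for $K=0$ only the end $t=0$ is collapsed, producing the topological cone over $F$ with the vertex as its unique fixed point and orbit space $[0,\infty)$. The only genuinely technical point in the whole argument is the correct invocation of Ketterer's cone theorem in the normalization $\RCD(Kn,n+1)$ together with the identification $\m^n_{\mathrm{Con}_K}=\Hauss^{n+1}$; the remaining steps are bookkeeping with the cited results.
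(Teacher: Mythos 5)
Your proposal is correct and follows essentially the same route as the paper: Berestovskii's theorem plus a normalization to get $\Ric(g)\geq n-1$ and $\diam(g)\leq\pi$, Ketterer's cone theorem for the $\RCD(Kn,n+1)$ bound, and a direct reading of the topology, the measure-preserving isometric action, the orbit space and the group diagram from the definition of the $(K,n)$-cone, with your non-collapse argument (identifying $\m^n_{\mathrm{Con}_K}$ with $\Hauss^{n+1}$ away from the vertices, or via the tangent cone at a regular point) being a harmless variant of the paper's essential-dimension argument. One small inaccuracy worth noting: the bound $\diam(F)\leq\pi$ is a hypothesis of Ketterer's cone theorem for every $K\geq 0$, not only for $K>0$, but since you establish it in your first step this does not affect the validity of the proof.
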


\begin{proof}

By \cite[Theorem 1]{Berestovskii1995} $F$ admits a $G$-invariant Riemannian metric $\tilde{g}$ with $\Ric(\tilde{g})\geq a>0$. Consider $\lambda>0$ such that $a/(n-1)\geq \lambda^2$ and $\pi/\diam(\tilde{g})\geq \lambda$. Then with respect to the Riemannian metric $g=\lambda g$, we have that $(F,\dis_g,\mathrm{vol}(g)) = (F,\lambda\dis_{\tilde{g}},\lambda^n\mathrm{vol}(\tilde{g}))$ is an $\RCD(a\lambda^{-2},n)$-space. Since by construction $a\lambda^{-2}\geq (n-1)$, then $(F,\dis_g,\mathrm{vol}(g))$ is an $\RCD(n-1,n)$-space. By \cite[Theorem~2~(i)]{Sturm2006} it follows that $\Ric(g)\geq (n-1)$. Moreover, by construction we have that $\diam(g) = \lambda\diam(\tilde{g})\leq \pi$. Then by \cite[Theorem 1.1]{Ketterer2013} we obtain that for $K\geq 0$ the $(K,n)$-cone  $\mathrm{Con}^n_K(F)$ is $\RCD(Kn,n+1)$. But by construction for $K>0$ the underlying topological space of $\mathrm{Con}^n_K(F)$ is $F\times[0,\pi/\sqrt{K}]/(F\times \{0,\pi/\sqrt{K}\})$, and thus homeomorphic to  the suspension $\mathrm{Susp}(F):= F\times [0,1]/(F\times\{0,\pi/\sqrt{K}\})$. And for $K=0$ the underling space of $\mathrm{Con}_K^n(F)$ is the topological cone $F\times [0,\infty)/(F\times\{0\})$.

Recall that in both cases we can define an action of $G$ on $\mathrm{Con}_K^n(F)$ by setting
\[
g[x,t]:=[gx,t].
\]
In the case when $K=0$ we refer to this action by the cone of the action of $G$ on $F$, and for $K>0$ we refer to this action by suspension  of the action of $G$ on $F$. 

Recall from Section~\ref{SS: Warped Products} that for $K\geq 0$ the distance function $\dis_{\mathrm{Con_K^n}}$ of $\mathrm{Con}^n_K(F)$ is given by
\begin{linenomath}
\begin{align*}
    \dis_{\mathrm{Con_K^n}}&([x,s],[y,t])\\
     &:=\begin{cases}
         \cos_K^{-1}\Big(\cos_K(s)\cos_K(t) +K\sin_K(s)\sin_K(t)\cos\big(\dis_F(x,y)\wedge\pi\big)\Big) & \mbox{if }K> 0,\\
         \sqrt{s^2+t^2-st\cos\big(\dis_F(x,y)\wedge\pi\big)} & \mbox{if }K=0.
     \end{cases}
\end{align*}
where $\dis_F(x,y)\wedge\pi:=\min\{\dis_F(x,y),\pi\}$,  $\cos_K\colon [0,\pi/\sqrt{K}]\to [0,1]$ given by $\cos_K(t):=\cos(\sqrt{K}t)$, and $\sin_K\colon [0,\pi/\sqrt{K}]\to [0,\infty)$ given by
\[
\sin_K(t):=\begin{cases}
    \frac{1}{\sqrt{K}}\sin(\sqrt{K}t) &\mbox{if } K>0,\\
    t&\mbox{if } K=0.
\end{cases}
\]
From the description of $\dis_{\mathrm{Con}_K^n}$ it easy to see that $G$ acts by isometries on $\mathrm{Con}_K^n(F)$, with respect to the cone and  suspension of the action of $G$ on $F$ depending on $K$.

The measure $\m_{\mathrm{Con}_K^n}$ of $\mathrm{Con}_K^n(F)$ is defined as $\m_{\mathrm{Con}_K^n}:= (\sin_K(t))^n dt\otimes\m_F$. Thus for $g\in G$ fixed, denoting by $\alpha_g\colon F\times[0,\pi/\sqrt{K}]/(F\times \{0,\pi/\sqrt{K}\})\to F\times[0,\pi/\sqrt{K}]/(F\times \{0,\pi/\sqrt{K}\})$ the homeomorphism given by $\alpha_g[x,t]:=[gx,t]$, we have for $A=A_1\times A_2\subset F\times[0,\pi/\sqrt{K}]/(F\times \{0,\pi/\sqrt{K}\})$ that 
\begin{linenomath}
\begin{align*}
(\alpha_g)_{\#}(\m_{\mathrm{Con}_K^n})(A)&=\m_{\mathrm{Con}_K^n}(\alpha_g^{-1}(A))\\
& = \m_{\mathrm{Con}_K^n}(A_1\times \alpha_g^{-1}(A_2))\\
&= \int_{A_1\times \alpha_g^{-1}(A_2)} \sin_K(t))^N\, dt\otimes d\m_F\\
&= \int_{A_1} \sin_K(t))^N \int_{\alpha_g^{-1}(A_2)} d\,\m_F\,dt\\
&= \int_{A_1} \sin_K(t))^N \int_{A_2)} d\,(\alpha_g)_{\#}\m_F\,dt\\
&= \int_{A_1} \sin_K(t))^N \int_{A_2)} d\,\m_F\,dt\\
&= \int_{A_1\times A_2} \sin_K(t))^N \,dt\otimes d\m_F\\
&= \m_{\mathrm{Con}_K^n}(A).
\end{align*}
\end{linenomath}
Thus $G$ acts by measure preserving isometries on $\mathrm{Con}_K^n(F)$. Moreover, the vertexes of the suspension and the vertex of the cone are fixed points, and the group $H$ is the isotropy group of any point $[x,t]$ with $0<t<\pi/\sqrt{K}$ when $K>0$, or with $t>0$ when $K=0$.

Last we point out that the essential dimension of $\mathrm{Con}^n_K(F)$ is $n+1$. Thus by \cite[Theorem 1.12]{dePhilippisGigli2018} and \cite[Theorem 1.3]{BrenaGigliHondaZhu2023} we have that $\mathrm{Con}^n_K(F)$ is non-collapsed.
\end{linenomath}
\end{proof}

\vspace*{8pt}
\begin{duplicate}[\ref{MT: Grassmannians as examples}]
The suspension $\mathrm{Susp}(\Sp^2\times\Sp^2)$ of $\Sp^2\times\Sp^2$ admits a non-collapsed $\RCD(K,5)$-structure for any $K\geq 0$ such that the suspension of the $\SO(3)\times\SO(3)$-action on $\Sp^2\times\Sp^2$ is by measure preserving isometries and by cohomogeneity one, with group diagram  $(\SO(3)\times\SO(3),\SO(2)\times\SO(2),\SO(3)\times\SO(3),\SO(3)\times\SO(3))$. Moreover, $\mathrm{Susp}(\Sp^2\times\Sp^2)$ with this action of $\SO(3)\times\SO(3)$ cannot be an Alexandrov space of cohomogeneity one. I.e., it does not admit a metric making it an Alexandrov space and such that the action of $\SO(3)\times\SO(3)$ is by isometries.
\end{duplicate}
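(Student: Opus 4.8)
The plan is to split the statement into two parts: the existence of the $\RCD$-structure, which is an immediate application of \th\ref{T: Suspensions are RCD spaces of coho 1}, and the non-existence of a compatible Alexandrov structure, which is where the content lies. For the first part I would write $\Sp^2\times\Sp^2 = (\SO(3)\times\SO(3))/(\SO(2)\times\SO(2))$, so that $F := \Sp^2\times\Sp^2$ is a homogeneous space of dimension $n=4\geq 2$ with trivial (hence finite) fundamental group, realized as $G/H$ with $G=\SO(3)\times\SO(3)$ and $H=\SO(2)\times\SO(2)$. Applying \th\ref{T: Suspensions are RCD spaces of coho 1} to this $F$, for every $K'>0$ the $(K',4)$-cone $\mathrm{Con}^4_{K'}(F)$ is a non-collapsed $\RCD(4K',5)$-space, homeomorphic to $\mathrm{Susp}(F)$ (since $K'>0$), carrying a cohomogeneity-one, measure-preserving isometric $G$-action with group diagram $(G,H,G,G)$. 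Given any target $K\geq 0$, choosing $K' = (K+1)/4$ produces an $\RCD(K+1,5)$-structure, which by \th\ref{prop.productsrescallings}(1) is in particular $\RCD(K,5)$; this establishes the first assertion, including the stated group diagram.

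For the obstruction I would argue by contradiction: suppose $X=\mathrm{Susp}(\Sp^2\times\Sp^2)$ admits a metric making it an Alexandrov space (of curvature bounded below by some $\kappa$) for which the suspended $\SO(3)\times\SO(3)$-action is by isometries. Since this action is topologically the suspension of the transitive action on $\Sp^2\times\Sp^2$, the two poles $p_\pm$ are isolated fixed points and the remaining orbits are the $4$-dimensional principal orbits $G/H$; hence the action is of cohomogeneity one. By the Slice Theorem for Alexandrov spaces \cite[Theorem B]{HarveySearle2017}, the slice at $p_+$ is isometric to the Euclidean cone over the space of directions $\Sigma_{p_+}X$, a compact Alexandrov space of curvature $\geq 1$ of dimension $4$. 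Because the quotient $X^\ast$ is one-dimensional and $p_+$ corresponds to an endpoint, the quotient of $\Sigma_{p_+}X$ by $G$ is a single point, so $G$ acts transitively on $\Sigma_{p_+}X$; moreover a distance sphere around $p_+$ is a single principal orbit, whence $\Sigma_{p_+}X$ is $G$-equivariantly homeomorphic to $G/H=\Sp^2\times\Sp^2$.

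Finally I would derive the contradiction. The space $\Sigma_{p_+}X$ is a homogeneous Alexandrov space, hence (by the fact that homogeneous Alexandrov spaces are Riemannian manifolds, due to Berestovskii) it is isometric to a homogeneous Riemannian manifold, and its Alexandrov bound $\mathrm{curv}\geq 1$ becomes the sectional curvature bound $\sec\geq 1>0$. But any $G$-invariant Riemannian metric on $G/H=\Sp^2\times\Sp^2$ is a product of round metrics: the isotropy representation of $H=\SO(2)\times\SO(2)$ on $\mathfrak{g}/\mathfrak{h}\cong\R^2\oplus\R^2$ splits into two inequivalent irreducible summands (each $\SO(2)$-factor rotating its own $\R^2$ and acting trivially on the other), so by Schur's Lemma the invariant metrics are exactly $\lambda_1 g_1\oplus\lambda_2 g_2$ with $\lambda_1,\lambda_2>0$ and $g_i$ round. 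Every such product metric has totally geodesic flat $2$-planes spanned by one vector tangent to each factor, and therefore cannot satisfy $\sec\geq 1$. This contradiction shows that no Alexandrov structure compatible with the $\SO(3)\times\SO(3)$-action can exist.

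I expect the main obstacle to be rigorously justifying the reduction to the space of directions: invoking the Alexandrov Slice Theorem to identify the slice at the fixed pole with a metric cone over a $G$-homogeneous, curvature-$\geq 1$ space of directions homeomorphic to the principal orbit, and then pinning down that the only $G$-invariant metrics on $\Sp^2\times\Sp^2$ are the curvature-degenerate products. Once these two facts are in place the curvature contradiction is immediate, and it is worth emphasizing that—unlike an appeal to the (open) Hopf problem on $\Sp^2\times\Sp^2$—the argument needs no deep global curvature result, only the symmetry forced by the prescribed $\SO(3)\times\SO(3)$-action.
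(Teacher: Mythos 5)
Your proposal is correct, and the first half (existence of the $\RCD(K,5)$-structure via \th\ref{T: Suspensions are RCD spaces of coho 1} applied to $F=(\SO(3)\times\SO(3))/(\SO(2)\times\SO(2))$) coincides with the paper's argument; your handling of the constant, taking a $(K',4)$-cone with $4K'\geq K$ and then relaxing via \th\ref{prop.productsrescallings}(1), is if anything slightly cleaner, since it also covers $K=0$ while keeping the cone compact. For the obstruction, both arguments perform the same reduction: pass to the space of directions at a suspension vertex, use transitivity of the $\SO(3)\times\SO(3)$-action together with Berestovskii's theorem and infinitesimal Hilbertianity to conclude it is a homogeneous \emph{Riemannian} $4$-manifold with $\sec\geq 1$ homeomorphic to $\Sp^2\times\Sp^2$. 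At that point you diverge: the paper invokes the Hsiang--Kleiner classification of positively curved $4$-manifolds with isometric circle actions to force the homeomorphism type to be $\Sp^4$ or $\CP^2$, whereas you classify the $\SO(3)\times\SO(3)$-invariant metrics on $G/H$ directly---the isotropy representation of $\SO(2)\times\SO(2)$ on $\mathfrak{g}/\mathfrak{h}\cong\R^2\oplus\R^2$ has two inequivalent irreducible summands, so Schur's Lemma forces a product of round metrics, which has flat mixed planes. Your route is more elementary and self-contained (it needs no global classification result, only the symmetry imposed by the prescribed action), while the paper's appeal to Hsiang--Kleiner is shorter to state and would apply verbatim to any candidate homogeneous space not homeomorphic to $\Sp^4$ or $\CP^2$, without computing its isotropy representation. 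The only step you should make fully precise is the transitivity of $G$ on $\Sigma_{p_+}X$: the clean justification is that $\Sigma_{p_+}X/G$ is isometric to the space of directions of $X^\ast=[0,\ell]$ at the endpoint $p_+^\ast$, which is a single point.
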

\vspace*{8pt}

\begin{proof}
We observe that $\Sp^2\times\Sp^2$ is the homogeneous space $\SO(3)\times\SO(3)/(\SO(2)\times\SO(2))\cong (\SO(3)/\SO(2))\times (\SO(3)/\SO(2))$. Thus by \th\ref{T: Suspensions are RCD spaces of coho 1}, there exists a $\SO(3)\times\SO(3)$-invariant Riemannian metric $g$ on $\Sp^2\times\Sp^2$ such that for any $K\geq 0$ the $(K,5)$-cone $\mathrm{Con}^5_K (\Sp^2\times\Sp^2)$ is a non-collapsed $\RCD(K,5)$-space with a cohomogeneity one action by measure preserving isometries of $\SO(3)\times\SO(3)$. But for $K>0$ the $(K,5)$-cone is homeomorphic to the suspension of $\Sp^2\times\Sp^2$.  This proves the first claim.

Assume that the suspension of $\Sp^2\times\Sp^2$ admits an Alexandrov metric with an action of cohomogeneity one by isometries of $\SO(3)\times \SO(3)$ fixing the vertices of the suspension and with principal isotropy $\SO(2)\times\SO(2)$. Consider $X$ the space of directions  at one of the vertexes of the suspension. This space is homeomorphic to $\Sp^2\times\Sp^2$, $\SO(3)\times \SO(3)$ acts transitively on $X$, and it is a $4$-dimensional Alexandrov space of curvature at least one. Thus $X$ is a homogeneous space, and by \cite[Theorem 7~I]{Berestovskii1989} $X$ is isometric to a homogeneous  Finsler manifold. Since $X$ is an Alexandrov space of curvature at least $1$, then  it is also an $\RCD(3,4)$-space, then $X$ is isometric to a homogeneous Riemannian manifold $(M,g)$. Moreover we have that $\sec (g)\geq 1$. Since we have non-trivial Killing vector fields by the action of $\SO(3)\times \SO(3)$, by \cite[Theorem 1]{HsiangKleiner1989} $M$ is homeomorphic to $\Sp^4$ or $\C P^2$, which is a contradiction. Thus $\Sp^2\times\Sp^2$ is not an $\SO(3)\times\SO(3)$-cohomogeneity one Alexandrov space.
\end{proof}

\begin{rmk}
When considering $G/H$ a homogeneous smooth manifold with finite fundamental group, using \th\ref{thm.rcd space from group diagram} for the group diagram $(G,H,G,G)$ we also get an $\RCD(K,N)$-space. From the proof if follows that  this case $K= 0$ and $N=\dim(G/H)+\dim(G)+1$, and the orbit space is isometric to $[-1-2/A,1+2/A]$ for an appropriate choice of $A$ and measure $f^{\dim(G/H)}\mathcal{L}$, where $f$ is a non-smooth function. Nonetheless, this space is equivariantly homeomorphic to the suspension of $G/H$ and thus to the space given by \th\ref{T: Suspensions are RCD spaces of coho 1}. That is, for a group diagram $(G,H,G,G)$ with $G/H$ having finite fundamental group, by \th\ref{thm.rcd space from group diagram} and \th\ref{T: Suspensions are RCD spaces of coho 1} we obtain two equivariantly homeomorphic spaces, but each one with a different $\RCD$-structure.
\end{rmk}

\bibliographystyle{siam}
\bibliography{References}
\end{document}